\let\tilde\widetilde
\newcommand\xref[1]{\csname#1\endcsname}
\newcommand{\xrhardcode}[2]{\expandafter\def\csname#1\endcsname{#2}}
\newcommand\refaprime{a$^\prime$}
\newcommand\refb{b}
\newcommand{\Cl}{\mathcal{C}}
\newcommand{\TTst}{\mathcal{T}_{\mathrm{st}}}
\newcommand{\loccit}{\emph{loc.\ cit.}\xspace}
\newcommand{\bX}{X}
\newcommand{\Z}{{\mathbb{Z}}}
\newcommand{\red}{_{\mathrm{red}}}
\newcommand{\enh}{_{\mathrm{enh}}}
\newcommand{\redwrong}{_{\mathrm{red}^\prime}}
\newcommand{\conn}{^\circ}
\newcommand{\st}{^\mathrm{st}}
\newcommand{\sep}{^{\mathrm{sep}}}
\newcommand{\Long}{^{\mathrm{long}}}
\newcommand{\Short}{^{\mathrm{short}}}
\newcommand{\dual}{{^\wedge}}
\DeclareMathOperator{\id}{id}
\DeclareMathOperator{\Weil}{R}
\DeclareMathOperator{\Rep}{Rep}
\newcommand{\univ}[1]{\underline{#1}} 
\newcommand{\CompOf}[2]{composition of $#1$ and $#2$}
\newcommand{\IsCompOf}[3]{$#1$ is a \CompOf{#2}{#3}}
\newcommand{\liftfactor}[3]{
\begin{xy}
\xymatrix@C=-20pt{
\Rep #3 & \\
& \Rep #2\ar@{~>}[ul] \\
\Rep #1\ar@{~>}[uu]\ar@{~>}[ur]
}
\end{xy}
}
\newcommand{\Q}{\mathbb{Q}}
\newcommand{\norm}[1][]{\NN_{#1}}
\newcommand{\dnorm}[1][]{{\widehat\NN_{#1}}}
\newcommand{\dnormst}[1][]{{\widehat\NN_{#1}\st}}
\newcommand{\normchar}[1][]{\NN^*_{#1}}
\newcommand{\dnormcochar}[1]{\widehat\NN_{#1,*}}
\newcommand{\dnormfunc}{\widehat{N}}
\DeclareMathOperator{\stab}{stab}
\DeclareMathOperator{\Hom}{Hom}
\DeclareMathOperator{\diag}{diag}
\DeclareMathOperator{\Int}{Int}
\DeclareMathOperator{\Aut}{Aut}
\DeclareMathOperator{\GL}{GL}
\DeclareMathOperator{\U}{U}
\DeclareMathOperator{\PGL}{PGL}
\DeclareMathOperator{\SO}{SO}
\DeclareMathOperator{\Sp}{Sp}
\DeclareMathOperator{\PGSp}{PGSp}
\DeclareMathOperator{\Gal}{Gal}
\DeclareMathOperator{\Ind}{Ind}
\DeclareMathOperator{\im}{im}
\DeclareMathOperator{\chr}{char}
\DeclareMathOperator{\pr}{pr}
\newcommand{\sfG}{{\mathsf{G}}}
\newcommand{\NN}{\mathcal{N}}
\newcommand{\inv}{^{-1}}
\newcommand{\set}[2]{ 
        {\left\{\left.
        #1\vphantom{#2\bigl(\bigr)}\,\right|
        \,#2\right\}}}
\newcommand{\lsup}[1]{{}^{#1}}
\newcommand{\lsub}[1]{{}_{#1}}
\newcommand{\llsub}[1]{{}_{#1}^{\phantom{x}}}
\newcommand{\maaap}[4]{\ensuremath{{#2}\colon{#3}#1{#4}}}
\newcommand{\map}{\maaap\longrightarrow}
\newcommand{\abmap}[2]{\ensuremath{{#1}\longrightarrow{#2}}}
\newcommand{\abmapto}[2]{\ensuremath{{#1}\longmapsto{#2}}}
\newtheorem{thm}[equation]{Theorem}  %[section] 
\newtheorem{prop}[equation]{Proposition}
\newtheorem{lem}[equation]{Lemma} 
\theoremstyle{definition} 
\newtheorem{defn}[equation]{Definition} 
\theoremstyle{remark} 
\newtheorem{notation}[equation]{Notation} 
\newtheorem{rem}[equation]{Remark} 
\newtheorem{example}[equation]{Example} 
\newtheorem{examples}[equation]{Examples}
\newtheorem{para}[equation]{} 
\numberwithin{equation}{section}
\numberwithin{table}{section}
\title[Lifting: Explicit conorms]%
{Lifting representations of finite reductive groups II: Explicit conorms}
\date{\today}
\author{Jeffrey D.~Adler}
\address{Department of Mathematics and Statistics\\
American University\\
Washington, DC 20016-8050}
\email[Adler]{jadler@american.edu}
\author{Joshua M.~Lansky} 
\email[Lansky]{lansky@american.edu}
\subjclass[2010]{Primary 20G15, 20G40.  Secondary 20C33.}
\keywords{Finite reductive groups, lifting, representations}
\begin{document}

\begin{abstract}
Let $k$ be a field, $\tilde{G}$ a connected reductive $k$-group,
and $\Gamma$ a finite group.
In a previous work, the authors defined what it means for a connected reductive
$k$-group $G$ to be \emph{parascopic} for $(\tilde{G},\Gamma)$.
Roughly, this is a simultaneous generalization of several settings.
For example, $\Gamma$ could act on $\tilde{G}$,
and $G$ could be the connected part of the group of $\Gamma$-fixed points in $\tilde{G}$.
Or $G$ could be an endoscopic group, a pseudo-Levi subgroup, or an isogenous image of $\tilde{G}$. 
If $G$ is such a group, and both $\tilde{G}$ and $G$ are $k$-quasisplit,
then we constructed a map $\dnormst$
from the set of stable semisimple conjugacy classes in the dual $G^\wedge(k)$
to the set of such classes in $\tilde{G}^\wedge(k)$.
When $k$ is finite, this implies a lifting from packets of representations of $G(k)$ to those of $\tilde{G}(k)$.

In order to understand such a lifting better, 
here we describe two ways in which $\dnormst$ can be made more explicit.
First, we can express our map in the general case in terms of simpler cases.
We do so by showing that $\dnormst$ is compatible with isogenies and with Weil restriction,
and also by expressing it as a composition of simpler maps.
Second,
in many cases we can 
construct an explicit $k$-morphism $\map{\dnormfunc}{G^\wedge}{\tilde{G}^\wedge}$
that agrees with $\dnormst$.
As a consequence, our
lifting of representations is seen to coincide with Shintani
lifting in some important cases.
\end{abstract}

\maketitle

\addtocounter{section}{-1}
\section{Introduction}

Suppose that $k$ is a field,
$\tilde{G}$ and $G$ are connected reductive $k$-groups,
$\Gamma$ is a finite group that acts on a maximal $k$-torus $\tilde{T}\subseteq\tilde{G}$ via $k$-automorphisms
that preserve a positive root system for $(\tilde G, \tilde T)$,
and $G$ is \emph{parascopic} for $\tilde{G}$ and the action of $\Gamma$ as in~\cite{adler-lansky:lifting}.
While our action of $\Gamma$ on $\tilde\Psi$ does not in general determine
an action of $\Gamma$ on $\tilde{G}$ (and even when it does,
this action is not unique),
nonetheless $G$ has many of the properties one would expect
of the connected part $(\tilde{G}^\Gamma)\conn$ of the group
of fixed points if such an action were to exist.
For example, the group $\Gamma$ acts on $\tilde{T}$,
and we obtain a $k$-map
$\abmap{T}{(\tilde T^\Gamma)\conn}$.
In addition to fixed-point groups,
some other examples of parascopic groups for $\tilde{G}$ are:
pseudo-Levi subgroups, endoscopic groups, and images under isogenies.
The precise relationship between $\tilde{G}$, $G$, and the action of $\Gamma$ is described
by a pair that we call a \emph{parascopic datum} for $(\tilde{G},\Gamma,G)$
(see Definition \ref{defn:parascopy}).

If $\tilde{G}$ and $G$ are $k$-quasisplit,
then one can form the duals $\tilde{G}^\wedge$ and $G^\wedge$,
and we showed in \cite{adler-lansky:lifting} that the parascopic datum induces
a \emph{conorm}, i.e., a canonical $k$-morphism $\dnorm$
from the variety of semisimple geometric conjugacy classes
in $G^\wedge$ to the analogous variety for $\tilde{G}^\wedge$.
Moreover, this map specializes to give a map $\dnormst$
from the set of stable (in the sense of Kottwitz \cite{kottwitz:rational-conj})
semisimple conjugacy classes in $G^\wedge(k)$ to the analogous
set for $\tilde{G}^\wedge(k)$.
In the special case where $k$ is finite, we have that
$G$ and $\tilde{G}$ are automatically $k$-quasisplit,
and stable and rational conjugacy coincide.
Since $G^\wedge(k)$-conjugacy classes in $G^\wedge(k)$ parametrize
collections
(``Lusztig series'')
of irreducible representations of $G(k)$,
and similarly for $\tilde{G}(k)$,
one obtains a lifting of such collections
from $G(k)$ to $\tilde{G}(k)$.

The description of the map $\dnorm$ in \cite{adler-lansky:lifting}
is explicit in some sense.
Given a semisimple element $s\in G^\wedge(k)$,
choose a maximal $k$-torus $T^\wedge\subseteq G^\wedge$ such that $s\in T^\wedge(k)$.
One can construct a corresponding $k$-torus
$\tilde{T}^\wedge \subseteq \tilde{G}^\wedge$, and an explicit $k$-homomorphism
$\map{\dnorm[T^\wedge]}{T^\wedge}{\tilde{T}^\wedge}$.
If $[s]$ and $[s]\st$ respectively denote the geometric and stable classes of $s$,
then $\dnorm([s])$ is the geometric class, and $\dnormst([s]\st)$ is the stable class, containing $\dnorm[T^\wedge](s)$.
In particular, the choices of $T^\wedge$ and $\tilde{T}^\wedge$ don't matter.

However, there are several situations where one can make
$\dnorm$ (and thus $\dnormst$) even more explicit.
First, one can sometimes express a given parascopic datum in terms
of simpler parascopic data.
For example, a datum can be factored
as a composition of other data
(see Definition \ref{defn:composition})
or realized as a Weil restriction of another datum
(see Definition \ref{defn:weil-restriction-parascopy}).
As a consequence, one can express the associated conorm $\dnorm$
in terms of simpler conorms
(see Propositions
\ref{prop:decomposition-gamma},
\ref{prop:decomp-fixed-pinning},
\ref{prop:product-conorm},
and \ref{lem:conorm-restriction-over-k}
for example).

Second, the $k$-morphism $\dnorm$ of varieties of geometric semisimple conjugacy classes
could be compatible with
a $k$-morphism of varieties $\map{\dnormfunc}{G^\wedge}{\tilde{G}^\wedge}$
that specializes to a map on all geometric conjugacy classes.
A morphism of the latter kind gives us an explicit formula for $\dnormst$.
Moreover,
in the Lusztig classification of representations of finite reductive groups,
each Lusztig series is further partitioned into ``families'',
which can be parametrized by appropriate conjugacy classes
in dual groups.
(See \cite{lusztig:chars-finite}.)
We would eventually like to lift these families of representations, and
having a lifting of all geometric 
conjugacy classes (in particular, those unipotent classes that commute with a given
semisimple element)
seems to be
a necessary step in this program.

We now consider some situations where we can factor $\dnorm$
as mentioned above.
First, given a subnormal series for $\Gamma$,
one can express the map $\dnorm$ as a composition of maps
associated to the subquotients of $\Gamma$
(Proposition \ref{prop:decomposition-gamma}).
Thus, it is enough to understand the situations where $\Gamma$
is simple or trivial.

Second, suppose we fix a parascopic datum
for $(\tilde G, \Gamma, G)$,
and consider all groups $\univ G$ such that our datum is parascopic
for $(\tilde{G},\Gamma,\univ G)$.
If $\tilde{G}$ is $k$-quasisplit, then
under some hypotheses on the action of $\Gamma$,
there is a choice of $\univ{G}$ that is universal in the sense that
our datum for $(\tilde{G},\Gamma,G)$ always factors in a canonical way through our datum for
$(\tilde{G},\Gamma,\univ{G})$.
(See Proposition \ref{prop:decomp-fixed-pinning}.)
Roughly, speaking, $\univ{G}$
is the group we obtain if we extend the action of $\Gamma$ to one on all of $\tilde{G}$
that fixes a pinning
and consider the connected part of the group of fixed points
of this action.

Now let us consider a situation where we can make $\dnorm$ more
explicit by showing that it agrees with a morphism $\abmap{G^\wedge}{\tilde{G}^\wedge}$.
From the point of view of Shintani base change, an important case is where
$E/k$ is a finite, Galois extension,
$\Gamma=\Gal(E/k)$,
$\tilde{G} = \Weil_{E/k} G$,
and the action of $\Gamma$ on $\tilde{G}$ is induced by the Galois action.
That is, as groups over the separable closure $k\sep$ of $k$,
$\tilde{G}$ is a product of $|\Gamma|$ copies of $G$, and $\Gamma$
acts as a simple, transitive permutation group of the coordinates.
One thus has a $k$-homomorphism $\diag_{E/k}$,
analogous to the diagonal inclusion,
from $G$ to $\tilde{G}$, and similarly from $G^\wedge$ to $\tilde{G}^\wedge$,
and this latter map agrees with $\dnorm$
(Proposition \ref{prop:conorm-base-change}).
Suppose $k$ is finite.
Since families of irreducible representations of $G(k)$ are parametrized
by ``special'' conjugacy classes in $G^\wedge(k)$,
our function $\diag_{E/k}$ induces a lifting of families of representations
of $G(k)$ to those of $\tilde{G}(k)$.
Families are singletons in the cases where $G = \GL_n$
(see \cite{dprasad-sanat:restriction-cuspidal}*{Theorem 4.1})
or $\U_n$
(see \cite{lusztig-srinivasan:unitary}*{Corollary 2.4})
and we obtain a lifting of irreducible representations.
For these and related groups, MacDonald's correspondence \cite{macdonald:finite-gln}
associates such representations to certain combinatorial data,
for which there is a natural notion of base change.
It is easy to check that the lifting induced by the function
$\diag_{E/k}$ agrees with MacDonald's base change.
Therefore, our lifting agrees with Shintani lifting for groups of the above type
provided that MacDonald's base change does.
Silberger and Zink \cite{silberger-zink:explicit-base-change}
show that this is indeed the case when $G = \GL_n$.
We expect that our lifting is similarly compatible with Shintani base change
for other groups, though in general
base change might be a several-to-several map.

Our two phenomena---resolving $\dnorm$ into simpler conorms
and explicit realization of $\dnorm$ as a function on groups---come together in several interesting cases.
Suppose (to simplify the present discussion) that $\tilde{G}$ is almost simple,
$\Gamma$ is cyclic and acts on $\tilde{G}$,
$G$ is the identity component of the group of fixed points,
and no nontrivial element of $\Gamma$ acts via inner automorphisms.
As above, let $\univ{G}$ denote the connected part of the
group of fixed points of a related action of $\Gamma$ that fixes a pinning.
Then there is a 
natural embedding $G^\wedge \hookrightarrow \univ{G}^\wedge$,
and $\dnorm$ factors through it
(see
Proposition \ref{prop:quasi-central-dual-closed}
and
\S\ref{para:conorm-dual-closed}).
For example, suppose $\Gamma$ acts on $\tilde G = \GL(2n)$ via an involution,
and $G = \SO(2n)$.  Then $\univ{G} = \Sp(2n)$,
and Proposition~\ref{prop:decomp-fixed-pinning} says that our lifting of (packets of) representations
from $\SO(2n,k)$ to $\GL(2n,k)$ ($k$ finite)
must factor through a lifting from
$\SO(2n,k)$ to $\Sp(2n,k)$,
and this latter lifting is determined by an embedding of dual groups.
Other liftings have analogous factorizations:
\begin{equation}
\label{eq:nice-decomp}
\liftfactor{\SO(2n,k)}{\Sp(2n,k)}{\GL(2n,k)}
\liftfactor{\PGSp(8,k)}{F_4(k)}{E_6(k)}
\liftfactor{\PGL(3,k)}{G_2(k)}{\SO(8,k)}
\end{equation}

One expects
\cites{adler-lansky:bc-u3-unram,adler-lansky:bc-u3-ram}
that base-change liftings of depth-zero representations
of (the group of rational points of)
a reductive group $G$ over a $p$-adic field $F$
can be described in terms of liftings
of representations of certain reductive groups $\sfG$
(over the residue field $k$ of $F$)
that arise as quotients of parahoric subgroups of $G$.
In certain situations, these latter liftings cannot be base change,
but they nonetheless arise from considering a $k$-action of a finite
group $\Gamma$ on a reductive group $\tilde\sfG$,
and lifting representations from $\sfG(k)$
(with $\sfG = (\tilde\sfG^\Gamma)\conn$) to those of $\tilde\sfG(k)$.
Thus, even if one is only concerned with base change,
one is forced to consider group actions more generally.
In order to prove a general relationship between 
liftings of representations of finite and of $p$-adic groups,
a necessary first step is to show that 
for a finite, Galois extension $E/F$, the natural action of
$\Gamma:= \Gal(E/F)$
on the Weil restriction $\tilde{G}:= \Weil_{E/F}G$
leads to a $k$-action of $\Gamma$ on each associated $k$-group
$\tilde\sfG$,
and that the groups $\sfG$ and $G$ are then closely related.
More generally, one would like to replace $\Gal(E/F)$ by any
finite group $\Gamma$ of $F$-automorphisms satisfying reasonable properties.
We address all of these matters elsewhere
\cite{adler-lansky-spice:group-actions}.
Still more generally, one expects that under reasonable
tameness hypotheses, parascopic data for $p$-adic groups,
and the liftings that they determine, should be associated
to parascopic data (and liftings) for finite groups.
In generalizing \loccit to show this, 
the decompositions of the present paper will play a key role.

Liftings of (packets of) representations
arise in other contexts besides base change. Consider three examples.
First, consider ``quadratic unipotent'' blocks of representations,
i.e., those Lusztig series that are parametrized
by order-two semisimple elements in the dual group.
Srinivasan \cite{srinivasan:quadratic-unipotent}
gives a correspondence between the union
of all such blocks for all general linear and unitary groups
(over a finite field),
and the union of all such blocks for all symplectic groups,
and this correspondence satisfies some strong properties.
Second, if $k$ is $p$-adic
and $\theta$ is a $k$-involution of $\tilde{G}=\GL_n$,
then 
Sylvestre 
\cite{sylvestre:thesis}
gives a relation between the
(twisted) depth-zero supercuspidal characters of $\tilde{G}(k)$
and the
depth-zero supercuspidal characters of $G(k)$,
where $G = \tilde{G}^\theta$,
thus lifting representations from $G(k)$ to $\tilde{G}(k)$.
Third,
our setting overlaps with, but does not subsume, that of
\cite{kumar-lusztig-dprasad:characters}.
We believe
that we could generalize our definitions to cover their situation.
For now we have not pursued the matter because our main
result, concerning the explicit form of the conorm map,
is trivial in their situation almost by definition.

We note that the definition of a parascopic datum in the present paper (Definition~\ref{defn:parascopy})
differs from that in~\cite{adler-lansky:lifting}*{Definition 4.1}
in two essential ways
as detailed in Remark~\ref{rem:new-parascopy-defn}.
First,
our current definition implies
a condition that was implicitly assumed in 
Lemma~\xref{FGL-lem:multiple}~\loccit.
Second, 
in all other respects our new definition is more general,
and thus applies to a wider selection of pairs $(\tilde{G},G)$.
In addition, our new definition is much easier to state,
using our notion of a \emph{morphism}
of root data (Definition \ref{defn:datum-morphism}).

\textbf{Outline.}
We work over an arbitrary field $k$.
In \S\ref{sec:basic}, we establish our basic notation and define
the notions of \emph{morphisms} of root data,
and of \emph{subdata}.
We then recall the notion of \emph{parascopic datum} that establishes
a relationship between a root datum $\tilde\Psi$ with an action of
a finite group $\Gamma$, and another root datum $\Psi$,
where all root data are equipped with an action of the absolute
Galois group $\Gal(k)$ of $k$.
If $\tilde{G}$ and $G$ are connected reductive $k$-groups with maximal $k$-tori
$\tilde{T}$ and $T$, such that $\tilde\Psi$ and $\Psi$
are the root data associated to the pairs $(\tilde{G},\tilde{T})$
and $(G,T)$,
then
one thus establishes a relationship between these pairs.
(We then say that we have a \emph{parascopic datum for the triple
$(\tilde{G},\Gamma,G)$
relative to $\tilde{T}$ and $T$}.)

We introduce (Definition \ref{defn:parascopy}) two properties
that a parascopic datum might have.
Recall that our parascopic datum induces a $k$-map
$\abmap{T}{(\tilde T^\Gamma)\conn}$.
Roughly, a datum is
\emph{torus-inclusive} if this map
is an isomorphism,
so that we obtain an inclusion $\abmap{T}{\tilde{T}}$;
and \emph{root-inclusive} if, were $\Gamma$ to act on $\tilde{G}$,
the root system of $(G,T)$ would be contained in
that of $((\tilde{G}^\Gamma)\conn, (\tilde{T}^\Gamma)\conn)$.
Root-inclusivity makes some arguments easier, and it holds in most cases,
but the exceptions are interesting.
See \S\ref{para:wrong-subsystem} for the most basic example.
After introducing some further notation, we recall
(Theorem \ref{thm:lift-geometric-general})
the main result of \cite{adler-lansky:lifting}:
a ``conorm'' map from the set of stable semisimple conjugacy classes of
$G^\wedge(k)$ to those of $\tilde{G}^\wedge(k)$ that depends only on
(the equivalence class of) the parascopic datum.

We then turn to the problem of expressing a parascopic
datum in terms of simpler data.
A reasonable step is 
to understand the subdata of a given root datum,
which is essentially the same as understanding the subsystems
of a root system.
By induction, it would be enough to understand \emph{maximal} subdata.
Fortunately, we show
(Proposition \ref{prop:max-subdatum})
that a maximal subdatum $\Psi$ of a datum $\tilde\Psi$
must be either closed or ``dual-closed'', in the sense that the dual
$\Psi^\wedge$ is a closed subdatum of $\tilde\Psi^\wedge$.
The closed case is interesting because
if $\Psi = \Psi(G,T)$ is closed in
$\tilde\Psi = \Psi(\tilde{G},\tilde{T})$,
where both root data are equipped with compatible $\Gal(k)$-actions,
and both groups $\tilde{G}$ and $G$ are $k$-quasisplit,
then one can construct (Proposition \ref{prop:embedding})
a $k$-embedding $\map{\nu}{G}{\tilde{G}}$
such that the image $\nu(T)$ of $T$ in $\tilde{G}$ is stably conjugate
to $\tilde{T}$.
We will see in 
\S\ref{para:conorm-dual-closed}
that the dual-closed case is interesting because in this case
the conorm map
is essentially an embedding of dual groups
$\abmap{G^\wedge}{\tilde{G}^\wedge}$.

In \S\ref{sec:isogenies},
we show that conorms are compatible with isogenies and, more generally,
certain kinds of isotypies.
Thus, for the purposes of understanding conorms,
we can reduce to the case where $\tilde{G}$ is adjoint.
In \S\ref{sec:decomposition-defn},
we define the notion of a \emph{decomposition} of parascopic data,
and give some of its properties.
In \S\ref{sec:decomposition-gamma}, we show that
a parascopic datum can be expressed as a composition of simpler data
whenever $\Gamma$ has a normal subgroup,
allowing us to reduce to the case where $\Gamma$ is simple or trivial.
In \S\ref{sec:fixed-pinning}, we show that a root-inclusive
parascopic datum has a factorization of the kind illustrated
in \eqref{eq:nice-decomp}.
In particular, we obtain an action of $\Gamma$ on all of $\tilde{G}$
that fixes a pinning
(Proposition \ref{prop:decomp-fixed-pinning}).
This allows us to reduce the root-inclusive case to the case
where $\Gamma$ acts fixing a pinning.

In \S\ref{sec:conormfunc}, we introduce the notion of a
\emph{pointwise conorm},
a $k$-morphism of varieties $\abmap{G^\wedge}{\tilde G^\wedge}$
that agrees with our conorm as a map of conjugacy classes,
and we provide (Proposition \ref{prop:conorm-func-hom})
a convenient way to detect when a morphism has this property.
We apply this result in two ways.
First, we take care of some unfinished business:
determining a conorm for what will turn out to be the basic
example of a parascopic datum that is not root-inclusive
(\S\ref{para:wrong-subsystem}).
Second, we consider the case where $\tilde{G}$ is a direct product
(Proposition \ref{prop:product-conorm}).

In \S\ref{sec:weil-parascopic},
we define the notion of a \emph{Weil restriction} of a root datum
and of a parascopic datum,
and we consider the cases where a parascopic datum
arises via Weil restriction
(Lemma \ref{lem:parascopy-restriction-over-k})
or doesn't
(Proposition \ref{prop:non-induced}).
In \S\ref{sec:weil-conorm},
we consider the conorms that can arise when $\tilde{G}$ is a Weil restriction.

Suppose we have a parascopic datum for a triple $(\tilde{G},\Gamma,G)$.
We would like to express this datum, and thus the associated
conorm map, in terms of simpler special cases.
The reductions we have made above allow us to assume that
$\tilde{G}$ is absolutely simple,
and $\Gamma$ is simple or trivial.
In \S\ref{sec:reduction}
we further reduce the general case to a list of special cases.
With a small number of exceptions listed in Tables
\ref{table:quasi-central} and \ref{table:subdatum},
we then express the conorm in each case as a $k$-morphism
of dual groups.

\textbf{Acknowledgements.}
The authors thank Brian Conrad and Ryan Vinroot for helpful conversations.
In an earlier incarnation of this paper,
we used results of Steven Spallone and Rohit Joshi
\cite{joshi-spallone:spinoriality}*{Proposition 1 and Remark 1}.
We thank them not only for providing us a pre-publication version,
but also for strengthening their results in order to suit our
then-purposes.
We thank an anonymous referee for providing several corrections
and clarifications.
Both authors were partially supported by the National Science Foundation (DMS-0854844),
the National Security Agency (H98230-13-1-0202),
and by
Summer Faculty Research Awards
from the College of Arts and Sciences of American University.

\section{Notation and preliminaries}
\label{sec:basic}
Let $k$ be a field.
Let $\Psi = (X^*,\Phi,X_*,\Phi^\vee)$ be a root datum on which the
absolute Galois group $\Gal(k)$ of $k$ acts.
We will denote the Weyl group of $\Psi$ by $W(\Psi)$.

Denote by $\Psi\red$ the reduced root datum formed 
as in \cite{adler-lansky:data-actions}*{Remark 13}.
That is,
$\Psi\red = (X^*,\Phi\red,X_*,\Phi^\vee\red)$,
where $\Phi\red$ is the set of
non-divisible (resp.\ non-multipliable) roots
in $\Phi$ according as $\chr k$ is not two (resp.\ two),
and $\Phi^\vee\red$ is the set of coroots corresponding
to roots in $\Phi\red$.
Denote by $\Psi\redwrong = (X^*,\Phi\redwrong,X_*,\Phi^\vee\redwrong)$,
the reduced root datum formed
in the opposite way, i.e., the set of
non-divisible (resp.\ non-multipliable) roots
in $\Phi$ according as $\chr k$ is two (resp.\ not two).
%% This notation below is for a generalization that we did not pursue.
\begin{comment}
Sometimes, instead of reducing a root datum, we will need to enhance it.
Denote by $\Psi\enh = (X^*,\Phi\enh,X_*,\Phi^\vee\enh)$
the root datum formed as follows.
The (possibly non-reduced) root system
$\Phi\enh$ is the union of $\Phi$ and the set of all $2\alpha$,
where $\alpha$ is a short root in a factor of $\Phi$ of type $B_n$.
Similarly,
$\Phi^\vee\enh$ is the union of $\Phi^\vee$ and the set of all $\alpha^\vee/2$,
where $\alpha^\vee$ is a long coroot in a factor of $\Phi^\vee$ of type $C_n$.
Thus, $\Psi\enh$ is similar to $\Psi$, except that factors of type $B_n$
and $C_n$ have been replaced by factors of type $BC_n$.
\end{comment}
Note that $\Psi\red$ and $\Psi\redwrong$
%, and $\Psi\enh$
are $\Gal(k)$-stable.

\begin{defn}
\label{defn:restricted}
Suppose that $\map{\phi}{\Gamma}{\Aut(\Psi)}$ is a
$\Gal(k)$-equivariant action of a finite
group $\Gamma$ on $\Psi = (\bX^*, \Phi,\bX_*,\Phi^\vee)$,
and suppose that 
the action of $\Gamma$ preserves a system of positive roots in $\Phi$.
We define the
\emph{restricted root datum $\lsub{\phi}\Psi$ with respect to the action $\phi$}
as in 
\cite{adler-lansky:data-actions}*{\S2}.
Let $\lsub{\phi}X^*$ and $\lsub{\phi}\Phi$ be the
respective images of $ X^*$ and $\Phi$ under the quotient map
from $X^*\otimes\Q$ to its space of $\Gamma$-coinvariants $(X\otimes \Q)_{\phi(\Gamma)}$.
Then $\lsub{\phi}X^*$ and $\lsub{\phi}X_*:= X_*^{\phi(\Gamma)}$
are naturally in duality.
By~\cite{adler-lansky:data-actions}*{Lemma 6},
if $\bar\alpha\in\lsub{\phi}\Phi$, then the set of roots
in $\Phi$ that map to $\bar\alpha$
is a $\Gamma$-orbit $\Theta_{\bar\alpha}$.
By~\cite{adler-lansky:data-actions}*{Theorem 7},
we can form a coroot system $\lsub{\phi}\Phi^\vee\subseteq \bar X_*$,
where for each $\bar\alpha$,
the corresponding coroot $\bar\alpha^\vee$ is either
$\sum_{\theta\in\Theta_{\bar\alpha}}\theta^\vee $ or its double, the latter
case occurring only when the orbit $\phi(\Gamma)\cdot \alpha$ is not an orthogonal set. 
Then $\lsub{\phi}\Psi: = (\lsub{\phi}X^*,\lsub{\phi}\Phi, \lsub{\phi}X_*,\lsub{\phi}\Phi^\vee)$ is a
(not necessarily reduced) root datum with $\Gal(k)$-action. In situations where the action
$\phi$ is clear from context, we will denote $\lsub{\phi}\Psi$,
$\lsub{\phi}\Phi, \ldots$ by $\lsub{\Gamma}\Psi$, $\lsub{\Gamma}\Phi, \ldots$.
\end{defn}

We note that if $\Psi$ is reduced, then $\lsub{\phi}\Psi$ will be reduced unless $\Phi$ contains an irreducible component
of type $A_{2n}$ on which a subgroup of $\Gamma$ acts nontrivially, in which case
$\lsub{\phi}\Phi$ will contain a subsystem of type $BC_n$.

\begin{defn}
\label{defn:datum-morphism}
Suppose $\Psi = (\bX^*, \Phi, \bX_*, \Phi^\vee)$
and
$\dot\Psi = (\dot\bX^*, \dot\Phi, \dot\bX_*, \dot\Phi^\vee)$
are root data.
For us, a \emph{morphism} from $\Psi$ to $\dot\Psi$
will be a
map $\map{f^*}{\bX^*}{\dot\bX^*}$
such that $f^*(\Phi) \subseteq \dot\Phi $,
and such that the transpose map
$\map{f_*}{\dot\bX_*}{\bX_*}$
maps $f^*(\alpha)^\vee$ to $\alpha^\vee$ for all $\alpha \in \Phi$.
(In particular, it follows that $f^*$ must be injective on $\Phi$.)
If $f^*$ is the identity map, then we will say that $\Psi$ is a \emph{subdatum} of $\dot\Psi$,
and that $\dot\Psi$ is a \emph{superdatum} of $\Psi$.
\end{defn}

We note that the above notion of morphism is by no means standard.

\begin{example}
For every root datum $\Psi$, $\Psi\red$ is a subdatum of $\Psi$.
\end{example}

\begin{rem}
\label{rem:morphism-properties}
\ 
\begin{enumerate}[(a)]
\item
The composition of morphisms is again a morphism.
\item
Let $\Psi$ and $\dot\Psi$ be root data equipped with actions of a finite group $\Gamma$.
Let $f^*$ be a $\Gamma$-equivariant morphism from $\Psi$ to $\dot\Psi$. Then $f^*$
induces a morphism $\lsub{\Gamma}f^*$ from $\lsub{\Gamma}\Psi$ to $\lsub{\Gamma}\dot\Psi$.
If both $\Psi$ and $\dot\Psi$ are equipped with an action of $\Gal(k)$,
and $f^*$ is $\Gal(k)$-equivariant, then so is $\lsub{\Gamma}f^*$.
\item A morphism $f^*$ from $\Psi$ to $\dot\Psi$ induces an embedding $\map{i_{f^*}}{W(\Psi)}{W(\dot\Psi)}$
taking the reflection of $X^*$ through the root $\alpha\in\Phi$ to the reflection of $\dot X^*$ through
the root $f^*(\alpha)$.
\end{enumerate}
\end{rem}

Suppose $f$ is a morphism from $\Psi$ to $\dot\Psi$ as above.
If $f^*(X^*)^\perp$ denotes the annihilator of $f^*(X^*)$ in $\dot X_*$, then
$f_*$ descends to a map $\abmap{\dot X_*/f^*(X^*)^\perp}{X_*}$, which 
in turn extends to a map
$\map{\bar f_*}{(\dot X_*/f^*(X^*)^\perp)\otimes\Q}{X_*\otimes\Q}$.
The pairing between $\dot X^*$ and $\dot X_*$ determines a pairing between
$f^*(X^*)$ and $\bar f_*\inv(X_*)$ by which the latter two modules are in duality.
Letting $f^*(\Phi)^\vee = \{ (f^*\alpha)^\vee\mid \alpha\in\Phi \}$,
it is easily seen that since $f^*$ is a morphism,
the image $\overline{f^*(\Phi)^\vee}$ of $f^*(\Phi)^\vee$
in $\dot X^* / f^*(X^*)^\perp$ lies in $\bar f_*\inv(X_*)$.
Furthermore, it is straightforward to check that
$(f^*(X^*),f^*(\Phi),\bar f_*\inv(X_*), \overline{f^*(\Phi)^\vee})$
is a root datum. (A key observation toward verifying this statement is that
the reflection of $f^*(X^*)$ corresponding to a root in $f^*(\Phi)$
is the restriction to $f^*(X^*)$ of the reflection of $\dot X^*$ corresponding to this root.)
We refer to this root datum as the \emph{image $f^*(\Psi)$ of $f^*$}.

\begin{defn}
\label{defn:isotypy}
A homomorphism $\map{f}{\dot G}{G}$ between connected reductive groups
is an \emph{isotypy} provided that
the kernel of $f$ contains the center of $\dot G$ and the image of $f$ contains
the derived group of $G$.
Thus, $f$ restricts to give an isogeny
between the derived groups of $\dot G$ and $G$.
\end{defn}

\begin{rem}
\label{rem:morphism-example}
It is easy to see that every morphism
from $\Psi$ to $\dot\Psi$
can be written as a composition of morphisms
of the following forms. 
\begin{enumerate}[(a)]
\item
\label{item:morphism-example-isogeny-derived}
The function $f^*$ restricts to give a bijection from $\Phi$ to $\dot\Phi$.
In this case, there exist an isotypy $\map{f}{\dot G}{G}$ of connected reductive groups
and maximal tori $\dot T\subseteq \dot G$ and $T\subseteq G$
such that $f(\dot T)\subseteq T$ and $\Psi$ (resp.~$\dot\Psi$) is isomorphic to $\Psi(G,T)$ (resp. $\Psi(\dot G,\dot T)$)
in such a way that $f$ induces the morphism $f^*$ on root data.
If both $\Psi$ and $\dot\Psi$ are equipped with an action of $\Gal(k)$,
and $f^*$ is $\Gal(k)$-equivariant, then it follows from~\cite{adler-lansky:data-actions}*{Theorem 1}
that there exist $k$-structures on
$G$, $\dot G$, $T$, $\dot T$ such that $f$ is $k$-rational and the above isomorphisms of root data
are $\Gal(k)$-equivariant.
(Note that if $\Psi$ and $\dot\Psi$ have the same rank and $f^*$ is injective,
then $f^*$ is an isogeny of root data
in the usual sense~\cite{springer:corvallis}*{\S1.7},
and any associated map $f$ is in fact a central isogeny.)
\item
\label{item:morphism-example-embedding}
The function $f^*$ is an isomorphism of lattices,
and
we call $f^*$ an \emph{embedding} of root data.
In this case, $f^*(\Psi)$ is a subdatum of $\dot\Psi$.
\end{enumerate}
\end{rem}

\begin{rem}
\label{rem:morphism-pullback}
Root data along with the morphisms of Definition~\ref{defn:datum-morphism} form a category
for which images exist by the preceding discussion. This category also possesses pullbacks.
To see this, suppose $\dot\Psi=(\dot X^*,\dot\Phi, \dot X_*, \dot\Phi^\vee)$
is a root datum and for $i=1,2$, $f_i^*$ is a morphism from a 
root datum $\Psi_i=(X_i^*,\Phi_i, {X_i}_*, \Phi_i^\vee)$ to $\dot\Psi$.
Define the lattice $X^*$ (resp.~$X_*$) together with the maps
$\map{{g_i}^*}{X^*}{X_i^*}$ (resp.~$\map{{g_i}_*}{{X_i}_*}{X_*}$) for $i=1,2$
to be the pullback (resp.~pushout) of the $f_i^*$ (resp~${f_i}_*$).
Let $\Phi = {g_1^*}\inv(\Phi_1)\cap {g_2^*}\inv(\Phi_2)$,
and $\Phi^\vee = {g_1}_*(g_1^*(\Phi)^\vee)$ (which is equal to ${g_2}_*(g_2^*(\Phi)^\vee)$).
Then one can check that $\Psi = (X^*,\Phi,X_*,\Phi^\vee)$
is a root datum satisfying the universal property of a pullback for the $f_i$.
That $\Psi$ is a root datum follows from the fact that for $\alpha,\beta\in\Phi$,
$\langle\alpha,\beta^\vee\rangle = \langle f_i^*\alpha, (f_i^*\beta)^\vee\rangle$,
which can be used to show that the map $\abmap{\Phi}{\Phi^\vee}$ given by
$\alpha\mapsto {g_1}_*(g_1^*(\alpha)^\vee)$ is bijective,
and that $W(\Psi)$ is a subgroup of $W(\Psi_i)$ and is hence finite.

We note that if $\dot\Psi$ and the $\Psi_i$ carry actions of $\Gal(k)$
and the maps $f_i^*$ are $\Gal(k)$-equivariant,
then $\Psi$ inherits an action of $\Gal(k)$ and the maps $g_i^*$ are $\Gal(k)$-equivariant.
\end{rem}

For any reductive $k$-group $G$, denote by $G\conn$ the connected component of the
identity in $G$.
Let $\TTst(G,k)$ denote the set of stable conjugacy classes of maximal $k$-tori in $G$.
If $T$ is a maximal
$k$-torus of $G$, let $\Phi(G,T)$ denote the system of roots of $T$ in $G$,
$\Psi(G,T)$ the root datum of $G$ with respect to $T$,
$W(G,T)$ the Weyl group of $T$ in $G$,
$\bX^*(T)$ and $\bX_*(T)$ the character and cocharacter modules of $T$.
All of these objects come equipped with actions
of the absolute Galois group $\Gal(k)$ of $k$.
Any homomorphism $\map f{T}{T'}$ of
tori determines  maps $\map {f^*}{\bX^*(T')}{\bX^*(T)}$
and $\map {f_*}{\bX_*(T)}{\bX_*(T')}$.

\section{Basic results on parascopic data and conorms}

\begin{defn}
\label{defn:parascopy}
Let
$\Psi = (\bX^*, \Phi,\bX_*,\Phi^\vee)$
and
$\tilde\Psi = (\tilde\bX^*, \tilde\Phi,\tilde\bX_*,\tilde\Phi^\vee)$
be 
abstract root data with $\Gal(k)$-actions, and $\Gamma$ a finite group.
A \emph{parascopic datum} for the triple $(\tilde\Psi,\Gamma,\Psi)$ is a pair
$(\phi,q^*)$, where
\begin{itemize}
\item
$\phi$ is a homomorphism from $\Gamma$ to $\Aut (\tilde\Psi)$,
such that $\phi(\Gamma)$ commutes with the action of $\Gal(k)$ and preserves some system of
positive roots in $\tilde\Phi$; and
\item
$q^*$ is a $\Gal(k)$-invariant morphism from $\Psi$ to $\lsub{\phi}\tilde\Psi$.
\end{itemize}
If $q^*$ is an embedding of root data, we say that $(\phi,q^*)$ is \emph{torus-inclusive}.
If $q^*$ is actually a morphism from $\Psi$ to $\lsub{\phi}\tilde\Psi\red$, we say that our datum is \emph{root-inclusive}.
\end{defn}

\begin{rem}
\label{rem:new-parascopy-defn}
As noted in the Introduction, Definition~\ref{defn:parascopy} is somewhat different from 
the definition of a parascopic datum given in \cite{adler-lansky:lifting}*{Definition 4.1},
which consisted of the action $\phi$ on $\tilde\Psi$ together with an embedding
$\map{j_*}{X_*\otimes\Q}{\lsub\phi \tilde X_*\otimes\Q}$ satisfying 
the property (\textbf{P1}) that $j\inv_*(\lsub\phi \tilde X_*)\subseteq X_*$
and a property (\textbf{P2}) that amounts to the condition that ${j^*}\inv(\Phi)\subseteq \lsub{\phi}\tilde\Phi$.

To explain the relationship between these definitions,
suppose $(\phi,q^*)$ is a parascopic datum in the sense of Definition~\ref{defn:parascopy}.
If $q^*$ is a full-rank embedding of lattices, then it follows from the fact that $q^*$ is a morphism that
$j_* := q_*\inv$ satisfies \textbf{P1} and \textbf{P2}.
The definition of morphism implies, however, that $j_*$ satisfies the \textit{additional} condition on \emph{coroots} that
$j_*(\alpha^\vee) = {j^*}\inv(\alpha)^\vee$ for $\alpha\in\Phi$,
which does not follow from \textbf{P1} and \textbf{P2}.
It happens though that much of the theory developed in \cite{adler-lansky:lifting} depends 
upon Lemma~\xref{FGL-lem:multiple} \loccit, which implicitly requires
the preceding condition to hold, and so it should have appeared as an additional property in Definition 4.1 \loccit
This does not present any real difficulty, however,
as all of the examples and constructions of parascopic data discussed in that paper 
(e.g., Examples 4.2, and Propositions 7.4 and 7.5\ \loccit)
are easily seen to satisfy this additional condition.
In Example~\ref{ex:parascopy-fixed-point},
we verify this for one important class of parascopic data (those from Example 4.2(a)\ \loccit)
by showing that the map $q^*$ arising in this setting is a morphism of root data.

Thus when $q^*$ is a full-rank embedding $X\rightarrow \lsub\phi\tilde X$,
the condition that it is a morphism from $\Psi$ to $\lsub{\phi}\tilde\Psi$ is equivalent to 
its inverse transpose $j_*$ possessing precisely the properties necessary
for the results of \cite{adler-lansky:lifting} to hold.
Even further, it is straightforward to check that the constructions in
\cite{adler-lansky:lifting} do not make use of $j_*$ directly, but rather 
of its inverse transpose, namely $q^*$. Moreover, they remain valid whether or not $q^*$ is assumed to be
full-rank or even injective, and hence there is no need to require this in Definition~\ref{defn:parascopy}.
\end{rem}

\begin{example}
\label{ex:parascopy-gamma-trivial}
Suppose $\Gamma$ is the trivial group.
Use ``$1$'' to denote both $\Gamma$ and the map
from $\Gamma$ to any automorphism group.
Let $\Psi$ and $\tilde\Psi$
denote root data with $\Gal(k)$-action,
and let $q^*$ be a $\Gal(k)$-equivariant morphism from $\Psi$ to $\tilde\Psi$.
Then $(1,q^*)$ is a parascopic datum for $(\tilde\Psi,1,\Psi)$ that is torus-inclusive when
$q^*$ is an embedding.
\end{example}

\begin{example}
\label{ex:parascopy-group-action}
Let $\tilde\Psi$ be a root datum with $\Gal(k)$-action,
and let $\Psi$ be any $\Gal(k)$-invariant subdatum of $\lsub{\phi}\tilde\Psi$.
Let $\id$ denote the identity map on $X^* = \lsub{\phi}\tilde X^*$.
Then $(\phi,\id)$ is a torus-inclusive parascopic datum for 
$(\tilde\Psi,\Gamma,\Psi)$. If $\Psi$ is a subdatum of $\lsub{\phi}\tilde\Psi\red$, then $(\phi,\id)$ is also root-inclusive.
\end{example}

\begin{example}
\label{ex:bad}
If $\tilde\Phi$ is reduced and has no factor of type $A_{2n}$ on which $\Gamma$
acts nontrivially, then $\lsub{\phi}\tilde\Phi$ is reduced by
\cite{kottwitz-shelstad:twisted-endoscopy}*{\S1.3},
so any datum $(\phi,q^*)$ is root-inclusive. See \S\ref{para:wrong-subsystem} for a basic example
of a parascopic datum that is not root-inclusive.
\end{example}

\begin{defn}
\label{defn:parascopy-groups}
Let $G$ and $\tilde{G}$ be connected reductive $k$-groups, and $\Gamma$ a finite group.
Let $T$ (resp.~$\tilde T$) be a maximal $k$-torus of $G$ (resp.~$\tilde G$).
The root data $\Psi(G,T)$ and $\Psi(\tilde{G},\tilde{T})$
come equipped with an action of $\Gal(k)$.
We will refer to a parascopic datum $(\phi,q^*)$ for the triple $(\Psi(\tilde G,\tilde T),\Gamma,\Psi(G,T))$
as a \emph{parascopic datum for $(\tilde{G},\Gamma,G)$
relative to the tori $\tilde T\subseteq \tilde{G}$ and $T\subseteq G$}.
We will say that $G$ is
a \emph{weakly parascopic group} for the pair $(\tilde{G},\Gamma)$
if such a parascopic datum exists, and we will feel free not to specify a particular datum if
it is clear from the context.
\end{defn}

\begin{example}
\label{ex:parascopy-fixed-point}
The whole point of parascopy was to axiomatize the essential features of the situation where
$\Gamma$ acts on $\tilde{G}$ via $k$-automorphisms, all of which preserve
some positive root system for some maximal torus of $\tilde{G}$,
and $G = (\tilde{G}^\Gamma)\conn$, as in~\cite{adler-lansky:lifting}*{\S3}.
In this situation, we can choose maximal $k$-tori $T\subseteq G$ and $\tilde{T}\subseteq \tilde{G}$
such that $\Gamma$ preserves $\tilde{T}$, and $T = (\tilde{T}^\Gamma)\conn$.
The induced action $\phi$ of $\Gamma$ on $\Psi(\tilde G,\tilde T)$ gives rise to the restricted root datum
$\lsub{\phi}\Psi(\tilde G,\tilde T) =
(\lsub{\phi}X^*(\tilde T),\lsub{\phi}\Phi(\tilde G,\tilde T), \lsub{\phi}X_*(\tilde T), \lsub{\phi}\Phi^\vee(\tilde G,\tilde T)).$
There is a natural isomorphism $\map{q^*}{X^*(T)}{\lsub{\phi}X^*(\tilde T)}$;
namely, for $\chi\in X^*(T)$, define $q^*(\chi)$ to be the image in $\lsub{\phi}X^*(\tilde T)$
of any extension of $\chi$ to $\tilde T$.
We claim that $q^*$ is a $\Gal(k)$-invariant embedding of $\Psi(G,T)$ in $\lsub{\phi}\Psi(\tilde G,\tilde T)$
and thus (as indicated in Example~\ref{ex:parascopy-group-action})
that $(\phi,q^*)$ is a parascopic datum for $(\tilde G,\Gamma ,G)$ relative to $\tilde T $ and $T$
(cf.~\cite{adler-lansky:lifting}*{Example \xref{FGL-ex:fixed-point}}).

First note that $q^*(\Phi(G,T))\subseteq \lsub\phi\Phi(\tilde G,\tilde T)$ since every root $\alpha\in\Phi(G,T)$ is the restriction
of some root $\tilde\alpha\in\Phi(\tilde G,\tilde T)$ \cite{adler-lansky:lifting}*{Proposition 3.5(iv)}.
To conclude that $q^*$ is a morphism of root data, it remains to show that
$q_*(q^*(\alpha))^\vee = \alpha^\vee$ for all $\alpha\in\Phi(G,T)^\vee$.
As defined in~\cite{adler-lansky:data-actions}*{\S2},
$q^*(\alpha)^\vee$ is a multiple of $\sum_{\gamma\in\Gamma}\phi(\gamma)\cdot\tilde\alpha^\vee$.
We may identify $\lsub{\phi}X_*(\tilde T)$ with $X_*(T)$ via $q_*$ (which simply restricts codomains of cocharacters
from $\tilde T$ to $T$), and under this identification,
it suffices to show that the coroot $\alpha^\vee\in\Phi(G,T)^\vee\subset X_*(T)$ is a multiple of the preceding sum.
(That the particular multiples coincide must then follow from the fact that
$\langle\alpha,\alpha^\vee\rangle = 2 = \langle q^*\alpha,q_*(\alpha^\vee)\rangle$.)

To verify this, note that the one-dimensional root group $U_\alpha\subset G$ associated to $\alpha$
is contained in the group generated by the root groups
$U_{\phi(\gamma)\cdot\tilde\alpha}\subset\tilde G$ for $\gamma\in\Gamma$.
(This is a result of~\cite{steinberg:endomorphisms}*{\S8}, which pertains to the case
in which $\Gamma$ is cyclic, together with an argument similar to that in the proof
of~\cite{adler-lansky:lifting}*{Proposition 3.5} that reduces to the cyclic case.)
Thus the coroot $\alpha^\vee$ must lie in the span of $\phi(\Gamma)\cdot \tilde\alpha^\vee$
in $V_*(T) = V_*(\tilde T)^{\phi(\Gamma)}$.
It is well known that $\phi(\Gamma)\cdot \tilde\alpha$, and hence
$\phi(\Gamma)\cdot\tilde\alpha^\vee$, are linearly independent
(see e.g.,~\cite{kottwitz-shelstad:twisted-endoscopy}*{\S1.3}). Since $\alpha^\vee$ is $\phi(\Gamma)$-fixed,
it follows that $\alpha^\vee$ must be a scalar multiple of
$\sum_{\gamma\in\Gamma}\phi(\gamma)\cdot\tilde\alpha^\vee$.

We note moreover, that $(\phi,q^*)$ is torus- and root-inclusive.
Torus inclusivity follows from the fact that $q^*$ is an isomorphism of lattices.
Root inclusivity, i.e., that $q^*(\Phi(G,T))\subseteq\lsub{\phi}\Phi(\tilde G,\tilde T)\red$,
follows from \cite{steinberg:endomorphisms}*{\S8.2(2$''''$)} when $\Gamma$ is cyclic;
one can reduce to the cyclic case using the argument in the proof of \cite{adler-lansky:data-actions}*{Lemma 28}.
Furthermore, it follows from \loccit that the image of this embedding is equal to
$\lsub{\phi}\Phi(\tilde G,\tilde T)\red$
when $\Gamma$ fixes a pinning with respect to some set of simple roots in $\Phi(\tilde G,\tilde T)$.
In this case, we will sometimes identify the root data $\Phi(G,T)$ and $\lsub{\phi}\Phi(\tilde G,\tilde T)\red$.
\end{example}

\begin{para}[Embeddings of Weyl groups]
\label{para:weyl}
Suppose that $\phi$ is a $\Gal(k)$-equivariant action of the finite group $\Gamma$
on the root datum $\tilde\Psi$.  Then $\Gamma$ acts on the Weyl group $W(\tilde\Psi)$, and
by~\cite{adler-lansky:lifting}*{Proposition \xref{FGL-prop:weyl-embedding}},
there is a natural embedding
$\map{i_\phi}{W(\lsub{\phi}\tilde\Psi)}{W(\tilde\Psi)}$ with
$\im(i_\phi) = W(\tilde\Psi)^{\phi(\Gamma)}$
defined by the property that for any
$\alpha\in\lsub{\phi}\tilde\Phi$, the image of the reflection
$w_\alpha$ under $i_\phi$ is $\prod_{\beta\in\Xi_\alpha}w_\beta$,
where $\Xi_\alpha$ is a certain $\phi(\Gamma)$-orbit of orthogonal roots in $\tilde\Phi$ attached to $\alpha$.
Namely, $\Xi_\alpha$ is the set of roots in $\tilde\Phi$ with image in $\lsub{\phi}\tilde\Phi$ equal to the
largest positive multiple $c_\alpha\alpha$ of $\alpha$ in $\lsub{\phi}\tilde\Phi$. The
scalar $c_\alpha$ must be either $1$ or $2$, and by~\cite{kottwitz-shelstad:twisted-endoscopy}*{\S1.3},
$c_\alpha = 2$ if and only if the $\phi(\Gamma)$-orbit of roots in $\tilde\Phi$
that map to $\alpha$ is not an orthogonal set.
This can occur only if some (hence every) $\beta\in\Xi_\alpha$ lies inside an irreducible component of
$\tilde\Phi$ of type $A_{2n}$ on which some element of $\Gamma$ acts nontrivially, fixing $\beta$.
In this case, $\alpha$ must lie inside an irreducible component of $\lsub{\phi}\tilde\Phi$ of type
$BC_n$.

If $(\phi,q^*)$ is a parascopic datum for $(\tilde\Psi,\Gamma,\Psi)$, then we obtain a
$\Gal(k)$-equivariant embedding $\map{i_{\phi,q^*}}{W(\Psi)}{W(\tilde\Psi)}$
by composing $i_\phi$ with the embedding
$\map{i_{q^*}}{W(\Psi)}{W(\lsub{\phi}\tilde\Psi)}$
of Remark~\ref{rem:morphism-properties}(c).
\end{para}

To define the notion of a parascopic group, we need to consider the cohomological parametrization of stable conjugacy classes of tori in reductive groups. The following result is~\cite{adler-lansky:lifting}*{Proposition 6.1}.
\begin{prop}
\label{prop:stable-tori}
Let $G$ denote a connected reductive group over a field $k$.
Let $S$ be a maximal $k$-torus of $G$.
Then there is a natural injection $\abmap{\TTst(G,k)}{H^1(k,W(G,S))}$.
This map is a surjection when $G$ is $k$-quasisplit.
\end{prop}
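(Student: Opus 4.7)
The plan is to apply the standard cohomological description of maximal tori. Since all maximal tori of $G$ are $G(k^{\mathrm{sep}})$-conjugate, any maximal $k$-torus $T$ may be written $T = g S g^{-1}$ with $g \in G(k^{\mathrm{sep}})$; the $k$-rationality of $T$ forces $n_\sigma := g^{-1}\sigma(g) \in N_G(S)(k^{\mathrm{sep}})$ for each $\sigma \in \Gal(k)$, and $\sigma \mapsto n_\sigma$ is a $1$-cocycle. Composing with the projection $N_G(S) \twoheadrightarrow W(G,S)$ yields a cocycle in $W(G,S)(k^{\mathrm{sep}})$, whose class in $H^1(k, W(G,S))$ I will call $[T]$.

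I would then verify well-definedness and injectivity together. Replacing $g$ by $gn$ with $n \in N_G(S)(k^{\mathrm{sep}})$ changes $\{n_\sigma\}$ by a coboundary in $N_G(S)$, so $[T]$ is independent of the choice of $g$ and depends only on the $G(k^{\mathrm{sep}})$-conjugacy class of $T$, i.e., on the stable class. Conversely, if $T_1 = g_1 S g_1^{-1}$ and $T_2 = g_2 S g_2^{-1}$ give the same class in $H^1(k, W(G,S))$, then after adjusting $g_2$ by an element of $N_G(S)(k^{\mathrm{sep}})$ one may assume $g_1^{-1}\sigma(g_1)$ and $g_2^{-1}\sigma(g_2)$ have the same image in $W(G,S)$ for all $\sigma$; their ratio then lies in $S(k^{\mathrm{sep}})$, so $h := g_2 g_1^{-1}$ satisfies $T_2 = h T_1 h^{-1}$ with $h^{-1}\sigma(h) \in g_1 S(k^{\mathrm{sep}}) g_1^{-1} = T_1(k^{\mathrm{sep}})$, exhibiting the requisite stable conjugacy.

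For surjectivity when $G$ is $k$-quasisplit, the task is to lift a given cocycle $\{w_\sigma\} \in Z^1(k, W(G,S))$ to a cocycle $\{n_\sigma\}$ in $N_G(S)(k^{\mathrm{sep}})$; once lifted, the inner twist of $S$ by $\{n_\sigma\}$ is realized inside $G$ as a maximal $k$-torus whose class maps to $\{w_\sigma\}$. To obtain the lift, I would fix a $k$-rational pinning of $G$ — available because $G$ is $k$-quasisplit — and, after replacing $S$ within its stable class by the maximal $k$-torus belonging to this pinning (legitimate because stable conjugation induces a canonical identification of Weyl groups), invoke the Tits cross-section, which associates to each $w \in W(G,S)$ a lift $\tilde{w} \in N_G(S)(k^{\mathrm{sep}})$ depending $\Gal(k)$-equivariantly on $w$. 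Applying this section to $\{w_\sigma\}$ produces the sought cocycle in $N_G(S)(k^{\mathrm{sep}})$.

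The main obstacle is surjectivity. The boundary map $H^1(k, W(G,S)) \to H^2(k, S)$ coming from the exact sequence $1 \to S \to N_G(S) \to W(G,S) \to 1$ need not vanish in general, and the quasisplit hypothesis enters precisely to provide the Galois-equivariant Tits section that kills this obstruction; the other steps are formal manipulations with cocycles.
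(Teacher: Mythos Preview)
The paper does not give a proof here; it simply cites \cite{adler-lansky:lifting}*{Proposition 6.1}. So your argument must stand on its own.

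Your construction of the map and the injectivity argument are standard and essentially correct, though one sentence is garbled: you say $[T]$ ``depends only on the $G(k^{\mathrm{sep}})$-conjugacy class of $T$, i.e., on the stable class,'' but all maximal tori are $G(k^{\mathrm{sep}})$-conjugate, so that statement is vacuous. What you need (and what your injectivity computation in fact shows, read in reverse) is that stably conjugate tori give the same class in $H^1$.

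The surjectivity argument has a real gap. The Tits section $w \mapsto \tilde w$ is indeed $\Gal(k)$-equivariant for a $k$-rational pinning, but it is \emph{not} a group homomorphism: one has $\widetilde{w_1 w_2} = \tilde w_1 \tilde w_2$ only when lengths add. Hence for a cocycle $\{w_\sigma\}$ in $W$, the lifts $\{\tilde w_\sigma\}$ generally fail the cocycle identity in $N_G(S)$; the defect is a $2$-cochain with values in $S$, and a merely set-theoretic Galois-equivariant section does not force it to be a coboundary. In other words, such a section does not split the non-abelian sequence $1 \to S \to N_G(S) \to W(G,S) \to 1$ and does not kill the $H^2$ obstruction you correctly identify. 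The standard fix is to invoke \cite{raghunathan:tori}*{Theorem 1.1} (which this paper uses elsewhere, e.g.\ in the proof of Proposition~\ref{prop:embedding}) or the argument in \cite{kottwitz:rational-conj}, both of which establish directly that in the quasisplit case every class in $H^1(k, W(G,S))$ is realized by some maximal $k$-torus.
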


\begin{defn}
\label{defn:parascopic-group}
Suppose that $(\phi,q^*)$ is a parascopic datum for $(\tilde{G},\Gamma,G)$ relative
to $\tilde{T}$ and $T$.
Thus, $G$ is weakly parascopic for $(\tilde{G},\Gamma)$.
Identify $\TTst(G,k)$ and $\TTst(\tilde{G},k)$ with subsets
of $H^1(k,W(G,T))$ and $H^1(k,W(\tilde{G},\tilde{T}))$, respectively, via the injection
of Proposition \ref{prop:stable-tori}. The embedding of $W(G,T)$ in $W(\tilde G,\tilde T)$
from \S\ref{para:weyl} determines a map
\begin{equation}
\label{eq:H1-lifting}
\abmap{H^1(k,W(G,T))}{H^1(k,W(\tilde{G},\tilde{T}))}.
\end{equation}
We will say that the parascopic datum $(\phi,q^*)$ is \emph{strong} for $(\tilde{G},\Gamma,G)$,
if the map \eqref{eq:H1-lifting} restricts to a map $\abmap{\TTst(G,k)}{\TTst(\tilde{G},k)}$.
In this situation, we say that $G$ is a \emph{parascopic group} for $(\tilde{G},\Gamma)$.
\end{defn}

\begin{defn}
\label{defn:equivalent-data}
Let $(\phi',{q'}^*)$ denote another parascopic datum for
$(\tilde G,\Gamma,G)$,
this time relative to the maximal $k$-tori $T'\subseteq G$ and $\tilde{T}'\subseteq \tilde{G}'$.
Adapting~\cite{adler-lansky:lifting}*{Definition 7.3} to our new definition of parascopic datum,
we say that the data $(\phi,q^*)$ and $(\phi',{q'}^*)$ are
\emph{equivalent}
if there exist elements $g\in G(k\sep)$ and $\tilde{g}\in \tilde{G}(k\sep)$ satisfying:
\begin{enumerate}[(a)]
\item
$\lsup g T  = T'$ and $\lsup {\tilde{g}} \tilde{T} = \tilde{T}'$.
\item 
\label{item:phi-compatible}
For all $\gamma\in \Gamma$,
$\phi'(\gamma) = {\Int(\tilde g)^*}\inv \circ \phi(\gamma) \circ \Int(\tilde{g})^*$,
i.e., $\Int(\tilde g)^*$ is $\Gamma$-equivariant.
\item
\label{item:j-compatible}
${q'}^* = \lsub{\Gamma}{\Int(\tilde g)^*}\inv \circ q^* \circ \Int(g)^*$, where $\lsub{\Gamma}\Int(\tilde g)^*$
is the isomorphism from $\lsub{\phi'}\Psi(\tilde G,\tilde T')$ to $\lsub{\phi}\Psi(\tilde G,\tilde T)$ induced by
$\Int(\tilde g)^*$ as in Remark~\ref{rem:morphism-properties}(b).
\end{enumerate}
The equivalence class of $(\phi,q^*)$ will be denoted $[\phi,q^*]$.
\end{defn}

\begin{rem}
\label{rem:equivalence}
Suppose we are given $(\phi,q^*)$ and elements $g\in G(k\sep)$ and $\tilde{g}\in\tilde G(k\sep)$ such that
the maximal tori $\lsup g T\subseteq G$ and $\lsup{\tilde g}\tilde T\subseteq\tilde G$ are defined over $k$.
Then for any $\sigma\in\Gal(k)$, $g\inv\sigma(g)$ must normalize $T$,
and $\tilde g\inv\sigma(\tilde g)$ must normalize $\tilde T$.
If the pair $(\phi',{q'^*})$ is \emph{defined} as in (a), (b), and (c) above,
we will say that \emph{$(\phi',{q'}^*)$ is the conjugate of $(\phi,q^*)$ via the elements $g$ and $\tilde{g}$.}
If so, then by~\cite{adler-lansky:lifting}*{Lemma 7.4}, $(\phi',{q'^*})$ is a parascopic datum
for $(\tilde G,\Gamma,G)$ relative to $\lsup{\tilde g}\tilde T$ and $\lsup g T$ (necessarily equivalent to $(\phi,q^*)$)
if and only if the images of $g\inv\sigma(g)$ in $W(G,T)$ and of $\tilde g\inv\sigma(\tilde g)$
in $W(\tilde G,\tilde T)$ correspond under the embedding of \S\ref{para:weyl} for all $\sigma\in\Gal(k)$.
\end{rem}

\begin{example}
In the situation of Example~\ref{ex:parascopy-fixed-point}, if $T_i\subseteq G$ and $\tilde{T}_i\subseteq \tilde{G}$
are maximal $k$-tori for $i=1,2$ such that $\tilde{T}_i$ is $\Gamma$-invariant and $T_i = (\tilde{T}_i^\Gamma)\conn$,
then the parascopic data that arise from the actions $\phi_i$ of $\Gamma$ on $\Psi(\tilde G,\tilde T_i)$ are
easily seen to be equivalent via elements $g$ and $\tilde g$ that can be taken to be equal.
\end{example}

\begin{para}[Dual groups, tori, and isogenies]
\label{para:duality}
Suppose that $G$ is a connected, reductive, quasisplit $k$-group,
$B\subseteq G$ is a Borel $k$-subgroup, and $T_0\subseteq B$
is a maximal $k$-torus. Let $G^\wedge$ be a connected, reductive $k$-group
with a maximal $k$-torus $T_0^\wedge$ such that there exists
a $\Gal(k)$-isomorphism $\delta_0$ from $\Psi(G,T_0)$ to
$\Psi(G^\wedge,T_0^\wedge)^\wedge: =
(\Phi^\vee(G^\wedge,T_0^\wedge),X_*(T_0^\wedge),\Phi(G^\wedge,T_0^\wedge),X^*(T_0^\wedge))$.
Then $G^\wedge$ is quasisplit, and we will say that $G^\wedge$ is in \emph{$k$-duality} with $G$
via $\delta_0$.
(See \cite{adler-lansky:lifting}*{\S\xref{FGL-sec:duality}}.)
Such a pair $(G^\wedge, T_0^\wedge)$ must exist by~\cite{springer:lag-article}*{Theorem 6.2.7}.
In the following, we will often suppress the objects $B$, $T_0$, $\delta_0$, etc., saying only that the group
$G^\wedge$ is in $k$-duality with $G$.

Suppose that $G$ is as above and $T\subseteq G$ is an arbitrary maximal $k$-torus.
It is straightforward to show that if $G^\wedge$ is \textit{any} connected, reductive, quasisplit $k$-group,
$T^\wedge\subseteq G^\wedge$ is a maximal $k$-torus, and
there exists a $\Gal(k)$-isomorphism $\delta$ from $\Psi(G,T)$ to
$\Psi(G^\wedge,T^\wedge)^\wedge$, then $G^\wedge$ is in $k$-duality with $G$.
We will refer to such a map $\map{\delta}{\bX^*(T)}{\bX_*(T^\wedge)}$ as a \textit{duality map}.

From \cite{adler-lansky:lifting}*{Proposition \xref{FGL-prop:dual-tori}},
there is a canonical bijection between $\TTst(G,k)$
and $\TTst(G^\wedge,k)$.
Moreover, if $T\subseteq G$ and $T^\wedge\subseteq G^\wedge$ correspond under this bijection,
then there is a duality map 
$\map{\delta_T}{\bX^*(T)}{\bX_*(T^\wedge)}$
that is uniquely determined up to the action of $W(G,T)^{\Gal(k)}$
on $T$.

Suppose $\map{f}{G'}{G}$ is a $k$-isotypy,
and let $T' = f\inv (T)$.
Then one obtains a $\Gal(k)$-morphism $f^*$ from $\Psi(G,T)$ to $\Psi(G',T')$.
Let $G'^\wedge$ be a quasi-split connected reductive $k$-group,
and $T'^\wedge$ a maximal $k$-torus in $G'^\wedge$,
such that $(G'^\wedge,T'^\wedge)$ is in $k$-duality with $(G',T')$
via a duality map $\map{\delta'}{X^*(T')}{X_*(T'^\wedge)}$.
Then the transpose ${f^*}^\wedge$ of
$\delta'\circ f^* \circ \delta\inv$
is a morphism from 
$\Psi(G'^\wedge,T'^\wedge)$
to
$\Psi(G^\wedge,T^\wedge)$.
By Remark \ref{rem:morphism-example}(\ref{item:morphism-example-isogeny-derived}),
we thus obtain a $k$-isotypy $\map{f^\wedge}{G'^\wedge}{G^\wedge}$,
uniquely determined only up to toral inner $k$-automorphisms
of $G^\wedge$,
which we will refer to as an \emph{isotypy dual to $f$} (with respect to
$\delta$ and $\delta'$).
\end{para}

\begin{para}[Tori in parascopic groups and their duals]
\label{para:square-of-tori}
Let $\tilde G$ and $G$ be connected reductive quasisplit $k$-groups, $\Gamma$
a finite group, and $\tilde T$ and $T$ maximal $k$-tori in $\tilde G$ and $G$. Let
$(\phi,q^*)$ be a strong parascopic datum for $(\tilde{G},\Gamma,G)$ relative to $\tilde{T}$ and $T$.

Let $G^\wedge$ and $\tilde G^\wedge$ be $k$-groups in $k$-duality respectively with $G$ and $\tilde G$
as in \S\ref{para:duality}.
We then have canonical bijections $\abmap{\TTst(G,k)}{\TTst(G^\wedge,k)}$
and $\abmap{\TTst(\tilde{G},k)}{\TTst(\tilde{G}^\wedge,k)}$. Since $(\phi,q^*)$ is strong,
it determines a map $\abmap{\TTst(G,k)}{\TTst(\tilde{G},k)}$.
Thus, given a maximal $k$-torus $S$ in $G$, we obtain maximal $k$-tori
$S^\wedge\subseteq G^\wedge$, $\tilde{S}\subseteq \tilde{G}$, and
$\tilde{S}^\wedge\subseteq \tilde{G}^\wedge$, all uniquely determined up to stable conjugacy.
Moreover, by
\cite{adler-lansky:lifting}*{Proposition \xref{FGL-prop:parascopy-equivalence}(\xref{FGL-item:all-tori-equally-good})},
we can find a parascopic datum $(\phi',{q'}^*)$ equivalent to $(\phi,q^*)$
(as in Definition~\ref{defn:equivalent-data}), relative to $S$ and $\tilde S$.

Conversely, given $S^\wedge$, we obtain $S$, $\tilde{S}$, and $\tilde{S}^\wedge$ and a
parascopic datum relative to $S$ and $\tilde S$ that is equivalent to $(\phi,q^*)$.
\end{para}

\begin{para}[Norm and conorm maps on tori]
\label{para:conorm-defn}
Retain the assumptions and notation of \S\ref{para:square-of-tori}.  
Since the endomorphism of $X^*(\tilde T)$ given by $x\mapsto\sum_{\gamma\in\Gamma}\, \phi(\gamma)\cdot x$ 
is constant on $\Gamma$-orbits, it factors through a $\Gal(k)$-equivariant map
$\map{\normchar[\phi]}{\lsub{\phi}X^*(\tilde T)}{X^*(\tilde T)}$. We define the norm map
$\map{\normchar[\phi,q^*]}{X^*(T)}{X^*(\tilde T)}$ to be
$\normchar[\phi]\circ q^*$. 
Thus we have a commutative diagram
\begin{equation}
\label{eqn:norm}
\begin{xy}
\xymatrix{
X^*(\tilde T) \ar[r]^{\sum\phi(\gamma)} \ar@{->>}[d] & X^*(\tilde T)  \\
\lsub{\phi}X^*(\tilde T) \ar[ur]^{\normchar[\phi]} \\
X^*(T) \ar[u]_{q^*} \ar[uur]_{\normchar[\phi,q^*]} 
}
\end{xy}
\end{equation}

Let $T^\wedge\subseteq G^\wedge$ and $\tilde T^\wedge\subseteq \tilde G^\wedge$ be maximal tori, and let $\map{\delta}{\bX^*(T)}{\bX_*(T^\wedge)}$ and $\map{\tilde\delta}{\bX^*(\tilde{T})}{\bX_*(\tilde{T}^\wedge)}$
be duality maps as in \S\ref{para:duality}.
These maps are uniquely determined up to conjugacy by
$W(G,T)^{\Gal(k)}$ and $W(\tilde{G},\tilde{T})^{\Gal(k)}$, respectively.
We obtain a $\Gal(k)$-equivariant homomorphism
\begin{equation}
\label{eq:norm-char}
(\map{\dnorm[\phi,q^*,\tilde\delta,\delta])_*
=
\tilde\delta\circ \norm[\phi,q^*]^* \circ \delta\inv }{\bX_*(T^\wedge)}{\bX_*(\tilde{T}^\wedge)},
\end{equation}
which corresponds to a $k$-homomorphism
\begin{equation}
\label{eq:conorm-points}
\map{\dnorm[\phi,q^*,\tilde\delta,\delta]}{T^\wedge}{\tilde{T}^\wedge}
\end{equation}
that we call a \emph{conorm map}.
When the duality maps $\tilde\delta$ and $\delta$ are understood from 
the context, we will feel free to omit them
from the notation, instead writing
$(\dnorm[\phi,q^*])_*$
and
$\dnorm[\phi,q^*]$.
Furthermore, 
when the parascopic datum $(\phi,q^*)$
is understood from the context,
we will denote 
$\norm[\phi,q^*]^*$ by $\norm[T]^*$;
$(\dnorm[\phi,q^*])_*$ by $\dnorm[T^\wedge,*]$;
and
$\dnorm[\phi,q^*]$ by $\dnorm[T^\wedge]$.
\end{para}

\begin{notation}
\label{notation:classes}
For $H$ a connected reductive $k$-group
and $s\in H(k)$ a semisimple element,
let $\Cl(H)$ denote the $k$-variety of geometric semisimple
conjugacy classes of $H$, and let $[s]$ denote the geometric conjugacy class of $s$ in $\Cl(H)$.
Similarly, let $\Cl\st(H)$ denote the set of stable 
(in the sense of Kottwitz~~\cite{kottwitz:rational-conj})
semisimple conjugacy classes in $H(k)$,
and let $[s]\st$ denote the stable conjugacy class of $s$ in $\Cl\st(k)$.
\end{notation}

The main result of~\cite{adler-lansky:lifting} is the following.

\begin{thm}
\label{thm:lift-geometric-general}
Let $(\phi,q^*)$ be a strong parascopic datum for the triple $(\tilde{G},\Gamma,G)$.
The equivalence class $[\phi,q^*]$ of $(\phi,q^*)$ determines a $k$-morphism
$\map{\dnorm[{[\phi,q^*]}]}{\Cl(G^\wedge)}{\Cl(\tilde G^\wedge)}$
and a function $\map{\dnormst[{[\phi,q^*]}]}{\Cl\st(G^\wedge)}{\Cl\st(\tilde G^\wedge)}$
such that for any
\begin{itemize}
\item maximal $k$-torus $S^\wedge\subseteq G^\wedge$,
\item maximal $k$-tori $S\subseteq G$, $\tilde S\subseteq \tilde G$, and $\tilde S^\wedge\subseteq \tilde G^\wedge$
compatible with $S^\wedge$ as in \S\ref{para:square-of-tori},
\item duality maps $\map{\delta}{X^*(S)}{X_*(S^\wedge)}$ and
$\map{\tilde\delta}{X^*(\tilde S)}{X_*(\tilde S^\wedge)}$, and
\item parascopic datum $(\phi',{q'}^*)$ relative to $\tilde S$ and $S$ equivalent to $(\phi,q^*),$
\end{itemize}
we have
$\dnorm[\phi',q^{\prime *},\tilde\delta,\delta](s)\in \dnorm[{[\phi,q^*]}]([s])$
and $\dnorm[\phi',q^{\prime *},\tilde\delta,\delta](s)\in \dnormst[{[\phi,q^*]}]([s]\st)$
for all $s\in S^\wedge(\bar k)$.
\end{thm}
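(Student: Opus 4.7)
The core of the construction is torus-by-torus: given a semisimple $s\in G^\wedge(k)$, one picks a maximal $k$-torus $S^\wedge\subseteq G^\wedge$ with $s\in S^\wedge(k)$, transports $S^\wedge$ through the chain of canonical bijections $\TTst(G^\wedge,k)\leftrightarrow\TTst(G,k)\to\TTst(\tilde G,k)\leftrightarrow\TTst(\tilde G^\wedge,k)$ of \S\ref{para:square-of-tori} to obtain $S,\tilde S,\tilde S^\wedge$, invokes \cite{adler-lansky:lifting}*{Proposition \xref{FGL-prop:parascopy-equivalence}(\xref{FGL-item:all-tori-equally-good})} to get a parascopic datum $(\phi',q'^*)$ equivalent to $(\phi,q^*)$ relative to $(\tilde S,S)$, picks duality maps $\delta,\tilde\delta$, and forms the torus conorm $\dnorm[\phi',q'^*,\tilde\delta,\delta]\colon S^\wedge\to\tilde S^\wedge$ of \S\ref{para:conorm-defn}. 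One then \emph{defines} $\dnorm[{[\phi,q^*]}]([s])$ (resp.\ $\dnormst[{[\phi,q^*]}]([s]\st)$) to be the geometric (resp.\ stable) class of $\dnorm[\phi',q'^*,\tilde\delta,\delta](s)$ in $\tilde G^\wedge$. The asserted compatibility relation is then immediate from the definition, and the two maps fit into a single $k$-morphism on $\Cl(G^\wedge)$ after identifying $\Cl(G^\wedge)$ and $\Cl(\tilde G^\wedge)$ with the GIT quotients $T_0^\wedge/W$ and $\tilde T_0^\wedge/\tilde W$ attached to fixed maximal $k$-tori in the (quasi-split) duals and observing that $(\dnorm[\phi,q^*])_*$ of \eqref{eq:norm-char} descends to such a morphism of quotient varieties.

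The content of the theorem is therefore entirely in \textbf{well-definedness}, which splits into three independent pieces. First, the duality maps $\delta$ and $\tilde\delta$ of \S\ref{para:duality} are pinned down only up to $W(G,S)^{\Gal(k)}$ and $W(\tilde G,\tilde S)^{\Gal(k)}$; altering $\tilde\delta$ replaces the image $\dnorm[\phi',q'^*,\tilde\delta,\delta](s)$ by a rational Weyl-translate inside $\tilde S^\wedge$, which leaves both the geometric and stable classes in $\tilde G^\wedge$ unchanged, while altering $\delta$ precomposes $\norm[\phi,q^*]^*$ with a $\Gal(k)$-fixed element of $W(G,S)$. Second, different choices of $(\phi',q'^*)$ equivalent to $(\phi,q^*)$ are related, per Definition~\ref{defn:equivalent-data}, by elements $g\in G(k\sep)$ and $\tilde g\in\tilde G(k\sep)$; conditions (\refb) and (c) of that definition translate directly into precomposition by $\Int(g)^*$ and postcomposition by $\Int(\tilde g)^*$ of $\norm[\phi,q^*]^*$, so the output shifts by an element of $\tilde G^\wedge(k\sep)$ that is easily checked to be rationally Weyl-conjugate, hence stably conjugate, to the old one. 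Third, if two maximal $k$-tori $S^\wedge_1,S^\wedge_2$ of $G^\wedge$ both contain $s$, their preimages in $\TTst(G,k)$ can be represented by tori $S_1,S_2\subseteq G$, and the two constructions are related by an equivalence of parascopic data, reducing this case to the previous one.

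\textbf{Main obstacle.} The substantive point in all three pieces above is that rational Weyl-translations on the $G$-side must be matched with rational Weyl-translations on the $\tilde G$-side. The mechanism for this is the $\Gal(k)$-equivariant Weyl-group embedding $i_{\phi,q^*}\colon W(\Psi)\to W(\tilde\Psi)$ of \S\ref{para:weyl}, and the nontrivial compatibility to verify is the intertwining
\begin{equation*}
\norm[\phi,q^*]^*\circ w^* \;=\; i_{\phi,q^*}(w)^*\circ \norm[\phi,q^*]^*\qquad(w\in W(\Psi)),
\end{equation*}
both as endomorphisms of the relevant character lattices and after dualization in the sense of \S\ref{para:duality}. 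Granting this, each of the three indeterminacies above is absorbed into conjugation by a rational element of $W(\tilde G,\tilde S)$, and the stability hypothesis on the parascopic datum (Definition~\ref{defn:parascopic-group}) ensures that the map \eqref{eq:H1-lifting} sends the relevant $H^1$-class in $W(G,S)$ to one that lies in the image of $\TTst(\tilde G,k)$, so that the output really represents a well-defined stable class in $\tilde G^\wedge(k)$. The remaining verification that $\dnorm[{[\phi,q^*]}]$ is a morphism of varieties (and not merely a set-theoretic map of geometric classes) follows once one checks that the formula \eqref{eq:norm-char} on a single torus is $\Gal(k)$-equivariant and $W$-equivariant in the sense above, and therefore descends to the Chevalley quotients.
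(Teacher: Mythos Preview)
The present paper does not prove this theorem at all: it is introduced with the sentence ``The main result of~\cite{adler-lansky:lifting} is the following,'' and is stated without proof, serving only as a recall of Theorem~\xref{FGL-thm:main} and Remark~\xref{FGL-rem:main} of the companion paper. There is therefore no proof here to compare your proposal against.

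That said, your outline is a faithful summary of the strategy one expects (and which is carried out in \cite{adler-lansky:lifting}): define the conorm torus-by-torus via \eqref{eq:norm-char}, and then check independence of the choices of duality maps, of the equivalent parascopic datum, and of the torus through $s$. You have correctly identified the key intertwining relation $\normchar[\phi,q^*]\circ w = i_{\phi,q^*}(w)\circ\normchar[\phi,q^*]$ as the lynchpin; indeed, the current paper invokes exactly this identity later (in the proof of Proposition~\ref{prop:conorm-func-hom}) without reproving it. Your sketch is also right that the ``strong'' hypothesis (Definition~\ref{defn:parascopic-group}) is what guarantees that the target torus $\tilde S^\wedge$ exists over $k$, so that the output is a genuine stable class. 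If anything, your third well-definedness item (independence of $S^\wedge$ containing $s$) is slightly understated: two such tori need not be stably conjugate in $G^\wedge$, so one cannot directly reduce to the equivalence-of-data case; rather, one works inside the centralizer of $s$ and uses \cite{adler-lansky:lifting}*{Proposition~\xref{FGL-prop:centralizer-weyl}} to compare the two torus conorms. But this is a detail of the earlier paper, not of the one at hand.
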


\begin{para}[Change of group action]
\label{para:group-action}
Keeping the assumptions of Theorem \ref{thm:lift-geometric-general},
suppose that $\phi^\sharp$ is a $\Gal(k)$-equivariant action of a finite group $\Gamma^\sharp$
on the root datum $\tilde\Psi = \Psi(\tilde G,\tilde T)$.
Suppose that the endomorphisms $\psi = \sum_{\gamma\in\Gamma}\phi(\gamma)$ and
$\psi^\sharp = \sum_{\gamma\in\Gamma^\sharp}\phi^\sharp(\gamma)$
of $X^*(\tilde T)$ agree. It is straightforward to check that the kernel
of the quotient map from $X^*(\tilde T)\otimes\Q$ to the coinvariant space
$(X^*(\tilde T)\otimes\Q)_{\phi(\Gamma)}$
(resp.~to $(X^*(\tilde T)\otimes\Q)_{\phi^\sharp(\Gamma)}$)
is equal to the kernel of $\psi$ (resp.~$\psi^\sharp$) on $X^*(\tilde T)\otimes\Q$.
Then by Definition~\ref{defn:restricted},
$\lsub{\phi} X^*(\tilde T) = \lsub{\phi^\sharp} X^*(\tilde T)$
and $\lsub{\phi} \Phi(\tilde G, \tilde T) = \lsub{\phi^\sharp} \Phi(\tilde G, \tilde T)$.
It follows that the restricted root data
$\lsub\phi\tilde\Psi$ and $\lsub{\phi^\sharp}\tilde\Psi$
are the same, and hence that 
the maps $\normchar[\phi]$ and $\normchar[\phi^\sharp]$ from 
\S\ref{para:conorm-defn} agree.
Moreover, if $(\phi,q^*)$ is a (strong) parascopic datum for $(\tilde G, \Gamma,G)$
relative to the tori $\tilde T \subseteq \tilde G$ and $T\subseteq G$,
then $(\phi^\sharp, q^*)$ is a (strong) parascopic datum for $(\tilde G, \Gamma^\sharp, G)$
relative to the same tori.
\end{para}

\begin{prop}
\label{prop:change-group-action}
Under the above assumptions,
$
\dnorm[{[\phi,q^*]}]
=
\dnorm[{[\phi^\sharp,q^*]}]
$
and
$
\dnormst[{[\phi,q^*]}]
=
\dnormst[{[\phi^\sharp,q^*]}]
$.
\end{prop}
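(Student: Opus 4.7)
The plan is to show that every object used in the construction of the conorm map depends on $\phi$ only through the endomorphism $\psi = \sum_{\gamma \in \Gamma} \phi(\gamma)$, which by hypothesis equals $\psi^\sharp = \sum_{\gamma \in \Gamma^\sharp} \phi^\sharp(\gamma)$.

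First, \S\ref{para:group-action} already establishes that $\lsub{\phi}\tilde\Psi = \lsub{\phi^\sharp}\tilde\Psi$ and $\normchar[\phi] = \normchar[\phi^\sharp]$; hence $\normchar[\phi,q^*] = \normchar[\phi^\sharp,q^*]$, and for any duality maps $\delta$, $\tilde\delta$, the torus-level conorm map $\dnorm[\phi,q^*,\tilde\delta,\delta]$ of \eqref{eq:conorm-points} agrees with $\dnorm[\phi^\sharp,q^*,\tilde\delta,\delta]$. Next, I would check that the Weyl group embedding $i_{\phi,q^*} = i_\phi \circ i_{q^*}$ of \S\ref{para:weyl} is unchanged when $\phi$ is replaced by $\phi^\sharp$. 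The factor $i_{q^*}$ depends only on $q^*$; the other factor $i_\phi$ is determined by the orbit sets $\Xi_{\bar\alpha}$, defined as the fiber in $\tilde\Phi$ of the quotient map $\tilde\Phi \to \lsub{\phi}\tilde\Phi$ over the largest positive multiple of $\bar\alpha$ lying in $\lsub{\phi}\tilde\Phi$. Since this quotient map is the restriction of the canonical projection $\bX^*(\tilde T) \otimes \Q \to (\bX^*(\tilde T) \otimes \Q)/\ker\psi$, it is intrinsic to $\psi$, and thus the fibers $\Xi_{\bar\alpha}$ coincide for $\phi$ and $\phi^\sharp$. This gives $i_{\phi,q^*} = i_{\phi^\sharp,q^*}$.

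Because the two Weyl group embeddings agree, the induced map \eqref{eq:H1-lifting} on cohomology is the same, so $(\phi,q^*)$ is strong if and only if $(\phi^\sharp,q^*)$ is, and both data induce the same map $\TTst(G,k) \to \TTst(\tilde{G},k)$ and hence the same stable torus correspondence from \S\ref{para:square-of-tori}. I would also verify that the equivalence class is preserved under changing $\phi$ to $\phi^\sharp$: if $(\phi',{q'}^*)$ is the conjugate of $(\phi,q^*)$ via elements $g,\tilde g$ in the sense of Remark~\ref{rem:equivalence}, then setting $\phi^{\sharp\prime}(\gamma) = \Int(\tilde g)^{*-1} \circ \phi^\sharp(\gamma) \circ \Int(\tilde g)^*$ yields $\sum_\gamma \phi^{\sharp\prime}(\gamma) = \Int(\tilde g)^{*-1} \circ \psi^\sharp \circ \Int(\tilde g)^* = \Int(\tilde g)^{*-1} \circ \psi \circ \Int(\tilde g)^* = \sum_\gamma \phi'(\gamma)$, so the hypothesis of \S\ref{para:group-action} is preserved under conjugation and the earlier conclusions apply to $(\phi',{q'}^*)$ and $(\phi^{\sharp\prime},{q'}^*)$ as well.

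Finally, I would invoke Theorem~\ref{thm:lift-geometric-general}, which characterizes $\dnorm[{[\phi,q^*]}]$ and $\dnormst[{[\phi,q^*]}]$ in terms of (i) the torus correspondence of \S\ref{para:square-of-tori} and (ii) the torus-level conorms $\dnorm[\phi',{q'}^*,\tilde\delta,\delta]$ arising from equivalent data. By the preceding steps both ingredients coincide for $\phi$ and $\phi^\sharp$, yielding $\dnorm[{[\phi,q^*]}] = \dnorm[{[\phi^\sharp,q^*]}]$ and $\dnormst[{[\phi,q^*]}] = \dnormst[{[\phi^\sharp,q^*]}]$. The only point requiring genuine care is the second step—verifying that the Weyl embedding $i_\phi$ is read off solely from the restricted root datum together with the fibers of the quotient map—but this is immediate once one unpacks the definition in \S\ref{para:weyl}.
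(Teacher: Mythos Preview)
Your proposal is correct and follows essentially the same approach as the paper: both reduce to checking that the Weyl group embeddings $i_{\phi,q^*}$ and $i_{\phi^\sharp,q^*}$ coincide (so that conjugation by $g,\tilde g$ transports equivalence classes compatibly), and then that the conjugated norm maps $\normchar[\phi']$ and $\normchar[{\phi'}^\sharp]$ agree, which follows from $\normchar[\phi]=\normchar[\phi^\sharp]$ together with the conjugation formula. Your framing in terms of ``everything depends only on $\psi$'' is slightly cleaner than the paper's phrasing (which asserts the $\Gamma$- and $\Gamma^\sharp$-orbits in $\tilde\Phi$ coincide), but the content is the same.
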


\begin{proof}
Let $T'\subseteq G$ be a maximal $k$-torus, and
let $\tilde T' \subseteq\tilde G$ be a maximal $k$-torus
associated to $T'$ as in Definition~\ref{defn:parascopic-group}.
Let $(\phi', {q'}^*)$
be a parascopic datum for $(\tilde G,\Gamma,G)$
relative to $\tilde T'$ and $T'$
that is equivalent to $(\phi,q^*)$ via elements $g\in G(k\sep)$ and $\tilde g\in \tilde G(k\sep)$.
Since $\lsub\phi\tilde\Psi = \lsub{\phi^\sharp}\tilde\Psi$,
the orbits of $\Gamma$ and $\Gamma^\sharp$ are the same,
and hence so are the embeddings $\map{i_{\phi,q^*}, i_{\phi^\sharp,q^*}}{W(G,T)}{W(\tilde G,\tilde T)}$
from \S\ref{para:weyl}. By Remark~\ref{rem:equivalence}, it follows that the
conjugate $({\phi'}^\sharp, {q'}^*)$ of $(\phi', {q'}^*)$ via $g$ and $\tilde g$ is
a parascopic datum for $(\tilde G,\Gamma,G)$
relative to $\tilde T'$ and $T'$ that is equivalent to $(\phi^\sharp, q^*)$.
From \eqref{eq:norm-char} and
Theorem~\ref{thm:lift-geometric-general}, it will be enough to
show that 
$\normchar[\phi',{q'}^*] = \normchar[{\phi'}^\sharp,{q'}^*]$, which will follow from
\eqref{eqn:norm} if we can show that $\normchar[\phi'] = \normchar[{\phi'}^\sharp] $.

Since $\Int(\tilde g)^*$ is a $\Gamma$-equivariant isomorphism from
$\Psi(\tilde G,\tilde T')$ to $\Psi(\tilde G,\tilde T)$,
it induces an isomorphism $\lsub{\Gamma}\Int(\tilde g)^*$ from
$\lsub{\phi'}\Psi(\tilde G,\tilde T')$
to $\lsub{\phi}\Psi(\tilde G,\tilde T)$.
Since $\phi'(\gamma) = {\Int(\tilde g)^*}\inv \circ \phi(\gamma) \circ \Int(\tilde{g})^*$ for all $\gamma\in\Gamma$,
it follows from the definition of the norm (\S\ref{para:conorm-defn}) that 
$\normchar[\phi'] = {\Int(\tilde g)^*}\inv \circ \normchar[\phi] \circ \lsub{\Gamma}\Int(\tilde{g})^*$.
Similarly,
$ \normchar[{\phi'}^\sharp]  = {\Int(\tilde g)^*}\inv \circ \normchar[\phi^\sharp] \circ \lsub{\Gamma^\sharp}\Int(\tilde{g})^*$.
Since $\normchar[\phi] = \normchar[\phi^\sharp]$,
it follows that $ \lsub{\Gamma}\Int(\tilde{g})^* = \lsub{\Gamma^\sharp}\Int(\tilde{g})^*$
and hence that $\normchar[\phi'] = \normchar[{\phi'}^\sharp] $.
\end{proof}

\section{Closed and dual-closed subdata}

If $\tilde\Psi = (X^*, \tilde\Phi,X_*,\tilde\Phi^\vee)$
is a root datum,
and 
$\Psi = (X^*, \Phi,X_*,\Phi^\vee)$ is a subdatum of $\Psi$,
then we will say that $\Psi$ is \emph{closed} in $\tilde\Psi$
if $\Phi$ is closed in $\tilde\Phi$,
and \emph{dual-closed} 
if $\Phi^\vee$ is closed in $\tilde\Phi^\vee$.

\begin{para}
\label{para:conormfunc-subsystem}
Suppose that $\Psi$ and $\tilde\Psi$ possess $\Gal(k)$-actions.
Let $q^*$ be a $\Gal(k)$-equivariant embedding from $\Psi$ to $\tilde\Psi$.
Consider the parascopic datum $(1,q^*)$
for $(\tilde\Psi,1,\Psi)$
as in Example~\ref{ex:parascopy-gamma-trivial}.
From~\cite{adler-lansky:data-actions}*{Remark 25}
there exists a connected reductive $k$-group $G$
and a maximal $k$-torus $T\subseteq G$ such that
$\Psi$ is $\Gal(k)$-isomorphic to $\Psi(G,T)$.
Similarly, we obtain $\tilde T\subseteq\tilde G$
such that $\tilde\Psi$ is $\Gal(k)$-isomorphic to $\Psi(\tilde G,\tilde T)$.
Identifying $\tilde\Psi$
with $\Psi(\tilde G,\tilde T)$
and $\Psi$ with $\Psi(G,T)$, we have that
$(1, q^*)$ is a parascopic datum
for $(\tilde G,1,G)$.
If the image of $\Psi$ under $q^*$ is closed in $\tilde\Psi$,
then it is easy to manufacture the group $G$ in such a way
that there is an $k$-embedding $\abmap{G}{\tilde G}$
that restricts to give an isomorphism of $T$ onto $\tilde T$
which induces the map $\map{{q^*}\inv}{X^*(\tilde T)}{X^*(T)}$.

From~\cite{adler-lansky:data-actions}*{Remark 25},
we can construct $\tilde{G}$ or $G$ to be quasi-split.
However, even if $\tilde{G}$ is quasi-split,
we cannot simultaneously ensure that $G$ is quasi-split
and that the embedding from $G$ to $\tilde{G}$
takes $T$ to $\tilde{T}$.
We might have to replace $\tilde{T}$ by another maximal torus
in $\tilde{G}$ that is stably conjugate to it:
\end{para}

\begin{prop}
\label{prop:embedding}
(cf.~\cite{kottwitz:rational-conj}*{Lemma 3.3})
Let $\tilde G$ be a connected reductive quasisplit $k$-group and
$\tilde T\subseteq \tilde G$ a maximal $k$-torus.
Let $\Psi$
be a closed, $\Gal(k)$-invariant subdatum of $\Psi(\tilde G,\tilde T)$.
Then there exists a connected reductive quasisplit $k$-group $G$,
a maximal $k$-torus $T$ of $G$, and a $\Gal(k)$-embedding
$q^*$ from $\Psi(G,T)$ to $\Psi(\tilde G,\tilde T)$
such that ${q^*} (\Psi(G,T)) = \Psi$.
Moreover,
there exists a $k$-embedding $\map{\nu}{G}{\tilde G}$,
and an element $\tilde x\in \tilde G(k\sep)$ such that
$\lsup{\tilde x} (\nu(T)) = \tilde T$,
and the map $\abmap{X^*(\tilde T)}{X^*(T)}$ induced
by $\Int(\tilde x)\circ\nu$ coincides with ${q^*}\inv$.
In particular, $\nu(T)$ is stably conjugate to $\tilde T$.
\end{prop}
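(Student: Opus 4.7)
The plan is to realize $\Psi$ inside $\tilde{G}$ as the root datum of a closed reductive $k$-subgroup $G_0$ containing $\tilde{T}$, then pass to a quasisplit inner form $G$ of $G_0$ equipped with a maximal $k$-torus $T$ whose stable conjugacy class matches that of $\tilde{T}$, and finally construct $\nu$ and $\tilde{x}$ via an inner twist argument whose cohomological obstruction is trivialized using the quasisplitness of $\tilde{G}$.

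For the first step, write $\Phi$ for the root system of $\Psi$. Since $\Phi$ is closed in $\tilde\Phi$ and $\Gal(k)$-stable, the subgroup $G_0\subseteq\tilde{G}$ generated by $\tilde{T}$ together with the root subgroups $\tilde{U}_\alpha$ of $\tilde{G}$ for $\alpha\in\Phi$ is a connected reductive $k$-subgroup with $\tilde{T}$ a maximal $k$-torus and $\Psi(G_0,\tilde{T})=\Psi$. Denote the inclusion by $\iota_0\colon G_0\hookrightarrow\tilde{G}$.

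Next, I would invoke \cite{springer:lag-article}*{Theorem 6.2.7} to produce a connected reductive quasisplit $k$-group $G$ together with a maximal $k$-torus $T\subseteq G$ and a $\Gal(k)$-equivariant isomorphism $\Psi(G,T)\cong\Psi$; composing with the inclusion $\Psi\hookrightarrow\Psi(\tilde{G},\tilde{T})$ yields the desired $q^*$. Because $G$ is quasisplit, Proposition~\ref{prop:stable-tori} provides a bijection $\TTst(G,k)\to H^1(k,W(\Psi))$, so I can replace $T$ by a stable conjugate inside $G$ so that its image in $H^1(k,W(\Psi))$ matches the image of the class of $\tilde{T}\in\TTst(G_0,k)$. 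Having done so, I pick a $k\sep$-isomorphism $\alpha\colon G\to G_0$ that carries $T$ onto $\tilde{T}$ and whose restriction $\alpha|_T$ is the $k$-rational isomorphism determined by ${q^*}\inv$; the matched Weyl cohomology classes let me further arrange that $\alpha\inv\sigma(\alpha)=\Int(t_\sigma)$ for some $t_\sigma\in T(k\sep)$, well-defined modulo $Z(G)$, for every $\sigma\in\Gal(k)$.

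The crucial remaining step is to find $\tilde{x}\in\tilde{G}(k\sep)$ with $\sigma(\tilde{x})\tilde{x}\inv\equiv\iota_0(\alpha(t_\sigma))$ modulo the centralizer of $\iota_0(\alpha(G))$ in $\tilde{G}$, for every $\sigma\in\Gal(k)$. Given such $\tilde{x}$, the map $\nu:=\Int(\tilde{x}\inv)\circ\iota_0\circ\alpha$ is $\Gal(k)$-equivariant and hence a $k$-embedding $G\to\tilde{G}$. A direct computation shows $\nu(T)=\lsup{\tilde{x}\inv}\tilde{T}$, so $\lsup{\tilde{x}}(\nu(T))=\tilde{T}$, and the map on characters induced by $\Int(\tilde{x})\circ\nu=\iota_0\circ\alpha$ restricts on $T$ to $\alpha|_T$, which by construction is ${q^*}\inv$. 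The hardest part will be establishing the existence of $\tilde{x}$, i.e., the vanishing of the class of the cocycle $\sigma\mapsto\iota_0(\alpha(t_\sigma))\in\tilde{T}(k\sep)$ in a suitable cohomology group for $\tilde{G}$. Following the strategy of Kottwitz's Lemma 3.3 in \cite{kottwitz:rational-conj}, one would pass to a $z$-extension of $\tilde{G}$ by an induced central torus in order to reduce the problem to a vanishing statement for $H^1$ of a simply connected quasisplit group; the matching of Weyl cohomology classes built into the choice of $T$ then forces the reduced obstruction to vanish, producing $\tilde{x}$.
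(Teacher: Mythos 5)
Your approach is genuinely different from the paper's in structure: you realize $\Psi$ as the root datum of a reductive $k$-subgroup $G_0\subseteq\tilde G$ (generated by $\tilde T$ and the root subgroups for the closed system $\Phi$), then view $G$ as an ``inner twist'' of $G_0$ along the torus and try to trivialize the twist inside $\tilde G$ by finding $\tilde x$. The paper instead goes the other way round: it fixes a torus $T_0$ in a Borel of $G$, records the Weyl-valued cocycle $z$ for the stable class of $T$, pushes $z$ to $\tilde G$ via the Weyl embedding of \S\ref{para:weyl}, and invokes \cite{raghunathan:tori}*{Theorem 1.1} to realize the pushed cocycle by some $\tilde g\in\tilde G(k\sep)$, producing a $k$-subgroup $\tilde H\subseteq\tilde G$ whose root datum is the transport of $\Psi$ and which is then \emph{observed} to be quasisplit (because $\tilde T_0$ lands in a Borel of $\tilde H$), so it must be $k$-isomorphic to $G$.

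The gap in your version is precisely the step you flagged as hardest. You need to show that the cocycle $\sigma\mapsto\iota_0(\alpha(t_\sigma))$, with values in $\tilde T(k\sep)$ modulo $C_{\tilde G}(G_0)(k\sep)=Z(G_0)(k\sep)$, is split by some $\tilde x\in\tilde G(k\sep)$. Your proposed route --- pass to a $z$-extension of $\tilde G$ and appeal to vanishing of $H^1$ of a simply connected quasisplit group --- is not available over the general field $k$ of this paper: $H^1(k, G_{\mathrm{sc}})$ is nonzero for quasisplit simply connected groups over $\R$ and over number fields, for instance, so the reduction you sketch does not by itself produce $\tilde x$. Nor does ``matching of Weyl cohomology classes'' give the vanishing: with the natural base points the stable classes of $T$ and $\tilde T$ both map to the trivial class, so that condition is automatic and carries no information about the inner-twist obstruction. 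The quasisplitness of $\tilde G$ must be used in a more specific way. The paper uses it through Raghunathan's theorem, which asserts precisely the surjectivity statement (every Weyl cocycle for a quasisplit group is realized by $\tilde g\inv\sigma(\tilde g)$) needed to build the trivializing element; your argument would have to locate an input of comparable strength, and as written it does not.

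A secondary, more minor point: \cite{springer:lag-article}*{Theorem 6.2.7} produces a quasisplit $(G, T)$ with $T$ inside a Borel, so its root datum carries a $\Gal(k)$-invariant base. Your subdatum $\Psi\subseteq\Psi(\tilde G,\tilde T)$ need not have a $\Gal(k)$-invariant base (since $\tilde T$ is arbitrary), so you cannot get a $\Gal(k)$-isomorphism $\Psi(G,T)\cong\Psi$ directly from that theorem. You would want \cite{adler-lansky:data-actions}*{Remark~25}, as the paper does, which realizes an arbitrary root datum with $\Gal(k)$-action as $\Psi(G,T)$ for some $(G,T)$, and then choose a Borel torus $T_0$ separately.
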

 
\begin{proof}
By~\cite{adler-lansky:data-actions}*{Remark 25}, there exists a
connected reductive $k$-group $G$, a maximal $k$-torus $T$ of $G$,
and a $\Gal(k)$-isomorphism from $\Psi$ to $\Psi(G,T)$.
Thus there is a $\Gal(k)$-embedding $q^*$ from $\Psi(G,T)$
to $\Psi(\tilde G,\tilde T)$ such that $q^* (\Psi(G,T)) = \Psi$.

Let $T_0$ be a maximal $k$-torus contained in a Borel $k$-subgroup of $G$.
Choose $g\in G(k\sep)$ such that $T_0 = (\Int g)(T)$.
Then the map $z$ taking $\sigma\in\Gal(k)$ to
the image of ${g}\inv\sigma(g)$ in $W(G,T)$ is a cocycle in $Z^1(k,W(G,T))$,
and the stable conjugacy class of
$T_0$ in $G$ corresponds to the class
of $z$ in $H^1(k,W(G,T))$.
The $\Gal(k)$-equivariant embedding
$\map{i_{q^*}}{W(G,T)}{W(\tilde G,\tilde T)}$ determines a map
$\abmap{Z^1(k,W(G,T))}{Z^1(k,W(\tilde G,\tilde T))}$.
Let $\tilde z$ be the image of $z$ under this map.
By~\cite{raghunathan:tori}*{Theorem 1.1}, there exists
$\tilde g\in \tilde G(k\sep)$ such that
$\tilde z$ coincides with the map taking $\sigma\in\Gal(k)$ to
the image of $\tilde g\inv \sigma(\tilde g)$ in $W(\tilde G,\tilde T)$.
Then $\tilde T_0 := (\Int \tilde g)(\tilde T)$ is a maximal $k$-torus of $\tilde G$
in the stable conjugacy class of tori corresponding to the class of $\tilde z$ in $H^1(k,W(\tilde G,\tilde T))$.

The subdatum $\Psi$ can be transported to a closed subdatum
$\Psi_0$ of $\Psi(\tilde G,\tilde T_0)$ via $\Int (\tilde g\inv)^*$.
Moreover, $\Psi_0$ is
stabilized by $\Gal(k)$ since $\tilde z$ takes values in the image of
$W(G,T)$ in $W(\tilde G,\tilde T)$, and elements of this image
stabilize $\Psi$.

Thus there is a reductive
$k$-subgroup $\tilde H$ of $\tilde G$ that contains $\tilde T_0$ with root datum
$\Psi_0$.
By construction, the composition of the chain of isomorphisms
\begin{equation}
\label{eq:data-maps}
\Psi(G,T_0)
\xrightarrow{\Int(g)^*} \Psi(G,T)
\xrightarrow{\phantom{xx}q^*\phantom{xx}} \Psi
\xrightarrow{\Int(\tilde g\inv)^*} \Psi (\tilde H,\tilde T_0)
\end{equation}
is readily seen to be $\Gal(k)$-equivariant.
Thus, since $\Gal(k)$ stabilizes a base in $\Phi(G,T_0)$
it must stabilize one in $\Phi(\tilde H,\tilde T_0)$.
It follows (see~\cite{springer:lag-article}*{Theorems 4.4.1(ii) and 6.1.2}) that $\tilde H$ is $k$-quasisplit and hence
that there exists a $k$-isomorphism $\map{\nu}{G}{\tilde H}$ taking $T_0$ onto $\tilde T_0$
and such that the induced isomorphism $\nu^*_{T_0}$
of $\Psi(\tilde H,\tilde T_0)$ with $\Psi(G,T_0)$
is the inverse of that given by \eqref{eq:data-maps}.

The map $\nu$ also induces a $\Gal(k)$-isomorphism
$\nu^*_{T}$ of $\Psi(\tilde H,\nu(T))$ with $\Psi(G,T)$, where
$$
\nu^*_{T} = \Int(g)^*\circ \nu^*_{T_0}\circ \Int(\nu(g)\inv)^*.
$$
Thus, by the definition of $\nu_{T_0}$,
\begin{equation*}
\begin{aligned}
\nu^*_{T}
&=  \Int(g)^*\circ
\bigl(\Int(g\inv)^*\circ{q^*}\inv\circ\Int(\tilde g)^*\bigr)
\circ \Int(\nu(g)\inv)^*\\
&=  {q^*}\inv\circ\Int(\nu(g)\inv \tilde g)^* .
\end{aligned}
\end{equation*}
Letting $\tilde x = \tilde g\inv\nu(g)$, we therefore have that
$$
{q^*}\inv = \nu^*_{T}\circ\Int(\tilde x)^* = (\Int(\tilde x)\circ\nu)^*,
$$
as desired.

Note that $\Int (\tilde x)(\nu(T)) = \tilde T$. Since $\nu^*$ and $q^*$
are both $\Gal(k)$-equivariant, it follows that
the isomorphism $\Int(\tilde x)^*$ of $X^*(\tilde T)$ with $X^*(\nu(T))$ is as well.
Thus $\nu(T)$ is stably conjugate to $\tilde T$ via $\tilde x$.
\end{proof}

\begin{prop}
\label{prop:max-subdatum}
Let $\tilde \Psi$ be an irreducible, reduced root datum carrying an action of $\Gal(k)$.
Let $\Psi$ be a maximal $\Gal(k)$-invariant subdatum of $\tilde\Psi$.
Then $\Psi$ is either closed or dual-closed in $\tilde\Psi$.
\end{prop}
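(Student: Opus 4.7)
The plan is to translate the problem into one about sub-root-systems of $\tilde\Phi$ and then exploit maximality via closure and dual-closure operations. Because a subdatum of $\tilde\Psi$ is pinned down by its set of roots (the ambient lattices being common to $\Psi$ and $\tilde\Psi$), a $\Gal(k)$-invariant subdatum amounts to a $\Gal(k)$-stable sub-root-system $\Phi \subseteq \tilde\Phi$; the proposition becomes the assertion that a maximal such $\Phi$ is closed in $\tilde\Phi$ or has $\Phi^\vee$ closed in $\tilde\Phi^\vee$.

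I would form two canonical $\Gal(k)$-stable enlargements of $\Phi$: the ordinary closure $\Phi^{\mathrm{cl}}$, defined as the smallest closed symmetric subset of $\tilde\Phi$ containing $\Phi$; and the dual closure $\Phi^\sharp$, consisting of those $\alpha \in \tilde\Phi$ whose coroots lie in the analogous closure of $\Phi^\vee$ in $\tilde\Phi^\vee$. A standard fact (Bourbaki, \emph{Groupes et alg\`ebres de Lie}, VI, \S1.7, Prop.~23) asserts that a closed symmetric subset of a root system is itself a sub-root-system; applied on the root side and on the coroot side, this makes both $\Phi^{\mathrm{cl}}$ and $\Phi^\sharp$ sub-root-systems of $\tilde\Phi$. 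Each contains $\Phi$ and, being canonical, is $\Gal(k)$-stable, so maximality forces $\Phi^{\mathrm{cl}}, \Phi^\sharp \in \{\Phi, \tilde\Phi\}$. The alternatives $\Phi^{\mathrm{cl}} = \Phi$ and $\Phi^\sharp = \Phi$ are precisely closedness and dual-closedness of $\Phi$.

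The main obstacle is to rule out $\Phi^{\mathrm{cl}} = \Phi^\sharp = \tilde\Phi$ with $\Phi$ a proper subsystem. If $\tilde\Phi$ is simply-laced, then every sub-root-system is already closed (since $\alpha + \beta = -w_\alpha(\beta) \in \Phi$ whenever $\alpha,\beta\in\Phi$ make a $120^\circ$-angle), so $\Phi^{\mathrm{cl}} = \Phi$ is automatic. When $\tilde\Phi$ is of type $B_n$, $C_n$, $F_4$, or $G_2$, the long-root set $\tilde\Phi_{\mathrm{long}}$ is a closed simply-laced proper subsystem, and dually so is $\tilde\Phi^\vee_{\mathrm{long}}$; hence $\Phi^{\mathrm{cl}} = \tilde\Phi$ forces $\Phi$ to meet $\tilde\Phi_{\mathrm{short}}$, while $\Phi^\sharp = \tilde\Phi$ forces $\Phi$ to meet $\tilde\Phi_{\mathrm{long}}$.

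To conclude, I would apply maximality once more to $\Phi' := \langle \tilde\Phi_{\mathrm{long}} \cup \Phi\rangle$ and $\Phi'' := \langle \tilde\Phi_{\mathrm{short}} \cup \Phi\rangle$, both $\Gal(k)$-stable sub-root-systems containing $\Phi$ and hence each equal to $\Phi$ or to $\tilde\Phi$. A short case-by-case inspection in each of the four doubly-laced types, exploiting the orbit structure of $W(\tilde\Phi_{\mathrm{long}})$ on $\tilde\Phi_{\mathrm{short}}$ and vice versa, shows that $\Phi' = \Phi$ (respectively $\Phi'' = \Phi$) forces $\Phi \supseteq \tilde\Phi_{\mathrm{long}}$ (respectively $\Phi \supseteq \tilde\Phi_{\mathrm{short}}$) and then, after restricting the complementary part to be stable under the indicated Weyl subgroup, that $\Phi$ is in fact closed (respectively dual-closed) or equal to $\tilde\Phi$---in every instance contradicting our standing assumption or the properness of $\Phi$; the residual possibility $\Phi' = \Phi'' = \tilde\Phi$ is excluded because it compels $\tilde\Phi_{\mathrm{long}} \subseteq \Phi$, collapsing to the first sub-case. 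This concluding combinatorial check is the heart of the argument, but in each type it is very short and explicit.
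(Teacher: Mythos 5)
Your framework is clean and genuinely different from the paper's. Whereas the paper first uses the observation that $\Psi = \bigcap_{\sigma\in\Gal(k)}\sigma(\overline\Psi)$ for a maximal \emph{non-}Galois-invariant subdatum $\overline\Psi$ to reduce the proposition to a statement with no Galois action at all, you keep the Galois action and exploit maximality of $\Psi$ among $\Gal(k)$-invariant subdata directly, via the canonical enlargements $\Phi^{\mathrm{cl}}$ and $\Phi^\sharp$. That dichotomy $\Phi^{\mathrm{cl}},\Phi^\sharp\in\{\Phi,\tilde\Phi\}$ is correct, and the observations that a sub-root-system with only long roots is closed (so that $\Phi^{\mathrm{cl}}=\tilde\Phi$ forces $\Phi$ to contain a short root, and dually) are also correct.

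However, there is a genuine gap in the final step. You split into four sub-cases according to whether $\Phi'=\langle\tilde\Phi_{\mathrm{long}}\cup\Phi\rangle$ and $\Phi''=\langle\tilde\Phi_{\mathrm{short}}\cup\Phi\rangle$ equal $\Phi$ or $\tilde\Phi$, and the first three are fine. But you dismiss the last one, $\Phi'=\Phi''=\tilde\Phi$, by asserting that it ``compels $\tilde\Phi_{\mathrm{long}}\subseteq\Phi$.'' That inference does not hold: $\Phi'=\tilde\Phi$ says only that $\Phi$ together with $\tilde\Phi_{\mathrm{long}}$ \emph{generates} $\tilde\Phi$, which is a much weaker statement than $\tilde\Phi_{\mathrm{long}}\subseteq\Phi$ (indeed $\langle\tilde\Phi_{\mathrm{long}}\cup\Phi\rangle=\tilde\Phi$ would follow automatically from $\Phi^{\mathrm{cl}}=\tilde\Phi$). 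So in the fourth sub-case you are left with the scenario that $\Phi$ is a full-rank subsystem containing some but not all of the short roots and some but not all of the long roots, with closure and dual-closure both filling up $\tilde\Phi$---and this is precisely the nontrivial case. Ruling it out is where the real content lies: the paper handles it by reducing to $\Phi$ full rank, factoring $\Phi$ into at most two irreducible pieces, and then running the Borel--de Siebenthal enumeration of maximal full-rank closed (and dual-closed) subsystems of $F_4$ and $C_n$ explicitly. Your ``short case-by-case inspection'' would have to carry essentially this same combinatorial weight; as written, the proof does not do so.
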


\begin{proof}
Let $\overline\Psi$ be a maximal subdatum of $\tilde\Psi$
containing the maximal $\Gal(k)$-invariant subdatum $\Psi$ of $\tilde\Psi$.
Then $\Psi$ is equal to the intersection of all
$\Gal(k)$-conjugates of $\overline\Psi$.
If $\overline\Psi$ is closed (resp.\ dual-closed), then 
so is $\Psi$.
Therefore, replacing $\Psi$ by $\overline\Psi$, it is enough to show
that every maximal subdatum of $\tilde\Psi$ is either
closed or dual-closed.

Let $\Phi$ and $\tilde\Phi$ denote the root systems of $\Psi$ and
$\tilde\Psi$.
If $\tilde\Phi$ is simply laced, then every subsystem is both
closed and dual-closed, so there is nothing to prove.
If $\Phi$ contains only long roots, then it is closed in $\tilde\Phi$;
if it contains only short roots, then its dual $\Phi^\wedge$
contains only long roots, making it closed in $\tilde\Phi^\wedge$,
thus making $\Phi$ dual-closed in $\tilde\Phi$.
Therefore, assume from now on that $\tilde\Phi$ and $\Phi$
each contain roots of two different lengths.

If $\tilde\Phi$ has type $G_2$, then it is easy to check
our result directly. Also, our result is true for $C_n$ if and only if it is true for $B_n$.
Therefore, we only need to check the cases of $C_n$ and $F_4$.

If $\Phi$ has smaller rank than $\tilde\Phi$, then so do its
closure and dual-closure.  By maximality, $\Phi$ must be
both closed and dual-closed.  Therefore, we may assume that
$\Phi$ has the same rank as $\tilde\Phi$.

Suppose $\Phi$ is a product of two nonempty subsystems,
not assumed irreducible:
$\Phi = \Weil_1 \times \Weil_2$.
Let $\overline \Weil_i$ denote the smallest subsystem of $\tilde\Phi$ that
contains $\Weil_i$ and is both closed and dual-closed in $\tilde\Phi$.
Then
$\Phi\subseteq \overline \Weil_1 \times \overline \Weil_2$.
We have that $\overline \Weil_1 \times \overline \Weil_2 \neq \tilde\Phi$
by the irreducibility of $\tilde\Phi$,
so the maximality of $\Phi$
implies that each factor $\Weil_i$ must be both closed and dual-closed
in $\tilde\Phi$.

Now suppose that
$\Phi$ is a product of more than two irreducible factors:
$\Phi = \prod_{i=1}^n \Phi_i$ ($n\geq 3$).
Suppose $\alpha, \beta\in \Phi$, and $\alpha+\beta\in \tilde\Phi$.
Grouping together the one or two factors $\Phi_i$ that contain
$\alpha$ and $\beta$, we can write
$\Phi = \Phi_0 \times \Phi'$, where each factor is nonempty,
and $\alpha,\beta \in \Phi_0$.
From the previous paragraph, $\Phi_0$ is both closed and dual-closed
in $\tilde\Phi$.
Therefore,
$\alpha+\beta\in\Phi_0\subset\Phi$, so $\Phi$ is closed in $\tilde\Phi$.
Similarly, $\Phi$ is also dual-closed in $\tilde\Phi$.

Therefore, we only need to consider the case where
$\Phi$ is either irreducible or is a product of two
irreducible factors.

Borel and de Siebenthal~\cite{borel-desiebenthal}*{\S7}
provide an algorithm to list all
maximal, full-rank, closed subsystems of $\tilde\Phi$ up to isomorphism:
Remove one vertex from
the extended
Dynkin diagram of $\tilde\Phi$.
Applying the same algorithm to $\tilde\Phi^\vee$ and taking
the duals of the results, we obtain all maximal, full-rank,
dual-closed subsystems of $\tilde\Phi$.
Applying both of the above to $F_4$, we obtain
the closed subsystems
$B_4$,
$A_1\Short \times A_3\Long$,
$A_2\Short \times A_2\Long$,
and
$C_3\times A_1\Long$;
and the dual-closed subsystems
$C_4$,
$A_1\Long \times A_3\Short$,
$A_2\Long \times A_2\Short$,
and
$B_3\times A_1\Short$.
In $C_n$,
we obtain the closed subsystems
$C_i \times C_j$ ($i+j=n$);
and the dual-closed subsystems
$C_i \times D_j$ ($i+j=n$).
We need to show that $F_4$ and $C_n$ have no other
maximal, full-rank, proper subsystems.

Suppose that $\Phi$ is irreducible,
and recall that we may assume that $\Phi$ is not simply laced.
If $\tilde\Phi$ has type $F_4$, then observe that
$B_4$ and $C_4$ are closed and dual-closed (respectively) in $\tilde\Phi$.
If $\tilde\Phi$ has type $C_n$, then it is straightforward
to check that
$\tilde\Phi$ contains no proper irreducible, non-simply-laced subsystem
of rank $n$.

It remains only to consider the case when $\Phi$ is a product
of two irreducible factors: $\Phi = \Weil_1 \times \Weil_2$.
Suppose for a contradiction
that $\Phi$ is neither closed nor dual-closed in $\tilde\Phi$.
Then since we have already seen that each $\Weil_i$ must be closed
and dual-closed,
we must have short roots
$\alpha_i \in \Weil_i$ for $i=1,2$
such that $\alpha_1+\alpha_2\in \tilde\Phi$,
and
we must have long roots
$\beta_i\in \Weil_i$ 
such that $(\beta_1+\beta_2)/2\in \tilde\Phi$.
In particular, each $\Weil_i$ must contain both short and long roots,
and so cannot be simply laced.

Suppose that $\tilde\Phi$ has type $F_4$.
It is straightforward to check that $G_2 \times B_2$ is not
contained in $\tilde\Phi$, and $B_2 \times B_2$ is contained
(as a dual-closed subsystem) in $B_4$, and therefore is not
maximal in $\tilde\Phi$.

Suppose $\tilde\Phi$ has type $C_n$.
Note that an irreducible, non-simply-laced root system of rank
$i$ has at least $i$ long roots, with equality if and only if
our root system is $C_i$.
Therefore, the only product of two such root systems
that can be a full-rank subsystem of $C_n$ is $C_i\times C_j$ for $i+j=n$,
and this is a closed subsystem.
\end{proof}

\section{Compatibility of conorms with isogenies}
\label{sec:isogenies}
Suppose $(\phi,q^*)$ is a strong parascopic datum for $(\tilde{G},\Gamma,G)$.
Then one has associated conorm functions
$\dnorm[{[\phi,q^*]}]$ and $\dnormst[{[\phi,q^*]}]$
from the sets of geometric and stable conjugacy classes in $G^\wedge$ to those in 
$\tilde{G}^\wedge$.
Suppose we have a separable surjective $k$-homomorphism $\tilde f$ from $\tilde{G}$ to another $k$-group
$\tilde{G}'$ with central kernel, and that $\tilde f$ has good invariance properties with
respect to $\Gamma$.
In this section, we show the existence of a group $G'$,
a separable surjective $k$-homomorphism $\map{f}{G}{G'}$ with central kernel,
and a parascopic datum $(\phi',q^{*\prime})$ for $(\tilde{G}',\Gamma,G')$
such that $\dnorm[{[\phi,q^*]}]$ (resp.\ $\dnormst[{[\phi,q^*]}]$)
can be described in terms of
$\dnorm[{[\phi',q^{*\prime}]}]$ (resp.\ $\dnormst[{[\phi',q^{*\prime}]}]$),
together with the maps $f$ and $\tilde f$.
(In fact, we show that such a result holds in a somewhat more general situation described below.)
In particular,
for the purpose of understanding conorm maps, this will allow one
to pass to the situation in which $\tilde{G}$ is adjoint,
or, equivalently, $\tilde{G}^\wedge$ is simply connected.

In this section, let
$\tilde\Psi=(\tilde X^*,\tilde\Phi, \tilde X_*, \tilde\Phi^\vee)$
and $\Psi=(X^*,\Phi, X_*, \Phi^\vee)$
denote root data with actions of $\Gal(k)$; and $\Gamma$ a finite group.
Let $(\phi,q^*)$ denote a parascopic datum for $(\tilde\Psi,\Gamma,\Psi)$.
Let $\tilde\Psi'=(\tilde X'^*,\tilde\Phi',\tilde X_*',\tilde\Phi'^\vee)$
denote another root datum with $\Gal(k)$-action,
and $\tilde f^*$ a $\Gal(k)$-equivariant morphism
from $\tilde\Psi'$ to $\tilde\Psi$.
We will assume that there exists a $\Gal(k)$-equivariant action $\phi'$ of $\Gamma$ on $\tilde\Psi'$
such that $\tilde f^*\circ \phi'(\gamma) = \phi(\gamma)\circ \tilde f^*$
for all $\gamma\in \Gamma$. We observe that a unique such $\phi'$ must exist
when $\tilde f^*$ is injective and $\im \tilde f^* \subseteq \tilde X^*$
is invariant under the action of $\phi(\Gamma)$.

Since $\tilde f^*$ is $\Gamma$-equivariant, it induces a morphism
$\lsub{\Gamma}\tilde f^*$ from $\lsub{\phi'}\tilde\Psi'$ to $\lsub{\phi}\tilde\Psi$.
Taking the pullback of $q^*$ and $\lsub{\Gamma}\tilde f^*$
(see Remark~\ref{rem:morphism-pullback}), we obtain
a root datum $\Psi' = ({X'}^*,\Phi', X'_*, {\Phi'}^\vee)$,
and $\Gal(k)$-equivariant morphisms ${q'}^*$ from $\Psi'$ to $\lsub{\phi'}\tilde \Psi'$
and $f^*$ from $\Psi'$ to $\Psi$ satisfying $q^*\circ f^* = \lsub{\Gamma}\tilde f^*\circ {q'}^*$.

\begin{prop}
\label{prop:associated-morphism}
The pair $(\phi',{q'}^*)$ is a parascopic datum for $(\tilde \Psi', \Gamma, \Psi')$.
This datum is torus-inclusive if $(\phi,q^*)$ is,
and is root-inclusive if and only if $(\phi,q^*)$ is.
\end{prop}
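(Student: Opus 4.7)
The plan is to verify the three assertions in order: that $(\phi',{q'}^*)$ is a parascopic datum, that it is torus-inclusive whenever $(\phi,q^*)$ is, and that root-inclusivity passes in both directions.

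For the first assertion, the only condition in Definition~\ref{defn:parascopy} not supplied by hypothesis or by the pullback construction in Remark~\ref{rem:morphism-pullback} is that $\phi'(\Gamma)$ preserve a positive system in $\tilde\Phi'$. Fixing a $\phi(\Gamma)$-stable positive system $\tilde\Phi^+\subseteq\tilde\Phi$, I would take $(\tilde\Phi')^+ := (\tilde f^*)^{-1}(\tilde\Phi^+)\cap\tilde\Phi'$. Injectivity of $\tilde f^*$ on $\tilde\Phi'$ (recalling that morphisms of root data are injective on roots) makes this a positive system, and the $\Gamma$-equivariance of $\tilde f^*$ together with $\phi(\Gamma)$-stability of $\tilde\Phi^+$ makes it $\phi'(\Gamma)$-stable. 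For torus-inclusivity, if $q^*$ is an isomorphism of lattices then pulling back along an isomorphism yields an isomorphism, so ${q'}^*$ is a lattice isomorphism, hence an embedding of root data.

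The heart of the proof is the equivalence of root-inclusivity. The key observation is that $\lsub\Gamma\tilde f^*$, being a morphism of restricted root data, is a $\Gal(k)$-equivariant lattice map that sends roots to roots. From this, if $\bar\beta'/2$ is a root of $\lsub{\phi'}\tilde\Phi'$ then $\lsub\Gamma\tilde f^*(\bar\beta')/2 = \lsub\Gamma\tilde f^*(\bar\beta'/2)$ is a root of $\lsub\phi\tilde\Phi$, so $\lsub\Gamma\tilde f^*$ sends divisible roots to divisible roots and, contrapositively, pulls back non-divisible roots to non-divisible roots. The ``if'' direction follows at once: for $\alpha'\in\Phi'$, commutativity of the pullback diagram gives $\lsub\Gamma\tilde f^*({q'}^*(\alpha')) = q^*(f^*(\alpha'))\in\lsub\phi\tilde\Phi\red$, and the reflection of non-divisibility then forces ${q'}^*(\alpha')\in\lsub{\phi'}\tilde\Phi'\red$. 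For the ``only if'' direction, given $\alpha\in\Phi$, I would lift $\alpha$ to an element of $\Phi'$ (equivalently, show that $q^*(\alpha)$ lies in the image of $\lsub\Gamma\tilde f^*$ and that a suitable preimage is a root of $\lsub{\phi'}\tilde\Phi'$) and then invoke a push-forward companion of the key observation to conclude that $q^*(\alpha)\in\lsub\phi\tilde\Phi\red$. The same argument applies with ``non-multipliable'' in place of ``non-divisible'' when $\chr k = 2$.

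The main obstacle is this push-forward step in the ``only if'' direction: there is no purely formal reason that a morphism of root data should carry non-divisible roots to non-divisible roots. Ruling out pathological behaviour of $\lsub\Gamma\tilde f^*$ requires the orbit-theoretic description recalled in \S\ref{para:weyl}, according to which non-reducedness of a restricted root datum is detected by non-orthogonal $\phi(\Gamma)$-orbits sitting inside irreducible components of type $A_{2n}$ of $\tilde\Phi$. Because $\tilde f^*$ is $\Gamma$-equivariant, injective on roots, and preserves Cartan integers, it identifies such orbits in $\tilde\Phi'$ with their images in $\tilde\Phi$ in a way that simultaneously transports the non-orthogonal pair and its sum; this is what both produces the lifting and ensures that non-divisibility is preserved along $\lsub\Gamma\tilde f^*$ on roots arising from the pullback.
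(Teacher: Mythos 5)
Your overall approach matches the paper's, whose proof of this proposition is only a few lines and leaves essentially all of the verification to the reader. Your check of the parascopic-datum condition (that $\phi'(\Gamma)$ preserves the preimage of a $\phi(\Gamma)$-stable positive system) and of the torus-inclusivity implication is correct, and your ``if'' direction of root-inclusivity is also correct: $\lsub{\Gamma}\tilde f^*$ sends roots to roots, so if ${q'}^*(\alpha')$ were divisible then $q^*(f^*(\alpha')) = \lsub{\Gamma}\tilde f^*({q'}^*(\alpha'))$ would be too, contradicting root-inclusivity of $(\phi,q^*)$.

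You are also right to flag the ``only if'' direction, and the gap you identify is genuine; the sketch you offer does not yet close it. You need two things: (1) that every $\alpha\in\Phi$ lifts to $\Phi'$, i.e.\ that $f^*\colon\Phi'\to\Phi$ is surjective, and (2) the ``push-forward'' fact that $\lsub{\Gamma}\tilde f^*$ carries non-divisible (resp.\ non-multipliable, if $\chr k = 2$) restricted roots to non-divisible ones. Neither is a formal consequence of $\tilde f^*$ being a $\Gamma$-equivariant morphism: if, say, $\tilde\Psi'$ has no roots, then $\Phi'$ is empty and $(\phi',{q'}^*)$ is vacuously root-inclusive whatever $(\phi,q^*)$ may be, so the biconditional fails at this level of generality. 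What saves the day in the setting the paper actually uses (the paragraphs immediately following the proposition, where $\tilde f$ is a surjective homomorphism with central kernel, so $\tilde f^*$ is as in Remark~\ref{rem:morphism-example}(\ref{item:morphism-example-isogeny-derived})) is that $\tilde f^*$ restricts to a \emph{bijection} $\tilde\Phi'\to\tilde\Phi$. Being $\Gamma$-equivariant, this induces a bijection on $\Gamma$-orbits and hence a Cartan-integer-preserving bijection $\lsub{\phi'}\tilde\Phi'\to\lsub{\phi}\tilde\Phi$, which preserves divisibility in both directions; and for each $\alpha\in\Phi$ the unique preimage of $q^*(\alpha)$ under this bijection furnishes the lift $(\alpha,\alpha')\in\Phi'$, so $f^*$ is also a bijection on roots. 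That hypothesis, made explicit, turns your argument into a complete proof; without it, the ``only if'' half is over-stated and the orbit-theoretic considerations you gesture at will not by themselves produce the surjectivity of $f^*$ on roots.
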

We will call $(\phi', q^{\prime *})$
the \emph{pullback of $(\phi, q^*)$ along $\tilde f^*$}.

\begin{proof}
To see that $(\phi', {q'}^*)$ is a parascopic datum for $(\tilde \Psi', \Gamma, \Psi')$,
it suffices to observe that if $\tilde\Phi^+$ is a
$\phi(\Gamma)$-invariant
set of positive roots in $\tilde\Phi$, then
$\phi'(\Gamma)$ preserves the set $\tilde f^{*\, -1} (\tilde\Phi^+)$ of positive roots in $\tilde\Phi'$.

To verify the second statement,
note that if $q^*$ is an isomorphism of lattices, then so is $q^{\prime *}$,
and that $q^*(\Phi) \subseteq \lsub{\phi}\tilde\Phi\red$ if and only if
$q^{\prime*}(\Phi') \subseteq \lsub{\phi'}\tilde\Phi'\red$.
\end{proof}

Suppose we have connected reductive $k$-groups $\tilde{G}$, $\tilde{G}'$,
and $G$;
maximal
$k$-tori $\tilde T \subseteq \tilde{G}$ and $T\subseteq G$;
and a  parascopic datum $(\phi,q^*)$ for $(\tilde{G},\Gamma,G)$
relative to the tori $\tilde{T}$ and $T$.
Suppose $\map{\tilde{f}}{\tilde{G}}{\tilde{G}'}$ is a surjective $k$-homomorphism
with central kernel invariant under the action of $\Gamma$ on $\tilde T$ induced by $\phi$.

Let $\tilde T' = \tilde{f}(\tilde{T})\subseteq \tilde{G}'$.
Let
$\tilde\Psi = \Psi(\tilde G,\tilde T)$,
$\Psi = \Psi(G,T)$,
and
$\tilde\Psi' = \Psi(\tilde G',\tilde T')$.
Then $\tilde f^*$ is an injective morphism from $\tilde\Psi'$
to $\tilde\Psi$ with $\phi(\Gamma)$-invariant image.
The pullback $(\phi',{q'}^*)$ of $(\phi,q^*)$ along $\tilde f^*$
as in Proposition~\ref{prop:associated-morphism} is a
parascopic datum for $(\tilde\Psi', \Gamma,\Psi')$,
where the root datum $\Psi'$, as defined above, comes
equipped with a morphism $f^*$ from $\Psi'$ to $\Psi$.

It is a straightforward consequence of~\cite{springer:corvallis}*{Lemma 2.8}
that there exists a $k$-group $G'$
with maximal $k$-torus $T'$
such that $\Psi'$ is $k$-isomorphic to $\Psi(G',T')$,
and a surjective $k$-homomorphism
$\map{f}{G}{G'}$ with central kernel that (identifying $\Psi'$ and $\Psi(G',T')$) induces the morphism $f^*$.
Then $(\phi',q^{\prime *})$ is a parascopic datum 
for $(\tilde G', \Gamma, G')$
relative to the tori $\tilde T'$ and $T'$.

\begin{prop}
\label{prop:isogeny-pushout}
With the preceding notation, the parascopic datum $(\phi,q^*)$ is strong for $(\tilde G,\Gamma, G)$
if and only if its pullback
$(\phi',q^{\prime *})$ is strong for $(\tilde G',\Gamma, G')$.
\end{prop}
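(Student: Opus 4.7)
The plan is to reduce the strongness of $(\phi,q^*)$ to that of $(\phi',q^{\prime *})$ by producing $\Gal(k)$-equivariant identifications of the Weyl groups on both sides and showing that the candidate $H^1$-maps correspond under these identifications.

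First, since $\tilde f$ has central kernel, $\tilde f^*$ is an injective $\Gamma$-equivariant morphism that restricts to a bijection $\tilde\Phi'\to\tilde\Phi$; by $\Gamma$-equivariance, $\lsub{\Gamma}\tilde f^*$ then bijects $\lsub{\phi'}\tilde\Phi'\to\lsub{\phi}\tilde\Phi$. From the pullback description of $\Psi'$ (Remark~\ref{rem:morphism-pullback}), $f^*$ likewise bijects $\Phi'\to\Phi$. Consequently, the Weyl-group embeddings $i_{f^*}\colon W(G',T')\to W(G,T)$ and $i_{\tilde f^*}\colon W(\tilde G',\tilde T')\to W(\tilde G,\tilde T)$ of Remark~\ref{rem:morphism-properties}(c) are $\Gal(k)$-equivariant isomorphisms.

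Second, I would verify that the diagram
\[
\begin{xy}
\xymatrix{
W(G',T')\ar[r]^{i_{\phi',q^{\prime *}}}\ar[d]_{i_{f^*}} & W(\tilde G',\tilde T')\ar[d]^{i_{\tilde f^*}} \\
W(G,T)\ar[r]^{i_{\phi,q^*}} & W(\tilde G,\tilde T)
}
\end{xy}
\]
commutes. Unwinding the construction in \S\ref{para:weyl}, this reduces to the following assertion: whenever $\bar\alpha'\in\lsub{\phi'}\tilde\Phi'$ has image $\bar\alpha\in\lsub{\phi}\tilde\Phi$ under $\lsub{\Gamma}\tilde f^*$, the map $\tilde f^*$ bijects the orbit $\Xi_{\bar\alpha'}\subseteq\tilde\Phi'$ onto $\Xi_{\bar\alpha}\subseteq\tilde\Phi$ and the doubling scalars satisfy $c_{\bar\alpha'}=c_{\bar\alpha}$. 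Both conclusions follow from the $\Gamma$-intertwining $\tilde f^*\circ\phi'(\gamma)=\phi(\gamma)\circ\tilde f^*$ together with the fact that $\tilde f^*$ preserves irreducible components and their labels.

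Third, since $f$ and $\tilde f$ have central kernels, the assignments $T\mapsto f(T)$ and $\tilde T\mapsto \tilde f(\tilde T)$ are $\Gal(k)$-equivariant bijections of maximal $k$-tori, with inverses $T'\mapsto f\inv(T')\conn$ and $\tilde T'\mapsto \tilde f\inv(\tilde T')\conn$; these descend to bijections $\TTst(G,k)\to \TTst(G',k)$ and $\TTst(\tilde G,k)\to \TTst(\tilde G',k)$. Tracking cocycles (via $T_0\mapsto f(T_0)$ and $g\mapsto f(g)$) shows that, under the injections of Proposition~\ref{prop:stable-tori}, these bijections correspond precisely to the $H^1$-isomorphisms induced by $(i_{f^*})\inv$ and $(i_{\tilde f^*})\inv$. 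Combined with Step 2, the $H^1$-map induced by $i_{\phi,q^*}$ sends $\TTst(G,k)$ into $\TTst(\tilde G,k)$ if and only if the one induced by $i_{\phi',q^{\prime *}}$ sends $\TTst(G',k)$ into $\TTst(\tilde G',k)$, yielding the desired equivalence. The main obstacle is Step~2: the orbit-by-orbit compatibility between the two Weyl-group embeddings is where the $BC_n$ doubling scalars in \S\ref{para:weyl} interact nontrivially with the pullback construction; the remaining steps are essentially bookkeeping.
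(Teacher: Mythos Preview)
Your proof is correct and follows the same line as the paper's, but the paper's proof is a one-liner: it simply invokes that $\tilde f$ and $f$ induce bijections $\TTst(\tilde G,k)\to\TTst(\tilde G',k)$ and $\TTst(G,k)\to\TTst(G',k)$, leaving implicit the Weyl-group compatibility that you make explicit in Step~2. Your version is thus a fleshed-out form of the paper's argument rather than a different approach; the orbit-matching and doubling-scalar verification you flag as the ``main obstacle'' is exactly what the paper's terse proof sweeps under the rug.
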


\begin{proof}
This follows from the fact that
$\tilde f$ induces a bijection $\abmap{\TTst(\tilde G, k)}{\TTst(\tilde G', k)}$
and
$f$ induces a bijection $\abmap{\TTst(G, k)}{\TTst(G', k)}$.
\end{proof}

\begin{prop}
\label{prop:conorm-isogeny}
In the situation of Proposition~\ref{prop:isogeny-pushout},
suppose that
the parascopic datum $(\phi,q^*)$ is strong
and that the connected reductive $k$-groups
$\tilde{G}^\wedge$, $G^\wedge$, $\tilde G'^\wedge$, and $G'^\wedge$ are in
$k$-duality respectively with $\tilde{G}$, $G$, $\tilde G'$, and $G'$.
Let $\map{\tilde f\dual}{\tilde G'^\wedge}{\tilde G^\wedge}$
and
$\map{f\dual}{G'^\wedge}{G^\wedge}$ be
$k$-isotypies dual (see \S\ref{para:duality})
respectively to $\tilde f$ and $f$.
Let $\tilde f\dual$ and $f\dual$ also denote the corresponding
maps on the sets of geometric and stable conjugacy classes.
Then
\begin{align*}
\dnormst[{[\phi,q^*]}]
&= \tilde f\dual\circ \dnormst[{[\phi',q^{\prime *}]}] \circ {f\dual}\inv,
\quad\text{and}\\
\dnorm[{[\phi,q^*]}]
&= \tilde f\dual\circ \dnorm[{[\phi',q^{\prime *}]}] \circ {f\dual}\inv.
\end{align*}
In particular, the right-hand sides of the equations above are well defined.
\end{prop}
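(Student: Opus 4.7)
The plan is to invoke Theorem~\ref{thm:lift-geometric-general}, which describes each conorm on conjugacy classes via its restriction to a single maximal torus, and to establish the required compatibility at the torus level. The content to prove is that the diagram with horizontal maps $\dnorm[{[\phi',q'^*]}]$ and $\dnorm[{[\phi,q^*]}]$ and vertical maps $f^\wedge\colon \Cl(G'^\wedge) \to \Cl(G^\wedge)$ and $\tilde f^\wedge\colon \Cl(\tilde G'^\wedge) \to \Cl(\tilde G^\wedge)$ commutes, and similarly for stable classes. Once this is shown, both displayed identities of the proposition hold for every class in the image of $f^\wedge$ (for any choice of preimage), and the well-definedness of the right-hand side follows immediately, since any two preimages of the same class under $f^\wedge$ are then sent to the same class by $\tilde f^\wedge \circ \dnorm[{[\phi',q'^*]}]$.

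Given a semisimple $s' \in G'^\wedge(\bar k)$, I would first choose a maximal $k$-torus $S'^\wedge \subseteq G'^\wedge$ containing $s'$ and take $S^\wedge \subseteq G^\wedge$ to be the unique maximal $k$-torus containing $f^\wedge(S'^\wedge)$. Applying \S\ref{para:square-of-tori} to both parascopic data yields compatible maximal $k$-tori $S \subseteq G$, $\tilde S \subseteq \tilde G$, $\tilde S^\wedge \subseteq \tilde G^\wedge$ on one side and $S' \subseteq G'$, $\tilde S' \subseteq \tilde G'$, $\tilde S'^\wedge \subseteq \tilde G'^\wedge$ on the other. Using the bijections $\TTst(G,k) \cong \TTst(G',k)$ and $\TTst(\tilde G,k) \cong \TTst(\tilde G',k)$ induced by $f$ and $\tilde f$ (cf.\ the proof of Proposition~\ref{prop:isogeny-pushout}), I would then replace the tori in $G'$ and $\tilde G'$ by stable conjugates so that $f(S) = S'$ and $\tilde f(\tilde S) = \tilde S'$, and choose duality maps on the two sides that are compatible with the restrictions of $f^\wedge$ and $\tilde f^\wedge$ to these tori. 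Finally, replace $(\phi,q^*)$ and $(\phi',q'^*)$ by equivalent data relative to these tori.

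The heart of the argument is the character-lattice identity
\[
\tilde f^* \circ \norm^*_{[\phi',q'^*]} = \norm^*_{[\phi,q^*]} \circ f^*
\quad\text{as maps}\quad X^*(S') \to X^*(\tilde S).
\]
This follows from the commutative triangle~\eqref{eqn:norm} together with two inputs built into the pullback construction of Proposition~\ref{prop:associated-morphism}: the identity $q^* \circ f^* = \lsub{\Gamma}\tilde f^* \circ q'^*$, and the intertwining $\tilde f^* \circ \phi'(\gamma) = \phi(\gamma) \circ \tilde f^*$ for all $\gamma \in \Gamma$, which implies $\tilde f^* \circ \norm^*_{\phi'} = \norm^*_\phi \circ \lsub{\Gamma}\tilde f^*$. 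Transporting via the chosen duality maps and extending to $k$-morphisms of tori yields $\tilde f^\wedge \circ \dnorm[\phi',q'^*] = \dnorm[\phi,q^*] \circ f^\wedge$ as $k$-morphisms $S'^\wedge \to \tilde S^\wedge$. Evaluating at $s'$ and invoking Theorem~\ref{thm:lift-geometric-general} on each side gives commutativity of the square on the class of $s'$, which is the whole claim. I expect the main obstacle to be the bookkeeping in the middle step — simultaneously arranging stably-conjugate tori and compatible duality maps in all eight groups so that $f^\wedge$ and $\tilde f^\wedge$ really restrict to the expected dual maps between the chosen tori — since once that is set up, the character-lattice calculation is essentially dictated by the pullback construction.
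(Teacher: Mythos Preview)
Your proposal is correct and follows essentially the same route as the paper: reduce to a commutativity statement on tori, prove the character-lattice identity $\tilde f^*\circ\normchar[\phi',q'^*]=\normchar[\phi,q^*]\circ f^*$ from the defining relations $q^*\circ f^*=\lsub{\Gamma}\tilde f^*\circ q'^*$ and $\tilde f^*\circ\phi'(\gamma)=\phi(\gamma)\circ\tilde f^*$, and transport via duality. The paper organizes the same calculation via a cube diagram whose faces encode exactly the ingredients you list, and it handles well-definedness by the kernel condition $\dnormcochar{T'^\wedge}(\ker f^\wedge_*)\subseteq\ker\tilde f^\wedge_*$, which is an immediate consequence of the commuting right face of the cube --- equivalent to your observation that any two $f^\wedge$-preimages of a class have the same image under $\tilde f^\wedge\circ\dnorm[{[\phi',q'^*]}]$.
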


\begin{proof}
We only prove the statement about the conorm maps
on stable
conjugacy classes, as the statement
for geometric conjugacy classes follows from considering stable classes over $\overline k$.
%% Reason:
%% Suppose we have the result for stable classes over $k$.
%% Then we have the result for stable classes over $\overline k$,
%% so we have the result for geometric classes.

For a semisimple element $s \in G^\wedge(k)$,
choose a maximal $k$-torus $T^\wedge\subseteq G^\wedge$ containing $s$.
As in \S\ref{para:square-of-tori},
we obtain
maximal $k$-tori
$T\subseteq G$,
$\tilde{T}\subseteq \tilde{G}$, and
$\tilde{T}^\wedge \subseteq \tilde{G}^\wedge$,
all determined up to stable conjugacy.
These tori in turn determine maximal $k$-tori $T'\subseteq G'$, ${T'}^\wedge\subseteq {G'}^\wedge$,
$\tilde T'\subseteq\tilde G'$, and $\tilde T^{\prime\wedge}\subseteq \tilde G^{\prime\wedge}$.
Choose duality maps $\map{\delta}{X^*(T)}{X_*(T^\wedge)}$ and
$\map{\tilde\delta}{X^*(\tilde T)}{X_*(\tilde T^\wedge)}$. Then as in
\S\ref{para:duality}, these maps determine duality maps
$\map{\delta'}{X^*(T')}{X_*(T'^\wedge)}$ and
$\map{\tilde\delta'}{X^*(\tilde T')}{X_*(\tilde T'^\wedge)}$.
Corresponding to these tori and duality maps, we obtain
conorm maps $\map{\dnorm[T^\wedge]}{T^\wedge}{\tilde{T}^\wedge}$
and $\map{\dnorm[{T'}^\wedge]}{T'^\wedge}{\tilde{T'}^\wedge}$.
From 
\cite{adler-lansky:lifting}*{Theorem \xref{FGL-thm:main}
	and Remark \xref{FGL-rem:main}},
$\dnorm[T^\wedge](s) \in \dnormst[{[\phi,q^*]}]([s]\st)$.
Therefore, it will be enough to prove that
\begin{equation}
\label{eq:isogeny-conorm}
\dnorm[T^\wedge](s) = (\tilde f\dual\circ\dnorm[T'^\wedge] \circ {f\dual}\inv )(s),
\end{equation}
and that the expression on the right is well defined.

We have norm maps $\map{\normchar[T]}{X^*(T)}{X^*(\tilde{T})}$ and $\map{\normchar[T']}{X^*(T')}{X^*(\tilde{T}')}$
given by \eqref{eqn:norm} that yield the diagram:
$$
\begin{xy}
\xymatrix{ 
& \bX^*(\tilde{T}')
	\ar[rr]^{\tilde\delta'}
	\ar@{<-}'[d][dd]_(.3){\normchar[T']}
& & \bX_*(\tilde T^{\prime\wedge})
	\ar@{<-}[dd]^{\dnormcochar{T'^\wedge}}
\\ 
\bX^*(\tilde T)
	\ar@{<-}[ur]^{\tilde f^*}
	\ar[rr]^(.65){\tilde\delta}
	\ar@{<-}[dd]_{\normchar[T]}
& & \bX_*(\tilde{T}^\wedge)
	\ar@{<-}[ur]^{\tilde f^\wedge_*}
	\ar@{<-}[dd]^(.35){\dnormcochar{T^\wedge}}
\\ 
& \bX^*(T')
	\ar'[r]_(.6){\delta'}[rr]
& & \bX_*(T^{\prime\wedge})
\\ 
\bX^*(T)
	\ar[rr]_{\delta}
	\ar@{<-}[ur]^{f^*} 
& & \bX_*(T^\wedge)
	\ar@{<-}[ur]^{f^\wedge_*} 
} 
\end{xy}
$$
The top and bottom faces commute by the definitions of the duality maps
$\delta'$ and $\tilde\delta'$.
The front and back faces commute by the definitions of the conorms
$\dnormcochar{T^\wedge}$ and $\dnormcochar{T'^\wedge}$.
The left face commutes by the definitions of $\normchar[T]$, $\normchar[T']$,
$\phi'$ and ${q'}^*$.
Since all of the duality maps are isomorphisms, it follows that the right face must also commute.
Thus $\dnormcochar{T^\wedge}\circ {f_*\dual} = \tilde f_*\dual\circ \dnormcochar{T'^\wedge}$
so $\dnorm[T^\wedge]\circ {f\dual} = \tilde f\dual\circ \dnorm[T'^\wedge]$.
To prove \eqref{eq:isogeny-conorm}, it remains to show that 
$\dnorm[T'^\wedge] (\ker f \dual) \subseteq \ker \tilde f\dual$, or equivalently that
$\dnormcochar{T'^\wedge} (\ker f_* \dual) \subseteq \ker \tilde f_*\dual$.
But this follows from the commutativity of the right face of the cube.
\end{proof}

\section{Composition of parascopic data and conorms: Definition and basic results}
\label{sec:decomposition-defn}

\begin{defn}
\label{defn:composition}
Suppose that
\begin{itemize}
\item
for $j\in \{0,1,2\}$,
$\Psi_j = (X^*_j, \Phi_j, {X_j}_*, \Phi^\vee_j)$,
is a root datum equipped with an action of $\Gal(k)$;
\item
$\Gamma$, $\Gamma_0$, and $\Gamma_1$ are finite groups;
\item
for $j=0$ or $1$,
$(\phi_j, q_j^*)$
is a parascopic datum for
$(\Psi_j, \Gamma_j, \Psi_{j+1})$;
and
\item
$(\phi, q^*)$
is a parascopic datum for
$(\Psi_0, \Gamma, \Psi_2)$.
\end{itemize}
We say that
\emph{\IsCompOf{(\phi,q^*)}{(\phi_0, q_0^*)}{(\phi_1, q_1^*)}}
if the following are satisfied.
\begin{itemize}
\item
$\Gamma$ is an extension of $\Gamma_1$ by $\Gamma_0$.
\item
$\phi_0$ is the restriction of $\phi$ to $\Gamma_0$.
\item
The morphism $q_0^*$ from $\Psi_1$ to $\lsub{\phi_0}\Psi_0$
is $\Gamma_1$-equivariant with respect to the action $\phi_1$ on $\Psi_1$ and
the action $\bar\phi$
of $\Gamma_1 = \Gamma / \Gamma_0$
on $\lsub{\phi_0}\Psi_0$ arising from the action $\phi$ of $\Gamma$ on $\Psi_0$.
Hence it induces a morphism $\llsub{\Gamma_1}q_0^*$ from $\lsub{\phi_1}\Psi_1$ to
$\lsub{\bar\phi}(\lsub{\phi_0}\Psi_0) = \lsub{\phi}\Psi_0$.
\item
As morphisms from $\Psi_2$ to $\lsub{\phi}\Psi_0$,
we have $q^* = \llsub{\Gamma_1}q_0^* \circ q_1^*$.
\end{itemize}
\end{defn}

\begin{rem}
In general, a composition, like a group extension,
is not determined by its factors alone.
However, the composition is indeed uniquely determined if either $\Gamma_0$ or $\Gamma_1$ is trivial.
\end{rem}

\begin{example}
Suppose $\Psi_0$ is a root datum with $\Gal(k)$-action,
$\Psi_1$ is a $\Gal(k)$-invariant subdatum of $\Psi_0$,
and $\Psi_2$ is a $\Gal(k)$-invariant subdatum of $\Psi_1$.
Then the parascopic datum for $(\Psi_0,1,\Psi_2)$
induced by the inclusion morphism from $\Psi_2$ to $\Psi_0$
is the composition of the analogous parascopic data for
$(\Psi_0,1,\Psi_1)$ and $(\Psi_1,1,\Psi_2)$.
\end{example}

\begin{rem}
\label{rem:decomp-canonical}
Suppose $\tilde\Psi$ and $\Psi = (X^*,\Phi,X_*,\Phi^\vee)$ are root data, each with an action of $\Gal(k)$,
$\Gamma$ is a finite group,
and $(\phi,q^*)$ is a parascopic datum for $(\tilde\Psi,\Gamma,\Psi)$.
Then $(\phi,q^*)$ is a composition of the following three parascopic data:
\begin{itemize}
\item
Letting $\id$ denote the morphism from $\lsub{\phi}\tilde\Psi$ to itself,
we obtain a torus-inclusive parascopic datum $(\phi,\id)$
for $(\tilde\Psi, \Gamma,\lsub{\phi}\tilde\Psi)$
(see Example \ref{ex:parascopy-group-action}).
\item Letting $1$ denote the trivial action of the trivial group on $\lsub\phi\tilde\Psi$,
and letting $\id$ denote the identity map on the character lattice in $\lsub{\phi}\tilde\Psi$,
we obtain the torus-inclusive parascopic datum $(1,\id)$
for $(\lsub\phi\tilde\Psi, 1, q^*(\Psi))$
(see Example~\ref{ex:parascopy-gamma-trivial}).
\item
The map $q^*$ restricts to give a bijection of $\Phi$ onto $q^*(\Phi)$,
so the pair $(1,q^*)$ is a root-inclusive parascopic datum for $(q^*(\Psi), 1,\Psi)$
of the kind considered in Example~\ref{ex:parascopy-gamma-trivial} for
$q^*$ as in Remark~\ref{rem:morphism-example}(\ref{item:morphism-example-isogeny-derived})
on isotypies.
\end{itemize}
Note that if $(\phi,q^*)$ is torus-inclusive,
then $q^*$ actually gives an isomorphism from $\Psi$ to $q^*(\Psi)$,
so the third factor above is essentially unnecessary.
Also, $(\phi,q^*)$ is root-inclusive if and only if we can replace
$\lsub{\phi}\tilde\Phi$ by $\lsub{\phi}\tilde\Phi\red$ in the first two factors above.
\end{rem}

\begin{lem}
\label{lem:weyl-embedding-factorization}
Retain the notation of Definition \ref{defn:composition}.
Let
$\map{i_{\phi,q^*}}{W(\Psi_2)}{W(\Psi_0)}$, 
$\map{i_{\phi_0,q_0^*}}{W(\Psi_1)}{W(\Psi_0)}$, and
$\map{i_{\phi_1,q_1^*}}{W(\Psi_2)}{W(\Psi_1)}$
be the Weyl group embeddings described in~\S\ref{para:weyl}.
Then $i_{\phi,q^*}= i_{\phi_0,q_0^*} \circ i_{\phi_1,q_1^*}$.
\end{lem}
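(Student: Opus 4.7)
The plan is to reduce the identity to a clean compatibility between Weyl embeddings attached to restrictions of group actions, and then to verify that compatibility by a direct computation on reflection generators.

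First I would use the factorizations $i_{\phi,q^*} = i_\phi \circ i_{q^*}$ and analogously for the two factors (from~\S\ref{para:weyl}), together with the functoriality of $i_{(-)}$ along compositions of morphisms of root data (which is immediate from the formula in Remark~\ref{rem:morphism-properties}(c), since a morphism $f^*$ sends $w_\alpha \mapsto w_{f^*(\alpha)}$ on generators). The equality $q^* = \llsub{\Gamma_1}q_0^* \circ q_1^*$ from the definition of composition then yields $i_{q^*} = i_{\llsub{\Gamma_1}q_0^*} \circ i_{q_1^*}$. After cancelling the injection $i_{q_1^*}$ common to both sides, the problem becomes proving
\begin{equation*}
i_\phi \circ i_{\llsub{\Gamma_1}q_0^*} = i_{\phi_0} \circ i_{q_0^*} \circ i_{\phi_1}
\end{equation*}
as maps $W(\lsub{\phi_1}\Psi_1) \to W(\Psi_0)$.

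Second, I would introduce the induced $\Gamma_1 = \Gamma/\Gamma_0$ action $\bar\phi$ on $\lsub{\phi_0}\Psi_0$, so that $\lsub{\bar\phi}\lsub{\phi_0}\Psi_0 = \lsub{\phi}\Psi_0$. The $\Gamma_1$-equivariance of $q_0^*$ shows that $i_{q_0^*} \circ i_{\phi_1}$ lands in the $\bar\phi(\Gamma_1)$-fixed subgroup of $W(\lsub{\phi_0}\Psi_0)$, which is exactly the image of $i_{\bar\phi}$. Moreover, on a generator $w_\alpha$ with $\alpha \in \lsub{\phi_1}\Phi_1$, both $i_{q_0^*} \circ i_{\phi_1}$ and $i_{\bar\phi} \circ i_{\llsub{\Gamma_1}q_0^*}$ evaluate to $\prod_{\beta} w_{q_0^*(\beta)}$, where $\beta$ ranges over the preimage orbit of $\alpha$ in $\Phi_1$ (and, via $q_0^*$, over the corresponding preimage orbit of $\llsub{\Gamma_1}q_0^*(\alpha)$ in $\lsub{\phi_0}\Phi_0$). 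Hence the desired identity is reduced to the purely action-theoretic statement
\begin{equation*}
i_\phi = i_{\phi_0} \circ i_{\bar\phi}
\end{equation*}
as embeddings $W(\lsub{\phi}\Psi_0) \hookrightarrow W(\Psi_0)$.

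Finally, I would verify this by evaluating both sides on a reflection generator $w_{\bar\alpha}$ for $\bar\alpha \in \lsub{\phi}\Phi_0$. By the description of the Weyl embedding in~\S\ref{para:weyl}, the left side is $\prod_{\gamma \in \Xi_{\bar\alpha}^{(\Gamma)}} w_\gamma$, while the right side expands to $\prod_{\beta \in \Xi_{\bar\alpha}^{(1)}} \prod_{\gamma \in \Xi_\beta^{(0)}} w_\gamma$. The crucial claim is the set equality
\begin{equation*}
\Xi_{\bar\alpha}^{(\Gamma)} = \bigsqcup_{\beta \in \Xi_{\bar\alpha}^{(1)}} \Xi_\beta^{(0)}
\end{equation*}
inside $\Phi_0$, which reflects the fact that taking $\Gamma$-orbits in $\Phi_0$ factors through the intermediate $\Gamma_0$-orbits. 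I expect the main obstacle to be the careful bookkeeping of the doubling scalars $c^{(\Gamma)}, c^{(1)}, c^{(0)} \in \{1,2\}$ that determine which positive multiple of a given root appears in each coinvariant root system; the identity $c^{(\Gamma)} = c^{(0)}c^{(1)}$ (with at most one factor equal to $2$) follows from the criterion in \cite{kottwitz-shelstad:twisted-endoscopy}*{\S1.3} identifying doubling with non-orthogonality of the relevant orbit, applied separately at the two stages and then synthesized via the observation that a non-orthogonality in $\Phi_0$ must already be visible at one of the two stages. Granting the set equality, both sides become the product of reflections through the same orthogonal family of mutually commuting roots, yielding the required identity.
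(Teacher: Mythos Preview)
Your proposal is correct and follows essentially the same approach as the paper: both reduce the identity, via a commutative diagram built from the factorizations $i_{\phi,q^*}=i_\phi\circ i_{q^*}$ and $q^*=\llsub{\Gamma_1}q_0^*\circ q_1^*$, to the core statement $i_\phi=i_{\phi_0}\circ i_{\bar\phi}$, and then prove that by the orbit equality $\Xi_{\bar\alpha}=\bigsqcup_{\beta\in\Xi_{\bar\alpha}^{(1)}}\Xi_\beta^{(0)}$ together with a case analysis of the doubling constants. The paper carries out the doubling-constant bookkeeping (your $c^{(\Gamma)}=c^{(0)}c^{(1)}$ with at most one factor equal to $2$) explicitly via the $A_{2n}$/$BC_n$ criterion from \cite{kottwitz-shelstad:twisted-endoscopy}*{\S1.3}, exactly as you anticipate.
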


\begin{proof}
As above, let $\bar\phi$ denote the action of $\Gamma_1 = \Gamma / \Gamma_0$
on $\lsub{\phi_0}\Psi_0$ arising from the action $\phi$ of $\Gamma$ on $\Psi_0$.
As in \S\ref{para:weyl}, we have embeddings
\begin{align*}
i_{\phi_0}\colon \abmap{W(\lsub{\phi_0}\Psi_0)}{W(\Psi_0)}, & \quad
i_{\bar\phi}\colon \abmap{W(\lsub{\phi}\Psi_0) = W(\lsub{\bar\phi}(\lsub{\phi_0}\Psi_0))}{W(\lsub{\phi_0}\Psi_0)},\\ 
\map{i_{\phi_1}}{W(\lsub{\phi_1}\Psi_1)}{W(\Psi_1)}, & \quad \text{and}\quad  i_\phi\colon  \abmap{W(\lsub{\phi}\Psi_0)}{W(\Psi_0)}.
\end{align*}
Also, from Remark \ref{rem:morphism-properties},
the morphisms $q^*$, $q_0^*$, $q_1^*$, and $\lsub{\Gamma_1}q_0^*$ induce embeddings
\begin{align*}
\map{i_{q_0*}}{W(\Psi_1)}{W(\lsub{\phi_0}\Psi_0)}, & \quad
\map{i_{\lsub{\Gamma_1}q_0^*}}{W(\lsub{\phi_1}\Psi_1)}{W(\lsub{\bar\phi}(\lsub{\phi_0}\Psi_0)) = W(\lsub{\phi}\Psi_0)},\\
\map{i_{q_1^*}}{W(\Psi_2)}{W(\lsub{\phi_1}\Psi_1)},\ & \text{and}\quad \map{i_{q^*}}{W(\Psi_2)}{W(\lsub{\phi}\Psi_0)}.
\end{align*}
These maps fit into the diagram:
\begin{equation}
\label{eq:weyl}
\begin{xy}
\xymatrix{
& &
W(\Psi_0)\\
&
W(\lsub{\phi}\Psi_0) \ar[r]^{i_{\bar\phi}} \ar[ur]^{i_{\phi}} &
W(\lsub{\phi_0}\negthinspace\Psi_0) \ar[u]^{i_{\phi_0}}\\
W(\Psi_2) \ar[r]^{i_{q_1^*}} \ar[ur]^{i_{q^*}} \ar@/^2pc/[uurr]^{i_{\phi,q^*}} 
	\ar@/_2pc/[rr]_{i_{\phi_1,q_1^*}} &
W(\lsub{\phi_1}\Psi_1) \ar[r]^{i_{\phi_1}} \ar[u]_{i_{\llsub{\Gamma_1}q_0^*}} &
W(\Psi_1) \ar[u]^{i_{q_0^*}} \ar@/_2pc/[uu]_{i_{\phi_0,q_0^*}}
}
\end{xy}
\end{equation}
The lower left triangle commutes since
$q^* = \llsub{\Gamma_1}q_0^* \circ q_1^*$ by Definition~\ref{defn:composition}.
The commutativity of the lower square follows from the definitions of
$\llsub{\Gamma_1}q_0^*$, $i_{\phi_1}$, and $i_{\bar\phi}$. Finally, the commutativity of diagrams involving each
curved arrow and its two neighbors follows from the definitions of
$i_{\phi,q^*}$, $i_{\phi_0,q_0^*}$, and $i_{\phi_1,q_1^*}$ in \S\ref{para:weyl}.
To conclude that $i_{\phi,q^*}= i_{\phi_0,q_0^*} \circ i_{\phi_1,q_1^*}$,
it suffices to show that the upper right triangle commutes, i.e., that $i_{\phi} = i_{\phi_0} \circ i_{\bar\phi}$.

Let $\alpha\in \lsub{\phi}\Phi_0$. As described in \S\ref{para:weyl},
$i_\phi(w_\alpha)$ (resp.~$i_{\bar\phi}(w_\alpha)$)
is defined in terms of an orthogonal $\phi(\Gamma)$-orbit $\Xi_\alpha$ in $\Phi_0$
(resp.~$\bar\phi(\Gamma_1)$-orbit $\Xi_\alpha^1\subseteq\lsub{\phi_0}\Phi_0$)
attached to $\alpha$.
Analogously, for $\beta\in\lsub{\phi_0}\Phi_0$, $i_{\phi_0}(w_\beta)$
is defined in terms of an
orthogonal $\phi_0(\Gamma_0)$-orbit
$\Xi_\beta^0\subseteq\Phi_0$.

By the definitions of these embeddings, to verify that $i_{\phi} = i_{\phi_0} \circ i_{\bar\phi}$,
it suffices to show that for any $\alpha\in\lsub{\phi}\Phi_0$,
\begin{equation}
\label{eq:orbits}
\Xi_\alpha = \bigcup_{\beta\in\Xi^1_\alpha} \Xi^0_\beta .
\end{equation}

To establish this equality, suppose $\xi\in\Xi^0_\beta$ for some $\beta\in\Xi^1_\alpha$. Then the image of
$\xi$ in $\lsub{\phi_0}\Phi_0$ (resp.~$\beta$ in $\lsub{\phi}\Phi_0$) is
$c_\beta \beta$ for $c_\beta = 1$ or $2$ (resp.\ $c_\alpha\alpha$ for $c_\alpha= 1$ or $2$).
It follows that the image of $\xi$ in $\lsub{\phi}\Phi_0$ is
$c_\alpha c_\beta \, \alpha$. To show that
$\xi\in\Xi_\alpha$, it remains to verify that $c_\alpha c_\beta\alpha$ is the largest positive multiple
of $\alpha$ in $\lsub{\phi}\Phi_0$.

If $c_\alpha = 2$, it follows from \S\ref{para:weyl} that $\beta$
must be contained in an irreducible component of $\lsub{\phi_0}\Phi_0$ of type
$A_{2n}$ on which an element of $\bar\phi(\Gamma_1)$ acts nontrivially.
Since this component is not of type $BC$,
$c_\beta$ must equal $1$, hence $c_\alpha c_\beta \alpha = 2\alpha$
is the largest positive multiple of $\alpha$ in $\lsub{\phi}\Phi_0$.

On the other hand, if $c_\beta = 2$, then from \S\ref{para:weyl},
$\beta$ must be contained in an irreducible component of $\lsub{\phi_0}\Phi_0$ of type $BC_n$.
Since this component is not of type $A$, it follows from \S\ref{para:weyl} that $c_\alpha = 1$. Thus again
$c_\alpha c_\beta\alpha = 2\alpha$
is the largest positive multiple of $\alpha$ in $\lsub{\phi}\Phi_0$.

If $c_\alpha=c_\beta=1$, we must show that $\alpha$ is nonmultipliable in $\lsub{\phi}\Phi_0$.
If it is multipliable, then as in \S\ref{para:weyl}, the $\phi(\Gamma)$-orbit $\Theta$ of roots
in $\Phi_0$ with image $\alpha$ in $\lsub{\phi}\Phi_0$ is not orthogonal.
However, since $c_\beta = 1$, the $\phi(\Gamma_0)$-orbit $\Theta_0$ of roots
in $\Phi_0$ with image $\beta$ in $\lsub{\phi_0}\Phi_0$ is orthogonal.
Let $\Phi'$ be an irreducible component of $\Phi_0$ meeting $\Theta_0\subseteq\Theta$
(which must necessarily be of type $A_{2n}$). Then
$\Phi'\cap\Theta_0$ is orthogonal, but $\Phi'\cap\Theta$ is not. In particular,
$\Phi'\cap\Theta_0\neq\Phi'\cap\Theta$, so
the stabilizer of $\Phi'$ in $\phi(\Gamma_0)$ must act trivially on $\Phi'$, while
the stabilizer of $\Phi'$ in $\phi(\Gamma)$ acts nontrivially on $\Phi'$.
Thus the image $\Phi''$ of $\Phi'$ in $\lsub{\phi_0}\Phi_0$ is isomorphic to $\Phi'$.
The image of $\Theta$ in $\lsub{\phi_0}\Phi_0$ is the $\bar\phi(\Gamma_1)$-orbit $\Theta_1$
of roots in $\lsub{\phi_0}\Phi_0$ that map to $\alpha\in\lsub{\phi}\Phi_0$. 
The image of $\Phi'\cap\Theta$ in $\Phi''$ is $\Phi''\cap\Theta_1$,
and since $\Phi'\cap\Theta$ is not orthogonal, neither is $\Phi''\cap\Theta_1\subseteq\Theta_1$.
But this contradicts $c_\alpha=1$ by \S\ref{para:weyl}. Thus $\alpha$ must be nonmultipliable.

We have therefore shown that the right side of \eqref{eq:orbits} is contained in the left.
To prove the opposite inclusion, suppose that $\xi\in\Xi_\alpha$, and let
$\beta'$ be the image of $\xi$ in $\lsub{\phi_0}\Phi_0$. Let 
$\beta$ be either $\beta'/2$ or $\beta'$ accordingly as $\beta'$ is either divisible or not.
Then it is readily seen that $\beta\in\Xi^1_\alpha$ and $\xi\in\Xi^0_\beta$.
This completes the proof of \eqref{eq:orbits}, and it follows that $i_{\phi} = i_{\phi_0} \circ i_{\bar\phi}$.
\end{proof}

\begin{prop}
\label{prop:composition-equivalence}
Let $G_0$, $G_1$, and $G_2$ be connected reductive $k$-groups,
and let $\Gamma$,
$\Gamma_0$, and $\Gamma_1$ be finite groups. Let
$T_j$ be a maximal $k$-torus of $G_j$ for $j=0,1,2$.
Suppose that $(\phi,q^*)$ is a parascopic datum for
$(G_0,\Gamma,G_2)$ relative to the tori $T_0$ and $T_2$,
and for $j=0,1$, $(\phi_j,q^*_j)$ is a parascopic
datum for $(G_j,\Gamma_j,G_{j+1})$
relative to the tori $T_j$ and $T_{j+1}$.
Suppose that
\IsCompOf{(\phi,q^*)}{(\phi_0,q^*_0)}{(\phi_1,q^*_1)}.
If $(\phi',q^{\prime *})$ is equivalent to $(\phi,q^*)$, then for $j=0,1$, there exist parascopic data
$(\phi'_j,q^{\prime *}_j)$ for $(G_j,\Gamma_j,G_{j+1})$ such that
\begin{itemize}
\item $(\phi'_j,q^{\prime*}_j)$ is equivalent to $(\phi_j,q^*_j)$ for $j = 0,1$,
\item
\IsCompOf{(\phi',q^{\prime *})}{(\phi'_0,q^{\prime*}_0)}{(\phi'_1,q^{\prime *}_1)}.
\end{itemize}
\end{prop}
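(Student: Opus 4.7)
The plan is to produce an intermediate $k$-torus $T_1'\subseteq G_1$, together with an element $g_1\in G_1(k\sep)$ with $\lsup{g_1}T_1 = T_1'$, such that the conjugate of $(\phi_0,q_0^*)$ via the pair $(\tilde g_0,g_1)$ and the conjugate of $(\phi_1,q_1^*)$ via the pair $(g_1,g_2)$ will serve as $(\phi_0',q_0^{\prime *})$ and $(\phi_1',q_1^{\prime *})$; here $\tilde g_0\in G_0(k\sep)$ and $g_2\in G_2(k\sep)$ are the elements witnessing the equivalence of $(\phi,q^*)$ and $(\phi',q^{\prime *})$ per Definition~\ref{defn:equivalent-data}. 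By Remark~\ref{rem:equivalence}, the cocycles $c_0(\sigma):=\tilde g_0\inv\sigma(\tilde g_0)\in W(G_0,T_0)$ and $c_2(\sigma):=g_2\inv\sigma(g_2)\in W(G_2,T_2)$ correspond under the Weyl group embedding $i_{\phi,q^*}$, and by Lemma~\ref{lem:weyl-embedding-factorization}, $i_{\phi,q^*}=i_{\phi_0,q_0^*}\circ i_{\phi_1,q_1^*}$. Hence setting $c_1(\sigma):=i_{\phi_1,q_1^*}(c_2(\sigma))\in W(G_1,T_1)$ produces a cocycle satisfying $i_{\phi_0,q_0^*}(c_1)=c_0$.

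The main obstacle is producing $g_1\in G_1(k\sep)$ such that $g_1\inv\sigma(g_1)$ has image $c_1(\sigma)$ in $W(G_1,T_1)$. By Proposition~\ref{prop:stable-tori}, this amounts to showing that $[c_1]\in H^1(k,W(G_1,T_1))$ lies in the image of $\TTst(G_1,k)$, which is automatic if $G_1$ is $k$-quasisplit, in which case $g_1$ is produced via~\cite{raghunathan:tori}*{Theorem 1.1} exactly as in the proof of Proposition~\ref{prop:embedding}. Granted such a $g_1$, we set $T_1':=\lsup{g_1}T_1$, and define $(\phi_0',q_0^{\prime *})$ (resp.\ $(\phi_1',q_1^{\prime *})$) to be the conjugate of $(\phi_0,q_0^*)$ (resp.\ $(\phi_1,q_1^*)$) via the pair $(\tilde g_0,g_1)$ (resp.\ $(g_1,g_2)$). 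That each is a parascopic datum equivalent to the original follows from Remark~\ref{rem:equivalence}, using precisely the cocycle compatibilities $i_{\phi_0,q_0^*}(c_1)=c_0$ and $i_{\phi_1,q_1^*}(c_2)=c_1$ built into the construction of $c_1$.

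Finally, to verify that $(\phi',q^{\prime *})$ is the composition of $(\phi_0',q_0^{\prime *})$ and $(\phi_1',q_1^{\prime *})$ per Definition~\ref{defn:composition}, one checks each clause in turn. The extension structure on $\Gamma$ as an extension of $\Gamma_1$ by $\Gamma_0$ is inherited unchanged. The restriction identity $\phi_0'=\phi'|_{\Gamma_0}$ and the $\Gamma_1$-equivariance of $q_0^{\prime *}$ (with respect to the induced actions) follow from the corresponding properties for the unprimed data by conjugating through by $\Int(\tilde g_0)^*$. The equation $q^{\prime *}=\llsub{\Gamma_1}q_0^{\prime *}\circ q_1^{\prime *}$ then follows from $q^*=\llsub{\Gamma_1}q_0^*\circ q_1^*$ by inserting the conjugations $\Int(\tilde g_0)^*$, $\Int(g_1)^*$, and $\Int(g_2)^*$ at the positions dictated by the defining formulas for $q^{\prime *}$, $q_0^{\prime *}$, and $q_1^{\prime *}$; the $\Int(g_1)^*$ and $\Int(g_1)^{*-1}$ cancel in the middle, yielding the desired equality.
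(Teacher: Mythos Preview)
Your proof is correct and follows essentially the same route as the paper's: construct the intermediate cocycle $c_1=i_{\phi_1,q_1^*}(c_2)$, invoke Lemma~\ref{lem:weyl-embedding-factorization} to get $i_{\phi_0,q_0^*}(c_1)=c_0$, lift $c_1$ to an element $g_1\in G_1(k\sep)$ via \cite{raghunathan:tori}*{Theorem 1.1}, define the $(\phi_j',q_j^{\prime*})$ as conjugates, and verify the composition axioms. Your caveat that the lifting step via Raghunathan's theorem requires $G_1$ to be $k$-quasisplit is well placed; the paper's proof invokes the same theorem at the same point without explicitly flagging this hypothesis, so your account is, if anything, slightly more careful on this point.
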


\begin{proof}
For $j = 0,1,2$, let $W_j$ denote $W(G_j,T_j)$.
Suppose that $(\phi',q'^*)$ is equivalent to $(\phi,q^*)$ via the elements
$g_2\in G_2(k\sep)$ and $ g_0\in G_0(k\sep)$.
For $j = 0,2$, $g_j\inv\sigma(g_j)\in N_{G_j}(T_j)(k\sep)$ for all $\sigma\in\Gal(k)$.
Moreover, the map that takes $\sigma$ to the image of
$g_j\inv\sigma(g_j)$ in $W_j$ is a cocycle $c_j$ in $Z^1(k,W_j)$.
It follows from~\cite{adler-lansky:lifting}*{Lemma 7.4} that the map
$\abmap{Z^1(k,W_2)}{Z^1(k,W_0)}$
induced by $\map{i_{\phi,q^*}}{W_2}{W_0}$ takes $c_2$ to $c_0$.

Let $c_1\in Z^1(k,W_1)$ be the image of $c_2$ under the map
$\abmap{Z^1(k,W_2)}{Z^1(k,W_1)}$ induced by
$\map{i_{\phi_1,q_1^*}}{W_2}{W_1}$.
From \cite{raghunathan:tori}*{Theorem 1.1},
there exists $g_1\in G_1(k\sep)$ such that
the image of $g_1\inv\sigma(g_1)$ in $W(G_1,T_1)$ coincides with
$c_1(\sigma)$ for each $\sigma\in\Gal(k)$.
Also, it follows from Lemma~\ref{lem:weyl-embedding-factorization} that $c_0$ is the
image of $c_1$ under the map $\abmap{Z^1(k,W_1)}{Z^1(k,W_0)}$
induced by $\map{i_{\phi_0,q_0^*}}{W_1}{W_0}$.

For $j = 0,1$, define $\phi'_j$ and $q'^*_j$ as in 
Definition \ref{defn:equivalent-data}.
That is, for each
$\gamma\in\Gamma_j$, let $\phi'_j(\gamma) = {\Int(g_j)^*}\inv \circ \phi_j(\gamma) \circ \Int(g_j)^*$,
and define $q'^*_j$ to be the map
${\lsub{\Gamma_j}\Int(g_j)^*}\inv \circ q^*_j \circ \Int(g_{j+1})^*$, where 
$\lsub{\Gamma_j}\Int(g_j)^*$ is the isomorphism from
$\lsub{\phi'_j}\Psi(G_j, \lsup{g_j}T_j)$ to $\lsub{\phi_j}\Psi(G_j,T_j)$ induced by
$\Int(g_j)^*$. For $j = 0,1$, it follows from~\cite{adler-lansky:lifting}*{Lemma 7.4}
that the pair $(\phi'_j,{q_j^{\prime *}})$ is a parascopic
datum for $(G_j,\Gamma,G_{j+1})$ that is evidently equivalent to
$(\phi_j,q^*_j)$ via $g_{j+1}$ and $g_j$.

It is now easily verified that
\IsCompOf{(\phi',q'^*)}{(\phi'_0,q'^*_0)}{(\phi'_1,q'^*_1)}.
\end{proof}

\begin{prop}
\label{prop:conorm-factorization}
Maintaining the notation from Proposition~\ref{prop:composition-equivalence},
for $j=0,1,2$, suppose that $G_j$ is $k$-quasisplit, 
and let $G^\wedge_j$ be a connected reductive quasisplit
$k$-group in $k$-duality with $G_j$.
For $j=0,1$,
consider the maps
$
\map
{\dnorm[{[\phi_j,q^*_j]}]}
{\Cl(G^\wedge_{j+1})}
{\Cl(G^\wedge_j)}
$
and
$
\map
{\dnormst[{[\phi_j,q^*_j]}]}
{\Cl\st(G^\wedge_{j+1})}
{\Cl\st(G^\wedge_j)}
$,
and also the maps 
$\dnorm[{[\phi,q^*]}]$
and
$\dnormst[{[\phi,q^*]}]$.
Then
$$
\dnorm[{[\phi,q^*]}]
=
\dnorm[{[\phi_0,q^*_0]}]
\circ
\dnorm[{[\phi_1,q^*_1]}]
\qquad\text{and}\qquad
\dnormst[{[\phi,q^*]}]
=
\dnormst[{[\phi_0,q^*_0]}]
\circ
\dnormst[{[\phi_1,q^*_1]}].
$$
\end{prop}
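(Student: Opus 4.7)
The plan is to reduce the claimed factorization of conorms on conjugacy classes to a factorization of norm maps on character lattices. Since stable classes over $\bar k$ recover geometric conjugacy, it suffices to prove the $\dnormst$ identity; the $\dnorm$ version will follow. Fix a semisimple $s \in G_2^\wedge(k)$ and a maximal $k$-torus $T_2^\wedge \subseteq G_2^\wedge$ containing $s$. Applying the construction of \S\ref{para:square-of-tori} to the datum $(\phi,q^*)$ starting from $T_2^\wedge$, I obtain compatible maximal $k$-tori $T_2 \subseteq G_2$ and $T_0 \subseteq G_0$, together with an equivalent parascopic datum $(\phi',q^{\prime *})$ relative to $T_0$ and $T_2$. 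By Proposition~\ref{prop:composition-equivalence}, one may write $(\phi',q^{\prime *})$ as a composition of parascopic data $(\phi_0',q_0^{\prime *})$ and $(\phi_1',q_1^{\prime *})$, equivalent to $(\phi_0,q_0^*)$ and $(\phi_1,q_1^*)$ respectively, and relative to $T_0,T_1$ and $T_1,T_2$ for some maximal $k$-torus $T_1 \subseteq G_1$.

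Choose a maximal $k$-torus $T_1^\wedge \subseteq G_1^\wedge$ in the stable class corresponding to $T_1$ under $k$-duality, and duality maps $\delta_j\colon X^*(T_j) \to X_*(T_j^\wedge)$ for $j = 0,1,2$. By Theorem~\ref{thm:lift-geometric-general} applied three times, the $\dnormst$ identity follows from the toral identity
$$\dnorm[\phi',q^{\prime *},\delta_0,\delta_2] = \dnorm[\phi_0',q_0^{\prime *},\delta_0,\delta_1] \circ \dnorm[\phi_1',q_1^{\prime *},\delta_1,\delta_2].$$
By formula \eqref{eq:norm-char}, the duality maps cancel from this equation, reducing it to the norm identity
$$\normchar[\phi',q^{\prime *}] = \normchar[\phi_0',q_0^{\prime *}] \circ \normchar[\phi_1',q_1^{\prime *}]$$
of $\Gal(k)$-equivariant maps $X^*(T_2) \to X^*(T_0)$.

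To establish the norm identity, I would combine the defining equality $q^{\prime *} = \llsub{\Gamma_1} q_0^{\prime *} \circ q_1^{\prime *}$ from Definition~\ref{defn:composition} with \eqref{eqn:norm}, decomposing the claim into two sub-identities. First, the $\Gamma_1$-equivariance of $q_0^{\prime *}$ (required by the definition of a composition) together with the definition of $\llsub{\Gamma_1} q_0^{\prime *}$ as a map on coinvariants yields $q_0^{\prime *} \circ \normchar[\phi_1'] = \normchar[\bar\phi'] \circ \llsub{\Gamma_1} q_0^{\prime *}$, where $\bar\phi'$ is the induced $\Gamma_1$-action on $\lsub{\phi_0'}\Psi(G_0,T_0)$. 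Second, I need the transitivity of norms $\normchar[\phi'] = \normchar[\phi_0'] \circ \normchar[\bar\phi']$; this follows by choosing a set $S \subseteq \Gamma$ of coset representatives for $\Gamma_0$, writing $\sum_{\gamma \in \Gamma} \phi'(\gamma) = \bigl(\sum_{\gamma_0 \in \Gamma_0} \phi'(\gamma_0)\bigr) \circ \bigl(\sum_{s \in S} \phi'(s)\bigr)$ at the level of $X^*(T_0)$, and verifying that both sides descend consistently to the appropriate coinvariant spaces.

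The step I expect to be most delicate is ensuring that the intermediate $T_1^\wedge$ is simultaneously the torus that $(\phi_1,q_1^*)$ assigns to $T_2^\wedge$ and the one that $(\phi_0,q_0^*)$ takes to $T_0^\wedge$. This compatibility is exactly what Lemma~\ref{lem:weyl-embedding-factorization} guarantees: since the Weyl-group cocycle determining $T_0$ from $T_2$ via $i_{\phi,q^*}$ factors as the composition of the cocycles via $i_{\phi_0,q_0^*}$ and $i_{\phi_1,q_1^*}$, the corresponding stable conjugacy classes on the dual side line up. Once this bookkeeping is in place, the remaining algebra of norm maps is routine.
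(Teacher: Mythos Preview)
Your proposal is correct and follows essentially the same approach as the paper's proof: reduce to a toral conorm identity via Theorem~\ref{thm:lift-geometric-general}, then to the norm identity $\normchar[\phi,q^*] = \normchar[\phi_0,q_0^*] \circ \normchar[\phi_1,q_1^*]$, which is established by the same two ingredients you name (transitivity of norms $\normchar[\phi] = \normchar[\phi_0]\circ\normchar[\bar\phi]$ from the double-sum factorization, and the equivariance square for $q_0^*$). The paper organizes this via a single commutative diagram and is slightly looser about the passage to equivalent data at the new tori, whereas you invoke Proposition~\ref{prop:composition-equivalence} explicitly; this is a difference of bookkeeping, not of substance.
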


\begin{proof}
Our parascopic data
$(\phi_0,q^*_0)$,
$(\phi_1,q^*_1)$,
and
$(\phi,q^*)$,
induce embeddings
of the sets of stable conjugacy classes of maximal $k$-tori
(from Proposition~\ref{prop:stable-tori}).
From Lemma \ref{lem:weyl-embedding-factorization},
these embeddings are transitive:
$$
\begin{xy}
\xymatrix{
\TTst(G_2,k) \ar[r] \ar@/^1pc/[rr]
&\TTst(G_1,k) \ar[r]
&\TTst(G_0,k).
}
\end{xy}
$$

Let $s\in G_2^\wedge(\bar k)$, and let $S_2^\wedge\subseteq G_2^\wedge$ be a maximal $k$-torus containing $s$.
Then as in \S\ref{para:square-of-tori},
our parascopic datum $(\phi_1,q^*_1)$ gives us maximal
$k$-tori $S_2\subseteq G_2$, $S_1\subseteq G_1$, and $S_1^\wedge\subseteq G_1^\wedge$,
unique up to stable conjugacy,
and $(\phi_0,q^*_0)$ gives
$S_0\subseteq G_0$ and $S_0^\wedge\subseteq G_0^\wedge$.
From the previous paragraph, we may assume that $(\phi,q^*)$ gives
$S_2\subseteq G_2$, $S_0\subseteq G_0$, and $S_0^\wedge\subseteq G_0^\wedge$.
As in \S\ref{para:square-of-tori},
for $j=0,1,2$, let $\delta_j$ be a map implementing the $k$-duality
between $S_j$ and $S_j^\wedge$. The parascopic data together with the
duality maps
$\delta_j$ thus give maps
$$
\map{\dnorm[\phi_1,q^*_1]}{S_2^\wedge}{S_1^\wedge},
\quad
\map{\dnorm[\phi_0,q^*_0]}{S_1^\wedge}{S_0^\wedge},
\quad
\map{\dnorm[\phi,q^*]}{S_2^\wedge}{S_0^\wedge},
$$
and it will be enough to show that
\begin{equation}
\label{eq:conorm-factorization}
\dnorm[\phi,q^*] (s) = 
\dnorm[\phi_0,q^*_0]
\bigl(
\dnorm[\phi_1,q^*_1] (s)
\bigr) .
\end{equation}
This will follow from the corresponding equality
$$
\big(\dnorm[\phi,q^*]\big)_* = 
\big(\dnorm[\phi_0,q^*_0]\big)_*
\circ
\big(\dnorm[\phi_1,q^*_1]\big)_*
$$
of maps of cocharacters. By the definition of the conorm \eqref{eq:norm-char},
this
is equivalent to the equation
\begin{equation}
\label{eq:normchar-composition}
\norm[\phi,q^*]^*
= 
\norm[\phi_0,q^*_0]^*
\circ
\norm[\phi_1,q^*_1]^* .
\end{equation}
To see that this equation holds, consider the diagram
\begin{equation}
\label{eqn:weyl}
\begin{xy}
\xymatrix{
& &
X^*(T_0)\\
&
\lsub{\phi}X^*(T_0) \ar[r]^{\normchar[\bar\phi]} \ar[ur]^{\normchar[\phi]} &
\lsub{\phi_0}X^*(T_0) \ar[u]^{\normchar[\phi_0]}\\
X^*(T_2) \ar[r]^{q_1^*} \ar[ur]^{q^*} \ar@/^2pc/[uurr]^{\normchar[\phi,q^*]}
	\ar@/_2pc/[rr]_{\normchar[\phi_1,q^*_1]} &
\lsub{\phi_1}X^*(T_1) \ar[r]^{\normchar[\phi_1]} \ar[u]_{\llsub{\Gamma_1}q_0^*} &
X^*(T_1) \ar[u]^{q_0^*} \ar@/_2pc/[uu]_{\normchar[\phi_0,q^*_0]}
}
\end{xy}
\end{equation}
The upper right triangle commutes because
$$\sum_{\bar\gamma\in\Gamma_1}\bar\phi(\bar\gamma) \sum_{\gamma_0\in\Gamma_0} \phi_0(\gamma_0) = \sum_{\gamma\in\Gamma }\phi(\gamma).$$
The commutativity of the lower left triangle, the lower square, and the diagrams involving each
curved arrow and its two neighbors all hold by reasoning analogous to that in \eqref{eq:weyl},
and \eqref{eq:normchar-composition} follows.
\end{proof}

\section{Decompositions coming from subnormal series for $\Gamma$}
\label{sec:decomposition-gamma}

In this section and the next, we provide specific decompositions of an arbitrary
parascopic datum into simpler data.  
From Proposition \ref{prop:conorm-factorization},
if one wants to understand
conorms in an explicit way, it will then be sufficient to understand
them in the cases of these simpler data.

\begin{prop}
\label{prop:decomposition-gamma}
Suppose
\begin{itemize}
\item
$\Psi_0$ and $\Psi_2$ are root data, each with an action of $\Gal(k)$,
\item
$(\phi,q^*)$ is a parascopic datum for $(\Psi_0,\Gamma,\Psi_2)$,
and
\item
$1 \longrightarrow \Gamma_0 \longrightarrow \Gamma \longrightarrow \Gamma_1 \longrightarrow 1$
is an exact sequence of groups.
\end{itemize}
Then there exist
\begin{itemize}
\item
a root datum $\Psi_1$ with an action of $\Gal(k)$,
and
\item
for $i=0,1$,
parascopic data 
$(\phi_i, q^*_i)$ for $(\Psi_i,\Gamma_i,\Psi_{i+1})$,
%\item
%a parascopic datum $(\phi_0, q^*_0)$ for $(\Psi_0,\Gamma_0,\Psi_1)$,
%and
%\item
%a parascopic datum $(\phi_1, q^*_1)$ for $(\Psi_1,\Gamma_1,\Psi_2)$
\end{itemize}
such that \IsCompOf{(\phi,q^*)}{(\phi_0,q^*_0)}{(\phi_1,q^*_1)}.
Moreover, if $(\phi,q^*)$ is torus-inclusive
(resp.\ root-inclusive),
then we can take 
$(\phi_0,q^*_0)$
and
$(\phi_1,q^*_1)$
to be as well.
In all cases, we can take $(\phi_0,q_0^*)$ to be torus-inclusive.
\end{prop}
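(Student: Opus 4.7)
The plan is to set $\phi_0 := \phi|_{\Gamma_0}$, take the intermediate root datum to be (essentially) the restricted datum $\lsub{\phi_0}\Psi_0$, and let $\phi_1 := \bar\phi$ be the action of $\Gamma_1 = \Gamma/\Gamma_0$ on it induced from $\phi$. This descended action is well defined because $\Gamma_0$ is normal in $\Gamma$; it is $\Gal(k)$-equivariant since $\phi$ is; and it preserves the positive system of roots in $\lsub{\phi_0}\Phi_0$ descended from any $\phi(\Gamma)$-stable choice in $\Phi_0$.

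For the general case, I take $\Psi_1 := \lsub{\phi_0}\Psi_0$, $q_0^* := \id$, and $q_1^* := q^*$ under the canonical identification $\lsub{\phi_1}\Psi_1 = \lsub{\bar\phi}(\lsub{\phi_0}\Psi_0) = \lsub{\phi}\Psi_0$. By Example~\ref{ex:parascopy-group-action}, $(\phi_0, q_0^*)$ is then a torus-inclusive parascopic datum for $(\Psi_0, \Gamma_0, \Psi_1)$, and $(\phi_1, q_1^*)$ is a parascopic datum for $(\Psi_1, \Gamma_1, \Psi_2)$ by construction. The four bullets of Definition~\ref{defn:composition} are tautological: $\Gamma$ is the prescribed extension; $\phi_0$ is by definition the restriction of $\phi$; $q_0^* = \id$ is $\Gamma_1$-equivariant with $\llsub{\Gamma_1}q_0^* = \id$; and $q^* = q_1^* = \llsub{\Gamma_1}q_0^* \circ q_1^*$. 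Torus-inclusivity holds for $(\phi_0, q_0^*)$ by construction, and transfers to $(\phi_1, q_1^*)$ whenever $(\phi, q^*)$ is torus-inclusive, since then $q_1^* = q^*$ is an embedding.

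The main subtlety lies with root-inclusivity. Here $(\phi_1, q_1^*)$ is automatically root-inclusive (as $\lsub{\phi_1}\Psi_1\red = \lsub{\phi}\Psi_0\red$ under the identification), but $(\phi_0, \id)$ with $\Psi_1 = \lsub{\phi_0}\Psi_0$ is root-inclusive only when $\lsub{\phi_0}\Psi_0$ itself is reduced. When it is not, I would instead take $\Psi_1 := \lsub{\phi_0}\Psi_0\red$, with $q_0^*$ the resulting subdatum inclusion, which is both torus- and root-inclusive by Example~\ref{ex:parascopy-group-action}. The key point is then that $q_1^*$ remains well defined, i.e., that $q^*(\Phi_2) \subseteq \lsub{\bar\phi}(\lsub{\phi_0}\Phi_0\red)$. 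This reduces to the assertion that every non-multipliable root of $\lsub{\phi}\Phi_0$ lifts to a non-multipliable root of $\lsub{\phi_0}\Phi_0$, which is immediate: a multipliable preimage $\beta \in \lsub{\phi_0}\Phi_0$ of $\alpha \in \lsub{\phi}\Phi_0\red$ would force $2\beta \in \lsub{\phi_0}\Phi_0$ to project to $2\alpha \in \lsub{\phi}\Phi_0$, contradicting non-multipliability of $\alpha$. Combined with root-inclusivity of $(\phi, q^*)$, this places $q^*(\Phi_2)$ inside the roots of $\lsub{\phi_1}\Psi_1$, completing the construction.
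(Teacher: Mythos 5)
Your proof is correct and follows essentially the same construction as the paper: restrict $\phi$ to $\Gamma_0$, take the intermediate datum to be $\lsub{\phi_0}\Psi_0$ (or its reduction in the root-inclusive case), set $q_0^*=\id$ and $q_1^*=q^*$, and verify the composition axioms, which are tautological. The one advance over the paper's write-up is that you actually argue for the containment $\lsub{\phi}\Phi_0\red = \lsub{\phi_1}\bigl(\lsub{\phi_0}\Phi_0\bigr)\red \subseteq \lsub{\phi_1}\bigl(\lsub{\phi_0}\Phi_0\,\red\bigr)$ that the paper asserts without justification; the paper, on the other hand, is slightly more flexible in allowing $\Phi_1$ to be any $\Gal(k)$-invariant subsystem of $\lsub{\phi_0}\Phi_0$ whose $\Gamma_1$-restriction contains $q^*(\Phi_2)$. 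One small inaccuracy: by Definition~\ref{defn:restricted}, $\red$ means \emph{non-divisible} in characteristic different from $2$ and \emph{non-multipliable} only in characteristic $2$, so your final step (phrased in terms of a multipliable preimage $\beta$, hence $2\beta$ mapping to $2\alpha$) covers the characteristic-$2$ case; in the other case one should argue with a divisible preimage $\beta$, so that $\beta/2\in\lsub{\phi_0}\Phi_0$ projects to $\alpha/2\in\lsub{\phi}\Phi_0$, contradicting that $\alpha$ is non-divisible. The substitution is immediate and the idea is identical.
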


\begin{proof}
For $j = 0,2$, write
$\Psi_j = (X^*_j, \Phi_j, X_{j*}, \Phi^\vee_j)$.
Let $\phi_0$ denote the restriction of $\phi$ to $\Gamma_0$.
Let $X_1^* = \lsub{\phi_0}X_0^*$
and
$X_{1*} = \lsub{\phi_0}X_{0*}$.

The action $\map{\phi}{\Gamma}{\Aut(X_0^*)}$
induces an action $\map{\phi_1}{\Gamma_1}{\Aut(X_1^*)}$.
Define
$\map{q^*_1}{X_2^*}{\lsub{\phi_1}X_1^*=\lsub{\phi}X_0^*}$ by $q^*_1 = q^*$,
and define 
$\map{q^*_0}{X_1^*}{\lsub{\phi_0}X_0^* = X_1^*}$ to be the identity map.

Let $\Phi_1$ denote any $\Gal(k)$-invariant subsystem of $\lsub{\phi_0}\Phi_0$
such that 
$\lsub{\phi_1}\Phi_1 \supseteq q_1^*(\Phi_2)$.
(Such a subsystem always exists.
For example, we could take $\Phi_1 = \lsub{\phi_0}\Phi_0$.)
Let $\Phi_1^\vee = \set{\alpha^\vee \in \lsub{\phi_0}\Phi_0^\vee}{\alpha \in \Phi_1}$.
Let $\Psi_1 = (X_1^*, \Phi_1, X_{1*}, \Phi_1^\vee)$.
Then $\Psi_1$ is a $\Gal(k)$-invariant root datum,
and we claim that
$(\phi_0,q_0^*)$
and
$(\phi_1,q_1^*)$
are the desired parascopic data.

Note that by its definition, $q_j$ is a morphism from $\Psi_{j+1}$ to $\lsub{\phi_j}\Psi_j$ for
$j = 0,1$.
Moreover, since $q_0^*$ is an isomorphism of lattices, we have that
$(\phi_0,q_0^*)$ is torus-inclusive.

We now show that 
\IsCompOf{(\phi,q^*)}{(\phi_0,q^*_0)}{(\phi_1,q^*_1)}.
Since $q_0^*$ is the identity 
map on $X_1^* = \lsub{\phi_0}X_0^*$,
it is obviously equivariant with respect to our action $\phi_1$
of $\Gamma_1$ on $X_1^*$.
Since $q^* = q_1^*$ and $\lsub{\Gamma_1}q_0^*$ is the identity map on
$\lsub{\phi_1}X_1^*=\lsub{\phi}X_0^*$,
we have that
$q^* = \lsub{\Gamma_1}q_0^* \circ q_1^*$.
Thus,
\IsCompOf{(\phi,q^*)}{(\phi_0,q^*_0)}{(\phi_1,q^*_1)}.

Suppose that the parascopic datum $(\phi, q^*)$ is torus-inclusive.
That is, $q^*$ is an isomorphism of lattices, and therefore, so is $q_1^*$,
so $(\phi_1,q_1^*)$ is torus-inclusive.
We have already seen that $(\phi_0,q_0^*)$ is torus-inclusive.

Suppose that the parascopic datum $(\phi, q^*)$ is root-inclusive.
That is, $q^*(\Phi_2) \subseteq \lsub{\phi}\Phi_0\,\red$.
Recall that we allowed $\Phi_1$ to be any root subsystem 
of $\lsub{\phi_0}\Phi_0$ with $\lsub{\phi_1}\Phi_1 \supseteq q_1^*(\Phi_2)$.
For $j = 0,1$, we must show that
$q_j^*(\Phi_{j+1}) \subseteq \lsub{\phi_j}\Phi_j\,\red$.
Now $q_1^*(\Phi_2)
=
q^*(\Phi_2)
\subseteq
\lsub{\phi}\Phi_0\,\red
=
\lsub{\phi_1}(\lsub{\phi_0}\Phi_0)\red$
and also $q_1^*(\Phi_2)\subseteq \lsub{\phi_1}\Phi_1$, so
$q_1^*(\Phi_2)\subseteq \lsub{\phi_1}(\lsub{\phi_0}\Phi_0)\red\cap \lsub{\phi_1}\Phi_1
= \lsub{\phi_1}\Phi_1\,\red$, as desired.

Meanwhile, since $q_0=\id$, the condition $q_0^*(\Phi_1) \subseteq \lsub{\phi_0}\Phi_0\,\red$
becomes $\Phi_1 \subseteq \lsub{\phi_0}\Phi_0\,\red$.
Thus we may take any $\Phi_1$ satisfying both $\lsub{\phi_1}\Phi_1 \supseteq q_1^*(\Phi_2)$
and this latter condition. 
There do indeed exist such $\Phi_1 $; e.g.,
$\Phi_1$ can be taken to be $ \lsub{\phi_0}\Phi_0\,\red$ itself since
$$q_1^*(\Phi_2) = (\lsub{\Gamma_1}q_0 \circ q_1^*)(\Phi_2) = q^*(\Phi_2)\subseteq
\lsub{\phi}\Phi_0\,\red\ = \lsub{\phi_1}(\lsub{\phi_0}\Phi_0)\red \subseteq
\lsub{\phi_1}(\lsub{\phi_0}\Phi_0\, \red).$$
\end{proof}

\begin{rem}
\label{rem:group-construction}
\label{rem:group-construction-solution}
If $G_0$ and $G_2$ are connected reductive $k$-groups
and the pair $(\phi,q^*)$ is a parascopic datum for $(G_0,\Gamma,G_2)$,
then Proposition~\ref{prop:decomposition-gamma}
gives us a root datum $\Psi_1$ with an action of $\Gal(k)$.
Thus, we have a connected reductive $k$-group $G_1$, determined up to its inner form.
For any choice of $G_1$, Proposition~\ref{prop:decomposition-gamma},
then gives us \emph{weak} parascopic data
$(\phi_j,q^*_j)$ ($j = 0,1$) for $(G_j,\Gamma_j,G_{j+1})$.
In order for these pairs to be parascopic data for the triples in question,
they must satisfy an additional condition that guarantees
transfers of tori
from $G_2$ to $G_1$ and from $G_1$ to $G_0$
(see Definition~\ref{defn:parascopic-group}).
In general,
we do not know if one can always choose an inner form of $G_1$ so that this
condition is satisfied by both pairs.
In certain cases,
there is a natural choice for an inner form of $G_1$.
For example:
\begin{enumerate}[(a)]
\item
\label{item:quasi-split}
If $G_0$ is quasisplit, then we can take $G_1$ to be quasisplit.
If, further, 
$\Gamma$ preserves a $\Gal(k)$-invariant positive system in $\Phi_0$,
then $G_1$ must be quasisplit.
\item
\label{item:k-good}
If $k$ is perfect and of cohomological dimension $\leq 1$, then all reductive $k$-groups are 
quasisplit, and
so $G_1$ is uniquely determined up to isomorphism.
\item
\label{item:fixed-point}
If $\phi$ extends to give an action of $\Gamma$ on $G_0$,
and $G_2 = (G_0^\Gamma)\conn$,
then we may take
$G_1 = (G_0^{\Gamma_0})\conn$.
From Example \ref{ex:parascopy-fixed-point},
the action $\phi$, together with appropriate maps
of character lattices of tori,
gives parascopic data for $(G_0,\Gamma_0,G_1)$ and $(G_1, \Gamma_1, G_2)$.
\end{enumerate}
\end{rem}

\section{Decompositions into parascopies coming from actions on reductive groups}
\label{sec:fixed-pinning}
In this section,
let $\tilde{G}$ denote a connected reductive quasisplit $k$-group and 
$\Gamma$ a finite group.
\begin{defn}
We will say that a homomorphism
$\map{\varphi}{\Gamma}{\Aut_k(\tilde G)}$ is \emph{quasicentral} if
$\varphi(\Gamma)$ stabilizes some Borel subgroup $\tilde{B}_0 \subseteq \tilde{G}$,
some maximal torus $\tilde{T}_0\subseteq \tilde{B}_0$,
and some pinning of $(\tilde{G},\tilde{B}_0,\tilde{T}_0)$
(not necessarily defined over $k$). If moreover we can take
$\tilde B_0$ and $\tilde T_0$ to be defined over $k$,
and the pinning to be invariant under the action of $\Gal(k)$,
then we will say that $\varphi$ is \emph{$k$-quasicentral}.
\end{defn}

\begin{rem}
\label{rem:quasicentral}
We note that if $\varphi$ is quasicentral, then $\varphi(\Gamma)$
must stabilize a pinning with respect to \emph{any} $\varphi(\Gamma)$-stable pair
$\tilde{T}_0\subseteq\tilde{B}_0$,
since any two such pairs are conjugate by some element of
$(G^{\varphi(\Gamma)})\conn(k)$
(see~\cite{adler-lansky:lifting}*{Proposition 3.5(ii)}).
\end{rem}

\begin{prop}
\label{prop:decomp-fixed-pinning}
Let $G$ be a connected reductive quasisplit $k$-group, and let
$(\phi,q^*)$ be a root-inclusive and torus-inclusive
parascopic datum for $(\tilde{G},\Gamma,G)$ relative to the maximal $k$-tori
$\tilde{T}\subseteq\tilde G$ and $T\subseteq G$.
Then there exists a maximal $k$-torus $\tilde T'$ of $\tilde G$ stably conjugate to $\tilde T$,
and a $k$-quasicentral homomorphism
$\map{\varphi}{\Gamma}{\Aut_k(\tilde G)}$
with image stabilizing $\tilde T'$ that possesses the following property.
Let $\bar G = (\tilde{G}^{\varphi(\Gamma)})\conn$
and $\bar T = (\tilde{T}'\,^{\varphi(\Gamma)})\conn$.
Then $\bar G$ is $k$-quasisplit,
and $(\phi,q^*)$ is a composition of
\begin{itemize}
\item a parascopic datum for $(\tilde G,\Gamma,\bar G)$
relative to $\tilde T$ and $\bar T$ equivalent to a parascopic
datum arising from $\varphi$ as in Example~\ref{ex:parascopy-fixed-point}, and
\item a parascopic datum for $(\bar G,1,G)$ relative to $\bar T$ and $T$
arising from an embedding from $\Psi(G,T)$ into $\Psi(\bar G,\bar T)$.
\end{itemize}
Both of these factors are root-inclusive and torus-inclusive. Moreover,
with respect to the above parascopic data,
$G$ is a parascopic group for $(\tilde G,\Gamma)$ and
for $(\bar G, 1)$, and $\bar G$ is a parascopic group for $(\tilde G,\Gamma)$.
\end{prop}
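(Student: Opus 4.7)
The plan is to construct a pinning-fixing lift $\varphi$ of the abstract action $\phi$, set $\bar G = (\tilde G^{\varphi(\Gamma)})\conn$, then read off the two factors from Example~\ref{ex:parascopy-fixed-point} and from an embedding of root data obtained from $q^*$.

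I first place the datum in convenient form. Since $\tilde G$ is $k$-quasisplit, fix a $\Gal(k)$-invariant pinning on a Borel pair $(\tilde B_0,\tilde T_0)$ defined over $k$. By replacing $(\phi,q^*)$ with an equivalent datum via Proposition~\xref{FGL-prop:parascopy-equivalence}(\xref{FGL-item:all-tori-equally-good}) and a Weyl-group conjugation permitted by Definition~\ref{defn:equivalent-data} and Remark~\ref{rem:equivalence}, I transport $\phi$ to an action on $\Psi(\tilde G,\tilde T_0)$ preserving the positive system coming from $\tilde B_0$; compatibility with $\Gal(k)$ comes from the fact that the two commuting actions permute each other's positive systems, producing a cocycle in the Weyl group. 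Each $\phi(\gamma)$ is then an automorphism of $\Psi(\tilde G,\tilde T_0)$ stabilizing the base of simple roots, and by the isomorphism theorem for pinned reductive groups it lifts uniquely to an automorphism $\varphi(\gamma)$ of $\tilde G$ fixing the chosen pinning. Uniqueness makes $\varphi$ a homomorphism, and $\Gal(k)$-invariance of pinning and $\phi$ forces $\varphi(\Gamma)\subseteq\Aut_k(\tilde G)$; so $\varphi$ is $k$-quasicentral by construction. The subgroup $(\tilde B_0^{\varphi(\Gamma)})\conn$ is a $k$-rational Borel of $\bar G := (\tilde G^{\varphi(\Gamma)})\conn$ containing $\bar T_0 := (\tilde T_0^{\varphi(\Gamma)})\conn$, so $\bar G$ is $k$-quasisplit.

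Example~\ref{ex:parascopy-fixed-point} supplies a torus- and root-inclusive parascopic datum $(\phi,q_0^*)$ for $(\tilde G,\Gamma,\bar G)$ relative to $\tilde T_0$ and $\bar T_0$, with $q_0^*$ an isomorphism $\Psi(\bar G,\bar T_0)\xrightarrow{\sim}\lsub\phi\Psi(\tilde G,\tilde T_0)\red$ (as in the final paragraph of that example). Torus- and root-inclusivity of $(\phi,q^*)$ say that $q^*$ is an isomorphism of lattices with image of roots in $\lsub\phi\Phi(\tilde G,\tilde T_0)\red$, so $q_1^* := (q_0^*)\inv\circ q^*$ is a $\Gal(k)$-equivariant embedding of root data $\Psi(G,T)\hookrightarrow\Psi(\bar G,\bar T_0)$; the coroot condition for $q_1^*$ is inherited because both $q^*$ and $q_0^*$ are morphisms. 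Then $(1,q_1^*)$ is a torus- and root-inclusive parascopic datum for $(\bar G,1,G)$ as in Example~\ref{ex:parascopy-gamma-trivial}, and the identity $q^* = q_0^*\circ q_1^*$ displays $(\phi,q^*)$ as the composition of Definition~\ref{defn:composition} with $\Gamma_0 = \Gamma$ and $\Gamma_1 = 1$ (so that $\llsub{\Gamma_1}q_0^*$ reduces to $q_0^*$). To identify the $\tilde T'$ of the statement, note that the embedding $W(G,T)\hookrightarrow W(\tilde G,\tilde T_0)$ of \S\ref{para:weyl} has image in $W(\tilde G,\tilde T_0)^{\varphi(\Gamma)}$, which by Example~\ref{ex:parascopy-fixed-point} is identified with $W(\bar G,\bar T_0)$; the stable class of $\tilde T$ in $\tilde G$ is thus represented by a maximal $k$-torus of $\bar G$ (using Proposition~\ref{prop:stable-tori} together with quasi-splitness of $\bar G$), and we take $\tilde T'$ to be that torus. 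The parascopic-group conclusions are automatic: since $\tilde G$, $\bar G$, and $G$ are all $k$-quasisplit, the maps of Proposition~\ref{prop:stable-tori} are bijections onto stable classes of tori, so the transfer conditions of Definition~\ref{defn:parascopic-group} hold.

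The main obstacle I foresee is the alignment of the datum with the pinning --- ensuring that $\phi$ can be made to preserve the positive system from $\tilde B_0$ in a $\Gal(k)$-equivariant way, and (relatedly) that the stable class of $\tilde T$ is realized inside $\bar G$. Once these arrangements are in place, the pinning-fixing lift is classical and the remaining verifications are routine unpacking of the definitions of composition and parascopic equivalence.
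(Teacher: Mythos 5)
Your overall strategy---lift $\phi$ to a pinning-fixing action $\varphi$, set $\bar G$ to be the fixed-point group, and read off the two factors from Example~\ref{ex:parascopy-fixed-point} and from the subdatum embedding coming from $q^*$---is the right shape, and your analysis of the factors $q_0^*$, $q_1^*$ and of the composition identity $q^* = q_0^* \circ q_1^*$ is essentially what the paper also uses (via Remark~\ref{rem:decomp-canonical}). But there are genuine gaps.

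First, the construction of $\varphi$ and the alignment with a $\Gal(k)$-invariant pinning is not complete. You assert that after replacing $(\phi,q^*)$ by an equivalent datum and then performing ``a Weyl-group conjugation,'' the action $\phi$ can be made to preserve the $\tilde B_0$-positive system while remaining $\Gal(k)$-equivariant, gesturing at ``a cocycle in the Weyl group.'' This is exactly where the nontrivial content lies, and it is not automatic: one must find a single positive system invariant under both $\Gal(k)$ and $\phi(\Gamma)$, and the Weyl-group element moving one invariant positive system to another must be shown to commute with $\phi(\Gamma)$ and to leave the parascopic-datum conditions (in particular the cocycle-compatibility of Remark~\ref{rem:equivalence}) intact. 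The paper avoids redoing this by citing \cite{adler-lansky:data-actions}*{Remarks 25 and 27}, which package precisely the existence of the torus $\tilde T'$, the element $\tilde g$, and the $k$-quasicentral $\varphi$ so that $\Int(\tilde g)^*$ is simultaneously $\Gal(k)$- and $\Gamma$-equivariant. Your argument is an attempt to reprove this lemma in passing, and it is not carried far enough to count as a proof.

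Second, your identification of $\tilde T'$ does not make sense as written. You conclude that ``the stable class of $\tilde T$ in $\tilde G$ is \ldots represented by a maximal $k$-torus of $\bar G$, and we take $\tilde T'$ to be that torus.'' But $\tilde T'$ in the statement must be a maximal $k$-torus of $\tilde G$, stably conjugate to $\tilde T$ and stabilized by $\varphi(\Gamma)$; a maximal torus of $\bar G$ has strictly smaller rank in general and cannot play that role. Moreover, having transported your datum so that the relevant torus in $\tilde G$ is $\tilde T_0$ (in a Borel), its stable class is the quasisplit one, which need not be the stable class of the original $\tilde T$; so your construction does not by itself produce the required $\tilde T'$. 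Related to this, you construct a decomposition only of the transported datum, not of $(\phi,q^*)$ itself relative to the original tori $\tilde T$ and $T$; this could in principle be repaired by invoking Proposition~\ref{prop:composition-equivalence} to carry the decomposition back across the equivalence, but you neither do so nor track what $\tilde T'$ and $\bar T$ become under that transfer. The paper's proof sidesteps both of these issues by producing $\tilde T'$ directly and then using the $\Gal(k)\times\Gamma$-isomorphism $\Int(\tilde g)^*$ to write the first factor as a datum relative to the original $\tilde T$ from the outset.
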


\begin{proof}
By~\cite{adler-lansky:data-actions}*{Remarks 25 and 27},
there exists a maximal $k$-torus $\tilde T'$ of $\tilde G$
stably conjugate to $\tilde T$,
an element $\tilde g\in\tilde G$, and a
$k$-quasicentral homomorphism
$\map{\varphi}{\Gamma}{\Aut_k(\tilde G)}$
such that $\varphi(\Gamma)$ stabilizes $\tilde T'$ and 
$\Int(\tilde g)$ induces a $\Gal(k)\times\Gamma$-isomorphism from
the root datum $\Psi(\tilde G,\tilde T')$
(with the action $\phi'$ of $\Gamma$ induced by $\varphi$)
to the root datum $\Psi(\tilde G,\tilde T)$ (with 
the action $\phi$).
Let $\tilde B_0$ be a $\varphi(\Gamma)$-stable Borel $k$-subgroup of $\tilde G$.
Then $(\tilde{B}_0^{\varphi(\Gamma)})\conn$ is a Borel $k$-subgroup of $\bar G$
by~\cite{adler-lansky:data-actions}*{Proposition 3.5(ii)}. Hence $\bar G$ is $k$-quasisplit.

By Example~\ref{ex:parascopy-fixed-point},
$\Psi(\bar G,\bar T)$ can be identified with
$\lsub{\phi'}\Psi(\tilde G,\tilde T')\red$,
and
the pair $(\phi',\id)$ is thus
a parascopic datum for $(\tilde G,\Gamma,\bar G)$ relative to
$\tilde T'$ and $\bar T$.
Since $\map{\Int(\tilde g)^*}{X^*(\tilde T')}{X^*(\tilde T)}$ is a
$\Gal(k)\times\Gamma$-isomorphism,
we obtain a $\Gal(k)$-isomorphism
$\lsub\Gamma\Int(\tilde g)^*$
from $\Psi(\bar G,\bar T) = \lsub{\phi'}\Psi(\tilde G, \tilde T')\red$
to $\lsub\phi \Psi(\tilde G, \tilde T)\red$.
Then $(\phi,\lsub\Gamma\Int(\tilde g)^*)$ is a
parascopic datum for $(\tilde G,\Gamma,\bar G)$ relative to
$\tilde T$ and $\bar T$ that is equivalent to $(\phi',\id)$.
This relationship is represented in the upper square of the following diagram,
where the label on each dotted, non-horizontal line represents
a parascopic datum for the root data at the upper and lower ends of the
dotted line.

$$
\xymatrix@C+2em{
&
\Psi(\tilde G, \tilde T')
\ar[r]^{\Int(\tilde g)^*}_{\sim}
\ar@{.}[d]_{(\phi',\id)}
&
\Psi(\tilde G, \tilde T)
\ar@{.}[d]_{(\phi,\id)} 
\ar@/^3pc/@{.}[dd]^{(\phi,q^*)}  
\\
\Psi(\bar G, \bar T) \ar@{=}[r]
&
\lsub{\phi'}\Psi(\tilde G, \tilde T')\red
\ar@{.}[ur]^{(\phi,\lsub\Gamma\Int(\tilde g)^*)}
\ar[r]^{\lsub\Gamma\Int(\tilde g)^*}_{\sim}
\ar@{.}[dr]_{(1, (\lsub\Gamma \Int(\tilde g)^*)\inv \circ q^*)\phantom{xxx}}
&
\lsub{\phi}\Psi(\tilde G, \tilde T)\red
\ar@{.}[d]_{(1, q^*)}
\\
&& \Psi(G,T)
}
$$

By Remark \ref{rem:decomp-canonical},
the torus-inclusive datum $(\phi,q^*)$
is a composition of the parascopic datum 
$(\phi, \id)$ for
$(\Psi(\tilde G,\tilde T),\Gamma,\lsub\phi\Psi(\tilde G,\tilde T)\red)$
and the parascopic datum $(1, q^*)$ for
$(\lsub\phi\Psi(\tilde G,\tilde T)\red,1,\Psi(G,T))$
provided in Example~\ref{ex:parascopy-gamma-trivial}.
This composition is illustrated in the right-hand column of the diagram
above.
It follows that
$(\phi, q^*)$ is also a composition of
the parascopic datum
$(\phi, \lsub\Gamma\Int(\tilde g)^*)$
for $(\tilde G,\Gamma, \bar G)$ with respect to $\tilde T$ and $\bar T$,
and the datum
$(1, (\lsub\Gamma\Int(\tilde g)^*)\inv \circ q^*)$
for $(\bar G, 1, G)$ with respect to $\tilde T$ and $T$.

The statement concerning root- and torus-inclusivity
follows from their definitions,
while the final statement of the proposition follows from
Proposition \ref{prop:stable-tori}
and the fact that $\tilde{G}$ and $\bar{G}$ are quasisplit.
\end{proof}

\begin{prop}
\label{prop:quasi-central-dual-closed}
Let $\map{\varphi,\bar\varphi}{\Gamma}{\Aut_k(\tilde G, \tilde T)}$
be homomorphisms with $\bar\varphi$ $k$-quasicentral.
Suppose that both $\varphi$ and $\bar\varphi$ 
induce the action $\map{\phi}{\Gamma}{\Aut_k\Psi(\tilde G, \tilde T)}$
and that
\begin{itemize}
\item for every $\alpha\in\Phi(\tilde G,\tilde T)$,
$\stab_\Gamma(\alpha)$ is either $\Gamma$ or $1$;
\item for every irreducible component $\tilde\Phi$ of $\Phi(\tilde G,\tilde T)$, 
$\stab_\Gamma(\tilde\Phi)$ acts faithfully on $\tilde\Phi$.
\end{itemize}
Let $T = (\tilde{T}^{\varphi(\Gamma)})\conn
= (\tilde{T}^{\bar\varphi(\Gamma)})\conn$,
$G = (\tilde{G}^{\varphi(\Gamma)})\conn$,
and
$\bar G = (\tilde{G}^{\bar\varphi(\Gamma)})\conn$.
Then $\Psi(G,T)$ is a dual-closed subdatum of $\Psi(\bar G, T)$.
\end{prop}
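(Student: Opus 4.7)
My plan has three main stages: first, identify $T$ and relate $\varphi$ to $\bar\varphi$ by an inner twist from $\tilde T$; second, realize $\Psi(G,T)$ as a subdatum of $\Psi(\bar G,T)$ using Example~\ref{ex:parascopy-fixed-point}; third, prove dual-closedness by a character-lift argument that leverages both stabilizer hypotheses and the cocycle condition implicit in $\varphi$ being a homomorphism.

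Since $\varphi$ and $\bar\varphi$ both induce $\phi$ on $\Psi(\tilde G,\tilde T)$, they agree on $\tilde T$, so $T$ is unambiguous. For each $\gamma$, the automorphism $\varphi(\gamma)\bar\varphi(\gamma)^{-1}$ of $\tilde G$ fixes $\tilde T$ pointwise and acts trivially on $\Psi(\tilde G,\tilde T)$, and such automorphisms are inner by elements of $\tilde T$; this produces a map $s\colon\Gamma\to\tilde T$ with $\varphi(\gamma) = \Int(s_\gamma)\bar\varphi(\gamma)$, with a $1$-cocycle condition $s_{\gamma_1\gamma_2} \equiv s_{\gamma_1}\bar\varphi(\gamma_1)(s_{\gamma_2})$ modulo $Z(\tilde G)$ coming from the fact that $\varphi$ is a homomorphism. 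Applying Example~\ref{ex:parascopy-fixed-point} to both actions yields a common canonical isomorphism $q^*\colon X^*(T)\to\lsub{\phi}X^*(\tilde T)$ under which coroots match; quasicentrality of $\bar\varphi$ forces $q^*(\Phi(\bar G,T)) = \lsub{\phi}\Phi(\tilde G,\tilde T)\red$, while $q^*(\Phi(G,T))\subseteq\lsub{\phi}\Phi(\tilde G,\tilde T)\red$ in general. Together these identifications realize $\Psi(G,T)$ as a subdatum of $\Psi(\bar G,T)$.

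For dual-closedness, I would analyze, for each $\Gamma$-orbit $\Theta$ of roots in $\tilde\Phi$ mapping to $\bar\alpha\in\lsub{\phi}\tilde\Phi\red$, the $\varphi$-fixed points on the $\Gamma$-stable subgroup of $\tilde G$ generated by $\{U_\theta\}_{\theta\in\Theta}$, augmented by the extra root groups forced in by commutation when $\Theta$ is non-orthogonal. The outcome is a character $\mu(\bar\alpha)\in X^*(\tilde T)^{\phi(\Gamma)}$ such that $\bar\alpha\in\Phi(G,T)$ if and only if $\mu(\bar\alpha)(s_\gamma) = 1$ for every $\gamma$; in the orthogonal case $\mu(\bar\alpha) = \sum_{\theta\in\Theta}\theta$, and in the non-orthogonal case a Heisenberg-type computation yields the corresponding lift. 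The hypothesis that $\Gamma$ acts faithfully on each stabilized irreducible component localizes the non-orthogonal orbits to irreducible components of type $A_{2n}$, where the structure is tractable; the hypothesis on root stabilizers forces orbits to have size $1$ or $|\Gamma|$, eliminating intermediate cases.

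The crux is the following additivity identity: whenever $\bar\alpha,\bar\beta,\bar\gamma\in\Phi(\bar G,T)$ satisfy $\bar\gamma^\vee = \bar\alpha^\vee+\bar\beta^\vee$ in $X_*(T)$, we have $\mu(\bar\gamma) = \mu(\bar\alpha)+\mu(\bar\beta)$ when restricted to cocycles $s$ of the kind produced above. Granting this, dual-closedness is immediate: if $\mu(\bar\alpha)(s_\gamma) = \mu(\bar\beta)(s_\gamma) = 1$ for all $\gamma$, then $\mu(\bar\gamma)(s_\gamma) = 1$, placing $\bar\gamma\in\Phi(G,T)$. The main obstacle is this additivity check, which requires matching the orbit-sum expressions for restricted coroots from~\S\ref{para:weyl} against the character lifts $\mu$, case-by-case according to whether the individual orbits $\Theta_{\bar\alpha}$, $\Theta_{\bar\beta}$, $\Theta_{\bar\gamma}$ are fixed, free-orthogonal, or non-orthogonal. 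The stabilizer hypotheses keep the verification to a manageable finite list of configurations, and the cocycle condition on $s$ is what makes the identity hold when BC-type doubling would otherwise create a discrepancy.
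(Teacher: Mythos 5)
Your route is genuinely different from the paper's, but it leaves a real gap at exactly the step you flag as the crux: you never prove the additivity identity $\mu(\bar\gamma)=\mu(\bar\alpha)+\mu(\bar\beta)$ when $\bar\gamma^\vee=\bar\alpha^\vee+\bar\beta^\vee$; you only sketch a plan for a case-by-case check and then declare dual-closedness ``immediate'' granting it. For orthogonal orbits the identity can indeed be made to work (a normalized $W\rtimes\Gamma$-invariant form on a $\Gamma$-orbit of simply-laced components identifies $\theta^\vee$ with $\theta$, hence identifies $\bar\alpha^\vee=\sum_{\theta\in\Theta_{\bar\alpha}}\theta^\vee$ with $\mu(\bar\alpha)=\sum_{\theta\in\Theta_{\bar\alpha}}\theta$, and linear relations carry over). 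But for non-orthogonal orbits---which by \S\ref{para:weyl} are confined to components of type $A_{2n}$ regardless of the faithfulness hypothesis---the restricted coroot is \emph{doubled}, $\bar\alpha^\vee=2\sum_{\theta\in\Theta_{\bar\alpha}}\theta^\vee$, and that factor of $2$ destroys the naive linear correspondence; your appeal to a ``Heisenberg-type computation'' and to ``the cocycle condition on $s$'' is precisely the unverified content. You also do not check the membership criterion $\bar\alpha\in\Phi(G,T)\Leftrightarrow\mu(\bar\alpha)(s_\gamma)=1$: for a free orbit $\Theta$ the weight space $\bigoplus_{\theta\in\Theta}\mathfrak{g}_\theta$ \emph{always} has a nonzero $\varphi(\Gamma)$-fixed vector, so such $\bar\alpha$ lie in $\Phi(G,T)$ unconditionally and you must show your condition is automatic there (using the cocycle relation on $s$, the $Z(\tilde G)$-triviality of roots, and $\phi(\Gamma)$-invariance of $\mu$); for singleton orbits you implicitly rely on Steinberg's theorem that a pinning-fixing automorphism acts trivially on $\mathfrak{g}_\theta$ for \emph{every} $\Gamma$-fixed $\theta$, not just simple $\theta$, and this should be cited rather than assumed.

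The paper's own proof takes a shorter path that avoids the additivity issue altogether: it observes that dual-closedness is equivalent to $\Phi(G,T)$ containing every \emph{short} root of $\Phi(\bar G,T)$, shows that every such short root is $i^*\beta$ for some $\beta\in\tilde\Phi$ with trivial $\Gamma$-stabilizer (a pairing computation relating lengths to orbit sizes, transported by the Weyl embedding $i_\phi$ from \S\ref{para:weyl}), and notes that $i^*\beta\in\Phi(G,T)$ unconditionally for such $\beta$. Type $A_{2n}$ is then dispatched by an entirely different mechanism: faithfulness rules out $\Gamma$-fixed simple roots there, so $\varphi$ restricts to a quasicentral action on the reductive subgroup generated by the $\Gamma$-orbit of that component, and Example~\ref{ex:parascopy-fixed-point} yields $\Phi=\bar\Phi$ outright. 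Before claiming your strategy is ``tractable'' in the $A_{2n}$ case, test your additivity identity there explicitly; I expect you will find it fails as a literal statement, and that what actually rescues you is that the short restricted roots from non-orthogonal orbits lie in $\Phi(G,T)$ automatically because those orbits are free, making your criterion vacuous rather than additive.
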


\begin{proof}
Since $\bar\varphi$ is quasicentral, $\phi$ must preserve a positive system in $\Psi(\tilde G, \tilde T)$.
By Example~\ref{ex:parascopy-fixed-point},
$\Psi(\bar G, T) = \lsub{\phi}\Psi(\tilde G, \tilde T)\red$,
so the identity map on $X^*(T)$ gives an embedding
of $\Psi(G,T)$ into $\Psi(\bar G, T)$.
Clearly then, to show that $\Psi(G,T)$ is dual-closed in $\Psi(\bar G, T)$,
it suffices to show that
$\Phi^\vee(G,T)$ contains all of the long coroots in $\Phi^\vee(\bar G,T)$, or equivalently,
that $\Phi(G,T)$ contains all of the short roots in $\Phi(\bar G,T)$.

Let $\map{i_*}{X_*(T)}{X_*(\tilde T)}$
be the inclusion map. The adjoint of $i_*$
is the natural quotient
$\map{i^*}{X^*(\tilde T)}{X^*(T)}$.
Since $\Phi(G,T)\subseteq\Phi(\bar G,T)\subseteq i^*(\Phi(\tilde G,\tilde T))$,
to prove this proposition, it is enough to prove that for any irreducible component
$\tilde\Phi$ of $\Phi(\tilde G,\tilde T)$,
the system $\Phi = i^*(\tilde\Phi)\cap\Phi(G,T)$ contains all of the short roots in the system $\bar\Phi =
i^*(\tilde\Phi)\cap\Phi(\bar G,T) = i^*(\tilde\Phi)\cap\lsub{\phi}\Phi(\tilde G,\tilde T)\red$.

If the action of $\stab_\Gamma(\tilde\Phi)$ on such a component $\tilde\Phi$ is trivial,
then $\stab_\Gamma(\tilde\Phi) = 1$ since
the action of $\stab_\Gamma(\tilde\Phi)$ is faithful. Thus every $\beta\in\tilde\Phi$ has trivial stabilizer,
and it follows from an argument similar to that in~\cite{steinberg:endomorphisms}*{Theorem 8.2($2''''$)} that
$i^*\beta\in\Phi$. Thus, $\Phi = \bar\Phi$, completing the proof in this case.

Now suppose that the action of $\stab_\Gamma(\tilde\Phi)$ on $\tilde\Phi$ is nontrivial.
Consider first the case in which $\tilde\Phi$ is not of type $A_{2n}$.
As above,
for all $\beta\in\tilde\Phi$ having trivial stabilizer in $\Gamma$,
$i^* \beta$
lies in $\Phi$.
Thus in this case,
it suffices to show that all short roots of
$\bar\Phi$
have the form $i^*\beta$ for such 
$\beta\in \tilde\Phi$.

Since the action of $\stab_\Gamma(\tilde\Phi)$ on $\tilde\Phi$ is nontrivial,
$\tilde\Phi$ must be simply laced
and must contain a root $\alpha$ that is fixed by $\stab_\Gamma(\tilde\Phi)$ and
a nonorthogonal root $\beta$ that is not fixed by $\stab_\Gamma(\tilde\Phi)$.
By assumption, $\stab_\Gamma(\beta)$ must be trivial.
Using the definition of the coroot system $\lsub\phi\Phi(\tilde G,\tilde T)^\vee$
from \cite{adler-lansky:data-actions}*{Theorem 7},
we have
\[
\langle i^*\alpha, (i^*\beta)^\vee\rangle
=
\langle \alpha, i_*((i^*\beta)^\vee)\rangle
=
\Bigl\langle
	\alpha,  \sum_{\theta\in\Gamma\cdot\beta}\theta^\vee
\Bigr\rangle
=
|\stab_\Gamma(\tilde\Phi)\cdot\beta|\langle\alpha ,\beta^\vee\rangle,
\]
while
\[
\langle i^*\beta, (i^*\alpha)^\vee\rangle
=
\langle \beta, i_*((i^*\alpha)^\vee)\rangle
=
\langle \beta, \alpha^\vee\rangle
=
\langle \alpha, \beta^\vee\rangle ,
\]
where the last equality holds because $\Phi(\tilde G,\tilde T)$ is simply laced.
Since $\langle \alpha, \beta^\vee\rangle\neq 0$, we therefore have
$\langle i^*\alpha, (i^*\beta)^\vee\rangle > \langle i^*\beta, (i^*\alpha)^\vee\rangle$,
so $i^*\alpha$ is a long root and $i^*\beta$ is a short root.

Now consider an arbitrary short root in $\bar\Phi$.
This root is of the form $i^*\beta'$ for some
$\beta'\in \Phi(\tilde G, \tilde T)$.
Since $i^*\beta$ and $i^*\beta'$ have the same length, there
must be an element $w \in W(\bar G, T)$
such that $w(i^*\beta)= i^*\beta'$.
The embedding $\map{i_\phi}{W(\bar G, T)}{W(\tilde G, \tilde T)}$
(see \S\ref{para:weyl}) has the property that
$i^*\beta' = w (i^*\beta) = i^* (i_\phi(w)\beta)$.
Since $i_\phi(w)$ is fixed by $\Gamma$,
the stabilizer of $i_\phi(w)\beta$ in $\Gamma$, like that of $\beta$,
is trivial.
(In fact, we note that a similar argument shows that
a root in $\Phi(\bar G,T)$
is short if and only if
it is the image under $i^*$ of a root with trivial stabilizer.)
This completes the proof in the case where $\Psi(\tilde{G}, \tilde{T})$
is not of type $A_{2n}$.

Now suppose that $\tilde\Phi$ is of type $A_{2n}$.
Let $\tilde H$ be the connected reductive subgroup of $\tilde G$ of full rank with root system
$\bigcup_{\gamma\in\Gamma} \gamma\cdot\tilde\Phi$.
Note that $\Gamma$ acts on $\tilde H$ via
both $\varphi$ and $\bar\varphi$.
Since the action of $\stab_\Gamma(\tilde\Phi)$ on $\tilde\Phi$ is faithful,
no simple root in $\tilde\Phi$ (with respect to a chosen $\Gamma$-stable Borel subgroup of $\tilde G$
containing $\tilde T$)
is fixed by $\stab_\Gamma(\tilde\Phi)$. Thus every simple root of $\tilde\Phi$ has trivial stabilizer.
The same is true for each conjugate $\gamma\cdot\tilde\Phi$ for $\gamma\in\Gamma$.
The existence of a $\varphi(\Gamma)$-stable pinning of $\tilde H$ follows immediately from this.
Thus the action $\varphi$ on $\tilde H$ is quasicentral.
Letting $H = G\cap\tilde H$, it follows from Example \ref{ex:parascopy-fixed-point} that
$\Phi(H,T) = \lsub{\phi}\Phi(\tilde H,\tilde T)\red$ and hence that $\Phi = \bar\Phi$, completing the proof.
\end{proof}

\section{The conorm as a function on points}
\label{sec:conormfunc}

Let $\tilde{G}$ and $G$ denote connected $k$-quasisplit reductive $k$-groups,
and $\Gamma$ a finite group.
Let $\tilde{G}^\wedge$ and $G^\wedge$ denote $k$-groups that are in $k$-duality
with $\tilde{G}$ and $G$

\begin{defn}
\label{defn:conorm-function}
Suppose $(\phi,q^*)$ is a parascopic datum for $(\tilde{G},\Gamma,G)$.
A \emph{pointwise conorm}
for the class $[\phi,q^*]$ is an algebraic $k$-morphism (not necessarily a group homomorphism)
$\map{\dnormfunc}{G^\wedge}{\tilde{G}^\wedge}$ possessing the following properties.
\begin{enumerate}
\item $\dnormfunc$ descends to a map on geometric conjugacy classes.
\item For each maximal $k$-torus $S^\wedge\subseteq G^\wedge$,
there exist maximal $k$-tori $S\subseteq G$, $\tilde S\subseteq\tilde G$,
and $\tilde S^\wedge\subseteq\tilde G^\wedge$ (as in \S\ref{para:conorm-defn}), a parascopic datum $(\phi',{q'}^*)$
relative to $\tilde S$ and $S$ in the class $[\phi,q^*]$, and duality maps 
$\map{\delta}{\bX^*(S)}{\bX_*(S^\wedge)}$ and $\map{\tilde\delta}{\bX^*(\tilde{S})}{\bX_*(\tilde{S}^\wedge)}$
such that the restriction of $\dnormfunc$ to $S^\wedge$
coincides with the conorm $\map{\dnorm[\phi',{q'}^*,\tilde\delta,\delta]}{S^\wedge}{\tilde{S}^\wedge}$.
\end{enumerate}
 \end{defn}

\begin{example}
\label{ex:conormfunc-isogeny}
Suppose that
$\map{f}{\tilde G}{G}$ is a $k$-isotypy.
Let $\tilde T$ be a maximal
$k$-torus of $\tilde G$ and let $T$ be the (unique) maximal torus in $G$
containing $f(\tilde{T})$.
Let $\tilde\Psi = \Psi(\tilde{G},\tilde{T})$ and $\Psi=\Psi(G,T)$.
Then one obtains a morphism
$f^*$ from $\Psi$ to $\tilde\Psi$,
and thus a parascopic datum
$(1,f_*)$
for $(\tilde\Psi,1,\Psi)$, hence for $(\tilde G, 1,G)$,
as in 
Example~\ref{ex:parascopy-gamma-trivial}.
The isotypy
$\map{f^\wedge}{G^\wedge}{\tilde{G}^\wedge}$
dual to $f$
(\S\ref{para:duality})
is clearly compatible with conorm maps
$\dnorm[S^\wedge]$ on tori,
hence is a pointwise conorm for the class $[1,f_*]$.
\end{example}

\begin{example}
\label{ex:trivial-action}
Suppose $\Gamma$ acts trivially on $\tilde G$,
and $G := \tilde{G}^\Gamma = \tilde{G}$.
Then we have a pointwise conorm for the class $[1, \id]$
given by $s \mapsto s^{|\Gamma|}$.
\end{example}

\begin{example}
\label{ex:permutation-action}
Suppose that $\tilde G = \prod_{i=1}^r G$,
$\Gamma$ is a group of order $r$,
and $\Gamma$ acts on $\tilde G$ by permuting the factors
simply and transitively.
Then $\tilde G^\wedge$ is $k$-isomorphic to $\prod_{i=1}^r G^\wedge$,
and it is easy to check that the diagonal embedding
$\map{\diag}{G^\wedge}{\tilde G^\wedge}$
is a pointwise conorm for this action.
\end{example}

The following result on pointwise conorms for parascopic data obtained via composition
(see \S\ref{sec:decomposition-defn}) follows easily from \eqref{eq:conorm-factorization}.
\begin{lem}
\label{lem:pointwise-conorm-factorization}
In the setting of Proposition~\ref{prop:conorm-factorization},
if $\dnormfunc_j$ is a pointwise conorm for
$[\phi_j,q^*_j]$ for $j = 0,1$, then $\dnormfunc_0\circ\dnormfunc_1$
is a pointwise conorm function for $[\phi,q^*]$.
\end{lem}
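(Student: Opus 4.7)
The plan is to verify directly the two conditions of Definition \ref{defn:conorm-function} for the morphism $\dnormfunc_0 \circ \dnormfunc_1$, using the pointwise conorm property of each factor together with equation \eqref{eq:conorm-factorization}. The first condition, that the composite descends to a morphism of varieties of geometric semisimple conjugacy classes, is immediate because each factor does so.

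For the second condition, fix a maximal $k$-torus $S_2^\wedge\subseteq G_2^\wedge$. I would first apply the pointwise conorm property of $\dnormfunc_1$ to $S_2^\wedge$ to obtain maximal $k$-tori $S_2\subseteq G_2$, $S_1\subseteq G_1$, $S_1^\wedge\subseteq G_1^\wedge$, a parascopic datum $(\phi_1',{q_1'}^*)$ in the class $[\phi_1,q_1^*]$ relative to $S_1$ and $S_2$, and duality maps $\delta_1$, $\delta_2$ such that $\dnormfunc_1|_{S_2^\wedge} = \dnorm[\phi_1',{q_1'}^*,\delta_1,\delta_2]$. I would then apply the pointwise conorm property of $\dnormfunc_0$ to the resulting torus $S_1^\wedge$, yielding maximal $k$-tori $S_1''\subseteq G_1$, $S_0\subseteq G_0$, $S_0^\wedge\subseteq G_0^\wedge$, a parascopic datum $(\phi_0',{q_0'}^*)$ in $[\phi_0,q_0^*]$ relative to $S_0$ and $S_1''$, and duality maps $\delta_1''$, $\delta_0$ such that $\dnormfunc_0|_{S_1^\wedge} = \dnorm[\phi_0',{q_0'}^*,\delta_0,\delta_1'']$.

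The delicate step is aligning the two applications at the intermediate level: the tori $S_1$ and $S_1''$ both come from $S_1^\wedge$ via $k$-duality (\S\ref{para:duality}), so they lie in the same stable conjugacy class in $G_1$. Choosing $g_1\in G_1(k\sep)$ with $\lsup{g_1}S_1'' = S_1$ and replacing $(\phi_0',{q_0'}^*)$ by its conjugate via $g_1$ (and the identity at the $G_0$-level), Remark \ref{rem:equivalence} yields an equivalent parascopic datum, still denoted $(\phi_0',{q_0'}^*)$, relative to $S_0$ and $S_1$, and the duality map $\delta_1''$ transports to a duality map on $S_1^\wedge$ that differs from $\delta_1$ only by an element of $W(G_1,S_1)^{\Gal(k)}$; adjusting either this Weyl element or $\delta_1$ accordingly, we may assume $\delta_1'' = \delta_1$ without changing the underlying torus conorms. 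By Proposition \ref{prop:composition-equivalence}, the data $(\phi_0',{q_0'}^*)$ and $(\phi_1',{q_1'}^*)$ then compose to a parascopic datum $(\phi',q'^*)$ in the class $[\phi,q^*]$, relative to $S_0$ and $S_2$, in the sense of Definition \ref{defn:composition}.

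Finally, by \eqref{eq:conorm-factorization} applied to this composition, one has
\[
\dnorm[\phi',q'^*,\delta_0,\delta_2]
=
\dnorm[\phi_0',{q_0'}^*,\delta_0,\delta_1]
\circ
\dnorm[\phi_1',{q_1'}^*,\delta_1,\delta_2],
\]
and chaining this with the two identifications from the previous paragraph gives
\[
(\dnormfunc_0\circ\dnormfunc_1)\bigr|_{S_2^\wedge}
=
\dnorm[\phi',q'^*,\delta_0,\delta_2],
\]
which is exactly the required property. I expect the only nontrivial part of the argument to be the bookkeeping in the alignment step, where the freedom in the pointwise conorm property (choice of torus up to stable conjugacy, choice of parascopic datum up to equivalence, choice of duality map up to the Weyl action) must be exploited to make the intermediate data match; everything else is a direct consequence of \eqref{eq:conorm-factorization}.
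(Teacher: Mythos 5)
Your plan is the natural one, and the paper itself only says the lemma ``follows easily from'' \eqref{eq:conorm-factorization}, so more detail is welcome. Condition (1) is indeed immediate, and your reduction of condition (2) to the factorization of torus conorms is the right structure. But there are two real gaps in the alignment argument.

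First, the citation of Proposition~\ref{prop:composition-equivalence} runs in the wrong direction: that proposition starts from a datum $(\phi',q'^*)$ equivalent to a given composition and produces equivalent \emph{factors}; it does not say that two independently chosen data $(\phi_0',q_0'^*)\sim(\phi_0,q_0^*)$ and $(\phi_1',q_1'^*)\sim(\phi_1,q_1^*)$ will compose, even after arranging that they are relative to a common intermediate torus. What you actually need is the observation in the \emph{proof} of Proposition~\ref{prop:composition-equivalence}: if $(\phi_1',q_1'^*)$ is the conjugate of $(\phi_1,q_1^*)$ via $(g_2,g_1)$ and $(\phi_0',q_0'^*)$ is the conjugate of $(\phi_0,q_0^*)$ via $(g_1,g_0)$ with the \emph{same} $g_1\in G_1(k^{\mathrm{sep}})$, then they compose to the conjugate of $(\phi,q^*)$ via $(g_2,g_0)$. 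In your setup, the element at the $G_1$-level coming from the pointwise conorm property of $\dnormfunc_0$ has no reason to coincide with the one coming from $\dnormfunc_1$, and conjugating $(\phi_0',q_0'^*)$ by $g_1$ and then by a Weyl representative does not manifestly produce a datum whose conjugating element at $G_1$ agrees with $b_1$ (the element for $(\phi_1',q_1'^*)$) modulo $T_1(k^{\mathrm{sep}})$. So after your ``adjustment'' the two factors need not form a composition, and the last displayed equality is not yet justified.

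Second, the claim that one may ``adjust either this Weyl element or $\delta_1$ accordingly \dots\ without changing the underlying torus conorms'' is hiding a non-trivial flexibility lemma that should be made explicit: if $\dnormfunc$ is a pointwise conorm for $[\phi,q^*]$ and one \emph{prescribes} the lower torus $S$, the lower duality map $\delta$, and a particular equivalent datum $(\phi',q'^*)$ relative to $\tilde S$ and $S$, then there exists an upper duality map $\tilde\delta$ making $\dnormfunc|_{S^\wedge}=\dnorm[\phi',q'^*,\tilde\delta,\delta]$. This does hold (conjugation by a Weyl element $w\in W(G,S)^{\Gal(k)}$ on the $G$-side can be absorbed on the $\tilde G$-side via the $\Gal(k)$-equivariant embedding $i_{\phi,q^*}$ and the intertwining identity $\normchar[\phi,q^*]\circ w = i_{\phi,q^*}(w)\circ\normchar[\phi,q^*]$ used in the proof of Proposition~\ref{prop:conorm-func-hom}), but it needs to be stated and proved, not asserted. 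With this lemma in hand, the cleaner order of operations is: apply the pointwise conorm property of $\dnormfunc_1$ to get $(\phi_1',q_1'^*)\sim(\phi_1,q_1^*)$ via $(b_2,b_1)$ together with $\delta_1,\delta_2$; then, as in the proof of Proposition~\ref{prop:composition-equivalence}, produce $a_0\in G_0(k^{\mathrm{sep}})$ so that the conjugate $(\phi_0',q_0'^*)$ of $(\phi_0,q_0^*)$ via $(b_1,a_0)$ composes with $(\phi_1',q_1'^*)$; finally, invoke the flexibility lemma for $\dnormfunc_0$ with the prescribed torus $S_1$, duality map $\delta_1$, and datum $(\phi_0',q_0'^*)$. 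Then \eqref{eq:conorm-factorization} closes the argument. Reorganizing this way avoids having to reconcile two independently produced intermediate conjugators after the fact.
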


%%% We no longer use this result.
\begin{comment}
We will need a basic result concerning when a $k$-homomorphism
factors through an isogeny.
For a connected reductive group $G$ with maximal torus $T$,
define $\pi_1(G,T)$ to be the quotient
$X_*(T)/\Z\Phi^\vee(G,T)$.
Then $\pi_1$ is a functor
from the category of such pairs $(G,T)$
to the category of finitely generated abelian groups
(see \cite{borovoi:abelian-galois-cohomology}*{\S1}
or \cite{joshi-spallone:spinoriality}*{Lemma 2}).

\begin{lem}
\label{lem:lift-isogeny}
Let $M$, $H$, and $H'$ be connected reductive
$k$-groups,
$\map{f}{H'}{H}$ a central $k$-isogeny
of connected reductive $k$-groups,
and $\map{\alpha}{M}{H}$ a $k$-homomorphism.
Let $S\subset M$ be a maximal $k$-torus in $M$,
$T\subset H$ a maximal $k$-torus containing $\alpha(S)$,
and $T' = f\inv(T)$.
Then $\alpha$ factors through $f$ if and only if $\pi_1(\alpha)$
factors through $\pi_1(f)$.
\begin{equation*}
\begin{xy}
\xymatrix{
H' \ar[r]^{f} & H  \\
M \ar@{.>}[u]\ar[ur]_{\alpha}
}
\end{xy}
\qquad
\qquad
\qquad
\begin{xy}
\xymatrix{
\pi_1(H',T') \ar[r]^{\pi_1(f)} & \pi_1(H,T)  \\
\pi_1(M,S) \ar@{.>}[u]\ar[ur]_{\pi_1(\alpha)}
}
\end{xy}
\end{equation*}
\end{lem}

\begin{proof}
This follows from 
\cite{joshi-spallone:spinoriality}*{Proposition 1 and Remark 1}.
\end{proof}
\end{comment}

\begin{prop}
\label{prop:conorm-func-hom}
Suppose $(\phi,q^*)$ is a parascopic datum for $(\tilde{G},\Gamma,G)$
relative to $k$-tori $\tilde{T}\subseteq \tilde{G}$
and $T\subset G$. Choosing dual maximal $k$-tori $T^\wedge\subseteq G^\wedge$
and $\tilde T^\wedge\subseteq\tilde G^\wedge$ along with appropriate duality maps
$\delta$ and $\tilde\delta$,
we obtain a conorm
$\map{\dnorm[T^\wedge]=\dnorm[\phi,q^*,\delta,\tilde\delta]}{T^\wedge}{\tilde T^\wedge}$.
Let $\map{f}{G^\wedge}{\tilde{G}^\wedge}$
be a $k$-homomorphism
whose restriction to $T^\wedge$
agrees with the map
$\map{\dnorm[T^\wedge]}{T^\wedge}{\tilde T^\wedge}$.
Suppose that $C_{\tilde G^\wedge}(f(T^\wedge)) = \tilde T^\wedge$.
Then
$f$ is a pointwise conorm for $[\phi,q^*]$.
\end{prop}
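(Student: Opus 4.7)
The plan is to verify the two defining conditions of a pointwise conorm (Definition~\ref{defn:conorm-function}). Condition (1) is immediate, since $f$ is an algebraic group homomorphism and so carries semisimple classes to semisimple classes. For condition (2), I fix an arbitrary maximal $k$-torus $S^\wedge\subseteq G^\wedge$ and must produce compatible $k$-tori $S\subseteq G$, $\tilde S\subseteq\tilde G$, $\tilde S^\wedge\subseteq\tilde G^\wedge$, a parascopic datum $(\phi',q^{\prime*})$ relative to $\tilde S$ and $S$ equivalent to $(\phi,q^*)$, and duality maps, such that $f|_{S^\wedge}$ agrees with the resulting conorm.

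The first step is to choose $g\in G^\wedge(k\sep)$ with $S^\wedge = \lsup g T^\wedge$ and to define $\tilde S^\wedge := \lsup{f(g)}\tilde T^\wedge$. The centralizer hypothesis plays a double role here. If $n\in N_{G^\wedge}(T^\wedge)$, then $f(n)$ normalizes $f(T^\wedge)$, hence stabilizes its centralizer $\tilde T^\wedge$; so $f(N_{G^\wedge}(T^\wedge))\subseteq N_{\tilde G^\wedge}(\tilde T^\wedge)$, and $\tilde S^\wedge$ is independent of the choice of $g$. Applying this observation to $g^{-1}\sigma(g)\in N_{G^\wedge}(T^\wedge)(k\sep)$ for $\sigma\in\Gal(k)$ and using $f(\sigma(g))=\sigma(f(g))$, the same argument shows $\tilde S^\wedge$ is $\Gal(k)$-stable, hence a $k$-torus.

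I then need to verify that $\tilde S^\wedge$ lies in the stable class associated to $S^\wedge$ through $(\phi,q^*)$, as prescribed by \S\ref{para:square-of-tori}. The induced homomorphism $W(G^\wedge,T^\wedge)\to W(\tilde G^\wedge,\tilde T^\wedge)$ coming from $f$ should match (the dual of) the Weyl embedding $i_{\phi,q^*}$ of \S\ref{para:weyl}. To see this, I combine the Weyl-equivariance of the norm map with the hypothesis $f|_{T^\wedge} = \dnorm[T^\wedge]$ to show that $f(n)$ and $i_{\phi,q^*}(\bar n)$ act identically on $f(T^\wedge)$ for every $n\in N_{G^\wedge}(T^\wedge)$; the centralizer hypothesis then forces their images in $W(\tilde G^\wedge,\tilde T^\wedge)$ to coincide, because any element of $N_{\tilde G^\wedge}(\tilde T^\wedge)$ trivial on $f(T^\wedge)$ must lie in $C_{\tilde G^\wedge}(f(T^\wedge)) = \tilde T^\wedge$ and therefore act trivially on $\tilde T^\wedge$. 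Consequently the cocycle $\sigma\mapsto f(g^{-1}\sigma(g))$ representing $\tilde S^\wedge$ in $H^1(k,W(\tilde G^\wedge,\tilde T^\wedge))$ is the image of the cocycle for $S^\wedge$, which via Proposition~\ref{prop:stable-tori} is exactly the expected correspondence of stable classes.

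Duality (\S\ref{para:duality}) now supplies stably conjugate $S\subseteq G$ and $\tilde S\subseteq\tilde G$, and by \cite{adler-lansky:lifting}*{Proposition \xref{FGL-prop:parascopy-equivalence}} there is a parascopic datum $(\phi',q^{\prime*})$ relative to $\tilde S$ and $S$ equivalent to $(\phi,q^*)$ via conjugating elements that can be arranged to be compatible with $g$ and $f(g)$ on the dual side. Transporting $\delta$ and $\tilde\delta$ along the same conjugations produces $\delta'$ and $\tilde\delta'$, and a direct computation unwinds the associated conorm on $S^\wedge$ as $\dnorm[S^\wedge] = \Int(f(g))\circ\dnorm[T^\wedge]\circ\Int(g^{-1})$; evaluated at $s=\lsup g t\in S^\wedge$, this yields $f(g)f(t)f(g)^{-1} = f(s)$, since $f$ is a homomorphism whose restriction to $T^\wedge$ equals $\dnorm[T^\wedge]$. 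I expect the main obstacle to be this last bookkeeping: arranging the conjugating elements supplied by \xref{FGL-prop:parascopy-equivalence} and the transported duality maps so that the explicit conorm formula holds on the nose, rather than up to $W(\tilde G^\wedge,\tilde T^\wedge)^{\Gal(k)}$-conjugation.
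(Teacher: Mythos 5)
Your proof takes essentially the same route as the paper: both use the centralizer hypothesis $C_{\tilde G^\wedge}(f(T^\wedge)) = \tilde T^\wedge$ to force the map on Weyl groups induced by $f$ (which lands in $N_{\tilde G^\wedge}(\tilde T^\wedge)$ because $f(T^\wedge)$ has centralizer $\tilde T^\wedge$) to agree with $i_{\phi,q^*}$, then transfer an arbitrary torus $S^\wedge = \lsup{g}T^\wedge$ along $f(g)$ and check that $f|_{S^\wedge}$ is a conorm for an equivalent datum. The bookkeeping you flag at the end is exactly what the paper carries out: it defines $\delta' = \Int(g^\wedge)_*\circ\delta\circ\Int(g)^*$ and $\tilde\delta' = \Int(\tilde g^\wedge)_*\circ\tilde\delta\circ\Int(\tilde g)^*$ (with $\tilde g^\wedge = f(g^\wedge)$), conjugates the parascopic datum by elements $g\in G(k\sep)$, $\tilde g\in\tilde G(k\sep)$ chosen so that the resulting cocycles correspond under $\delta,\tilde\delta$ and $i_{\phi,q^*}$, and verifies the on-the-nose identity $(f|_{T'^\wedge})_* = \dnormcochar{\phi',q'^*,\delta',\tilde\delta'}$ by reducing it to $\normchar[\phi'] = \Int(\tilde g)^*{}^{-1}\circ\normchar[\phi]\circ\lsub\Gamma\Int(\tilde g)^*$, which follows from a commuting prism built from \eqref{eqn:norm} and the definitions. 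So the obstacle you anticipate is real but surmountable by exactly your plan; the proposal is correct in approach and only defers the same computation the paper writes out.
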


\begin{proof}
We begin by constructing two maps
from $W(G^\wedge,T^\wedge)$ to $W(\tilde G^\wedge, \tilde T^\wedge)$.
The first arises from the fact that the homomorphism $f$ maps
$N_{G^\wedge}(T^\wedge)$ into $N_{\tilde G^\wedge}(\tilde T^\wedge)$,
and we will also denote this new map by $f$.
To obtain the second map,
recall from 
\S\ref{para:weyl}
that we have a map $\map{i_{\phi,q^*}}{W(G,T)}{W(\tilde G,\tilde T)}$,
and since $\delta$ and $\tilde\delta$ give identifications
$\abmap{W(G,T)}{W(G^\wedge,T^\wedge)}$
and
$\abmap{W(\tilde G,\tilde T)}{W(\tilde G^\wedge,\tilde T^\wedge)}$,
we obtain a map, also denoted $i_{\phi,q*}$,
from $W(G^\wedge,T^\wedge)$ to $W(\tilde G^\wedge, \tilde T^\wedge)$.

We claim that the maps $f$ and $i_{\phi,q^*}$ on $W(G^\wedge, T^\wedge)$
are equal.
To see this, let $w\in W(G^\wedge, T^\wedge)$.
Since the centralizer of $f(T^\wedge)$ is $\tilde T^\wedge$,
it will be enough to show that $f(w)$ and $i_{\phi,q^*}(w)$ act in the same way
on $f(T^\wedge)$, or
equivalently, that $i_{\phi,q^*}(w)f_*(x) = f(w) f_*(x)$ for all $x\in X_*(T^\wedge)$.
To verify this,
note that
$\normchar[\phi,q^*](w\lambda) = i_{\phi,q^*}(w) \normchar[\phi,q^*](\lambda)$
for all $\lambda \in X^*(T)$, justifying the second of the following
equalities:
$$
i_{\phi,q^*}(w) f_*(x)
=
i_{\phi,q^*}(w) \dnorm[T^{\wedge,*}](x)
=
\dnorm[T^{\wedge,*}](wx)
=
f_*(wx)
=
f(w) f_*(x),
$$
proving the claim.

Suppose $T'^\wedge$ is a maximal $k$-torus in $G^\wedge$.
Choose $g^\wedge\in G^\wedge(k\sep)$ such that
$\lsup{g^\wedge}T^\wedge = T'^\wedge$.
Let $\tilde g^\wedge = f(g^\wedge)$,
and let $\tilde{T}'^\wedge = \lsup{\tilde g ^\wedge}\tilde T ^\wedge$.
Let $c \in Z^1(k,W(G^\wedge,T^\wedge))$ be the cocycle given by
$\sigma \mapsto {g^\wedge}\inv \sigma(g^\wedge)$.
Then our map $f$ on $W(G^\wedge,T^\wedge)$ takes $c$ to the cocycle
$\tilde c \in Z^1(k, W(\tilde G^\wedge, \tilde T^\wedge))$
given by 
$\sigma \mapsto {\tilde g^\wedge}{}\inv \sigma(\tilde g^\wedge)$.

Pick a maximal $k$-torus $T\subseteq G$
that corresponds to $T^\wedge\subseteq G^\wedge$ as in 
\S\ref{para:duality}.
Pick $g\in G(k\sep)$ (resp.~$\tilde g\in \tilde G(k\sep)$)
so that the cocycle in $Z^1(k, W(G,T))$ (resp.~$Z^1(k, W(\tilde G,\tilde T))$)
given by $\sigma\mapsto g\inv \sigma(g)$
(resp.~$\sigma\mapsto \tilde g\inv \sigma(\tilde g)$)
corresponds to $c$ (resp.~$i_{\phi,q^*}\circ c$) under the identification 
$\delta$ (resp.~$\tilde\delta$).
Let $T' = \lsup g T$
and $\tilde T' = \lsup{\tilde g} \tilde T$.
Then $T'\subseteq G$ and $\tilde T' \subseteq \tilde G$ are both $k$-tori.

Let $(\phi', q'^*)$ be the conjugate of $(\phi, q^*)$
via the elements $g$ and $\tilde g$
(see Remark \ref{rem:equivalence}).
Then by the Remark, $(\phi,' q'^*)$ is a parascopic datum
for $(\tilde G,\Gamma, G)$ with respect to $\tilde T'$ and $T'$,
equivalent to $(\phi, q^*)$.

Define duality maps
$\map{\delta'}{X^*(T')}{X_*(T'^\wedge)}$
and
$\map{\tilde\delta'}{X^*(\tilde T')}{X_*(\tilde T'^\wedge)}$
by
$$
\delta' = \Int (g^\wedge)_* \circ \delta \circ \Int (g)^*,
\qquad
\tilde\delta' = \Int (\tilde g^\wedge)_* \circ \tilde \delta \circ \Int (\tilde g)^*.
$$

In order to complete the proof, it is enough to show that, as maps
$\abmap{X_*(T'^\wedge)}{X_*(\tilde T'^\wedge)}$,
we have
\begin{equation}
\label{eqn:conorm-func-hom-want}
(f|_{T'^\wedge})_* = \dnormcochar{\phi',q'^*,\delta',\tilde\delta'}.
\end{equation}
The left-hand side of \eqref{eqn:conorm-func-hom-want} equals
$
\Int (\tilde g^\wedge)_* \circ (f|_{T^\wedge})_* \circ \Int (g^\wedge)_*\inv
$,
which by \eqref{eq:norm-char} and the fact that
$f|_{T^\wedge} = \dnorm[T^\wedge]$,
we may rewrite as
\begin{equation}
\label{eqn:conorm-func-hom-want-lhs}
\Int (\tilde g^\wedge)_* \circ \tilde\delta \circ
\normchar[\phi]  \circ q^* \circ \delta\inv\circ \Int (g^\wedge)_*\inv.
\end{equation}
Meanwhile, the right-hand side of \eqref{eqn:conorm-func-hom-want}
is equal to
$\tilde \delta ' \circ \normchar[\phi'] \circ q'^* \circ \delta'{} \inv$,
which we may rewrite as
$$
\Int (\tilde g^\wedge)_* \circ \tilde \delta \circ \Int (\tilde g)^*
\circ
\normchar[\phi'] \circ q'^*
\circ
\Int (g)^*{}\inv \circ \delta \inv \circ \Int (g^\wedge)_*\inv.
$$
From the equivalence of $(\phi,q^*)$ and $(\phi',q'^*)$ via $g$ and $\tilde g$,
we have that $q'^* = \lsub\Gamma \Int(\tilde g)^*{}\inv \circ q^* \circ (\Int g)^*$,
so the right-hand side of \eqref{eqn:conorm-func-hom-want} equals
\begin{equation}
\label{eqn:conorm-func-hom-want-rhs}
\Int (\tilde g^\wedge)_* \circ \tilde \delta \circ \Int (\tilde g)^*
\circ
\normchar[\phi'] \circ \lsub\Gamma \Int (\tilde g)^*{}\inv \circ  q^* 
\circ
\delta \inv \circ \Int (g^\wedge)_*\inv.
\end{equation}
Comparing \eqref{eqn:conorm-func-hom-want-lhs}
and \eqref{eqn:conorm-func-hom-want-rhs},
we see that it will be sufficient to show
$$
\normchar[\phi']
= \Int(\tilde g)^*{}\inv \circ \normchar[\phi] \circ \lsub\Gamma\Int (\tilde g)^*
$$
or, equivalently, that the right-hand face of the following prism commutes.
\begin{equation*}
\begin{xy}
\xymatrix{
&
&
&
X^*(\tilde T')
	\ar[dlll]_{\Int (\tilde g)^*}
	\ar@{->>}[dd]|!{[dl];[rr]} \hole
	\ar[rr]^{\sum\phi'(\gamma)} &
&
X^*(\tilde T')
	\ar[dlll]^(0.4){\Int(\tilde g)^*}
\\
X^*(\tilde T)
	\ar[rr]_{\sum\phi(\gamma)}
	\ar@{->>}[dd] &
&
X^*(\tilde T)
\\
&
&
&
\lsub{\phi'}X^*(\tilde T')
	\ar[uurr]_{\normchar[\phi']}
	\ar[dlll]^{\lsub\Gamma \Int (\tilde g) ^*}
\\
\lsub{\phi}X^*(\tilde T)
	\ar[uurr]_{\normchar[\phi]}
}
\end{xy}
\end{equation*}
This follows because the front and back faces commute by
\eqref{eqn:norm};
the top face commutes by the definition of $\phi'$;
and the left-hand face commutes by the definition of
$\lsub\Gamma\Int \tilde g^*$.
\end{proof}

\begin{para}[Pointwise conorms for data obtained from dual-closed subsystems]
\label{para:conorm-dual-closed}
Suppose that
$\tilde T\subseteq \tilde G$ and $T \subseteq G$ are maximal $k$-tori.
Let $q^*$
be an embedding of
$\Psi(G, T)$ in $\Psi(\tilde G, \tilde T)$ with dual-closed image.
In particular, $q^*$ and its transpose are isomorphisms of lattices.
Then, as in Example~\ref{ex:parascopy-gamma-trivial},
$(1,q^*)$ is a parascopic datum for $(\tilde G, 1, G)$
relative to the tori $\tilde T$ and $T$.

Let $\tilde G^\wedge$ and $G^\wedge$ be quasi-split $k$-groups
with maximal $k$-tori $\tilde T^\wedge\subseteq \tilde G^\wedge$ and
$T^\wedge\subseteq G^\wedge$ such that
$\tilde G^\wedge$ and $G^\wedge$ are respectively in
$k$-duality with $\tilde G$ and $G$ via duality maps
$\map{\tilde \delta}{\bX^*(\tilde T)}{\bX_*(\tilde T^\wedge)}$ and
$\map{\delta}{\bX^*(T)}{\bX_*(T^\wedge)}$.
Then $\Psi(G^\wedge, T^\wedge)$ is naturally isomorphic to a closed
subdatum of $\Psi(\tilde G^\wedge, \tilde T^\wedge)$ via the map
\begin{equation}
\label{eq:dual-embedding}
\map%
{\lsup{t}\tilde \delta\inv\circ q _*\inv\circ\lsup{t}\delta}%
{X^*(T^\wedge)}%
{X^*(\tilde T^\wedge)},
\end{equation}
where the $t$ superscript denotes transpose.

Proposition \ref{prop:embedding} implies the existence of a $k$-embedding
$\map{\nu}{G^\wedge}{\tilde G^\wedge}$.
Moreover, since $\tilde T^\wedge$ is determined only up to stable conjugacy, 
it follows from this proposition that we may choose $\tilde T^\wedge$ to be $\nu(T^\wedge)$
and that we may assume that the map
$\map{\nu^*}{X^*(\tilde T^\wedge)}{X^*(T^\wedge)}$ induced by $\nu$ coincides with the inverse of
\eqref{eq:dual-embedding}, i.e.,
\begin{equation}
\label{eq:character-map}
\nu^* = 
\lsup{t}\delta\inv\circ q_*\circ\lsup{t}\tilde \delta .
\end{equation}
Thus the transpose $\map{\nu_*}{X_*(T^\wedge)}{X_*(\tilde T^\wedge)}$ is
given by $\nu_* = \tilde \delta \circ q^* \circ \delta\inv$.
Since $q^*$ coincides with
the norm function $\map{\normchar[1,q^*]}{X^*(T)}{X^*(\tilde{T})}$,
it follows from \eqref{eq:norm-char} that $\nu_*$ agrees with $(\dnorm[1,q^*])_*$
on $X_*(T^\wedge)$. Thus $\nu$ agrees with $\dnorm[1,q^*]$
on $T^\wedge$. It follows from Proposition \ref{prop:conorm-func-hom}
that $\nu$ is a pointwise conorm for $[1,q^*]$.
\end{para}

\begin{para}[Pointwise conorms for non-root-inclusive data]
\label{para:wrong-subsystem}
Suppose $\tilde{G}$ is a connected semisimple $k$-quasisplit $k$-group
of type $A_{2n}$ and
$\gamma$ is a non-inner quasisemisimple $k$-involution on $\tilde{G}$.
Then $\gamma$ preserves some Borel-torus pair $(\tilde B, \tilde T)$ for $\tilde{G}$,
where $\tilde T$ is defined over $k$.
Let $\phi$ denote the action of $\langle\gamma\rangle$ on $\Psi(\tilde G,\tilde T)$. Then
the root datum $\lsub\phi\Psi(\tilde G,\tilde T)$ has type $BC_n$.
It is straightforward to show that $\gamma$ must preserve a pinning 
of $(\tilde G,\tilde B,\tilde T)$.

Let $\Psi = \lsub\phi \Psi(\tilde G,\tilde T)\redwrong$
(see \S\ref{sec:basic}).
That is,
$\Psi$ has type $B_n$ or $C_n$ according as $p$ is $2$ or not,
so $\Psi$ is not the root datum of
the fixed-point group $\tilde{G}^\gamma$.
Let $G$ be a $k$-group with a maximal $k$-torus
$T$ and a $\Gal(k)$-equivariant isomorphism $q^*$ from $\Psi(G,T)$ to $\Psi$.
We can and will take $G$ to be quasisplit over $k$ (but note that this is automatic
if $\tilde B$ is defined over $k$,
since $\tilde B$ then determines a $\Gal(k)$-invariant positive system in $\Psi$
and thus in $\Psi(G,T)$).
Then 
$(\phi,q^*)$ is a parascopic
datum for the triple $(\tilde{G},\Gamma, G)$, the basic
example of a datum that is not root-inclusive.

Assume for now that $\tilde B$ is defined over $k$.

Let $\tilde G^\wedge$ be the $k$-dual of $\tilde G$,
and let $\tilde T^\wedge$ be a maximal $k$-torus of $\tilde G^\wedge$ corresponding to $\tilde T$ as in \S\ref{para:duality}.
The action of $\langle\gamma\rangle$ on $\Psi(\tilde G,\tilde T)$ together with the choice of a duality map
$\abmap{X^*(\tilde T)}{X_*(\tilde T^\wedge)}$ determines an action $\hat\phi$ of
$\langle\gamma\rangle$ on $\Psi(\tilde G^\wedge, \tilde T^\wedge)$.
Moreover, $\tilde B$ determines a $\Gal(k)$- and $\gamma$-stable positive system in
$\Phi^\vee(\tilde G, \tilde T) = \Phi(\tilde G^\wedge, \tilde T^\wedge)$, hence a $\gamma$-stable
$k$-subgroup $\tilde B^\wedge$ of $\tilde G^\wedge$ containing $\tilde T^\wedge$. Choosing
a $\Gal(k)$-stable pinning of $(\tilde G^\wedge,\tilde B^\wedge,\tilde T^\wedge)$ (which always exists),
we obtain a homomorphism $\psi$ from the automorphism group of the based root datum
$\Psi(\tilde G^\wedge,\tilde B^\wedge,\tilde T^\wedge)$ to $\Aut(\tilde G^\wedge)$
(see~\cite{adler-lansky:data-actions}*{Remark 19}).
It follows that $\psi\circ\hat\phi$ is a $k$-quasicentral action of $\langle\gamma\rangle$ on $\tilde G^\wedge$.

Let $G^\wedge$ denote the $k$-group $(\tilde G^{\wedge\,\gamma})\conn$
and $T^\wedge\subseteq G^\wedge$ the maximal $k$-torus $(\tilde T^{\wedge\,\gamma})\conn$.
By Example~\ref{ex:parascopy-fixed-point},
$\Psi(G^\wedge ,T^\wedge ) = \lsub{\hat\phi}\Psi(\tilde G^\wedge,\tilde T^\wedge)\red$,
and the latter is dual to $\Psi$, which is $\Gal(k)$-isomorphic to $\Psi(G,T)$.
Hence $G^\wedge$ is $k$-dual to $G$. It is straightforward to verify
that the restriction of the embedding
from $G^\wedge$ to $\tilde G^\wedge$
agrees with the map $\map{\dnorm[\phi,q^*]}{T^\wedge}{\tilde{T}^\wedge}$.
By Proposition \ref{prop:conorm-func-hom},
the embedding $\abmap{G^\wedge}{\tilde G^\wedge}$
is a pointwise conorm for $[\phi, q^*]$.

Now drop the assumption that $\tilde B$ is defined over $k$.
Let $B_0$ be a Borel $k$-subgroup of $G$, and $T_0\subseteq B_0$
a maximal $k$-torus.
Since $\langle\gamma\rangle$ preserves a pinning of $(\tilde G,\tilde B,\tilde T)$,
by \cite{adler-lansky:data-actions}*{Lemma 14}
these choices determine a $\gamma$-invariant
Borel-torus pair $(\tilde B_0, \tilde T_0)$ in $\tilde{G}$.
Since this pair is defined over the splitting field of $\tilde T_0$
and is unique, it must be invariant under $\Gal(k)$
and thus defined over $k$.
The action of $\langle\gamma\rangle$ on $(\tilde G,\tilde B_0,\tilde T_0)$
determines a parascopic datum $(\phi_0,q_0^*)$
relative to $\tilde T_0$ and $T_0$ that is equivalent to $(\phi,q^*)$.
Therefore, as above, we obtain an embedding $\abmap{G^\wedge}{\tilde{G}^\wedge}$
that is a pointwise conorm for $[\phi_0, q_0^*] = [\phi, q^*]$.
\end{para}

\begin{para}[Action on a product]
\label{para:product}
Let $\tilde G$ be a direct product of $k$-almost simple $k$-groups
$\prod_{i=1}^r \tilde{G}_i$,
and let $\tilde T = \prod_{i=1}^r \tilde T_i$,
where $\tilde T_i\subseteq \tilde G_i$ is a maximal $k$-torus.
Let $\tilde\Psi := \Psi(\tilde G,\tilde T)$
and $\tilde\Psi_i := \Psi(\tilde G_i, \tilde T_i)$
denote the corresponding root data.
Let $\Gamma$ be a finite group acting on the
root datum $\tilde\Psi$,
commuting with the action of $\Gal(k)$, and preserving a positive
system of roots.
Then $\Gamma$ permutes the factors $\tilde\Psi_i$,
and we assume this permutation is transitive.
Let $G$ be a connected reductive $k$-group,
$T\subseteq G$ a maximal $k$-torus,
and $(\phi,q^*)$ a parascopic datum for $(\tilde G,\Gamma,G)$
relative to $\tilde T$ and $T$.
Assume that $\tilde G$ and $G$ are $k$-quasisplit.

Let $\Gamma_1 = \stab_\Gamma(\tilde\Psi_1)$,
and let $\phi_1$ be the action of $\Gamma_1$ on $\tilde\Psi_1$
determined by $\phi$.
For each $i$, choose $\gamma_i\in \Gamma$ such that
$\gamma_i(\tilde \Psi_1) = \tilde \Psi_i$.
Define an isomorphism
$\psi^*$ from $\prod_{i=1}^r \tilde \Psi_1$ to $\tilde\Psi$
by
$\psi^*(x_1, \ldots , x_r) = \sum_i \gamma_i(x_i)$
for $x_i\in X^*(\tilde T_1)$.
Since $\Gamma_1$ acts on $\tilde\Psi_1$ and the cyclic
group $\Z/r$ acts on $\prod_{i=1}^r \tilde\Psi_1$ via
cyclic permutation of coordinates, we obtain an action
of $\Gamma^\sharp := \Gamma_1 \times \Z/r$
on
$\prod_{i=1}^r \tilde\Psi_1$.
Via the isomorphism $\psi^*$ we thus obtain an action $\phi^\sharp$
of $\Gamma^\sharp$ on $\tilde\Psi$.

It is straightforward to check that the endomorphisms
$\sum_{\gamma\in\Gamma}\phi(\gamma)$ and
$\sum_{\gamma\in\Gamma^\sharp}\phi^\sharp(\gamma)$
of $X^*(\tilde T)$
agree.
Thus $(\phi^\sharp,q^*)$ is a parascopic datum for $(\tilde G,\Gamma^\sharp,G)$ by \S\ref{para:group-action}.
By Proposition \ref{prop:change-group-action},
in order to describe
$\dnorm[{[\phi,q^*]}]$ and 
$\dnormst[{[\phi,q^*]}]$
we may and will replace
$\phi$ and $\Gamma$ by $\phi^\sharp$ and $\Gamma^\sharp$.
Since $\lsub\phi\tilde\Psi= \lsub{\phi_1}\tilde\Psi_1$,
$(\phi_1,q^*)$ is a parascopic datum for $(\tilde G_1, \Gamma_1, G)$.
Note that if $\tilde G_1^\wedge$ is in $k$-duality with $\tilde G_1$
via a map $\map{\tilde\delta_1}{X^*(\tilde T_1)}{X_*(\tilde T_1^\wedge)}$,
then $\tilde G^\wedge := \prod_{i=1}^r \tilde G_1^\wedge$ is in duality
with $\tilde G$
via $\tilde\delta := (\prod \tilde\delta_1)\circ \psi^*{}\inv$.
Let $\map\diag{\tilde G_1^\wedge}{\tilde G^\wedge}$ denote the diagonal map.
\end{para}

\begin{prop}
\label{prop:product-conorm}
In the situation of \S\ref{para:product},
suppose that $\map{\dnormfunc_1}{G^\wedge}{\tilde G_1^\wedge}$
is a pointwise conorm
for 
$[\phi_1, q_1^*]$.
Then $\map{\widehat N := \diag\circ\widehat N_1}{G^\wedge}{\tilde G^\wedge}$
is a pointwise conorm for $[\phi, q^*]$.
\end{prop}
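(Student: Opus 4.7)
The plan is to use the decomposition machinery of \S\ref{sec:decomposition-defn} together with \S\ref{sec:decomposition-gamma} to express the parascopic datum as a composition, realize each factor as one of the two pointwise conorms at hand, and invoke Lemma~\ref{lem:pointwise-conorm-factorization}. Concretely, since $\Gamma^\sharp = \Gamma_1 \times \Z/r$ sits in the split exact sequence
\[
1 \longrightarrow \Z/r \longrightarrow \Gamma^\sharp \longrightarrow \Gamma_1 \longrightarrow 1,
\]
I would apply Proposition~\ref{prop:decomposition-gamma} with $\Gamma_0 = \Z/r$ to obtain a decomposition
$(\phi^\sharp,q^*) = (\phi_0,q_0^*)\circ(\phi_1',q_1'^{*})$, where $(\phi_0,q_0^*)$ is a parascopic datum for $(\tilde\Psi,\Z/r,\Psi_1)$ and $(\phi_1',q_1'^*)$ is a parascopic datum for $(\Psi_1,\Gamma_1,\Psi(G,T))$ for some $\Gal(k)$-stable root datum $\Psi_1$.

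The crucial step is to identify the intermediate root datum $\Psi_1$ with $\tilde\Psi_1$. Via the isomorphism $\psi^*$ of \S\ref{para:product}, the restriction $\phi_0$ of $\phi^\sharp$ to $\Z/r$ is exactly cyclic permutation of the factors of $\prod_{i=1}^r\tilde\Psi_1$, and its coinvariant root datum $\lsub{\phi_0}\tilde\Psi$ is naturally isomorphic to $\tilde\Psi_1$ (since $\Z/r$ permutes simply transitively, the coinvariants of $\prod X^*(\tilde T_1)$ are canonically $X^*(\tilde T_1)$, and similarly on the cocharacter side). Under this identification, the residual action of $\Gamma_1 = \Gamma^\sharp/\Gamma_0$ on $\lsub{\phi_0}\tilde\Psi$ is precisely $\phi_1$, and the map $q_1'^*$ coincides with $q^*$ viewed as a morphism $\Psi(G,T)\to\lsub{\phi_1}\tilde\Psi_1$. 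Thus in the composition, the first (outer) factor $(\phi_0,q_0^*)$ is the permutation-action datum of Example~\ref{ex:permutation-action} for $\tilde G = \prod_{i=1}^r\tilde G_1$, and the second (inner) factor is exactly $(\phi_1,q^*)$.

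By Example~\ref{ex:permutation-action}, the diagonal embedding $\diag\colon \tilde G_1^\wedge \to \tilde G^\wedge = \prod_{i=1}^r \tilde G_1^\wedge$ is a pointwise conorm for $(\phi_0,q_0^*)$, using the identification of dual groups encoded in $\tilde\delta = (\prod\tilde\delta_1)\circ\psi^{*-1}$. By hypothesis, $\widehat N_1$ is a pointwise conorm for $(\phi_1,q^*)$. Lemma~\ref{lem:pointwise-conorm-factorization} then gives that $\diag\circ\widehat N_1$ is a pointwise conorm for the composition $(\phi^\sharp,q^*)$. Finally, because $\sum_{\gamma\in\Gamma}\phi(\gamma) = \sum_{\gamma\in\Gamma^\sharp}\phi^\sharp(\gamma)$, Proposition~\ref{prop:change-group-action} (and the discussion of \S\ref{para:group-action}) shows that the conorm maps on tori induced by $(\phi,q^*)$ and $(\phi^\sharp,q^*)$ agree; since being a pointwise conorm is a property of the associated toral conorms and the induced map on geometric conjugacy classes, $\widehat N := \diag\circ\widehat N_1$ is a pointwise conorm for $[\phi,q^*]$ as well.

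The main obstacle I anticipate is the bookkeeping in the second paragraph above: carefully verifying that when Proposition~\ref{prop:decomposition-gamma} is applied, the intermediate root datum $\Psi_1$ can indeed be chosen to be identified with $\tilde\Psi_1$ (via $\psi^*$), and that under this identification the first factor of the composition really agrees, \emph{as a parascopic datum} (including all duality data), with the permutation action datum of Example~\ref{ex:permutation-action}. Once this identification is made compatibly with the chosen $\tilde\delta$ and $\tilde\delta_1$, everything else follows cleanly from results already established.
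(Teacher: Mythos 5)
Your proof is correct and takes essentially the same route as the paper's: both decompose the parascopic datum into the $\Z/r$-permutation factor (handled by Example~\ref{ex:permutation-action}) and the $(\phi_1,q^*)$ factor, then invoke Lemma~\ref{lem:pointwise-conorm-factorization}. The paper asserts the composition directly rather than going through Proposition~\ref{prop:decomposition-gamma}, and handles the passage from $\phi^\sharp$ back to $\phi$ implicitly via the convention announced in \S\ref{para:product}, whereas you spell both of these steps out explicitly; the substance is the same.
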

\begin{proof}
The parascopic datum $(\phi,q^*)$ is a composition of the datum $(\phi |_{\Z/r},\id)$
for $(\tilde G,\Z/r,\tilde G_1)$ and the datum $(\phi_1,q^*)$
for $(\tilde G_1, \Gamma_1, G)$.
The result now follows from
Proposition \ref{lem:pointwise-conorm-factorization},
and Example \ref{ex:permutation-action}.
\end{proof}

\section{Weil restriction and parascopic data}
\label{sec:weil-parascopic}
In this section, we let $E/k$ be a finite separable extension.
For a connected reductive $E$-group $H$,
recall that the Weil restriction ${\Weil_{E/k}H}$ of $H$ is 
a connected reductive $k$-group equipped with a surjective $E$-homomorphism
$\map{\pr}{\Weil_{E/k}H}{H}$ satisfying a certain universal property
(see~\cite{springer:lag}*{Theorem 11.4.16}),
and thus it is well defined up to a canonical isomorphism.
Whenever we say that a $k$-group $G$ is equal to $\Weil_{E/k}H$,
we will mean that $G$ is equipped with a map $\pr$ as above.
If $H$ has a $k$-structure,
then the above universal property implies that the identity map
$\abmap{H}{H}$ induces a $k$-morphism $\abmap{H}{\Weil_{E/k}H}$,
which we will denote by $\diag_{E/k}$.

\begin{rem}
\label{rem:weil-restriction-maps}
(Weil restriction of lattices.)
Let $X$ and $Y$ be lattices on which 
$\Gal(E)$ acts,
and let $\map{f}{X}{Y}$ be a $\Gal(E)$-homomorphism.
Then by applying the functor $\Ind_{\Gal(E)}^{\Gal(k)}$,
we obtain lattice representations $\Weil_{E/k}X$
and $\Weil_{E/k}Y$ 
of $\Gal(k)$
and a $\Gal(k)$-homomorphism
$\map{\Weil_{E/k} f}{\Weil_{E/k}X}{\Weil_{E/k}Y}$.
The lattice $\Weil_{E/k}X$ is equipped with a
$\Gal(E)$-embedding $\map{\iota^*}{X}{\Weil_{E/k}X}$
satisfying a well-known universal property.
For any $\Gal(E)$-invariant set $\Xi\in X$ and $\tau\in\Gal(k)$,
the image of $\iota^*(\Xi)$ under $\tau$
depends only on the class of $\tau$ in $\Gal(k)/\Gal(E) = \Hom_k(E,k\sep)$, i.e.,
only on the restriction $\bar\tau$ of $\tau$ to $E$. We will denote
this image by $\Xi_{\bar\tau}$.
\end{rem}

\begin{defn}
\label{def:weil-restriction-root-data}
Let $\Psi = (X^*,\Phi,X_*,\Phi^\vee)$ be a root datum equipped with an action 
of $\Gal(E)$. Define the \emph{Weil restriction} $\Weil_{E/k}\Psi$ of $\Psi$ to be the
quadruple $(\Weil_{E/k} X^*,\Weil_{E/k}\Phi,\Weil_{E/k}X_*,\Weil_{E/k}\Phi^\vee)$, where
$\Weil_{E/k}\Phi := \bigsqcup\Phi_\sigma$ and $\Weil_{E/k}\Phi^\vee := \bigsqcup \Phi^\vee_\sigma $,
and $\sigma$ ranges over $\Hom_k(E,k\sep)$ (see Remark~\ref{rem:weil-restriction-maps}).
Identifying $\Weil_{E/k}X_*$ with the dual of $\Weil_{E/k}X^*$, it is easily seen that $\Weil_{E/k}\Psi$ is a root datum
with an action of $\Gal(k)$
and that $\Psi_\sigma := (X^*_\sigma,\Phi_\sigma,{X_*}_\sigma,\Phi_\sigma^\vee)$ is a root datum
with an action of $\Gal(\sigma E)$.
Moreover, the implicit $\Gal(E)$-embedding $\map{\iota^*}{X^*}{\Weil_{E/k}X^*}$
(see Remark~\ref{rem:weil-restriction-maps}) is in fact a morphism
from $\Psi$ to $\Weil_{E/k}\Psi$.
\end{defn}

\begin{rem}
\label{rem:weil-restriction-groups}
(Root data of Weil restrictions of reductive groups.)
Let $H$ be a connected reductive $E$-group.
If $S\subseteq H$ is a maximal $E$-torus, then $\Weil_{E/k}S$ is a maximal $k$-torus of $\Weil_{E/k}H$
(and every maximal $k$-torus of $\Weil_{E/k}H$ arises in this way).
Moreover, $\Psi (\Weil_{E/k}H,\Weil_{E/k}S)$ is naturally isomorphic to $\Weil_{E/k}(\Psi(H,S))$,
as follows from the fact that the map $\map{\pr^*}{X^*(S)}{X^*(\Weil_{E/k}S)}$
satisfies the universal property of the functor $\Ind_{\Gal(E)}^{\Gal(k)}$.
\end{rem}

Let $\Psi$ be a root datum equipped with an action of $\Gal(E)$,
and let $\Aut_E(\Psi)$ denote the group
of $\Gal(E)$-automorphisms of $\Psi$.
Let $\Gamma$ be a finite group and $\map{\phi}{\Gamma}{\Aut_E\Psi}$ a
homomorphism. Then $\Gamma$ acts naturally on
$\Weil_{E/k}\Psi$ via $\Gal(k)$-automorphisms
(apply the functor $\Ind_{\Gal(E)}^{\Gal(k)}$ to $\phi(\gamma)$ for $\gamma\in\Gamma$),
and we denote the corresponding map
$\abmap{\Gamma}{\Aut_k(\Weil_{E/k}\Psi)}$ by $\Weil_{E/k}\phi$.

\begin{defn}
\label{defn:weil-restriction-parascopy}
Let $\tilde\Psi$ and $\Psi$
be root data with actions of $\Gal(E)$, let $\Gamma$ be a finite group, and let $(\phi , q^*)$ be
a parascopic datum for $(\tilde\Psi,\Gamma,\Psi)$. 
The morphism $q^*$ from $\Psi$ to $\lsub\phi\tilde\Psi$ determines a morphism
$\Weil_{E/k}q^*$ from $\Weil_{E/k}\Psi$ to
$\Weil_{E/k} (\lsub\phi\tilde \Psi)=\lsub{\Weil_{E/k}\phi}\Weil_{E/k}\tilde\Psi$.
We define the \emph{Weil restriction}
$\Weil_{E/k}(\phi,q^*)$ to be the pair $(\Weil_{E/k}\phi,\Weil_{E/k}q^*)$
and observe that it
is a parascopic datum for $(\Weil_{E/k}\tilde\Psi,\Gamma,\Weil_{E/k}\Psi)$.
\end{defn}

\begin{rem}
\label{rem:weil-restriction-groups-parascopy}
(Parascopic data of Weil restrictions of reductive groups.)
Let $H$ (resp.~$\tilde H$) be a connected reductive $E$-group and
$S\subseteq H$ (resp.~$\tilde S\subseteq \tilde H$) a maximal $E$-torus.
Let $(\phi,q^*)$ be a parascopic datum for $(\tilde H,\Gamma ,H)$
relative to $\tilde S$ and $S$.
Then via the identifications of Remark~\ref{rem:weil-restriction-groups},
$\Weil_{E/k}(\phi,q^*)$ can be viewed as a parascopic datum for
$(\Weil_{E/k}\tilde H,\Gamma ,\Weil_{E/k}H)$
relative to $\Weil_{E/k}\tilde S$ and $\Weil_{E/k}S$.
\end{rem}

\begin{defn}
\label{defn:induction-action}
Let $\Psi$ be a root datum equipped with an action of $\Gal(k)$, 
$\Gamma$ a finite group, and
 $\phi$ a homomorphism $\abmap{\Gamma}{\Aut_k\Psi}$.
We will say that $\phi$ is \emph{$E/k$-induced} if there exists a root datum
$\breve\Psi$ with an action of $\Gal(E)$, 
a homomorphism $\map{\breve\phi}{\Gamma}{\Aut_E\breve\Psi}$,
and a $\Gal(E)$-embedding from $\breve\Psi$ to $\Psi$
that realizes $\Psi$ as $\Weil_{E/k}\breve\Psi$ and that intertwines $\breve\phi$ and $\phi$
(so that $\phi = \Weil_{E/k}\breve\phi$).
\end{defn}

\begin{lem}
\label{lem:parascopy-restriction-over-k}
Let $\Psi$ and $\tilde\Psi$ be a root data equipped with actions of $\Gal(k)$.
Let $\Gamma$ be a finite group, and let $(\phi,q^*)$ be a
torus-inclusive parascopic datum for $(\tilde\Psi , \Gamma ,\Psi)$.
Assume that $\phi$ is $E/k$-induced. Then there exist root data
$\breve\Psi$ and $\bar\Psi$ with actions of $\Gal(E)$,
and a torus-inclusive parascopic datum $(\breve \phi, \breve q^*)$ for
$(\bar\Psi, \Gamma ,\breve\Psi)$,
such that
$\tilde\Psi = \Weil_{E/k} \bar\Psi$,
$\Psi = \Weil_{E/k} \breve\Psi$,
and
$(\phi,q^*) = \Weil_{E/k}(\breve \phi, \breve q^*)$.
\end{lem}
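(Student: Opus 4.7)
The plan is to unwind the $E/k$-induced hypothesis and then read off $(\breve\phi,\breve q^*)$ from the ``identity factor'' of the Weil-restriction structure. Write $\Psi=(X^*,\Phi,X_*,\Phi^\vee)$ and $\bar\Psi=(\bar X^*,\bar\Phi,\bar X_*,\bar\Phi^\vee)$. By Definition~\ref{defn:induction-action} we are given a homomorphism $\breve\phi\colon\Gamma\to\Aut_E(\bar\Psi)$ with $\tilde\Psi=\Weil_{E/k}\bar\Psi$ and $\phi=\Weil_{E/k}\breve\phi$. A first routine step will be to check that $\Gamma$-restriction of root data commutes with Weil restriction, giving $\lsub\phi\tilde\Psi=\Weil_{E/k}(\lsub{\breve\phi}\bar\Psi)$: because $\phi$ acts on each factor of $\Weil_{E/k}\bar\Psi$ independently via $\breve\phi$, the $\Gamma$-coinvariants on characters and $\Gamma$-invariants on cocharacters decompose factorwise, and the coroot recipe of Definition~\ref{defn:restricted} is local to each factor.

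Torus-inclusivity then supplies a $\Gal(k)$-equivariant isomorphism of lattices
\[
q^*\colon X^*\xrightarrow{\ \sim\ }\Weil_{E/k}(\lsub{\breve\phi}\bar X^*)=\bigoplus_{\sigma\in\Hom_k(E,k\sep)}(\lsub{\breve\phi}\bar X^*)_\sigma.
\]
Since $q^*(\Phi)\subseteq\Weil_{E/k}(\lsub{\breve\phi}\bar\Phi)=\bigsqcup_\sigma(\lsub{\breve\phi}\bar\Phi)_\sigma$, each image root must lie in a single factor, and the $\Gal(k)$-invariance of $\Phi$ then forces $q^*(\Phi)=\bigsqcup_\sigma\Phi^*_\sigma$ with $\Phi^*_\sigma=\tau(\Phi^*_1)$ for any $\tau\in\Gal(k)$ restricting to $\sigma$. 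I would then set $\breve X^*:=\lsub{\breve\phi}\bar X^*$, $\breve X_*:=\lsub{\breve\phi}\bar X_*$, $\breve\Phi:=\Phi^*_1\subseteq\lsub{\breve\phi}\bar\Phi$, take $\breve\Phi^\vee$ to consist of the corresponding coroots inside $\lsub{\breve\phi}\bar X_*$ (transported analogously via $q_*$ on the cocharacter side), propose $\breve\Psi:=(\breve X^*,\breve\Phi,\breve X_*,\breve\Phi^\vee)$, and take $\breve q^*:=\id_{\breve X^*}$, viewed as a morphism to $\lsub{\breve\phi}\bar\Psi$ through the inclusion $\breve\Phi\hookrightarrow\lsub{\breve\phi}\bar\Phi$.

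What remains to verify is: (a) $\breve\Psi$ is a root datum, with the perfect pairing, the relation $\langle\alpha,\alpha^\vee\rangle=2$, and closure under root reflections all descending from the corresponding facts in $\Psi$ because each statement is local to a single Weil-restriction factor; (b) $\Gal(E)$, as the stabilizer of the identity factor in $\Gal(k)$, acts on $\breve\Psi$ compatibly with $\breve\phi$; (c) a $\phi(\Gamma)$-invariant positive system in $\tilde\Phi$ meets the identity factor in a $\breve\phi(\Gamma)$-invariant positive system in $\bar\Phi$, by the same factorwise decomposition argument that was used on $\Phi$; and (d) unwinding all definitions, $\Psi=\Weil_{E/k}\breve\Psi$ and $(\phi,q^*)=\Weil_{E/k}(\breve\phi,\breve q^*)$. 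I expect the only delicate point to be the bookkeeping in the decomposition of $q^*(\Phi)$: one must justify that no root is smeared across several factors of $\Weil_{E/k}(\lsub{\breve\phi}\bar\Phi)$, and this rests on Definition~\ref{def:weil-restriction-root-data}, where the root system of a Weil restriction is built as the \emph{disjoint} union of factor root systems rather than allowing cross-factor linear combinations.
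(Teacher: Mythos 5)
Your proof is correct and follows essentially the same route as the paper: establish $\lsub{\phi}\tilde\Psi = \Weil_{E/k}(\lsub{\breve\phi}\bar\Psi)$, then use the torus-inclusive isomorphism $q^*$ to transport that Weil-restriction structure back to $\Psi$. The only presentational difference is that the paper builds $\breve\Psi$ as the categorical pullback of $q^*$ and $\lsub{\Gamma}\tilde\iota^*$ (so Remark~\ref{rem:morphism-pullback} hands you the root-datum axioms and $\Gal(E)$-equivariance for free), whereas you carve $\breve\Psi$ out by hand from the identity factor and verify those properties directly; because $q^*$ is an isomorphism of lattices, the pullback \emph{is} your explicit slice, so the two constructions coincide.
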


\begin{proof}
By assumption, there is a 
root datum $\bar\Psi$
equipped with an action of $\Gal(E)$,
a homomorphism $\map{\breve\phi}{\Gamma}{\Aut_E \bar\Psi}$
and a $\Gal(E)$-embedding $\tilde\iota^*$  
from $\bar\Psi$ to $\tilde \Psi$ realizing $\tilde\Psi$ as $\Weil_{E/k}\bar\Psi$
and such that $\phi = \Weil_{E/k}\breve\phi$.
Since $\phi$ stabilizes a system of positive roots for $\tilde\Psi$,
the same is true for the action $\breve\phi$ on $\bar\Psi$.

By assumption, $q^*$ is a $\Gal(k)$-isomorphism from
$\Psi$ to $\lsub{\phi}\tilde \Psi$. Moreover, by Remark~\ref{rem:morphism-properties},
$\tilde\iota^*$ descends to a $\Gal(E)$-morphism 
$\lsub{\Gamma}\tilde \iota^*$
from $\lsub{\breve\phi}\bar \Psi$ to $\lsub{\phi}\tilde\Psi$
which satisfies the universal property of induction,
realizing $\lsub{\phi}\tilde \Psi$ as $\Weil_{E/k}\big(\lsub{\breve\phi}\bar \Psi\big)$

Let $\breve \Psi$ be the pullback of $\lsub{\Gamma}\tilde \iota^*$ and $q^*$,
and let $\breve q^*$ (resp.~$\iota^*$) be the
$\Gal(E)$-equivariant morphism from
$\breve\Psi$ to $\lsub{\breve\phi}\bar \Psi$
(resp.~$\breve\Psi$ to $\Psi$)
arising from the universal property of $\breve\Psi$.
By Remark~\ref{rem:morphism-pullback},
$\breve\Psi$ is a root datum with an action of $\Gal(E)$.
Since $q^*$ is an isomorphism, $\breve q^*$ is as well,
so $(\breve \phi, \breve q^*)$ is a torus-inclusive parascopic datum for
$(\bar\Psi, \Gamma ,\breve\Psi)$.
Since $\lsub{\phi}\tilde \Psi = \Weil_{E/k}\big(\lsub{\breve\phi}\bar \Psi\big)$
and $q^*\circ \iota^* = \lsub{\Gamma}\tilde \iota^*\circ\breve q^*$,
it follows that $\Psi = \Weil_{E/k}\breve\Psi$
and
$q^* = \Weil_{E/k}\breve q^*$, as desired.
\end{proof}

\begin{rem}
\label{rem:weil-restriction-automorphism}
(Automorphisms and Weil restriction.)
Let $\breve\Psi = (\breve X^*,\breve\Phi,\breve X_*,\breve\Phi^\vee)$
be an irreducible root datum with $\Gal(E)$-action, $\Psi = \Weil_{E/k}\breve\Psi$,
and $\varphi\in\Aut_k (\Psi)$.
Then, in the notation of Definition~\ref{def:weil-restriction-root-data} and
viewing $\id$ as an element of $\Hom_k(E,k\sep)$,
$\varphi (\breve X^*_{\id})$ must equal $\breve X^*_\sigma$ for some $\sigma\in\Hom_k(E,k\sep)$.
Moreover, it follows from the $\Gal(k)$-equivariance of $\varphi$ that
$\sigma$ must preserve $E$, and thus we may view $\sigma$ as
an element of  $\Aut (E/k)$, the group of $k$-automorphisms of $E$.
The map
$\abmapto{\varphi}{(\varphi |_{X^*_{\id}},\sigma)}$
defines a one-to-one correspondence between
$\Aut_k (\Psi)$ and the collection of pairs
$(f,\sigma)$, where $\sigma\in\Aut (E/k)$ and $f$ is a
$\Gal(E)$-equivariant isomorphism from $\breve\Psi$ to $\breve\Psi_\sigma$.
Moreover,
$\abmapto{\varphi}{\sigma}$ defines a homomorphism $\map{\pi}{\Aut_k (\Psi)}{\Aut(E/k)}$.
\end{rem}

We now consider the situation in which a semisimple root datum $\Psi$ obtained via Weil restriction
carries an action of $\Gamma$ that is not $K/k$-induced for any nontrivial finite separable extension
$K/k$.

\begin{prop}
\label{prop:non-induced}
Suppose that $\breve\Psi$ is a irreducible root datum equipped with an action of $\Gal(E)$ and let
$\phi$ be a $\Gal(k)$-equivariant action of a finite group $\Gamma$ on $\Psi = \Weil_{E/k}\breve\Psi$.
Then $\phi$ is not $K/k$-induced for any nontrivial subextension $K/k$ of $E/k$ 
if and only if $E/k$ is Galois and $\map{\pi\circ\phi}{\Gamma}{\Aut (E/k)}$ is surjective
(see Remark~\ref{rem:weil-restriction-automorphism}). If these conditions hold, $E\neq k$, and $\Gamma$
is simple, then $\pi\circ\phi$ is an isomorphism, $\lsub\phi\Psi$ is $\Gal(E)$-isomorphic to
$\breve\Psi$, and $\phi$ determines an action of $\Gal(k)$ on $\breve\Psi$.
\end{prop}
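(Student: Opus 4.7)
The plan is to translate the condition of being $K/k$-induced into a condition on the image of $\pi\circ\phi$ in $\Aut(E/k)$. First I would establish a key lemma: for any intermediate field $K$ with $k\subseteq K\subseteq E$, $\phi$ is $K/k$-induced if and only if $\pi(\phi(\Gamma))\subseteq\Aut(E/K)$. In the forward direction, if $\phi=\Weil_{K/k}\breve\phi'$ comes from a $K$-automorphism action on $\Weil_{E/K}\breve\Psi$, then by the blockwise nature of Weil restriction, each $\phi(\gamma)$ preserves every summand $(\Weil_{E/K}\breve\Psi)_\tau$ of $\Psi=\bigoplus_{\tau\in\Hom_k(K,k\sep)}(\Weil_{E/K}\breve\Psi)_\tau$; applying this to the summand indexed by $\id$ forces $\pi(\phi(\gamma))|_K=\id_K$. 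Conversely, if $\pi(\phi(\Gamma))\subseteq\Aut(E/K)$, each $\phi(\gamma)$ preserves the $K$-blocks, and its restriction to the identity block $\Weil_{E/K}\breve\Psi$ is a $K$-automorphism because $\Gamma$ commutes with $\Gal(k)$ and hence with $\Gal(K)$; the resulting $\breve\phi'$ then satisfies $\Weil_{K/k}\breve\phi'=\phi$ (this uses that $\phi(\gamma)$ commutes with each $\tau\in\Gal(k)$, so its behavior on $(\Weil_{E/K}\breve\Psi)_\tau$ is determined from the $\id$-block by conjugation by $\tau$).

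With this in hand the first assertion follows quickly. Let $K_0 = E^{\pi(\phi(\Gamma))}$ be the fixed field. Because $\Aut(E/K)\supseteq \pi(\phi(\Gamma))$ if and only if $K\subseteq K_0$, the lemma says that $\phi$ is $K/k$-induced for some $k\subsetneq K\subseteq E$ if and only if $K_0\neq k$. By Artin's theorem on finite groups of field automorphisms, $[E:E^{\pi(\phi(\Gamma))}]=|\pi(\phi(\Gamma))|$, so $K_0=k$ is equivalent to $|\pi(\phi(\Gamma))|=[E:k]$. Combined with the chain $|\pi(\phi(\Gamma))|\le|\Aut(E/k)|\le[E:k]$, equality throughout is equivalent to $E/k$ being Galois together with surjectivity of $\pi\circ\phi$.

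For the second statement, the kernel of $\pi\circ\phi$ is a normal subgroup of the simple group $\Gamma$. It cannot be all of $\Gamma$, since then the image would be trivial while $\Aut(E/k)=\Gal(E/k)$ is nontrivial ($E\neq k$); hence $\ker(\pi\circ\phi)=1$ and $\pi\circ\phi$ is an isomorphism, and $\Gamma$ acts simply transitively on $S=\Hom_k(E,k\sep)$. Then I would check that the composition
\[
\breve X^* = \breve X^*_{\id}\hookrightarrow X^*(\Psi)=\bigoplus_{\sigma\in S}\breve X^*_\sigma \twoheadrightarrow \lsub\phi X^*(\Psi)
\]
is an isomorphism of lattices: surjectivity is clear because every $x\in\breve X^*_\sigma$ is congruent in the coinvariants to $\gamma_\sigma^{-1}x\in\breve X^*_{\id}$, where $\gamma_\sigma$ is the unique element with $\pi(\phi(\gamma_\sigma))$ carrying $\id$ to $\sigma$, while injectivity follows from the existence of the $\Gamma$-invariant retraction $(x_\sigma)_\sigma \mapsto \sum_\sigma \gamma_\sigma^{-1}\cdot x_\sigma$ (which one verifies is $\Gamma$-invariant using $\gamma_{\sigma_{\gamma_0}\tau}=\gamma_0\gamma_\tau$). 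The root and coroot systems of $\lsub\phi\Psi$ transport under this identification to $\breve\Phi$ and $\breve\Phi^\vee$; in particular, no doubling occurs in Definition~\ref{defn:restricted}, since each $\Gamma$-orbit of roots consists of one root in each distinct irreducible summand $\breve\Phi_\sigma$, and these summands are pairwise orthogonal in $\Psi$. Because $E/k$ is Galois, $\Gal(E)$ is normal in $\Gal(k)$ and thus acts trivially on $S$; consequently $\Gal(E)$ preserves each summand $\breve X^*_\sigma$, and its action on $\breve X^*_{\id}$ coincides with the given one on $\breve X^*$, yielding the $\Gal(E)$-equivariance of the isomorphism $\lsub\phi\Psi\cong\breve\Psi$. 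Finally, since $\phi$ commutes with $\Gal(k)$, the $\Gal(k)$-action on $\Psi$ descends to $\lsub\phi\Psi$, and transfers via the above isomorphism to an action of $\Gal(k)$ on $\breve\Psi$ extending its original $\Gal(E)$-action. The main obstacle is bookkeeping the identifications of summands, in particular verifying that the coroot formula of Definition~\ref{defn:restricted} produces $\breve\Phi^\vee$ without a spurious factor of two.
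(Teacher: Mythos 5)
Your argument is close in spirit to the paper's: both reduce the question to tracking where $\pi\circ\phi$ lands inside $\Aut(E/k)$ and then invoke the Galois correspondence. The main organizational difference is that you package the whole thing into a single key lemma ("$\phi$ is $K/k$-induced $\Leftrightarrow$ $\pi(\phi(\Gamma))\subseteq\Aut(E/K)$") and then finish with Artin's theorem, whereas the paper argues the two directions of the equivalence asymmetrically: for the "only if" direction it constructs $K=E^{\im(\pi\circ\phi)}$, shows directly that $\phi$ is $K/k$-induced, and concludes $K=k$; for the "if" direction it works with a proper, nonempty, $\phi(\Gamma)$-stable union $\Sigma$ of irreducible components and argues that such a $\Sigma$ forces $\pi\circ\phi$ to be non-surjective. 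Your fleshing out of the final paragraph (why the map $\breve X^*\to\lsub\phi X^*$ is an isomorphism of lattices, why no coroot-doubling occurs because each $\Gamma$-orbit is one root per pairwise-orthogonal summand, why $\Gal(E)$-equivariance holds) supplies exactly the details the paper waves at with "it follows readily."

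The one place that needs more care is the forward implication of your key lemma. Definition~\ref{defn:induction-action} of $K/k$-inducedness permits $\bar\Psi$ to be \emph{any} $\Gal(K)$-datum realizing $\Psi$ as $\Weil_{K/k}\bar\Psi$; you implicitly take $\bar\Psi = \Weil_{E/K}\breve\Psi$ with the canonical embedding, and assert "by the blockwise nature of Weil restriction" that the $K$-block structure coming from the induced datum coincides with the one cut out by restriction to $K$ in $\Hom_k(E,k\sep)$. This compatibility is true at the level of root systems because $\breve\Phi$ is irreducible (so the $\breve\Phi_\sigma$ are exactly the irreducible components, and the image of $\bar\iota^*(\bar\Phi)$ is a union of them), but what one actually obtains is only that $\pi(\phi(\Gamma))\subseteq\Aut(E/\sigma_0(K))$ for some $\sigma_0\in\Gal(E/k)$ (the conjugate picking out the block containing $\breve\Phi_{\id}$), not $\Aut(E/K)$ itself. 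Your final iff is unaffected, since $\sigma_0(K)=k$ iff $K=k$, but as stated your lemma is slightly too strong; the paper sidesteps the issue by using $\Sigma$ directly and only extracting the weaker (but sufficient) conclusion that $\pi\circ\phi$ is not surjective.
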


\begin{proof}
Suppose $\phi$ is not $K/k$-induced for any field $K$ with $k\subsetneqq K\subseteq E$.
Let $\Sigma = \im (\pi\circ\phi)\subseteq \Aut (E/k)$, and let $K\subseteq E$ be the fixed field of $\Sigma$.
Then the direct sum $\bar\Psi$ over $\sigma\in \Sigma$ of the data $\breve\Psi_\sigma$
(as in Definition~\ref{def:weil-restriction-root-data}) is $\Gal(K)\times\Gamma$-invariant
and can be identified with $\Weil_{E/K}\breve\Psi$.
We have $\Psi = \Weil_{K/k}(\Weil_{E/K}\breve\Psi) = \Weil_{K/k}\bar\Psi$, and it is easily seen that 
the action $\phi$ of $\Gamma$ on $\Psi$ is induced by 
the $\Gal(K)$-equivariant action of $\Gamma$ on $\bar\Psi$
given by $\abmapto{\gamma}{\phi(\gamma)|_{\bar\Psi}}$.
Thus $\phi$ is $K/k$-induced, which, by assumption, implies that $K = k$, so $\Sigma = \Aut(E/k)$ and $\pi\circ\phi$ is surjective.
Moreover, since $k$ is the fixed field of a subgroup of $\Aut(E/k)$, $E/k$ is Galois.

Conversely, assume that $E/k$ is Galois and $\phi$ is $K/k$-induced from an action
$\bar\phi$ on a root datum $\bar\Psi$ with $\Gal(K)$-action,
where $k\subsetneqq K\subseteq E$.
Thus there is a natural $\Gal(K)$-equivariant embedding $\bar\iota^*$
of $\bar\Psi$ in $\Psi$ that realizes $\Psi$ as $\Weil_{K/k}\bar\Psi$ and intertwines $\bar\phi$ and $\phi$.
Using the notation of Definition~\ref{def:weil-restriction-root-data},
$\im\bar\iota^*$ is $\phi(\Gamma)$-invariant and is of the form
$\bigoplus_\Sigma \breve\Psi_\sigma$ for some proper subset $\Sigma$ of $\Hom_k(E,k\sep) =\Gal(E/k)$.
For $\gamma\in\Gamma$ and $\tau\in\Gal(k)$ with image $\bar\tau$ in $\Gal(E/k)$,
\begin{equation}
\label{eq:gamma-action}
\phi(\gamma) (\Psi_{\bar\tau}) = \phi(\gamma) (\tau(\Psi_{\id})) = \tau(\phi(\gamma)(\Psi_{\id})) =
\tau(\Psi_{(\pi\circ\phi)(\gamma)}) = \Psi_{\bar\tau \cdot (\pi\circ\phi)(\gamma)} .
\end{equation}
Since $\im\bar\iota^*$ is $\phi(\Gamma)$-invariant,
it follows that $\Sigma$ is a union of left $(\pi\circ\phi)(\Gamma)$-cosets.
Since $\Sigma$ is a proper subset of $\Gal(E/k)$, it follows that $(\pi\circ\phi)(\Gamma)\neq\Gal(E/k)$,
so $\pi\circ\phi$ is not surjective. 

We now prove the last statement. Let $\iota^*$ denote the implicit $\Gal(E)$-embedding
of $\breve\Psi$ in $\Psi = \Weil_{E/k}\breve\Psi$,
and let $X^*$ denote the character lattice component of $\Psi$.
If $\Gamma$ is simple, then $\ker(\pi\circ\phi)$ must be trivial,
since $\ker(\pi\circ\phi) = \Gamma$ implies $E=k$. Thus $\pi\circ\phi$ is an isomorphism,
and \eqref{eq:gamma-action} shows that the action of $\Gamma$ on the conjugates of
$\iota^*(\breve\Psi)$ is simply transitive. It follows readily from this that 
the composition of the quotient map $\abmap{X^*}{\lsub{\phi}X^*}$ with $\iota^*$
gives a $\Gal(E)$-isomorphism of $\breve\Psi$ with $\lsub\phi \Psi$. Since
$\lsub\phi \Psi$ carries an action of $\Gal(k)$, this isomorphism extends the
action of $\Gal(E)$ on $\breve\Psi$ to $\Gal(k)$.
\end{proof}

\begin{notation}
\label{notation:weil-restriction}
For the remainder of this section, we let $\tilde H$ denote a connected, reductive $E$-group,
and let $\tilde{G} = \Weil_{E/k} \tilde H$.
Let $G$ denote a connected reductive $k$-group, and let
$(\phi,q^*)$ be a torus-inclusive parascopic datum for $(\tilde G,\Gamma, G)$ relative to the maximal
$k$-tori $\tilde T \subseteq\tilde G$ and $T\subseteq G$.
Then $\tilde T = \Weil_{E/k}\tilde S$
for some maximal $E$-torus $\tilde S\subseteq\tilde H$.
\end{notation}

\begin{examples}
\label{ex:weil-restriction-parascopy}
Here are two special cases of the above situation.

\begin{enumerate}[(a)]
\item
\label{ex:weil-restriction-datum}
Let $G = \Weil_{E/k}H$ for some connected reductive $E$-group $H$.
Then $T = \Weil_{E/k}S$ for some maximal $E$-torus $S$ of $H$.
Let $(\breve \phi, \breve q^*)$ be a parascopic datum for $(\tilde H, \Gamma, H)$
relative to $\tilde S$ and $S$.
Then $(\phi,q^*) = \Weil_{E/k}(\breve \phi, \breve q^*)$ is a parascopic datum for
$(\tilde G,\Gamma, G)$ relative to $\tilde T$ and $T$, as in Remark~\ref{rem:weil-restriction-groups-parascopy}.
\item
\label{ex:base-change-parascopy}
Suppose that $E/k$ is a Galois extension and that $\tilde H$
is equipped with a $k$-structure.
This structure canonically determines a homomorphism $\abmap{\Gal(E/k)}{\Aut_k(\tilde G)}$.
Letting $\Gamma$ denote its image, we let
$G:=\tilde G^\Gamma = \diag_{E/k}(\tilde H)$.
Let $\tilde T = \Weil_{E/k}\tilde S$ be a
$\Gamma$-invariant maximal $k$-torus of $\tilde G$.
Then $T: = \tilde T^\Gamma$ is a maximal
$k$-torus of $G$ that is $E$-isomorphic to $\tilde S$,
and the action of $\Gamma$ on $\Psi(\tilde G,\tilde T)$ gives rise to a
parascopic datum for $(\tilde G, \Gamma, G)$ relative to $\tilde T$ and $T$
as in Example~\ref{ex:parascopy-fixed-point}.
\end{enumerate}
\end{examples}

The following lemma shows that the parascopic data for $(\tilde G,\Gamma, G)$
for which the action of $\Gamma$ is $E/k$-induced must be of the form described in
Example~\ref{ex:weil-restriction-parascopy}\eqref{ex:weil-restriction-datum}.

\begin{prop}
\label{lem:parascopic-group-restriction-over-k}
Using Notation \ref{notation:weil-restriction},
assume that the action $\phi$ of $\Gamma$ on
$\Psi(\tilde G,\tilde T) = \Weil_{E/k}(\Psi(\tilde H,\tilde S))$
is $E/k$-induced from an action $\breve\phi$ of $\Gamma$ on
$\Psi(\tilde H,\tilde S)$.
Then the parascopic datum $(\phi,q^*)$ determines a connected reductive
$E$-group $H$ and a maximal $E$-torus $S$ of $H$ such that
$G = \Weil_{E/k}H$, $T = \Weil_{E/k}S$, and
$(\phi,q^*) = \Weil_{E/k}(\breve \phi, \breve q^*)$ for some
parascopic datum $(\breve\phi, \breve q^*)$
for $(\tilde H, \Gamma, H)$ relative to $\tilde S$ and $S$.
\end{prop}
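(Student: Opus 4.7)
The plan is to reduce the statement to its already-proved analogue at the level of root data, namely Lemma~\ref{lem:parascopy-restriction-over-k}, and then upgrade the resulting root-theoretic data to the level of groups using the classification of connected reductive groups by their $\Gal$-equivariant root data.

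First, I would apply Lemma~\ref{lem:parascopy-restriction-over-k} to the torus-inclusive parascopic datum $(\phi,q^*)$ for $(\Psi(\tilde G,\tilde T),\Gamma,\Psi(G,T))$. The hypothesis is satisfied: by Remark~\ref{rem:weil-restriction-groups}, $\Psi(\tilde G,\tilde T)=\Weil_{E/k}\Psi(\tilde H,\tilde S)$, and by assumption $\phi$ is $E/k$-induced from $\breve\phi$ acting on $\Psi(\tilde H,\tilde S)$. Inspecting the proof of that lemma, one may take the intermediate datum called $\bar\Psi$ there to be $\Psi(\tilde H,\tilde S)$. The lemma then supplies a root datum $\breve\Psi$ with $\Gal(E)$-action and a torus-inclusive parascopic datum $(\breve\phi,\breve q^*)$ for $(\Psi(\tilde H,\tilde S),\Gamma,\breve\Psi)$ such that $\Psi(G,T)=\Weil_{E/k}\breve\Psi$ and $(\phi,q^*)=\Weil_{E/k}(\breve\phi,\breve q^*)$.

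Next, I would invoke the existence result for connected reductive groups from $\Gal$-equivariant root data (\cite{adler-lansky:data-actions}*{Remark 25}, applied over $E$) to produce a connected reductive $E$-group $H$ with a maximal $E$-torus $S\subseteq H$ together with a $\Gal(E)$-equivariant identification of $\Psi(H,S)$ with $\breve\Psi$. Under this identification, $(\breve\phi,\breve q^*)$ becomes a parascopic datum for $(\tilde H,\Gamma,H)$ relative to $\tilde S$ and $S$, and by Remark~\ref{rem:weil-restriction-groups-parascopy} its Weil restriction $\Weil_{E/k}(\breve\phi,\breve q^*)$ is a parascopic datum for $(\Weil_{E/k}\tilde H,\Gamma,\Weil_{E/k}H)$ relative to $\Weil_{E/k}\tilde S$ and $\Weil_{E/k}S$. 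Then, by Remark~\ref{rem:weil-restriction-groups}, the pair $(\Weil_{E/k}H,\Weil_{E/k}S)$ has root datum canonically $\Gal(k)$-isomorphic to $\Weil_{E/k}\Psi(H,S)=\Weil_{E/k}\breve\Psi=\Psi(G,T)$. The isomorphism theorem for reductive groups with a chosen maximal $k$-torus then yields a $k$-isomorphism $\Weil_{E/k}H\to G$ carrying $\Weil_{E/k}S$ onto $T$ and inducing the identity on root data; identifying $G$ with $\Weil_{E/k}H$ and $T$ with $\Weil_{E/k}S$ along this isomorphism, the equality $(\phi,q^*)=\Weil_{E/k}(\breve\phi,\breve q^*)$ obtained in the first step becomes the desired conclusion at the level of groups.

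The main obstacle is tracking compatibility of the identifications introduced in the three steps above: one must ensure that the isomorphism $\Weil_{E/k}H\cong G$ supplied by the classification theorem can be chosen so that the Weil restriction of $(\breve\phi,\breve q^*)$, transported across this isomorphism, recovers the originally given datum $(\phi,q^*)$ on the nose, rather than merely an equivalent datum. This is essentially a diagram-chasing exercise, handled by making the choices in Lemma~\ref{lem:parascopy-restriction-over-k} and in the existence theorem coherently, relying on the fact that the root-datum identifications at each stage are canonical up to $\Gal(E)$- or $\Gal(k)$-equivariant isomorphism.
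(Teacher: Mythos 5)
Your first step coincides with the paper's: both apply Lemma~\ref{lem:parascopy-restriction-over-k} (taking $\bar\Psi = \Psi(\tilde H,\tilde S)$) to obtain the root datum $\breve\Psi$ with $\Gal(E)$-action and the parascopic datum $(\breve\phi,\breve q^*)$ for $(\Psi(\tilde H,\tilde S),\Gamma,\breve\Psi)$ with $(\phi,q^*)=\Weil_{E/k}(\breve\phi,\breve q^*)$. Where you diverge is in promoting this to the level of groups. You build an abstract pair $(H,S)$ realizing $\breve\Psi$ via \cite{adler-lansky:data-actions}*{Remark 25}, form $\Weil_{E/k}H$, and then invoke the isomorphism theorem to identify $\Weil_{E/k}H$ with $G$. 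The paper instead observes that the image of the implicit $\Gal(E)$-embedding $\map{\iota^*}{\breve\Psi}{\Psi(G,T)=\Weil_{E/k}\breve\Psi}$ already determines a reductive $E$-subgroup $H\subseteq G$ and a maximal $E$-torus $S\subseteq H$ with $G=\Weil_{E/k}H$ and $T=\Weil_{E/k}S$ \emph{on the nose}, and then identifies $\breve\Psi$ with $\Psi(H,S)$. This internal construction is the key move you are missing: by producing $H$ inside $G$ rather than abstractly, the paper makes the equalities $G=\Weil_{E/k}H$, $T=\Weil_{E/k}S$, and $(\phi,q^*)=\Weil_{E/k}(\breve\phi,\breve q^*)$ hold literally, with no identification to transport.

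The ``coherence of identifications'' issue you flag at the end is a real one, and your treatment of it is a gap rather than a resolution. The isomorphism theorem supplies a $k$-isomorphism $\abmap{\Weil_{E/k}H}{G}$ inducing a prescribed map on root data, but the conclusion of the proposition, read under the paper's convention (``Whenever we say that a $k$-group $G$ is equal to $\Weil_{E/k}H$, we will mean that $G$ is equipped with a map $\pr$ as above''), requires equipping $G$ with a projection to $H$ compatible with the universal property, and requires the parascopic datum to equal $\Weil_{E/k}(\breve\phi,\breve q^*)$ exactly, not merely up to the equivalence of Definition~\ref{defn:equivalent-data}. You assert that these can be arranged ``by making the choices\ldots coherently,'' but that is precisely the content one must verify, and the verification is nontrivial because the isomorphism from the isomorphism theorem is determined only up to toral automorphisms and the root-datum identifications introduce their own ambiguities. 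Your proposal would be complete if you either carried out this bookkeeping or replaced the abstract construction of $H$ with the paper's internal one, reading off $H$ from $\im\iota^*\subseteq\Psi(G,T)$.
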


\begin{proof}
By Lemma~\ref{lem:parascopy-restriction-over-k},
there exists a root datum $\breve\Psi$ with an action of $\Gal(E)$,
and a torus-inclusive parascopic datum $(\breve \phi, \breve q^*)$ for
$(\Psi(\tilde H,\tilde S), \Gamma ,\breve\Psi)$
such that $(\phi,q^*) = \Weil_{E/k}(\breve \phi, \breve q^*)$.
In particular, there is a
$\Gal(E)$-embedding $\iota^*$ from $\breve\Psi$ to $\Psi(G,T)$
that realizes $\Psi(G,T)$ as $\Weil_{E/k} \breve\Psi$.

It is easily seen that $\im\iota^*$ determines a reductive $E$-subgroup $H$ of $G$ and a maximal
$E$-torus $S$ of $H$ such that $G = \Weil_{E/k}H$ and $T = \Weil_{E/k}S$. Then
$\breve\Psi$ can be identified with $\Psi(H,S)$ and hence $(\breve\phi,\breve q^*)$
is a parascopic datum for $(\tilde H,\Gamma,H)$ relative to $\tilde S$ and $S$.
\end{proof}

The following result shows that Weil restriction of parascopic data descends to an operation on equivalence classes
of data.

\begin{prop}
\label{prop:weil-restriction-classes}
Let $\tilde H$ and $H$ be connected reductive $E$-groups, $\Gamma$ a finite group,
and $(\breve\phi,\breve q^*)$ a parascopic datum for $(\tilde H,\Gamma,H)$.
Let $\tilde G = \Weil_{E/k}\tilde H$ and $G = \Weil_{E/k}H$, and let $(\phi,q^*)$ denote the parascopic datum
$\Weil_{E/k}(\breve\phi,\breve q^*)$ for $(\tilde G,\Gamma,G)$ (see Remark~\ref{rem:weil-restriction-groups-parascopy}).
If $(\breve\phi',\breve q^{\prime *})$ is a parascopic datum for $(\tilde H,\Gamma,H)$
equivalent to $(\breve\phi,\breve q^*)$, then $\Weil_{E/k}(\breve\phi',\breve q^{\prime *})$
is equivalent to $(\phi,q^*)$. Moreover, every parascopic datum for $(\tilde G,\Gamma,G)$ equivalent
to $(\phi,q^*)$ arises in this way.
\end{prop}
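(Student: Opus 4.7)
The plan is to handle the two directions separately, leveraging the concrete product decomposition $G(k\sep) = H(k\sep \otimes_k E) \cong \prod_\sigma H(k\sep)_\sigma$, where $\sigma$ ranges over the $k$-embeddings $E \hookrightarrow k\sep$ (with $\sigma_0$ the inclusion) and the factor indexed by $\sigma$ is the set of $k\sep$-points of $H$ with $E$-algebra structure twisted by $\sigma$. Under this decomposition, $X^*(\tilde T) = \bigoplus_\sigma X^*(\tilde S)_\sigma$, and both $\phi = \Weil_{E/k}\breve\phi$ and $q^*$ act componentwise. Moreover, for any $\tilde g \in \tilde G(k\sep)$ with components $(\tilde g_\sigma)_\sigma$, the map $\Int(\tilde g)^*$ decomposes as $\bigoplus_\sigma \Int(\tilde g_\sigma)^*$. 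Consequently, conditions (a)--(c) of Definition~\ref{defn:equivalent-data} decouple across $\sigma$-components, with the $\sigma_0$-component being the corresponding condition on the $\breve{}$-data.

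For the forward direction, I would take $\breve g \in H(k\sep)$ and $\breve{\tilde g} \in \tilde H(k\sep)$ implementing the equivalence of $(\breve\phi',\breve q'^*)$ with $(\breve\phi,\breve q^*)$ relative to $E$-tori $S' = {}^{\breve g}S$ and $\tilde S' = {}^{\breve{\tilde g}}\tilde S$, and lift them to elements of $G(k\sep)$ and $\tilde G(k\sep)$ as follows. For each $\sigma$, I would choose $\tau_\sigma \in \Gal(k)$ with $\tau_\sigma|_E = \sigma$ (taking $\tau_{\sigma_0} = \id$), and define the $\sigma$-component of $g$ (resp.\ $\tilde g$) to be the image of $\breve g$ (resp.\ $\breve{\tilde g}$) under the $E$-algebra isomorphism between $k\sep$ with the inclusion structure and $k\sep$ with the $\sigma$-structure given by $\tau_\sigma$. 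The equivalence conditions for $(g,\tilde g)$ then hold componentwise: on the $\sigma_0$-component they are exactly the hypothesis, and on other components they follow by transport via $\tau_\sigma$.

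For the reverse direction, I would suppose $(\phi',q'^*)$ is equivalent to $(\phi,q^*)$ via $g \in G(k\sep)$ and $\tilde g \in \tilde G(k\sep)$. By Remark~\ref{rem:weil-restriction-groups}, the $k$-tori $T' = {}^gT$ and $\tilde T' = {}^{\tilde g}\tilde T$ are Weil restrictions of maximal $E$-tori $S' \subseteq H$ and $\tilde S' \subseteq \tilde H$. Since $\phi$ is diagonal (being $\Weil_{E/k}\breve\phi$) and $\Int(\tilde g)^*$ preserves the product decomposition, the relation $\phi'(\gamma) = \Int(\tilde g)^{*-1} \circ \phi(\gamma) \circ \Int(\tilde g)^*$ forces $\phi'$ to also be diagonal, hence $E/k$-induced; the same reasoning applies to $q'^*$. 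Proposition~\ref{lem:parascopic-group-restriction-over-k} then produces a parascopic datum $(\breve\phi',\breve q'^*)$ for $(\tilde H,\Gamma,H)$ relative to $\tilde S'$ and $S'$ with $\Weil_{E/k}(\breve\phi',\breve q'^*) = (\phi',q'^*)$. Setting $\breve g = \pr(g)$ and $\breve{\tilde g} = \pr(\tilde g)$ (the $\sigma_0$-components), I would verify that these witness equivalence of $(\breve\phi',\breve q'^*)$ with $(\breve\phi,\breve q^*)$ by restricting the $\sigma_0$-components of the equivalence conditions for $(g,\tilde g)$.

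The main technical point is verifying that all constructions of Definition~\ref{defn:equivalent-data}, including $\lsub\Gamma \Int(\tilde g)^*$ on the coinvariant lattice $\lsub\phi X^*(\tilde T) \cong \bigoplus_\sigma \lsub{\breve\phi}X^*(\tilde S)_\sigma$, respect the product decomposition so that the conditions genuinely decouple across $\sigma$-components. Once this compatibility is established, both directions reduce to routine componentwise verifications anchored at $\sigma_0$.
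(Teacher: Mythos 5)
Your proposal takes essentially the same route as the paper's: both exploit the product decomposition of $\Weil_{E/k}$ over $k\sep$ and reduce the verification of Definition~\ref{defn:equivalent-data} to the $\sigma_0$-component; your componentwise verification and explicit $\tau_\sigma$-transport construction of the lifts $g,\tilde g$ are a concrete unwinding of the paper's appeal to the universal property of $\Ind^{\Gal(k)}_{\Gal(E)}$ and its ``easily seen'' choice with $\pr(g)=h$, $\pr(\tilde g)=\tilde h$.

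There is, however, one genuine flaw in the reverse direction: you invoke Proposition~\ref{lem:parascopic-group-restriction-over-k} to produce $(\breve\phi',\breve q'^*)$, but that proposition is stated under Notation~\ref{notation:weil-restriction}, which requires $(\phi,q^*)$ to be \emph{torus-inclusive}---a hypothesis that Proposition~\ref{prop:weil-restriction-classes} does not impose. The citation therefore does not apply as written. Fortunately the invocation is unnecessary: once you have established that $\phi'(\gamma)$ and $q'^*$ preserve the $\sigma$-decomposition, you can simply define $\breve\phi'(\gamma)$ and $\breve q'^*$ as their $\sigma_0$-components and check directly that they constitute a parascopic datum for $(\tilde H,\Gamma,H)$ relative to $\tilde S'$ and $S'$ (preservation of a positive system and being a morphism of root data both descend to the component). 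The paper itself avoids the issue by a slightly different route: it sets $h=\pr(g)$, $\tilde h=\pr(\tilde g)$, applies $\pr$ to the Weyl-cocycle relation of Remark~\ref{rem:equivalence} to conclude that the conjugate of $(\breve\phi,\breve q^*)$ via $h,\tilde h$ is an equivalent parascopic datum, and then identifies $(\phi',q'^*)$ with its Weil restriction by the computation \eqref{eq:phi-equivalence}. Either patch works, and the remainder of your argument is sound and closely parallels the paper's.
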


\begin{proof}
Suppose $(\breve\phi,\breve q^*)$ is a parascopic datum relative to the maximal $E$-tori
$\tilde S\subseteq\tilde H$ and $S\subseteq H$, and that $(\breve\phi,\breve q^*)$ is equivalent
to $(\breve\phi',\breve q^{\prime *})$ via $h\in H(k\sep)$ and $\tilde{h}\in \tilde{H}(k\sep)$.
Let $\tilde S' = \lsup{\tilde h}\tilde S$ and $S' = \lsup h S$.
Then $T := \Weil_{E/k}S$ and $T' := \Weil_{E/k}S'$ (resp.~$\tilde T := \Weil_{E/k}\tilde S$ and $\tilde T' := \Weil_{E/k}\tilde S'$)
are maximal $k$-tori of $G$ (resp.~$\tilde G$).
Choose $g\in G(k\sep)$ and $\tilde g\in \tilde G(k\sep)$ such that
$T' = \lsup g T$ and $\tilde T' = \lsup{\tilde g}\tilde T$.
It is easily seen that these choices can be made in such a way that
$\pr(g) = h$ and $\pr(\tilde g) = \tilde h$.

We claim that $\tilde g$ and $g$ implement an equivalence between the parascopic data
$(\phi,q^*)$ and $(\phi',{q'}^*) := \Weil_{E/k}(\breve\phi',\breve q^{\prime *})$. To prove this, we first show that
$\phi'(\gamma) = {\Int(\tilde g)^*}\inv \circ \phi(\gamma) \circ \Int(\tilde{g})^*$
for all $\gamma\in \Gamma$ (see Definition~\ref{defn:equivalent-data}(b)).
By the universal property of $\phi' = \Ind_{\Gal(E)}^{\Gal(k)}\breve\phi '$,
this is equivalent to showing that
\begin{equation}
\label{eq:weil-restriction-phi}
{\Int(\tilde g)^*}\inv \circ \phi(\gamma) \circ \Int(\tilde{g})^* \circ \pr^* = \pr^* \circ\, \breve\phi'(\gamma) .
\end{equation}

The equation $\phi = \Weil_{E/k}\breve\phi$ is equivalent to the property that
$\phi(\gamma)\circ\pr^* = \pr^*\circ\,\breve\phi(\gamma)$
for all $\gamma\in\Gamma$. Thus we have 
\begin{equation}
\label{eq:phi-equivalence}
\begin{aligned}
&    {\Int(\tilde g)^*}\inv \circ \phi(\gamma) \circ \Int(\tilde{g})^* \circ \pr^*
&&=  {\Int(\tilde g)^*}\inv \circ \phi(\gamma) \circ (\pr \circ \Int(\tilde{g}))^* \\
&=   {\Int(\tilde g)^*}\inv \circ \phi(\gamma) \circ (\Int(\tilde{h}) \circ \pr)^*
&&=  {\Int(\tilde g)^*}\inv \circ \phi(\gamma) \circ \pr^* \circ \Int(\tilde{h})^* \\
&=   {\Int(\tilde g)^*}\inv \circ \pr^* \circ \,\breve\phi(\gamma) \circ \Int(\tilde{h})^* 
&&=  (\pr \circ \Int(\tilde g)\inv)^* \circ  \breve\phi(\gamma) \circ \Int(\tilde{h})^*\\
&=   (\Int(\tilde h)\inv \circ \pr )^* \circ  \breve\phi(\gamma) \circ \Int(\tilde{h})^* 
&&=  \pr^* \circ \,{\Int(\tilde h)^*}\inv  \circ  \breve\phi(\gamma) \circ \Int(\tilde{h})^* \\
&=   \pr^* \circ \,\breve\phi'(\gamma) , 
\end{aligned}
\end{equation}
proving \eqref{eq:weil-restriction-phi}. The proof that
${q'}^* = \lsub{\Gamma}{\Int(\tilde g)^*}\inv \circ q^* \circ \Int(g)^*$
(see Definition~\ref{defn:equivalent-data}(c)) is analogous.
Thus $(\phi,q^*)$ and $(\phi',{q'}^*)$ are equivalent via the elements $\tilde g$ and $g$.

To prove the last statement of the proposition, suppose now that
$(\phi',{q'}^*)$ is an arbitrary parascopic datum for $(\tilde G,\Gamma , G)$
relative to maximal $k$-tori $\tilde T'\subseteq\tilde G$ and $T'\subseteq G$.
Moreover, suppose that $(\phi',{q'}^*)$ is equivalent to $(\phi,q^*)$ via elements
$g\in G(k\sep)$ and $\tilde g\in \tilde G(k\sep)$. Let 
$\tilde S' =\pr(\tilde T')$, $S' =\pr(T')$, $h = \pr(g)$ and $\tilde h = \pr(\tilde g)$.
Then $S'\subseteq H$ and $\tilde S'\subseteq\tilde H$ are maximal $E$-tori,
$S' = \lsup h S$, and $\tilde S' = \lsup{\tilde h} \tilde S$. 

Since $(\phi',{q'}^*)$ is equivalent to $(\phi,q^*)$,
the images of $g\inv\sigma(g)$ in $W(G,T)$ and of $\tilde g\inv\sigma(\tilde g)$
in $W(\tilde G,\tilde T)$ correspond as in Remark~\ref{rem:equivalence} for all $\sigma\in\Gal(k)$.
Applying $\pr$, it follows that the images of $h\inv\sigma(h)$ in $W(H,S)$ and of $\tilde h\inv\sigma(\tilde h)$
in $W(\tilde H,\tilde S)$ correspond for all $\sigma\in\Gal(E)$.
Letting $(\breve\phi',\breve q^{\prime *})$ be the conjugate of $(\breve\phi,\breve q^*)$ via $h$ and $\tilde h$,
it follows again from Remark~\ref{rem:equivalence} that $(\breve\phi',\breve q^{\prime *})$ is a parascopic datum
for $(\tilde H,\Gamma,H)$ relative to $\tilde S'$ and $S'$ that is equivalent to $(\breve\phi,\breve q^*)$.
The same calculation as that in \eqref{eq:phi-equivalence} now shows that
$(\phi',{q'}^*) = \Weil_{E/k}(\breve\phi',\breve q^{\prime *})$, completing the proof.
\end{proof}

\section{Weil restriction and pointwise conorm functions}
\label{sec:weil-conorm}
In this section, we analyze the pointwise conorm functions that arise from parascopic data for
$(\tilde G,\Gamma, G)$ in the case in which the $k$-group $\tilde G$ is the Weil restriction of an $E$-group $\tilde H$.
We continue to use the terminology in Notation~\ref{notation:weil-restriction}, and
we assume further for the remainder of the section that both $\tilde G$ and $G$ are $k$-quasisplit.
It follows immediately that $\tilde H$ is $E$-quasisplit.

\subsection{$E/k$-induced action of $\Gamma$}
\label{sub:conorm-restricted-datum}
We first analyze the pointwise conorm functions that arise from parascopic data $(\phi,q^*)$ for
$(\tilde G,\Gamma, G)$ for
which the action of $\Gamma$ is $E/k$-induced.
By Lemma \ref{lem:parascopic-group-restriction-over-k}, this is precisely the setting of
Example~\ref{ex:weil-restriction-parascopy}\eqref{ex:weil-restriction-datum}, and so there
exist a connected reductive quasisplit $E$-group $H$ such that $G = \Weil_{E/k}H$,
a maximal $E$-torus $S\subseteq H$ such that $T = \Weil_{E/k}S$,
and a parascopic datum $(\breve \phi, \breve q^*)$
for $(\tilde H, \Gamma, H)$ relative to $\tilde S$ and $S$ such that
$(\phi,q^*) = \Weil_{E/k}(\breve \phi, \breve q^*)$.

In this setting, let $H^\wedge$ and $\tilde H^\wedge$ be connected reductive quasisplit $E$-groups
in $E$-duality respectively with $H$ and $\tilde H$.
Choose maximal $E$-tori $S^\wedge\subseteq H^\wedge$ and
$\tilde S^\wedge\subseteq\tilde H^\wedge$ in respective duality with $S$ and $\tilde S$ via
$E$-duality maps $\map{\delta}{\bX^*(S)}{\bX_*(S^\wedge)}$
and $\map{\tilde\delta}{\bX^*(\tilde S)}{\bX_*(\tilde S^\wedge)}$, as in \S\ref{para:conorm-defn}.
Let $G^\wedge = \Weil_{E/k}H^\wedge$ and $\tilde G^\wedge = \Weil_{E/k}\tilde H^\wedge$.
Then $T^\wedge :=\Weil_{E/k}S^\wedge$ (resp.\ $\tilde T^\wedge :=\Weil_{E/k}\tilde S^\wedge$)
is a maximal $k$-torus of $G^\wedge$ (resp.\ $\tilde G^\wedge$).
Moreover, under the identifications in Remark~\ref{rem:weil-restriction-groups},
$T$ (resp.\ $\tilde T$) is in $k$-duality with
$T^\wedge$ (resp.\ $\tilde T^\wedge$) via the map
$\map{\Weil_{E/k}\delta}{\bX^*(T)}{\bX_*(T^\wedge)}$
(resp.\ $\map{\Weil_{E/k}\tilde\delta}{\bX^*(\tilde T)}{\bX_*(\tilde T^\wedge)}$).
Thus $G^\wedge$ and $\tilde G^\wedge$ are
in $k$-duality respectively with $G$ and $\tilde G$.

\begin{lem}
\label{lem:conorm-restriction-over-k}
Under the preceding assumptions and using the above notation (including~\ref{notation:weil-restriction}) 
the following hold.
\begin{enumerate}[(a)]
\item
\label{item:unrestriction-conorm-torus}
 $\dnorm[\phi, q^*,\Weil_{E/k}\tilde\delta,\Weil_{E/k}\delta]
 = \Weil_{E/k}\dnorm[\breve\phi, \breve q^*,\tilde\delta,\delta] $.
\item
\label{item:unrestriction-conorm}
Under the natural identifications of $G^\wedge(k)$ with $H^\wedge(E)$
and $\tilde G^\wedge(k)$ with $\tilde H^\wedge(E)$,
the conorm maps $\dnormst[{[\phi,q^*]}]$ and $\dnormst[{[\breve\phi,\breve q^*]}]$
coincide.
\end{enumerate}
\end{lem}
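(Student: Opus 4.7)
The plan is to prove part~(\ref{item:unrestriction-conorm-torus}) by direct computation using the functoriality of Weil restriction applied to the defining diagram of the norm map, and then to deduce part~(\ref{item:unrestriction-conorm}) by combining it with Theorem~\ref{thm:lift-geometric-general}, which reduces statements about $\dnormst$ to statements about the conorm maps on individual tori.

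For part (a), I would begin with the defining formula \eqref{eq:norm-char}, namely
\[
(\dnorm[\phi,q^*,\Weil_{E/k}\tilde\delta,\Weil_{E/k}\delta])_*
= (\Weil_{E/k}\tilde\delta) \circ \normchar[\phi,q^*] \circ (\Weil_{E/k}\delta)\inv,
\]
and verify that each factor on the right is a Weil restriction of the corresponding object for $(\breve\phi,\breve q^*)$. The nontrivial point is that the endomorphism $\sum_{\gamma\in\Gamma}\phi(\gamma)$ of $\bX^*(\tilde T)=\Weil_{E/k}\bX^*(\tilde S)$ is, by the definition of $\Weil_{E/k}\breve\phi$, literally the Weil restriction of $\sum_{\gamma\in\Gamma}\breve\phi(\gamma)$. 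Because Weil restriction of lattices commutes with the quotient $\bX^*\to\lsub{\phi}\bX^*$, diagram \eqref{eqn:norm} for $(\phi,q^*)$ is exactly $\Weil_{E/k}$ applied to the corresponding diagram for $(\breve\phi,\breve q^*)$, yielding $\normchar[\phi,q^*]=\Weil_{E/k}(\normchar[\breve\phi,\breve q^*])$. Composing with the Weil restrictions of the duality maps and passing to $k$-morphisms of tori then gives part (a).

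For part (b), I would first establish the natural bijection $\abmap{\Cl\st(G^\wedge)}{\Cl\st(H^\wedge)}$ compatible with $G^\wedge(k)=H^\wedge(E)$: fixing an embedding $E\hookrightarrow\bar k$, the $\bar k$-points of $G^\wedge$ decompose as $\prod_{\sigma\colon E\to\bar k}H^\wedge(\bar k)$, and two elements of $H^\wedge(E)$, viewed diagonally inside this product, are $G^\wedge(\bar k)$-conjugate exactly when they are $H^\wedge(\bar k)$-conjugate. Then, given $s\in H^\wedge(E)=G^\wedge(k)$, I would choose a maximal $E$-torus $S^\wedge\subseteq H^\wedge$ with $s\in S^\wedge(E)$ and take the compatible tori $S\subseteq H$, $\tilde S\subseteq\tilde H$, $\tilde S^\wedge\subseteq\tilde H^\wedge$ as in \S\ref{sub:conorm-restricted-datum}; their Weil restrictions furnish compatible tori for $(\phi,q^*)$ in the sense of \S\ref{para:square-of-tori}. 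By Proposition~\ref{prop:weil-restriction-classes}, any parascopic datum $(\breve\phi',\breve q'^*)$ relative to $\tilde S$ and $S$ equivalent to $(\breve\phi,\breve q^*)$ Weil-restricts to a parascopic datum $(\phi',q'^*)$ relative to $\Weil_{E/k}\tilde S$ and $\Weil_{E/k}S$ equivalent to $(\phi,q^*)$. Applying Theorem~\ref{thm:lift-geometric-general} on both sides and invoking part~(a), the image of $s$ under $\dnorm[\phi',q'^*,\Weil_{E/k}\tilde\delta,\Weil_{E/k}\delta]$ coincides, via the identification $\tilde G^\wedge(k)=\tilde H^\wedge(E)$, with the image of $s$ under $\dnorm[\breve\phi',\breve q'^*,\tilde\delta,\delta]$, so their stable classes agree.

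The main obstacle, I expect, is not any single computation but rather the bookkeeping required to track compatible choices of tori, duality maps, and equivalence classes of parascopic data through the Weil restriction functor, and to verify that the transfer of stable conjugacy under $G^\wedge(k)=H^\wedge(E)$ really is the one induced by our bijection on $\bar k$-points; given Lemma~\ref{lem:parascopy-restriction-over-k} and Proposition~\ref{prop:weil-restriction-classes}, each step is essentially formal, but stringing them together requires care.
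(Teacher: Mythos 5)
Your proposal follows essentially the same two-step route as the paper: for (a), unwind the definition \eqref{eq:norm-char} of the conorm, observe that $\normchar[\phi,q^*]=\Weil_{E/k}\normchar[\breve\phi,\breve q^*]$ because $(\phi,q^*)=\Weil_{E/k}(\breve\phi,\breve q^*)$, and pull $\Weil_{E/k}$ through the composition; for (b), reduce to a statement on maximal tori via Theorem~\ref{thm:lift-geometric-general}, use Proposition~\ref{prop:weil-restriction-classes} to match equivalence classes of parascopic data across Weil restriction, and invoke part (a). One small caveat: when you write that two elements of $H^\wedge(E)$ are ``$G^\wedge(\bar k)$-conjugate exactly when they are $H^\wedge(\bar k)$-conjugate,'' you are describing geometric conjugacy, not stable conjugacy in Kottwitz's sense, which carries a cohomological condition on $g^{-1}\sigma(g)$; the identification $\Cl\st(G^\wedge)\cong\Cl\st(H^\wedge)$ that part (b) really needs follows from Shapiro's lemma, which is how the paper handles it (in the remark immediately after the lemma statement). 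This doesn't affect the substance of your argument, but the justification of that identification should be phrased in cohomological rather than purely geometric terms.
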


\begin{rem}
Shapiro's lemma implies that the natural identification of $G^\wedge(k)$ with $H^\wedge(E)$
(resp.~$\tilde G^\wedge(k)$ with $\tilde H^\wedge(E)$) induces one of 
$\Cl\st(G)$ with $\Cl\st(H)$ (resp.~$\Cl\st(\tilde G)$ with $\Cl\st(\tilde H)$).
Thus the domains and codomains of $\dnormst[{[\phi,q^*]}]$ and $\dnormst[{[\breve\phi,\breve q^*]}]$
can be respectively identified.
\end{rem}

\begin{proof}
Proving \eqref{item:unrestriction-conorm-torus}
is clearly equivalent to showing that
the induced maps $(\dnorm[\phi, q^*,\Weil_{E/k}\tilde\delta,\Weil_{E/k}\delta])_*$ and
$\Weil_{E/k}(\dnorm[\breve\phi, \breve q^*,\tilde\delta,\delta])_* $ on $X_*(T^\wedge)$ agree.
This follows from the computation
\begin{align*}
(\dnorm[\phi, q^*,\Weil_{E/k}\tilde\delta,\Weil_{E/k}\delta])_*
&= \Weil_{E/k}\tilde\delta\circ\ \norm[\phi,q^*]^* \circ \Weil_{E/k}\delta\inv \\
&= \Weil_{E/k}\tilde\delta\circ\  \Weil_{E/k}\norm[\breve\phi, \breve q^*]^* \circ \Weil_{E/k}\delta\inv \\
&= \Weil_{E/k} (\tilde\delta\circ \norm[\breve\phi, \breve q^*]^* \circ \delta\inv) \\
&= \Weil_{E/k}(\dnorm[\breve\phi, \breve q^*,\tilde\delta,\delta])_* \ ,
\end{align*}
where the first and last equalities follow from \eqref{eq:norm-char} and the second holds
since $(\phi,q^*) = \Weil_{E/k}(\breve \phi, \breve q^*)$.

To prove \eqref{item:unrestriction-conorm}, given a semisimple element $t\in G^\wedge(k)$,
choose a maximal $k$-torus ${T'}^\wedge$ of $G^\wedge$ with $t\in {T'}^\wedge(k)$.
Let $T'$ be a maximal $k$-torus of $G$ that is in $k$-duality with ${T'}^\wedge$.
By~\cite{adler-lansky:lifting}*{Proposition 7.5(i)}, there exists a maximal $k$-torus
$\tilde T'$ and a parascopic datum $(\phi',{q^*}')$ relative to $\tilde T'$ and $T'$ that is equivalent to $(\phi,q^*)$.
By Proposition~\ref{prop:weil-restriction-classes}, $(\phi',{q^*}') = \Weil_{E/k}(\breve\phi',\breve q^{\prime *})$
for some parascopic datum $(\breve\phi',\breve q^{\prime *})$ for $(\tilde H,\Gamma, H)$
equivalent to $(\breve \phi, \breve q^*)$. Since $\dnormst[{[\phi,q^*]}] = \dnormst[{[\phi',{q^*}']}]$
and $\dnormst[{[\breve\phi,\breve q^*]}] = \dnormst[{[\breve\phi',\breve q^{\prime *}]}]$,
we may therefore assume that $T'=T$, ${T'}^\wedge = T^\wedge$, and $\tilde T' = \tilde T$.

Since $\dnorm[\phi, q^*,\Weil_{E/k}\tilde\delta,\Weil_{E/k}\delta]
 = \Weil_{E/k}\dnorm[\breve\phi, \breve q^*,\tilde\delta,\delta] $, it follows that these homomorphisms 
induce the same map of points on $T^\wedge(k) = S^\wedge(E)$.
Since  $\dnormst[{[\phi,q^*]}]$ (resp.~$\dnormst[{[\breve\phi,\breve q^*]}]$)
is defined to agree with $\dnorm[\phi,q^*,R_{E/k}\tilde\delta,R_{E/k}\delta]$
(resp.~$\dnorm[\breve\phi,\breve q^*,\tilde\delta,\delta]$) on $T^\wedge(k)$ (resp~$S^\wedge(E)$), it follows that
(under the above identifications)
the images of $[t]\st$ under
$\dnormst[{[\phi,q^*]}]$ and $\dnormst[{[\breve\phi,\breve q^*]}]$
coincide, proving (b).
\end{proof}

\begin{prop}
\label{prop:conorm-weil-restriction-general}
In the notation of Lemma~\ref{lem:conorm-restriction-over-k},
if $\map{\dnormfunc}{H^\wedge}{\tilde{H}^\wedge}$
is a pointwise conorm for the equivalence class $[\breve\phi,\breve j_*]$
of the parascopic datum $(\breve\phi,\breve j_*)$
for $(\tilde H,\Gamma, H)$,
then $\map{\Weil_{E/k}\dnormfunc}{G^\wedge}{\tilde{G}^\wedge}$ is a pointwise conorm
for the class $[\phi, j_*]$
for $(\tilde G,\Gamma, G)$.
\end{prop}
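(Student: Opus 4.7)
The plan is to verify the two defining properties of a pointwise conorm (Definition~\ref{defn:conorm-function}) for the morphism $\Weil_{E/k}\dnormfunc$ applied to the class $[\phi,q^*]$.

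\textbf{Condition (1):} Descent to geometric conjugacy classes. After base change to $k\sep$, the $k$-group $G^\wedge = \Weil_{E/k}H^\wedge$ becomes the product $\prod_{\sigma\in\Hom_k(E,k\sep)} (H^\wedge)_\sigma$, and similarly for $\tilde G^\wedge$; under this identification, $\Weil_{E/k}\dnormfunc$ is the componentwise product of appropriate twists of $\dnormfunc$. Since $\dnormfunc$ descends to geometric conjugacy classes of $H^\wedge$, and geometric conjugacy classes in a direct product are products of such classes, the map $\Weil_{E/k}\dnormfunc$ descends to geometric conjugacy classes of $G^\wedge$.

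\textbf{Condition (2):} Agreement with torus-level conorms. Let $S'^\wedge \subseteq G^\wedge$ be a maximal $k$-torus. By Remark~\ref{rem:weil-restriction-groups}, $S'^\wedge = \Weil_{E/k}\breve S'^\wedge$ for some maximal $E$-torus $\breve S'^\wedge\subseteq H^\wedge$. Applying the pointwise conorm property of $\dnormfunc$ to $\breve S'^\wedge$, we obtain maximal $E$-tori $\breve S'\subseteq H$, $\tilde{\breve S}'\subseteq \tilde H$, and $\tilde{\breve S}'^\wedge\subseteq \tilde H^\wedge$; a parascopic datum $(\breve\phi',\breve q^{\prime *})$ for $(\tilde H,\Gamma,H)$ relative to $\tilde{\breve S}'$ and $\breve S'$ that is equivalent to $(\breve\phi,\breve q^*)$; and $E$-duality maps $\breve\delta'$ and $\tilde{\breve\delta}'$ such that $\dnormfunc|_{\breve S'^\wedge} = \dnorm[\breve\phi',\breve q^{\prime *},\tilde{\breve\delta}',\breve\delta']$. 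Setting $S':=\Weil_{E/k}\breve S'$, $\tilde S':=\Weil_{E/k}\tilde{\breve S}'$, and $\tilde S'^\wedge:=\Weil_{E/k}\tilde{\breve S}'^\wedge$, we obtain by Remark~\ref{rem:weil-restriction-groups-parascopy} that $(\phi',q^{\prime*}):=\Weil_{E/k}(\breve\phi',\breve q^{\prime *})$ is a parascopic datum for $(\tilde G,\Gamma,G)$ relative to $\tilde S'$ and $S'$, and by Proposition~\ref{prop:weil-restriction-classes} it is equivalent to $(\phi,q^*)$. The discussion in \S\ref{sub:conorm-restricted-datum} shows that $\Weil_{E/k}\breve\delta'$ and $\Weil_{E/k}\tilde{\breve\delta}'$ are genuine $k$-duality maps for the corresponding tori.

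It remains to identify $(\Weil_{E/k}\dnormfunc)|_{S'^\wedge}$ with the conorm on the torus $S'^\wedge$ attached to this new data. Since the formation of $\Weil_{E/k}$ on morphisms is compatible with restriction to $k$-subtori, we have $(\Weil_{E/k}\dnormfunc)|_{S'^\wedge} = \Weil_{E/k}(\dnormfunc|_{\breve S'^\wedge}) = \Weil_{E/k}\dnorm[\breve\phi',\breve q^{\prime *},\tilde{\breve\delta}',\breve\delta']$. Applying Lemma~\ref{lem:conorm-restriction-over-k}(\ref{item:unrestriction-conorm-torus}) to the equivalent datum $(\breve\phi',\breve q^{\prime *})$ and the duality maps $\breve\delta',\tilde{\breve\delta}'$ in place of $(\breve\phi,\breve q^*)$ and $\delta,\tilde\delta$---the proof of that lemma uses only the formal identity $(\phi',q^{\prime *})=\Weil_{E/k}(\breve\phi',\breve q^{\prime *})$ together with the definition \eqref{eq:norm-char} of the conorm---we conclude that $\Weil_{E/k}\dnorm[\breve\phi',\breve q^{\prime *},\tilde{\breve\delta}',\breve\delta'] = \dnorm[\phi',q^{\prime *},\Weil_{E/k}\tilde{\breve\delta}',\Weil_{E/k}\breve\delta']$, exactly as needed.

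\textbf{Main obstacle:} The only nontrivial point is ensuring that Lemma~\ref{lem:conorm-restriction-over-k}(\ref{item:unrestriction-conorm-torus}) applies in the generality required here---namely, to an arbitrary datum equivalent to $(\breve\phi,\breve q^*)$ with arbitrary duality maps, rather than only to the pair originally fixed. I expect this to go through without difficulty since the argument in that lemma is a formal consequence of the functoriality of $\Weil_{E/k}$ on the character-lattice level and the formula for the norm map in \eqref{eqn:norm}, which has no dependence on the specific datum or duality maps chosen beyond the hypothesis that the $k$-level objects are the Weil restrictions of the $E$-level ones.
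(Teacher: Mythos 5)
Your proof is correct and follows essentially the same path as the paper's: verify descent to geometric classes via the product decomposition over $k\sep$, then for an arbitrary maximal $k$-torus pull down to an $E$-torus of $H^\wedge$, invoke the pointwise-conorm hypothesis there, transport everything back up with Proposition~\ref{prop:weil-restriction-classes} and Remark~\ref{rem:weil-restriction-groups-parascopy}, and finish with Lemma~\ref{lem:conorm-restriction-over-k}(\ref{item:unrestriction-conorm-torus}). The paper phrases the last step as a "without loss of generality, take $T'^\wedge = T^\wedge$" reduction (justified because the new tori, data, and duality maps satisfy exactly the hypotheses under which the lemma was stated), whereas you apply the lemma directly to the equivalent datum and explicitly observe that its proof is purely formal and insensitive to the particular choices; these are the same argument and your explicit remark on that point is exactly the right thing to have noticed.
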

\begin{proof}
Since $\dnormfunc$ is defined over $E$, $\Weil_{E/k}\dnormfunc$ is defined over $k$.
Moreover, over $k\sep$, $G^\wedge = \Weil_{E/k}H^\wedge$
(resp.~$\tilde G^\wedge = \Weil_{E/k}\tilde H^\wedge$) can be identified with
a product of $[E:k]$ $k\sep$-groups, each $k\sep$-isomorphic to $H^\wedge$ (resp.~$\tilde H^\wedge$).
Under the identifications of $G^\wedge$ and $\tilde G^\wedge$ with these product groups, it is easily
seen that $\Weil_{E/k}\dnormfunc$ coincides with the map $\dnormfunc\times\cdots\times\dnormfunc$. Since
$\dnormfunc$ descends to a map $\abmap{\Cl(H^\wedge)}{\Cl(\tilde H^\wedge)}$ on geometric conjugacy classes,
it follows that $\Weil_{E/k}\dnormfunc$ descends to a map $\abmap{\Cl(G^\wedge)}{\Cl(\tilde G^\wedge)}$,
hence satisfies Definition~\ref{defn:conorm-function}(1).

We now show that $\Weil_{E/k}\dnormfunc$ satisfies Definition~\ref{defn:conorm-function}(2).
Suppose that ${T'}^\wedge$ is a maximal $k$-torus of $G^\wedge$.
We need to show that $\Weil_{E/k}\dnormfunc$ restricts to a conorm on ${T'}^\wedge$
corresponding to some parascopic datum for $(\tilde G,\Gamma, G)$ equivalent to $(\phi,q^*)$.

There exists a maximal $E$-torus ${S'}^\wedge\subseteq H^\wedge$ such that ${T'}^\wedge = \Weil_{E/k}{S'}^\wedge$.
Since $\dnormfunc$ is a pointwise conorm function, there exist
maximal $k$-tori $S'\subseteq G$, $\tilde S'\subseteq\tilde G$,
and $\tilde S'{}^\wedge\subseteq\tilde G^\wedge$, a parascopic datum $(\breve\phi',\breve q^{\prime *})$
relative to $\tilde S'$ and $S'$ equivalent to $(\breve\phi,\breve q^*)$, and duality maps 
$\map{\delta'}{\bX^*(S')}{\bX_*({S'}^\wedge)}$
and
$\map{\tilde\delta'}{\bX^*(\tilde{S'})}{\bX_*(\tilde S'{}^\wedge)}$
such that the restriction of $\dnormfunc$ to $S'{}^\wedge$
coincides with the conorm $\map{\dnorm[\breve\phi',\breve q^{\prime *},\tilde\delta',\delta']}{S'{}^\wedge}{\tilde S'{}^\wedge}$.
Moreover, by Proposition~\ref{prop:weil-restriction-classes},
$(\phi',{q'}^*): = \Weil_{E/k}(\breve\phi',\breve q^{\prime *})$ is a parascopic datum for $(\tilde G,\Gamma, G)$ relative
to the maximal $k$-tori $\tilde T':=\Weil_{E/k}\tilde S'$ and $T':=\Weil_{E/k}S'$ that is equivalent to $(\phi,q^*)$.
Thus $S'{}^\wedge$, $T'{}^\wedge$, $(\breve\phi',\breve q^{\prime *})$, $(\phi',{q'}^*)$, etc.~satisfy
precisely the same conditions as ${S}^\wedge$, ${T}^\wedge$, $(\breve\phi,\breve q^*)$, $(\phi,q^*)$, etc.
Without loss of generality, therefore, it suffices to consider the particular maximal $k$-torus
$T^\wedge = \Weil_{E/k}S^\wedge$ of $G^\wedge$.

As in the preceding paragraph, we may assume that
the restriction of $\dnormfunc$ to ${S}^\wedge$
coincides with the conorm
$\map{\dnorm[\breve\phi,\breve q^*,\tilde\delta,\delta]}{S^\wedge}{\tilde{S}^\wedge}$.
Thus the restriction of $\Weil_{E/k}\dnormfunc$ to $T^\wedge$ coincides with
$\Weil_{E/k}\dnorm[\breve\phi, \breve q^*,\tilde\delta,\delta]$, which
equals $\dnorm[\phi,q^*,\Weil_{E/k}\tilde\delta,\Weil_{E/k}\delta]$
by Lemma~\ref{lem:conorm-restriction-over-k}(\ref{item:unrestriction-conorm}).
Thus, Definition \ref{defn:conorm-function}(2)
holds for $T^\wedge$.
\end{proof}

\subsection{Non-induced action of $\Gamma$}
\label{sub:act-via-group-aut}
We now consider a parascopic datum $(\phi,q^*)$ for $(\tilde G,\Gamma, G)$ 
for which the action of $\Gamma$ is not $K/k$-induced for any subextension
$K/k$ of $E/k$.
Since $\Gamma$ does not preserve
any irreducible factor of $\Psi(\tilde G,\tilde T)$,
we have that $(\phi,q^*)$ is root-inclusive.
We may of course assume that $E\neq k$.
Suppose also that $\tilde H$ is absolutely simple.
According to Proposition~\ref{prop:non-induced}, it follows that $E/k$ must be Galois and that
$\Psi(\tilde H,\tilde S)$ is $\Gal(E)$-isomorphic to
$\lsub{\phi}\Psi(\tilde G,\tilde T)\red$.

Apply Proposition~\ref{prop:decomp-fixed-pinning} to $(\phi, q^*)$ to obtain an action of $\Gamma$ on $\tilde G$ via
$k$-automorphisms that induces the action $\phi$ on $\Psi(\tilde G,\tilde T)$. (Replacing $(\phi,q^*)$ by
an equivalent parascopic datum, we may assume that
the torus $\tilde T'$ appearing in~\loccit is $\tilde T$.)
Letting $\bar G = (\tilde G^\Gamma)\conn$ and $\bar T = (\tilde G^\Gamma)\conn$,
we have that $(\phi, q^*)$ is a composition of the parascopic datum
$(\phi,\id )$ for $(\tilde G, \Gamma, \bar G)$ relative to $\tilde T$ and $\bar T$,
and the parascopic datum $(1, q^*)$ for $(\bar G, 1, G)$ relative to $\bar T$ and $T$.
(Here we identify $\Psi(\bar G,\bar T)$ and
$\lsub\phi\Psi(\tilde G,\tilde T) = \lsub\phi\Psi(\tilde G,\tilde T)\red$
as in Example~\ref{ex:parascopy-fixed-point}.)

Recall that the $\Gal(E)$-isomorphism from $\Psi(\tilde H,\tilde S)$ to
$\lsub{\phi}\Psi(\tilde G,\tilde T) = \Psi (\bar G,\bar T)$ from 
Proposition~\ref{prop:non-induced} is the composition of the quotient map
$\abmap{X^*(\tilde T)}{X^*(\bar T)}$ with the implicit embedding
$\abmap{X^*(\tilde S)}{X^*(\tilde T) = \Weil_{E/k}X^*(\tilde S)}$.
It is easily seen that this map on character lattices is induced by the $E$-isomorphism
$\abmap{\bar G}{\tilde H}$ obtained by composing the projection
$\map{\pr}{\tilde G}{\tilde H}$ with the inclusion $\abmap{\bar G}{\tilde G}$.
Composing the inverse of the latter isomorphism with $\pr$ gives an $E$-homomorphism from $\tilde G$ to $\bar G$
that satisfies the universal property of Weil restriction, and therefore
we can and will realize $\tilde G$ as $\Weil_{E/k}\bar G$.
Thus this situation is precisely that considered in
Example~\ref{ex:weil-restriction-parascopy}(\ref{ex:base-change-parascopy}).

Assuming that there exists a pointwise conorm function for $[1,q^*]$,
it follows from Lemma~\ref{lem:pointwise-conorm-factorization}
that the problem of finding such a function for $[\phi,q^*]$ reduces to that of finding one
for $[\phi,\id]$. We focus for now on the latter problem, i.e., we assume for now that $G = \bar G$
and $T=\bar T$.

Let $G^\wedge$ be a quasi-split $k$-group in $k$-duality with $G$.
Let $T^\wedge$ be a maximal $k$-torus of $G^\wedge$ corresponding to $T$ as in
\S\ref{para:duality}, and let $\map{\delta}{X^*(T)}{X_*(T^\wedge)}$ be a duality map.
Let $\tilde G^\wedge = \Weil_{E/k} G^\wedge$ and $\tilde T^\wedge =  \Weil_{E/k} T^\wedge$.
Then as in \S\ref{sub:conorm-restricted-datum}, $\tilde G^\wedge$ is in duality with $\tilde G$ via the duality map
$\tilde\delta := \map{\Weil_{E/k}\delta}{X^*(\tilde T)}{X_*(\tilde T^\wedge)}$.

\begin{prop}
\label{prop:conorm-base-change}
Using the notation and assumptions of \S\ref{sub:act-via-group-aut}, we have that
$\map{\diag_{E/k}}{G^\wedge}{\tilde G^\wedge}$
is a pointwise conorm for $[\phi,\id]$.
\end{prop}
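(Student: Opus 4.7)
The plan is to apply Proposition \ref{prop:conorm-func-hom} to the $k$-homomorphism $\diag_{E/k}$. To do so, I must verify two things: first, that the restriction of $\diag_{E/k}$ to $T^\wedge$ agrees with the conorm $\dnorm[T^\wedge] = \dnorm[\phi,\id,\tilde\delta,\delta]$ with respect to the duality maps $\delta$ and $\tilde\delta = \Weil_{E/k}\delta$ fixed in \S\ref{sub:act-via-group-aut}; and second, that $C_{\tilde G^\wedge}(\diag_{E/k}(T^\wedge)) = \tilde T^\wedge$.

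The centralizer condition is the easier of the two. Over $k\sep$, the identification $\tilde G^\wedge = \Weil_{E/k}G^\wedge$ becomes a product $\prod_{\sigma \in \Hom_k(E,k\sep)} G^\wedge_\sigma$, with $\tilde T^\wedge$ the corresponding product of copies of $T^\wedge$, and $\diag_{E/k}$ becomes the diagonal embedding $g \mapsto (g,\ldots,g)$. An element $(g_\sigma)_\sigma$ centralizing the diagonal torus $\{(t,\ldots,t) : t \in T^\wedge\}$ must have each $g_\sigma$ centralizing $T^\wedge$, and since $T^\wedge$ is a maximal torus of $G^\wedge$, this forces each $g_\sigma \in T^\wedge$, giving $C_{\tilde G^\wedge}(\diag_{E/k}(T^\wedge)) = \prod_\sigma T^\wedge = \tilde T^\wedge$.

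For the compatibility on tori, I will unravel the norm map. Since $G = (\tilde G^\Gamma)\conn$, Example \ref{ex:parascopy-fixed-point} canonically identifies $X^*(T)$ with $\lsub{\phi}X^*(\tilde T)$, and the parascopic datum is $(\phi,\id)$ under this identification. By \eqref{eqn:norm}, for $x \in X^*(T)$, the norm $\norm[\phi,\id]^*(x)$ equals $\sum_{\gamma \in \Gamma} \phi(\gamma) \cdot \tilde x$ for any preimage $\tilde x$ of $x$ in $X^*(\tilde T)$. Over $k\sep$, we have $X^*(\tilde T) = \bigoplus_\sigma X^*(T)_\sigma$ with $\Gamma = \Gal(E/k)$ acting by permuting the summands; taking $\tilde x$ to be the element supported in the identity component, $\sum_\gamma \phi(\gamma) \tilde x$ is the diagonal $(x, x, \ldots, x)$. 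Since $\tilde\delta = \Weil_{E/k}\delta$ acts componentwise on cocharacters, the induced map
\[
\dnormcochar{T^\wedge} = \tilde\delta \circ \norm[\phi,\id]^* \circ \delta\inv \colon X_*(T^\wedge) \longrightarrow X_*(\tilde T^\wedge) = \bigoplus_\sigma X_*(T^\wedge)_\sigma
\]
is the diagonal embedding, and hence coincides with the cocharacter map induced by $\diag_{E/k} \colon T^\wedge \to \tilde T^\wedge$. This is exactly the hypothesis of Proposition \ref{prop:conorm-func-hom}.

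The main obstacle is essentially bookkeeping: tracking the compatibility between the implicit embedding $\iota^* \colon X^*(S_E) \hookrightarrow X^*(\Weil_{E/k}S_E)$ and the identification of $\lsub{\phi}X^*(\tilde T)$ with $X^*(T)$ from Example \ref{ex:parascopy-fixed-point}, and checking that the resulting diagonal map on cocharacter lattices is indeed the one induced by $\diag_{E/k}$ on the dual side. No serious technical difficulty should arise beyond this, since everything reduces over $k\sep$ to a transparent statement about the diagonal map into a product, and all the maps involved ($\delta$, $\tilde\delta$, $\iota^*$, $\pr^*$) are defined precisely so as to make these identifications commute.
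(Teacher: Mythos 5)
Your proposal is correct and follows essentially the same route as the paper: both invoke Proposition~\ref{prop:conorm-func-hom} and reduce to checking that $\diag_{E/k}$ and $\dnorm[\phi,\id,\tilde\delta,\delta]$ agree on $T^\wedge$, via a computation with $\normchar[\phi]$ on character/cocharacter lattices. The paper phrases the torus check intrinsically (showing $\hat\pr\circ\dnorm[\phi,\id,\tilde\delta,\delta]=\id_{T^\wedge}$, dualized to $p^*\circ\normchar[\phi]=\id_{X^*(T)}$) while you base-change to $k\sep$, pick the preimage $\iota^*(x)$ in the identity component, and observe that the simply-transitive $\Gamma$-action makes $\sum_\gamma\phi(\gamma)\iota^*(x)$ diagonal; these are two ways of saying the same thing. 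One small merit of your write-up: you explicitly verify the centralizer hypothesis $C_{\tilde G^\wedge}(\diag_{E/k}(T^\wedge))=\tilde T^\wedge$ of Proposition~\ref{prop:conorm-func-hom}, which the paper's proof leaves tacit.
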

\begin{proof}
Since $\diag_{E/k}$ is a homomorphism, from Proposition \ref{prop:conorm-func-hom},
it will be enough to show that
the restriction of $\diag_{E/k}$
to $T^\wedge$
agrees with the map
$\map{\dnorm[\phi,\id,\tilde\delta,\delta]}{T^\wedge}{\tilde{T}^\wedge}$.
Let $\hat\pr$ be the natural $E$-projection $\abmap{\tilde T^\wedge =  \Weil_{E/k} T^\wedge}{T^\wedge}$.
Then by definition, $\dnorm[\phi,\id,\tilde\delta,\delta]$ will equal $\diag_{E/k}$
if and only if $\hat\pr\circ \dnorm[\phi,\id,\tilde\delta,\delta] = \id_{T^\wedge}$.
This is equivalent to the equality
$\hat\pr_*\circ (\dnorm[\phi,\id,\tilde\delta,\delta])_* = \id_{X_*(T^\wedge)}$
of the corresponding maps on cocharacters.
Dualizing via $\delta$ and $\tilde\delta$ as in \eqref{eq:norm-char},
the latter condition becomes
\begin{equation}
\label{eq:diag-criterion}
p^* \circ \normchar[\phi] = \id_{X^*(T)},
\end{equation}
where $p^*$ is the natural projection $\abmap{X^*(\tilde T) = \Weil_{E/k}X^*(T)}{X^*(T)}$.
More precisely, if $\iota^*$ denotes the implicit $\Gal(E)$-embedding $\abmap{X^*(T)}{X^*(\tilde T)}$,
then $p^* \circ \iota^* = \id $ and $p^*$ is trivial on the conjugates
$\sigma (\iota^*(X^*(T))$ for $\sigma\in\Gal(E/k)\smallsetminus\{\id\}$.
Then \eqref{eq:diag-criterion} follows from the fact that $\normchar[\phi]$ is induced by the map
$\sum_{\gamma\in\Gamma}\, \phi(\gamma)$ on $X^*(\tilde T)$ and that 
the action $\phi$ on the conjugates of $\iota^* (X^*(T))$ in $X^*(\tilde T)$ is simply transitive.
\end{proof}

\section{Describing general pointwise conorms in terms of special cases}
\label{sec:reduction}

Suppose that $\tilde{G}$ and $G$ are quasisplit.
Then a parascopic datum $(\phi,q^*)$ for a triple $(\tilde{G},\Gamma, G)$
relative to tori $\tilde{T}\subseteq \tilde{G}$ and $T\subseteq G$
gives rise to a conorm $\dnormst[{[\phi,j_*]}]$,
a map from stable semisimple conjugacy classes
in $G^\wedge(k)$
to those in
$\tilde{G}^\wedge(k)$.

The goal of this section is to prove the following result:
\begin{prop}
\label{prop:explicit}
The conorm $\dnormst[{[\phi,j_*]}]$
can be described explicitly in terms of
compositions,
direct products, Weil restriction,
the following conorms:
\begin{itemize}
\item
the map in
Example \ref{ex:conormfunc-isogeny} arising from an isotypy,
\item
the map $\diag$ of
Example \ref{ex:permutation-action}
and \S\ref{para:product} arising from actions on products,
\item
the embeddings of dual groups in \S\ref{para:conorm-dual-closed}
and \S\ref{para:wrong-subsystem},
\item
the map $\diag_{E/k}$ from the top of \S\ref{sec:weil-parascopic} arising from Weil restriction,
\item
the map in Example \ref{ex:conorm-quasicentral} (below),
\end{itemize}
and
conorms for the specific examples 
in Tables \ref{table:quasi-central} and \ref{table:subdatum} below.
Moreover, if none of the conorms from these tables are involved,
then 
$\dnormst[{[\phi,j_*]}]$
has a pointwise conorm function associated to it.
\end{prop}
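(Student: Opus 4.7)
The plan is to prove the result by a cascade of reductions that successively simplify $\tilde{G}$ and $\Gamma$, each step invoking one of the structural results of Sections 4--9 and tracking the pointwise-conorm property along the way.

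\textbf{Step 1 (reduction to $\tilde{G}$ adjoint).} Apply Proposition~\ref{prop:conorm-isogeny} to the central isogeny $\tilde{G}\twoheadrightarrow\tilde{G}^{\mathrm{ad}}$ (and pull the datum back accordingly). This writes $\dnormst[{[\phi,q^*]}]$ as a composition of a conorm for a datum on $(\tilde{G}^{\mathrm{ad}},\Gamma,G')$ with the isotypies $\tilde{f}\dual$, ${f\dual}\inv$, both of which are of the form in Example~\ref{ex:conormfunc-isogeny}.

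\textbf{Step 2 (reduction of $\Gamma$).} Iterate Proposition~\ref{prop:decomposition-gamma} along a composition series for $\Gamma$. By Proposition~\ref{prop:conorm-factorization} the conorm decomposes accordingly into factors coming from the simple subquotients, so we may assume $\Gamma$ is simple or trivial.

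\textbf{Step 3 (single $k$-almost simple factor).} With $\tilde{G}$ adjoint it is a direct product of $k$-almost simple groups, and $\Gamma$ acts by permuting the factors. Applying Proposition~\ref{prop:product-conorm} within each $\Gamma$-orbit reduces to the case where $\tilde{G}$ itself is $k$-almost simple, at the cost of composing with a $\diag$ map as in Example~\ref{ex:permutation-action}.

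\textbf{Step 4 (reduction to absolute simplicity via Weil restriction).} A $k$-almost simple $\tilde{G}$ is $\Weil_{E/k}\tilde{H}$ for an absolutely simple $E$-group $\tilde{H}$. If $\phi$ is $K/k$-induced for some $k\subseteq K\subseteq E$, Lemma~\ref{lem:parascopic-group-restriction-over-k} expresses the whole datum as $\Weil_{K/k}$ of a datum over $K$, and Proposition~\ref{prop:conorm-weil-restriction-general} realizes $\dnormfunc$ as $\Weil_{K/k}$ of the corresponding pointwise conorm. If $\phi$ is not induced from any proper subextension then by Proposition~\ref{prop:non-induced} (using that $\Gamma$ is simple), $E/k$ is Galois and $\Gamma\twoheadrightarrow\Gal(E/k)$; Proposition~\ref{prop:conorm-base-change} then supplies $\diag_{E/k}$ as a pointwise conorm for the root-inclusive factor $(\phi,\mathrm{id})$, and the remaining embedding-type factor $(1,q^*)$ is handled by Step~5.

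\textbf{Step 5 (case $\Gamma=1$, $\tilde{G}$ absolutely simple).} The datum is $(1,q^*)$ with $q^*$ a morphism of root data. By Remark~\ref{rem:morphism-example}, $q^*$ factors as an isotypy-type morphism (covered by Example~\ref{ex:conormfunc-isogeny}) followed by an embedding, and the embedding in turn factors through a chain of maximal subdata. By Proposition~\ref{prop:max-subdatum} each step is closed or dual-closed: the closed case is handled by the embedding of groups from Proposition~\ref{prop:embedding} (whose dual provides the pointwise conorm via Example~\ref{ex:conormfunc-isogeny}), and the dual-closed case by the embedding of dual groups in \S\ref{para:conorm-dual-closed}. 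The handful of configurations in which neither construction applies cleanly are collected in Table~\ref{table:subdatum}.

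\textbf{Step 6 (case $\Gamma$ simple, $\tilde{G}$ absolutely simple).} If the datum is root-inclusive, Proposition~\ref{prop:decomp-fixed-pinning} (after possibly a torus-inclusive preprocessing using the decomposition of Remark~\ref{rem:decomp-canonical}) factors $(\phi,q^*)$ through the quasi-central datum $(\phi,\lsub{\Gamma}\Int(\tilde g)^*)$ for $(\tilde G,\Gamma,\bar G)$ plus a $\Gamma=1$ piece handled by Step~5. The remaining quasi-central cases, organized by the type of $\tilde{G}$ and the action of $\Gamma$, are tabulated in Table~\ref{table:quasi-central} and the general construction of Example~\ref{ex:conorm-quasicentral}. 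The only non-root-inclusive case (forced by Example~\ref{ex:bad} to occur only for $A_{2n}$ with an outer involution) is treated in \S\ref{para:wrong-subsystem}, where the pointwise conorm is again an embedding of dual groups.

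\textbf{Pointwise conorm statement.} The operations used in the reductions all preserve the existence of a pointwise conorm: composition by Lemma~\ref{lem:pointwise-conorm-factorization}, direct products by Proposition~\ref{prop:product-conorm}, and Weil restriction by Proposition~\ref{prop:conorm-weil-restriction-general}. Since each elementary building block identified above has a pointwise conorm except possibly for the entries in Tables~\ref{table:quasi-central} and~\ref{table:subdatum}, the final assertion follows.

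The principal obstacle is the bookkeeping in Steps~5 and~6: verifying that the case-by-case analysis for quasi-central actions and for maximal subdata of exceptional types is exhaustive, and that the Galois structures and equivalence classes of parascopic data transport correctly through each reduction (in particular, through Proposition~\ref{prop:weil-restriction-classes}) so that the resulting composition of pointwise conorms is well defined and independent of the intermediate choices.
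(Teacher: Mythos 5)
Your overall architecture of reductions matches the paper's Section 10 closely, and Steps 1--4 and the pointwise-conorm bookkeeping at the end are essentially correct. However, there is a genuine error in Step 5 (the $\Gamma = 1$ subdatum case), and a couple of smaller omissions.

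\textbf{The main gap.} In Step 5 you write that ``the closed case is handled by the embedding of groups from Proposition~\ref{prop:embedding} (whose dual provides the pointwise conorm via Example~\ref{ex:conormfunc-isogeny}), and the dual-closed case by the embedding of dual groups in \S\ref{para:conorm-dual-closed}.'' You have the closed and dual-closed cases interchanged. If $\Psi(G,T)$ is a \emph{dual-closed} subdatum of $\Psi(\tilde G,\tilde T)$, then $\Phi^\vee(G,T)$ is closed in $\Phi^\vee(\tilde G,\tilde T)$, hence $\Phi(G^\wedge,T^\wedge)$ is closed in $\Phi(\tilde G^\wedge,\tilde T^\wedge)$, and it is Proposition~\ref{prop:embedding} applied to the \emph{dual} groups that yields the embedding $\nu\colon G^\wedge\hookrightarrow\tilde G^\wedge$ of \S\ref{para:conorm-dual-closed}; this is exactly the pointwise conorm. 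When $\Psi(G,T)$ is merely \emph{closed} (and not dual-closed), Proposition~\ref{prop:embedding} produces a $k$-embedding $G\hookrightarrow\tilde G$, but an embedding of full-rank reductive subgroups has no ``dual'' morphism $G^\wedge\to\tilde G^\wedge$: Example~\ref{ex:conormfunc-isogeny} applies only to an \emph{isotypy} $\tilde G\to G$ (surjective, central kernel, image containing the derived group), and a proper closed embedding is not an isotypy. The closed-but-not-dual-closed subdata are precisely the cases that populate Table~\ref{table:subdatum}, where no pointwise conorm is known. Your subsequent sentence attributing Table~\ref{table:subdatum} to ``configurations in which neither construction applies cleanly'' is then internally inconsistent: Proposition~\ref{prop:max-subdatum} guarantees every maximal $\Gal(k)$-invariant subdatum \emph{is} closed or dual-closed, so under your (incorrect) claim there would be nothing left for that table.

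\textbf{Smaller issues.} Step 6 compresses the $A_{2n}$ analysis too aggressively. The paper's treatment in \S\ref{sec:reduction-A2n} is not just an appeal to \S\ref{para:wrong-subsystem}: one must first decompose $\Phi\subset\lsub{\phi}\tilde\Phi$ into a $BC_m\times BC_{n-m}$ factorization, pass to a Levi subgroup via a dual-closed embedding, peel off a direct-product factor of type $A_{2n-2m-1}$ that is root-inclusive, and only then encounter the single non-root-inclusive step treated by \S\ref{para:wrong-subsystem}. You also omit the case where $\Gamma$ is simple and acts \emph{trivially} on the Dynkin diagram of an absolutely simple $\tilde G$: here the conorm is $s\mapsto s^{|\Gamma|}$ (Example~\ref{ex:trivial-action}), composed with a $\Gamma=1$ factor. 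This case does not fit into your Step 6 (which assumes a faithful action on the diagram) or Step 4 (which peels off $\Gal(E/k)$, leaving a possibly-nontrivial kernel only when the factorization of $\Gamma$ allows it, and in the $E=k$ situation this kernel is all of $\Gamma$).
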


%The goal of this section is to express
%this map as explicitly as possible.
%We do so by decomposing it or otherwise expressing it in terms
%of conorms coming from specific kinds of parascopic data,
%and in most cases we can express these simpler conorms
%as functions on points.
%For example,
By Remark \ref{rem:decomp-canonical}
and Example \ref{ex:conormfunc-isogeny},
we may and will assume
that our parascopic datum $(\phi,q^*)$ is torus-inclusive.
By Propositions \ref{prop:conorm-factorization} and \ref{prop:decomposition-gamma},
we may and will assume that $\Gamma$ is simple or trivial.
In \S\ref{sec:reduction-abs-simple}, we reduce to the case where
$\tilde{G}$ is absolutely simple and adjoint,
and one of the following holds:
(a) 
$\Gamma$ is cyclic and acts nontrivially on the Dynkin
diagram of $\tilde{G}$;
or (b) $\Gamma$ is trivial.
In \S\ref{sec:reduction-not-A2n},
putting off case (b), 
and temporarily 
ignoring one situation where $\Gamma$ acts via an involution
and $\tilde{G}$ has type $A_{2n}$,
we further reduce case (a)
to the cases where
(a$^\prime$) $\Gamma$ acts quasicentrally on $\tilde{G}$;
and (b) $\Gamma$ is trivial (again).
In case (a$^\prime$), we can produce pointwise conorm functions in some cases,
and in others (listed in Table \ref{table:quasi-central})
such functions appear not to exist.
In \S\ref{sec:reduction-A2n},
we take care of one piece of unfinished business:
the cases where $\Gamma$ acts via an involution on $A_{2n}$.
In all such cases, we either produce a pointwise conorm function
or reduce to simpler cases.
Finally, in \S\ref{sec:subdatum},
we address the remaining piece of unfinished business:
case (b) above.
One usually has a pointwise conorm function,
and the exceptions are given in Table \ref{table:subdatum}.

Let $\Psi$ and $\tilde\Psi$ denote the root data
for $(G,T)$ and $(\tilde G, \tilde T)$,
and let $\Phi$ and $\tilde\Phi$ denote their root systems.

\subsection{Reduction to a simple case}
\label{sec:reduction-abs-simple}
We show how to reduce to the case where $\tilde{G}$
is absolutely simple and adjoint, and $\Gamma$ either
(a) is cyclic
and acts nontrivially on the Dynkin diagram,
or (b) is trivial.

By Proposition \ref{prop:conorm-isogeny},
we may assume that $\tilde{G}$
is a direct product of
adjoint, $k$-simple factors.
Since the conorm
behaves well with respect to direct products of $\Gamma$-invariant factors,
we may assume that
$\Gamma$ permutes
the $k$-simple factors of $\tilde{G}$ transitively.
By Proposition \ref{prop:product-conorm},
we may assume that $\tilde{G}$ is in fact $k$-simple.
Since $\tilde{G}$ is adjoint, we have that
$\tilde{G} = \Weil_{E/k} \tilde H$
for some finite separable extension $E/k$ and some adjoint,
$E$-quasisplit
absolutely simple $E$-group $\tilde H$.

From Proposition \ref{prop:conorm-weil-restriction-general},
we may assume that the action $\phi$ on $\tilde\Psi$ is not $K/k$-induced
for any intermediate field $K$ of $E/k$. If $E\neq k$,
then as in \S\ref{sub:act-via-group-aut}, $(\phi,q^*)$ is the composition of
a parascopic datum for $(\tilde G, \Gamma, \bar G)$, for which the conorm is given
in Proposition~\ref{prop:conorm-base-change} and one of the form
$(1,q^*)$ for $(\bar G, 1, G)$, which case is handled in \S\ref{sec:subdatum}.
Thus we may now assume that $E = k$ and $\tilde G = \tilde H$
is absolutely simple.

Since $\Gamma$ is simple or trivial, it acts either trivially or faithfully
on the absolute Dynkin diagram of $\tilde{G}$.
If faithfully, then $\Gamma$ is cyclic.

Suppose $\Gamma$ acts trivially on the Dynkin diagram.
Then $\Gamma$ acts trivially on the root datum $\tilde\Psi$.
From Proposition \ref{prop:decomposition-gamma},
our parascopic datum $(\phi,q^*)$
for $(\tilde{G},\Gamma,G)$
is a composition of
the parascopic datum $(\phi,\id)$
for $(\tilde{G},\Gamma,\tilde{G})$
and
the parascopic datum $(1,q^*)$
for $(\tilde{G},1,G)$.
The first datum has an associated conorm function given
by $s\mapsto s^{|\Gamma|}$,
as in Example \ref{ex:trivial-action}.
As indicated above, the second will be addressed in \S\ref{sec:subdatum}

\subsection{Root-inclusive cases with nontrivial $\Gamma$ action}
\label{sec:reduction-not-A2n}
Suppose that $\tilde{G}$ is absolutely simple
and adjoint,
and $\Gamma$ is a cyclic group that acts nontrivially
on the Dynkin diagram of $\tilde{G}$.
Additionally, let us assume that 
$(\phi,q^*)$ is root-inclusive.
Doing so only excludes certain cases where $\tilde{G}$ has type $A_{2n}$
for some $n$,
and we will treat these cases in \S\ref{sec:reduction-A2n}.

By Proposition \ref{prop:decomp-fixed-pinning}
we can decompose $(\phi,q^*)$ into
% Here is the enumerate environment where we use ad hoc labeling.
% See note near the top of the file.
\begin{enumerate}[(a)]
\item[(a$^\prime$)]
%\label{item:quasi-central}
a parascopic datum coming from a quasicentral
action of $\Gamma$ on $\tilde{G}$, in which case $\Psi = \lsub\phi\tilde\Psi\red$; and
\item[(b)]
%\label{item:subdatum}
a parascopic datum coming from a root subdatum.
\end{enumerate}
Again postponing discussion of (\refb)
to \S\ref{sec:subdatum},
we now give explicit pointwise conorm functions in two subcases of (\refaprime) below.
The remaining three subcases, where we do not know of explicit pointwise
conorm functions, are listed in Table \ref{table:quasi-central}.

\begin{table}
$$
\begin{array}{l | c | l l}
\tilde{\Phi} & |\Gamma| & \Phi &  \\
\cline{1-3}
A_{2n} & 2 & B_n^{(*)}  &
	^{(*)}\text{$C_n$ if $p = 2$} \\
A_{2n-1} & 2 & C_n  \\
D_n & 2 & B_{n-1}  \\
\end{array}
$$
\caption{Quasicentral actions for which we don't know of a pointwise
conorm function}
\label{table:quasi-central}
\end{table}

\begin{example}
\label{ex:conorm-quasicentral}
We now give pointwise conorm functions in the cases of
(\refaprime) where
$(\tilde\Phi, |\Gamma|, \Phi)$ is
$(D_4, 3, G_2)$
or
$(E_6, 2, F_4)$.
As in \S\ref{para:wrong-subsystem}, one can
define a $k$-action of $\Gamma$ on $\tilde G^\wedge$,
and it follows easily that $G$ is in $k$-duality with the $k$-group
$G^\wedge: = (\tilde G^{\wedge\,\Gamma})\conn$.
Let $\iota$ be the embedding $\abmap{G^\wedge}{\tilde G^\wedge}$.
It can be checked that the map
from $G^\wedge$ to $\tilde G^\wedge$ given by 
$s \mapsto \iota(s^{|\Gamma|})$ agrees with a conorm map
$\dnorm[S^\wedge]$
for every maximal $k$-torus $S^\wedge \subset G^\wedge$,
and so
this map is a pointwise conorm for the action of $\Gamma$ on $\tilde G$.
\end{example}

\subsection{Case of $A_{2n}$ with an involution}
\label{sec:reduction-A2n}
Suppose that $\tilde\Psi$ has type $A_{2n}$ for some $n$,
$\Gamma$ has order two, and $\Gamma$ acts nontrivially on a positive
system of roots in $\tilde\Psi$.
Our goal in this section is to express the associated parascopy
for $(\tilde\Psi,\Gamma,\Psi)$ in terms of simpler cases.

The restricted root datum
$\lsub\phi \tilde\Psi$ has type $BC_n$.
Our parascopic datum for $(\tilde\Psi,\Gamma,\Psi)$ decomposes
into a datum for $(\tilde \Psi, \Gamma, \Psi')$
and a datum for $(\Psi', 1, \Psi)$,
where $\Psi'$ is a maximal reduced subsystem of $\lsub\phi\tilde\Psi$
containing $\Psi$.
We will address the second datum in 
\S\ref{sec:subdatum},
so replace $\Psi$ by $\Psi'$,
and thus assume from now on that $\Phi$ is maximal.
In particular, $\Phi$ cannot have type $D_n$,
so it must contain a multipliable or divisible root.

Let
$R =
\{
\alpha \in \lsub\phi\tilde\Phi
	\mid \text{$\alpha$, $2\alpha$, or $\alpha/2 \in \Phi$}
\}$.
Then $R$ is a (not necessarily proper)
maximal subsystem of $\lsub\phi\tilde\Phi$.
By construction, there is some root $\alpha\in R$ such that
$2\alpha\in R$.
Let $R_1$ be the irreducible factor of $R$ containing $\alpha$.
Then $R_1$ has type $BC_m$ for some $0<m\leq n$.
By the maximality of $R$, we have that $R = R_1\sqcup R_2$,
where $R_2$ is the set of roots $\lsub\phi\tilde\Phi$ orthogonal to $R_1$.
Since $R_2$ has an orthogonal basis consisting of the multipliable
roots in $\lsub\phi\tilde\Phi$ that are not in $R_1$,
we see that $R_2$ has type $BC_{n-m}$.
By the maximality of $\Phi$ among reduced subsystems of $R$,
we must have that $\Psi = \Psi_1 \times \Psi_2$, where 
$\Psi_1$ and $\Psi_2$ each have type $B$ or $C$.

If $m=n$, then $\Psi_2$ is empty, and $\Psi$ has type $B_n$
or $C_n$.
Suppose $m<n$.
If $\Psi_1$ and $\Psi_2$ both have the same type (e.g., $B$ or $C$),
then $\Psi$ is properly contained in $B_n$ or $C_n$,
which contradicts the maximality of $\Psi$.
Therefore, we may assume without loss of generality that
$\Psi_1$ has type $B_m$ and $\Psi_2$ has type $C_{n-m}$.
Since
the root system $\Phi_1$ of $\Psi_1$
(resp.\ $\Phi_2$ of $\Psi_2$)
consists of all
nondivisible
(resp.\ nonmultipliable)
roots in $R_1$ (resp.\ $R_2$),
it is straightforward to check that the inverse images
of $\Phi_1$ and $\Phi_2$ under the quotient map
$\map{i^*}{X^*(\tilde T)}{\lsub\phi X^*(\tilde T)}$
are orthogonal sets in $\tilde\Phi$.
Since $\tilde\Phi$ has type $A$, the sum of two orthogonal
roots in $\tilde\Phi$ is never a root.
Therefore, the subsystem
generated by these two orthogonal sets
is a disjoint union $\tilde \Phi' := \tilde\Phi_1\sqcup \tilde\Phi_2$,
where each $\tilde\Phi_j$ contains the inverse image
under $i^*$ of $\Phi_j$.
The subdatum $\tilde\Psi'$ of $\tilde\Psi$ determined by $\tilde\Phi'$
corresponds to a Levi subgroup $\tilde G'$
of $\tilde G$.
By \S\ref{para:conorm-dual-closed},
the embedding $\abmap{\tilde G'^\wedge}{\tilde G^\wedge}$
is a pointwise conorm for the equivalence class $[1,\id]$
for $(\tilde G,1,\tilde G')$.
Therefore,
we may replace $\tilde\Psi$ and $\tilde G$ by $\tilde\Psi'$ and $\tilde G'$.
From Proposition \ref{prop:conorm-isogeny},
we may further replace $\tilde G$ by a direct product of semisimple groups
$\tilde G_1 \times \tilde G_2$,
where the root system of each $\tilde G_j$ is $\tilde\Phi_j$,
which must have type $A$.
Since $i^*(\tilde\Phi_1)$ contains $B_m$
as a maximal reduced subsystem,
$\tilde\Phi_1$ must have type $A_{2m}$,
and $\Gamma$ acts nontrivially on it.
Similarly,
since $i^*(\tilde\Phi_2) \supseteq C_{n-m}$,
$\tilde\Phi_2$ must have type $A_{2n-2m-1}$ or $A_{2n-2m}$,
and $\Gamma$ acts nontrivially on it.
But we cannot have $A_{2n-2m}$ since then $\tilde\Psi$
would not be contained in $A_{2n}$.
We can handle each factor $\tilde\Phi_j$ separately,
and $\tilde\Phi_2$ is addressed in
\S\ref{sec:reduction-not-A2n}.
Therefore, assume that $\tilde\Psi$ is a single factor,
and that $\Psi$ has type $B_n$ or $C_n$.

Recall
(from \S\ref{sec:basic})
that $(\lsub\phi\tilde\Phi)\red		= B_m$
and  $(\lsub\phi\tilde\Phi)\redwrong	= C_m$
if $p\neq 2$,
and they are reversed if $p=2$.
If $\Psi$ is contained in $(\lsub\phi\tilde\Psi)\red$, then $(\phi,q^*)$
is root-inclusive, and this case was covered in
\S\ref{sec:reduction-not-A2n}.
Otherwise,
by Remark \ref{rem:decomp-canonical},
we can decompose $(\phi,q^*)$ into
a parascopic datum
for $(\tilde\Psi,\Gamma,(\lsub\phi\tilde\Psi)\redwrong)$
(this is \S\ref{para:wrong-subsystem}, where a pointwise
conorm is given), 
and a parascopic datum $(1,\id)$
for $((\lsub\phi\tilde\Psi)\redwrong,1,\Psi)$,
which will be discussed in \S\ref{sec:subdatum}.

\subsection{Subdatum cases}
\label{sec:subdatum}
Suppose that $\Gamma$ is trivial.
Then $\Psi$ is a $\Gal(k)$-invariant subdatum of $\tilde\Psi$.
By induction,
we may assume that $\Psi$ is maximal among $\Gal(k)$-invariant
subdata.
From Proposition \ref{prop:max-subdatum},
$\Psi$ must be either closed or dual-closed in $\tilde\Psi$.
If it is dual-closed, then
\S\ref{para:conorm-dual-closed}
gives an explicit pointwise conorm.
Therefore, we may assume that our subdatum
is maximal, closed, but not dual-closed,
hence has the same rank as $\tilde\Phi$.

Replacing our parascopic datum $(1,q^*)$ by an equivalent
datum defined with respect to a $k$-torus $T\subset G$ 
that lies in a Borel $k$-subgroup of $G$,
we may assume that our $\Gal(k)$ action on $\Psi$
preserves a system of positive roots.

Using a result of
Borel and de Siebenthal~\cite{borel-desiebenthal}*{\S7},
it is not hard to list all maximal, closed, proper,
full-rank subsystems of $\tilde\Phi$.
Iterating this process, it is not hard to list all closed,
full-rank subsystems of $\tilde\Phi$.
In Table \ref{table:subdatum},
we list all such subsystems
that are maximal among those that are invariant under
the action of $\Gal(k)$.
It turns out
that there is only one case of a non-maximal subsystem 
that is nonetheless maximal among those that are $\Gal(k)$-invariant:
$D_4$ with a triality action, which is contained in $B_4$,
a subsystem of $F_4$ that is not preserved by the $\Gal(k)$ action.
Therefore, in Table \ref{table:subdatum},
we list all maximal, closed,
equal-rank subsystems of $\tilde\Phi$,
and also $D_4$,
omitting a maximal subsystem of $F_4$ that is both closed and dual-closed.

\begin{table}
$$
\begin{array}{l | l l}
\tilde{\Phi} &  \Phi &  \\
\cline{1-3}
B_n & B_r \times D\Long_s &
	(r+s = n) \\
C_n & C_r \times C_s & 
	(r+s = n) \\
F_4 & B_4  \\
&	A_3\Long \times A_1\Short \\
&	A_1\Long \times C_3 \\
&	^3D_4\Long, \: ^6D_4\Long  & \text{(not maximal; in $B_4$)}\\
G_2 & A_2\Long \\
\end{array}
$$
\caption{Cases where $\Phi$ is a subdatum of $\tilde\Phi$,
and we don't know of a pointwise conorm function}
\label{table:subdatum}
\end{table}

% For reference:
%
%Closed subsystems of $C_4$:
%$(A_1\Long )^4$;
%$C_2 \times  (A_1\Long)^2$;
%$C_2 \times C_2$;
%$C_3 \times A_1\Long$.
%
%Closed subsystems of $B_4$:
%$D_4\Long$;
%$A_3\Long \times A_1\Short$;
%$A_1\Long \times A_1\Long \times B_2$;
%$(A_1\Long)^4$.

\begin{bibdiv}
\begin{biblist}
%\bibselect{references}

\bib{adler-lansky:bc-u3-unram}{article}{
  author={Adler, Jeffrey D.},
  author={Lansky, Joshua M.},
  title={Depth-zero base change for unramified ${\rm U}(2,1)$},
  journal={J. Number Theory},
  volume={114},
  date={2005},
  number={2},
  pages={324\ndash 360},
  issn={0022-314X},
  review={\MR {2167974}},
  eprint={arXiv:math.RT/0601695},
  note={Printer's error corrected in vol. 121 (2006), no. 1, 186.},
}

\bib{adler-lansky:bc-u3-ram}{article}{
  author={Adler, Jeffrey D.},
  author={Lansky, Joshua M.},
  title={Depth-zero base change for ramified ${\rm U}(2,1)$},
  journal={Trans.\ Amer.\ Math.\ Soc.},
  volume={362},
  year={2010},
  pages={5569\ndash 5599},
  eprint={arXiv:0807.1528},
}

\bib{adler-lansky:lifting}{article}{
  author={Adler, Jeffrey D.},
  author={Lansky, Joshua M.},
  title={Lifting representations of finite reductive groups I: Semisimple conjugacy classes},
  journal={Canad. J. Math.},
  volume={66},
  year={2014},
  pages={1201\ndash 1224},
  doi={10.4153/CJM-2014-013-6},
  eprint={arXiv:1106.0786},
}

\bib{adler-lansky:data-actions}{article}{
  author={Adler, Jeffrey D.},
  author={Lansky, Joshua M.},
  title={Root data with group actions},
  book={ title={Representations of reductive $p$-adic groups: Contributions from Pune}, editor={Aubert, Anne-Marie}, editor={Mishra, Manish}, editor={Roche, Alan}, editor={Spallone, Steven}, series={Progress in Math}, volume={328}, publisher={Birkh\"auser}, year={2019}, },
  eprint={arXiv:1707.01935},
}

\bib{adler-lansky-spice:group-actions}{article}{
  author={Adler, Jeffrey D.},
  author={Lansky, Joshua M.},
  author={Spice, Loren R.},
  title={On finite-group actions on reductive groups and buildings II},
  status={in preparation},
}

\bib{borel-desiebenthal}{article}{
  author={Borel, A.},
  author={de Siebenthal, J.},
  title={Les sous-groupes ferm\'es de rang maximum des groupes de Lie clos},
  language={French},
  journal={Comment. Math. Helv.},
  volume={23},
  date={1949},
  pages={200--221},
  issn={0010-2571},
  review={\MR {0032659 (11,326d)}},
}

\bib{joshi-spallone:spinoriality}{article}{
  author={Joshi, Rohit},
  author={Spallone, Steven},
  title={Spinoriality of orthogonal representations of reductive groups},
  journal={Represent. Theory},
  volume={24},
  year={2020},
  pages={435\ndash 469},
  eprint={arXiv:1901.06232},
}

\bib{kottwitz:rational-conj}{article}{
  author={Kottwitz, Robert E.},
  title={Rational conjugacy classes in reductive groups},
  journal={Duke Math. J.},
  volume={49},
  date={1982},
  number={4},
  pages={785--806},
  issn={0012-7094},
  review={\MR {0683003 (84k:20020)}},
}

\bib{kottwitz-shelstad:twisted-endoscopy}{article}{
  author={Kottwitz, Robert E.},
  author={Shelstad, Diana},
  title={Foundations of twisted endoscopy},
  language={English, with English and French summaries},
  journal={Ast\'erisque},
  number={255},
  date={1999},
  pages={vi+190},
  issn={0303-1179},
  review={\MR {1687096 (2000k:22024)}},
}

\bib{kumar-lusztig-dprasad:characters}{article}{
  author={Kumar, Shrawan},
  author={Lusztig, George},
  author={Prasad, Dipendra},
  title={Characters of simplylaced nonconnected groups versus characters of nonsimplylaced connected groups},
  conference={ title={Representation theory}, },
  book={ series={Contemp. Math.}, volume={478}, publisher={Amer. Math. Soc., Providence, RI}, },
  date={2009},
  pages={99--101},
  review={\MR {2513268}},
  doi={10.1090/conm/478/09321},
}

\bib{lusztig-srinivasan:unitary}{article}{
  author={Lusztig, George},
  author={Srinivasan, Bhama},
  title={The characters of the finite unitary groups},
  journal={Journal of Algebra},
  volume={49},
  year={1977},
  pages={167\ndash 171},
}

\bib{lusztig:chars-finite}{book}{
  author={Lusztig, George},
  title={Characters of reductive groups over a finite field},
  series={Annals of Mathematics Studies},
  volume={107},
  publisher={Princeton University Press},
  place={Princeton, NJ},
  date={1984},
  pages={xxi+384},
  isbn={0-691-08350-9},
  isbn={0-691-08351-7},
  review={\MR {742472 (86j:20038)}},
}

\bib{macdonald:finite-gln}{article}{
  author={MacDonald, I. G.},
  title={Zeta functions attached to finite general linear groups},
  journal={Math. Annalen},
  volume={249},
  year={1980},
  pages={1\ndash 15},
}

\bib{dprasad-sanat:restriction-cuspidal}{article}{
  author={Prasad, Dipendra},
  author={Sanat, Nilabh},
  title={On the restriction of cuspidal representations to unipotent elements},
  journal={Math. Proc. Camb. Phil. Soc.},
  year={2002},
  volume={135},
  pages={35\ndash 56},
}

\bib{raghunathan:tori}{article}{
  author={Raghunathan, M.~S.},
  title={Tori in quasi-split groups},
  journal={J.~Ramanujan Math.~Soc.},
  volume={19},
  date={2004},
  number={4},
  pages={281\ndash 287},
  issn={0970-1249},
  review={\MR {MR2125504 (2005m:20114)}},
}

\bib{silberger-zink:explicit-base-change}{article}{
  author={Silberger, Allan J.},
  author={Zink, Ernst-Wilhelm},
  title={Explicit Shintani base change and the Macdonald correspondence for characters of ${\rm GL}_n(k)$},
  journal={J. Algebra},
  volume={319},
  number={10},
  year={2008},
  pages={4147\ndash 4176},
}

\bib*{proc:corvallis1}{collection}{
  title={Automorphic forms, representations, and $L$-functions. Part 1},
  series={Proceedings of Symposia in Pure Mathematics, XXXIII},
  booktitle={Proceedings of the Symposium in Pure Mathematics of the American Mathematical Society (Twenty-fifth Summer Research Institute) held at Oregon State University, Corvallis, Ore., July 11--August 5, 1977},
  editor={Borel, Armand},
  editor={Casselman, W.},
  publisher={American Mathematical Society},
  place={Providence, R.I.},
  date={1979},
  pages={x+322},
  isbn={0-8218-1435-4},
  review={\MR {546586 (80g:10002a)}},
}

\bib{springer:corvallis}{article}{
  author={Springer, Tonny A.},
  title={Reductive groups},
  book={proc:corvallis1},
  pages={3\ndash 27},
  review={\MR {546587 (80h:20062)}},
}

\bib{springer:lag}{book}{
    author={Springer, Tonny A.},
     title={Linear algebraic groups},
    series={Progress in Mathematics},
    volume={9},
 publisher={Birkh\"auser Boston Inc.},
     place={Boston, MA},
      date={1998},
     pages={xiv+334},
      isbn={0-8176-4021-5},
    review={\MR {1642713 (99h:20075)}},
}

\bib{springer:lag-article}{article}{
  author={Springer, Tonny A.},
  title={Linear algebraic groups},
  conference={ title={Algebraic geometry IV}, },
  book={ series={Encyclopedia of Mathematical Sciences}, publisher={Springer—Verlag}, },
  date={1994},
  pages={1\ndash 121},
  review={\MR {1100484 (92g:20061)}},
}

\bib{srinivasan:quadratic-unipotent}{article}{
  author={Srinivasan, Bhama},
  title={Quadratic unipotent blocks in general linear, unitary and symplectic groups},
  journal={J. Group Theory},
  volume={16},
  date={2013},
  number={6},
  pages={825--849},
  issn={1433-5883},
  review={\MR {3198720}},
  doi={10.1515/jgt-2013-0019},
}

\bib{steinberg:endomorphisms}{book}{
  author={Steinberg, Robert},
  title={Endomorphisms of linear algebraic groups},
  series={Memoirs of the American Mathematical Society, No. 80},
  publisher={American Mathematical Society},
  place={Providence, R.I.},
  date={1968},
  pages={108},
  review={\MR {0230728 (37 \#6288)}},
}

\bib{sylvestre:thesis}{thesis}{
  author={Sylvestre, Jeremy},
  title={Twisted characters of depth-zero supercuspidal representations of ${\rm GL}(n)$},
  organization={The University of Toronto},
  type={Ph. D. Thesis},
  year={2008},
}

\end{biblist}
\end{bibdiv}
\end{document}